\theoremstyle{definition}
\newtheorem{definition}{Definition}[section]
\newtheorem{proposition}[definition]{Proposition}
\newtheorem{theorem}[definition]{Theorem}
\newtheorem{corollary}[definition]{Corollary}
\newtheorem{lemma}[definition]{Lemma}
\newtheorem{remark}[definition]{Remark}
\def\N{{\mathbb{N}}}
\def\Q{{\mathbb{Q}}}
\def\R{{\mathbb{R}}}
\def\S{{\mathbb{S}}}
\def\T{{\mathbb{T}}}
\def\div{\mbox{div}\,}
\def\<{\mathop{<}}
\def\>{\mathop{>}}
\newcommand{\spt}{\mathrm{spt}\,}
\newcommand{\dist}{\mathrm{dist}\,}
\newcommand{\diam}{\mathrm{diam}\,}
\numberwithin{equation}{section}
\def\Xint#1{\mathchoice
{\XXint\displaystyle\textstyle{#1}}%
{\XXint\textstyle\scriptstyle{#1}}%
{\XXint\scriptstyle\scriptscriptstyle{#1}}%
{\XXint\scriptscriptstyle\scriptscriptstyle{#1}}%
\!\int}
\def\XXint#1#2#3{{\setbox0=\hbox{$#1{#2#3}{\int}$}
\vcenter{\hbox{$#2#3$}}\kern-.5\wd0}}
\def\dashint{\Xint-}
\title[Existence of volume preserving MCF]
{Existence of weak solution to volume preserving mean curvature flow in higher dimensions}
\author[K. Takasao]{Keisuke Takasao \\ Department of Mathematics/Hakubi Center, Kyoto University, \\Kitashirakawa-Oiwakecho Sakyo Kyoto 606-8502, Japan}
\email{k.takasao@math.kyoto-u.ac.jp}
\keywords{volume preserving mean curvature flow, Allen--Cahn equation, phase field method}
\subjclass[2020]{Primary~35K93, Secondary~53E10}
\thanks{}
\date{}
\begin{document}
\maketitle
\begin{abstract}
In this paper, we construct a family of integral varifolds, which is
a global weak solution to the volume preserving mean curvature flow in the sense of $L^2$-flow. 
This flow is also a distributional BV-solution for a short time,
when the perimeter of the initial data is sufficiently close to that of ball with the same volume.
To construct the flow, we use the Allen--Cahn equation with non-local term
motivated by studies of Mugnai, Seis, and Spadaro, 
and Kim and Kwon. 
%The equation is derived from the Modica--Mortola functional plus a penalty for the volume,
%and the solution has several fine properties, such as the monotonicity formula
%and the non-positivity of the discrepancy measure.
We prove the convergence of the solution for the Allen--Cahn equation
to the family of integral
varifolds with only natural assumptions for the initial data.
\end{abstract}

\section{Introduction}
Let $d\geq 2$ be an integer and $\Omega := \mathbb{T}^d = (\mathbb{R}/\mathbb{Z}) ^d$. 
Assume that $T>0$ and $U_t \subset \Omega$ is an  open set with the smooth boundary 
$M_t:=\partial U_t$ for any $t \in [0,T)$.
The family of the hypersurfaces $\{ M_t \} _{t \in [0,T)}$ is called the volume preserving mean curvature flow
if the normal velocity vector $\vec{v}$ satisfies
\begin{equation}
\vec{v}=\vec{h} - \left( \frac{1}{\mathscr{H}^{d-1} (M_t)} \int _{M_t} \vec{h} \cdot \vec{\nu} \, d \mathscr{H}^{d-1} \right) \vec{\nu},
\quad \text{on} \ M_t , \ t \in (0,T).
\label{vpmcf}
\end{equation}
Here, $\mathscr{H}^{d-1}$ is the $(d-1)$-dimensional Hausdorff measure, and
$\vec{h}$ and $\vec{\nu}$ are the mean curvature vector and the inner unit normal vector of $M_t$, respectively.
Note that the solution $\{M_t\} _{t \in [0,T)}$ to \eqref{vpmcf} satisfies 
\begin{equation}
\frac{d}{dt}\mathscr{H}^{d-1} (M_t) \leq 0 \quad \text{and} \quad
\frac{d}{dt} \mathscr{L}^{d} (U_t) 
= - \int _{M_t} \vec{v} \cdot \vec{\nu} \, d \mathscr{H}^{d-1}  =0, \quad t \in (0,T),
\label{vpproperty}
\end{equation}
where $\mathscr{L}^{d}$ is the $d$-dimensional Lebesgue measure.
From \eqref{vpproperty}, $\{M_t\} _{t \in [0,T)}$ has the volume preserving property, 
that is, $\mathscr{L}^d (U_t)$ is constant with respect to $t$.

When $U_0$ is convex, Gage~\cite{MR848933} and Huisken~\cite{MR921165}
proved that there exists a solution to \eqref{vpmcf} and
it converges to a sphere as $t\to \infty$.
Escher and Simonett~\cite{MR1485470} showed the short time existence of the solution to \eqref{vpmcf}
 for smooth initial data $M_0$
and they also proved that if $M_0$ is sufficiently close to a sphere in the sense of 
the little H\"{o}lder norm $h^{1+\alpha}$,
 then there exists
a global solution and
it converges to some sphere as $t\to \infty$
(see also \cite{MR2780248, MR1456315, MR2552262} for related results).
Mugnai, Seis, and Spadaro~\cite{MR3455792} studied the minimizing movement for \eqref{vpmcf}
and they proved the global existence of the flat flow, that is, 
there exist $C=C(d,U_0) >0$ and a family of Caccioppoli sets $\{ U_t \}_{t \in [0,\infty)}$ such that
$\mathscr{L}^d (U_s \bigtriangleup U_t) \leq C\sqrt{s-t}$ for any $0\leq t<s$, 
$\mathscr{H}^{d-1} (\partial ^\ast U_t)$ is monotone decreasing, and
$\mathscr{L}^d (U_t)$ is constant. Here, $\partial ^\ast U_t$ is the
reduced boundary of $U_t$.
%Here, $\| \nabla \chi _{U_t} \|$ is the total variation measure
%of the distributional derivative $\nabla \chi _{U_t}$. 
In addition,  for $d \leq 7$, they proved the global existence of the weak solution 
to \eqref{vpmcf} in the sense of the distribution,
under the reasonable assumption for the convergence, that is,
\begin{equation}\label{eq:1.3}
\lim _{k\to \infty} \int _0 ^T \mathscr{H}^{d-1} (\partial ^\ast U_t ^k) \, dt 
= \int _0 ^T \mathscr{H}^{d-1} (\partial ^\ast U_t) \, dt,
\end{equation}
where $\{ U_t ^k \}_{t \in [0,T)}$ is the time-discretized approximate solution to \eqref{vpmcf}.
This kind of condition was introduced in \cite{MR1386964} (see also \cite{MR1205983, MR3556529}).
Laux and Swartz~\cite{MR3667702}
showed the convergence of the thresholding schemes
to the distributional BV-solutions of 
\eqref{vpmcf} under an assumption
of the convergence similar to \eqref{eq:1.3}. 
Laux and Simon~\cite{MR3847750} also proved similar results 
in the case of the phase field method.
On the other hand, 
the author~\cite{takasao2017} proved the existence of the weak solution
(family of integral varifolds) to \eqref{vpmcf}
in the sense of $L^2$-flow for $2\leq d \leq 3$ without 
any such convergence assumption, 
via the phase field method studied by Golovaty~\cite{golovaty}.
Recently, Kim and Kwon~\cite{MR4083196} proved the existence of the viscosity solution to \eqref{vpmcf}
for the case where $U_0$ satisfies a geometric condition called $\rho$-reflection.
Moreover, they also proved that the viscosity solution
converges to some sphere uniformly as $t \to \infty$.

Let $\{\delta _i \}_{i=1} ^\infty$ be a positive sequence with $\delta _i \to 0$ as $i\to \infty$
and we denote $\delta _i$ as $\delta$ for simplicity.
Suppose that $U_t ^\delta$ is an open set with smooth boundary $M_t ^\delta$ 
for any $t \in [0,T)$. 
The approximate solutions studied in \cite{MR3455792} and \cite{MR4083196}
correspond to the following mean curvature flow $\{ M_t ^\delta \}_{t \in [0,T)}$ with non-local term:
\begin{equation}\label{vpmcf.delta}
\vec{v} = \vec{h} - \lambda^\delta \vec{\nu},
\quad \text{on} \ M_t ^\delta , \ t \in (0,T),
\end{equation}
where
\[
\lambda^\delta (t) =\frac{1}{\delta} (\mathscr{L}^d (U_{0} ^\delta) -\mathscr{L}^d (U_t ^\delta)).
\] 
One can check that \eqref{vpmcf.delta} is a $L^2$-gradient flow of
$$
E ^\delta (t)=\mathscr{H}^{d-1} (M_t ^\delta) 
+\frac{1}{2\delta}(\mathscr{L}^d (U_{0}^\delta) - \mathscr{L}^d (U_t ^\delta))^2,
$$
that is, 
\[
\frac{d}{dt} E ^\delta (t) = - \int _{M_t ^\delta} \vert \vec{v} \vert ^2 \, d \mathscr{H}^{d-1} \leq 0 
\qquad \text{for any} \ 
t \in (0,T).
\]
Hence $\{ M_t ^\delta \}_{t \in [0,T)}$ satisfies
a relaxed volume preserving property, namely,
\begin{equation*}
\begin{split}
( \mathscr{L}^d (U_0 ^\delta) -  \mathscr{L}^d (U_t ^\delta) )^2
\leq
2\delta E^\delta (t) 
\leq
2\delta E ^\delta (0) =2\delta \mathscr{H}^{d-1} (M_0 ^\delta).
\end{split}
\end{equation*}
Therefore $\{ M_t ^\delta \}_{t \in [0, T)}$ converges to the solution $\{ M_t \}_{t \in [0,T)}$ to \eqref{vpmcf}
as $\delta \to 0$ formally. 
Note that we cannot directly obtain the monotonically decreasing of
$\mathscr{H}^{d-1} (M_t)$ by the energy estimates above.
However, 
if we have a natural energy estimate $\sup _i \int _0 ^T \vert \lambda ^{\delta_i} (t) \vert ^2 \, dt \leq C_T$ for some  constant $C_T>0$, 
we can expect the property 
in some sense,
because 
\[
\liminf _{i\to \infty}  
\frac{1}{2\delta _i }(\mathscr{L}^d (U_{0}^{\delta_i}) - \mathscr{L}^d (U_t ^{\delta_i}))^2
=
\liminf _{i\to \infty} 
\frac{\delta_i}{2} \vert \lambda ^{\delta_i} (t) \vert ^2
 = 0 \qquad \text{for a.e.} \ t \in [0,T) 
\]
by Fatou's lemma (see Proposition \ref{prop3.12}). 
The reason why the $L^2$-estimate is natural is because the non-local term of the
solution to \eqref{vpmcf} satisfies it
(see Proposition \ref{prop7.3}).
%Note that we can regard the non-local term as a perturbation
%under the standard parabolic rescaling $(\tilde x, \tilde t)=(x/r,t/r^2)$ with $\frac{r}{\delta} \ll 1$.
Mugnai, Seis, and Spadaro~\cite{MR3455792} used a minimizing movement scheme 
corresponding to \eqref{vpmcf.delta}, and 
Kim and Kwon~\cite{MR4083196} used \eqref{vpmcf.delta} to prove 
the existence of the viscosity solution to \eqref{vpmcf}.
Based on these results, in this paper we show the global existence of
the weak solution to \eqref{vpmcf}, via the phase field method corresponding to 
\eqref{vpmcf.delta}.

We denote $W(a):= \dfrac{(1-a^2)^2}{2}$ and $k(s) = \int _{0} ^s \sqrt{2W (a)} \, da= s -\dfrac{1}{3} s^3$.
Let $\varepsilon  \in (0,1)$, $T>0$, and $\alpha \in (0,1)$.
With reference to \cite{MR3455792} and \cite{MR4083196},
in this paper we consider the following Allen--Cahn equation with non-local term:
\begin{equation}
\left\{ 
\begin{array}{ll}
\varepsilon \varphi ^{\varepsilon} _t =\varepsilon \Delta \varphi ^{\varepsilon} -\dfrac{W' (\varphi ^{\varepsilon})}{\varepsilon }+ \lambda ^{\varepsilon} \sqrt{2W(\varphi ^\varepsilon)} ,& (x,t)\in \Omega \times (0, \infty),  \\
\varphi ^{\varepsilon} (x,0) = \varphi _0 ^{\varepsilon} (x) ,  &x\in \Omega,
\end{array} \right.
\label{ac}
\end{equation}
where $\lambda^{\varepsilon}$ is given by
\begin{equation}\label{lambdadef}
\lambda^{\varepsilon}(t) =\frac{1}{\varepsilon ^\alpha} 
\left( \int_{\Omega} k(\varphi ^\varepsilon _0 (x))\, dx 
- \int_{\Omega} k(\varphi ^\varepsilon (x,t))\, dx \right).
\end{equation}
Note that if $\varphi _0 ^\varepsilon$ satisfies suitable
assumptions,
the standard PDE theories imply the global existence and uniqueness of 
the solution to \eqref{ac} (see Remark \ref{rem3.3}).
Set
\begin{equation*}
\begin{split}
E^{\varepsilon} (t) \,
& =  \,\int _{\Omega} 
\left( \frac{\varepsilon \vert \nabla \varphi ^\varepsilon (x,t) \vert ^2}{2} 
+ \frac{W(\varphi ^\varepsilon (x,t))}{\varepsilon} \right) \, dx
+
\frac{1}{2\varepsilon ^\alpha}
\left( \int_{\Omega} k(\varphi ^\varepsilon _0 (x))\, dx 
- \int_{\Omega} k(\varphi ^\varepsilon (x,t))\, dx \right)^2 \\
& =:  E_S ^\varepsilon (t) + E_P ^\varepsilon (t).
\end{split}
\end{equation*}
As above, one can check that
the solution $\varphi^{\varepsilon} $ to \eqref{ac} satisfies
\begin{equation}\label{eq:1.11}
\frac{d}{dt} E^\varepsilon (t) = - \int _{\Omega} \varepsilon (\varphi ^\varepsilon _t )^2 \, dx
\leq 0 \qquad \text{for any} \ t \in (0,\infty) ,
\end{equation}
\begin{equation}\label{eq:1.12}
E^\varepsilon (T) 
+
\int _0 ^T \int _{\Omega} \varepsilon (\varphi ^\varepsilon _t )^2 \, dx dt
= 
E^\varepsilon (0) = E ^\varepsilon _S (0)
\qquad \text{for any} \ T\geq 0,
\end{equation}
and 
\begin{equation}\label{eq:1.5}
\left( \int_{\Omega} k(\varphi ^\varepsilon _0 (x))\, dx 
- \int_{\Omega} k(\varphi ^\varepsilon (x,t))\, dx \right)^2
=
2 \varepsilon ^\alpha E^\varepsilon _P (t) 
\leq
 2 \varepsilon ^\alpha E ^\varepsilon _S (0)
\quad \text{for any} \ t\in [0,\infty).
\end{equation}
Assume $ \sup _{\varepsilon \in (0,1)} E^\varepsilon _S (0) <\infty $
(this assumption corresponds to $\mathscr{H}^{d-1} (M_0) <\infty$ for \eqref{vpmcf}). 
Then, we can expect that $\varphi ^\varepsilon (x,t) \approx 1$ or $-1$
when $x$ is outside the neighborhood of 
the zero level set 
$M_t ^\varepsilon=\{x \in \Omega \mid \varphi ^\varepsilon (x,t)=0 \}$
for sufficiently small $\varepsilon$.
Then we have 
$\int_{\Omega} k(\varphi ^\varepsilon )\, dx 
\approx 
\frac23 \int_{\Omega} \varphi ^\varepsilon \, dx$ and thus
we can regard \eqref{eq:1.5} as a relaxed volume preserving property.
The function $\sqrt{2W(\varphi ^\varepsilon)}$ 
expresses that the non-local term is almost zero when $x$ is outside the neighborhood of 
$M_t ^\varepsilon$.
In addition, $\sqrt{2W(\varphi ^\varepsilon)}$ plays important roles in 
$L^\infty$-estimates and energy estimates
(see Proposition \ref{prop3.2} and Theorem \ref{thm3.11}). 

\bigskip

The first main result of this paper is that there exists a global-in-time
weak solution to \eqref{vpmcf} for any $d\geq 2$
in the sense of $L^2$-flow, under the assumptions on the regularity of $M_0$
(see Theorem \ref{mainthm1}). We employ \eqref{ac} to construct the solution.
Note that we do not require assumptions such as \eqref{eq:1.3}. 
The second main result is that, when $M_0$ is $C^1$ and 
the value $\mathscr{H}^{d-1} (M_0)/ (\mathscr{L}^d (U_0))^{\frac{d-1}{d}}$ 
is sufficiently close to that of a ball,
there exists $T_1 >0$ such that the flow has a unit density for a.e. $t \in [0,T_1)$ and is also a
distributional BV-solution up to $t=T_1$ (see Theorem \ref{mainthm2}).
To obtain the main results, we need to prove that the varifold
$V_t ^\varepsilon$ defined by the Modica--Mortola functional \cite{MR473971} 
converges to a integral varifold for a.e. $t\geq 0$ (roughly speaking, the condition \eqref{eq:1.3}
corresponds to this convergence).
For the standard Allen--Cahn equation without non-local term,
this convergence was shown by Ilmanen~\cite{ilmanen1993} and Tonegawa~\cite{MR2040901}.
Therefore we can expect the convergence for \eqref{ac} if 
$\lambda ^\varepsilon$ has suitable properties.
%Therefore it is not too much to say that
%the proof of the convergence is the most difficult part.
%In the proof of this convergence with the volume preserving property, 
%difficulties arise in estimates of the Lagrange multiplier.
%To avoid this difficulty, the author used the Allen--Cahn equation studied in \cite{golovaty} 
%and obtained the $L^2$-estimate of the Lagrange multiplier.
%Although there are several known phase field model corresponding to \eqref{vpmcf},
%the reason we consider \eqref{ac} is that 
In fact, $\lambda^\varepsilon$ can be regarded as an error term 
when we consider the parabolic rescaled equation of \eqref{ac}. We explain this more precisely.
Define $\tilde \varphi ^\varepsilon (\tilde x,\tilde t) = \varphi ^\varepsilon (\varepsilon \tilde x, \varepsilon ^2 \tilde t)$.
Then $\tilde \varphi ^\varepsilon$ satisfies
\begin{equation}
\tilde \varphi ^\varepsilon _{\tilde t} = \Delta _{\tilde x} \tilde \varphi ^\varepsilon - W' (\tilde \varphi ^\varepsilon)
+\varepsilon \lambda ^\varepsilon (\varepsilon ^2 \tilde t) \sqrt{2W(\tilde \varphi ^\varepsilon)},
\label{eq:1.6}
\end{equation}
where 
$\Delta _{\tilde x} $ is a Laplacian with respect to $\tilde x$.
Assume $\sup_{x} \vert \varphi ^\varepsilon _0 (x) \vert < 1$. Then Proposition \ref{prop3.1} below yields
$\sup_{x,t} \vert \varphi ^\varepsilon (x,t) \vert < 1$. Thus we have
\begin{equation}
\sup _{\tilde t \geq 0} \lvert \varepsilon \lambda ^\varepsilon (\varepsilon ^2 \tilde t)  
\sqrt{2W(\tilde \varphi ^\varepsilon)} \rvert
\leq
\sup _{\tilde t \geq 0} \lvert \varepsilon \lambda ^\varepsilon (\varepsilon ^2 \tilde t) \rvert
\leq \frac{4}{3} \mathscr{L}^{d} (\Omega) \varepsilon ^{1-\alpha}
=\frac{4}{3} \varepsilon ^{1-\alpha},
\label{eq:1.7}
\end{equation}
where we used $\max_{s \in [-1,1]} \vert k (s) \vert =\frac23$. Therefore, broadly speaking, 
the non-local term 
$\varepsilon \lambda ^\varepsilon (\varepsilon ^2 \tilde t)\sqrt{2W(\tilde \varphi ^\varepsilon)}$
is a perturbation (to the best of our knowledge, for \eqref{vpmcf}, 
no phase field model with such a property has been known).
Hence we can show the rectifiability and the integrality of the varifold $V_t$
with arguments similar to that in \cite{ilmanen1993, MR2040901} 
(see also \cite{takasao-tonegawa}). However,  
the proofs are not exactly the same as those, 
because the monotonicity formula for \eqref{ac} 
is different from the standard one (see Proposition \ref{prop3.8}). 
Therefore we give the proofs in Section 4.
In addition, as another good property of $\lambda ^\varepsilon$, the $L^2$-norm  
can be controlled (see Lemma \ref{lem3.2}). This property is useful when proving
the monotonicity formula and the rectifiability of $V_t$. 
%One of the contributions of this paper is the 
%finding of the reaction-diffusion equation \eqref{ac} that can 
%demonstrate these fine properties.

\bigskip

The most well-known phase field model for \eqref{vpmcf} 
studied by Rubinstein and Sternberg~\cite{RubinsteinSternberg}
is the following equation.
\begin{equation}
\left\{ 
\begin{array}{ll}
\varepsilon \varphi ^{\varepsilon} _t =\varepsilon \Delta \varphi ^{\varepsilon} -\dfrac{W' (\varphi ^{\varepsilon})}{\varepsilon }+ \Lambda ^{\varepsilon},
& (x,t)\in \Omega \times (0,T),  \\
\varphi ^{\varepsilon} (x,0) = \varphi _0 ^{\varepsilon} (x) ,  &x\in \Omega,
\end{array} \right.
\label{rs}
\end{equation}
where $\Lambda ^\varepsilon (t) = 
\frac{1}{\mathscr{L}^d (\Omega)}\int _\Omega \frac{W' (\varphi ^{\varepsilon} (x,t) )}{\varepsilon } \, dx$.
As above, the solution to \eqref{rs} has the volume preserving property 
$ \frac{d}{dt} \int _{\Omega} \varphi ^\varepsilon \, dx =0 $.
Chen, Hilhorst, and Logak~ \cite{MR2754215} proved that
for the smooth solution $\{M_t\} _{t \in [0,T)}$ to \eqref{vpmcf}, there exists a
family of functions $\{ \varphi _0 ^{\varepsilon _i } \}_{i =1 } ^\infty$ with $\varepsilon_i \to 0$
such that the level set $M_t ^{\varepsilon_i} = \{ x \in \Omega \mid \varphi^{\varepsilon_i} (x,t)=0 \}$
converges to $M_t$, where $\varphi^\varepsilon$ is a solution to \eqref{rs} with initial data $\varphi^\varepsilon _0$.
In addition, as mentioned above, Laux and Simon~\cite{MR3847750} proved the convergence of the vector-valued version of \eqref{rs} to 
the weak volume preserving multiphase mean curvature flow under an assumption
corresponds to \eqref{eq:1.3}.
However, it is an open problem to show its convergence for \eqref{rs} without such assumptions.
One of the difficulties is that the boundedness of 
$\sup _{\varepsilon >0} \int_0 ^T \vert \Lambda ^\varepsilon \vert ^2 \, dt$ proved by 
Bronsard and Stoth~\cite{bronsard-stoth}
does not immediately lead to
\begin{equation}\label{eq:1.10}
\sup _{\varepsilon >0} \int _0 ^T \int _{\Omega} \varepsilon 
\left( \Delta \varphi ^{\varepsilon} -\dfrac{W' (\varphi ^{\varepsilon})}{\varepsilon^2 } \right)^2 
\,dxdt <\infty.
\end{equation}
Note that \eqref{eq:1.10} corresponds to 
$\int _0 ^T \int _{M_t} \vert \vec{h} \vert ^2 \, d\mathscr{H}^{d-1} dt <\infty$ of the solution to \eqref{vpmcf}
and is important to show the rectifiability of the varifold
(see Theorem \ref{thm3.11} and Theorem \ref{thm4.6}). 
As another phase field method for \eqref{vpmcf}, the study of Brassel and Bretin~\cite{brassel-bretin} is known (see \cite[Section 1]{takasao2017} for a comparison of these equations).

\bigskip

The organization of this paper is as follows.
In Section 2, we set our notations and state the main results.
In Section 3, to obtain the existence theorem
we prove the energy estimates and $L^\infty$-estimates
for the solution to \eqref{ac}. In addition, for $d=2$ or $3$, we give a short proof for the
integrality of the limit measure $\mu_t$ constructed as a weak solution to \eqref{vpmcf}.
In Section 4, we show the integrality of $\mu_t$ for any $d \geq 2$.
In Section 5, we prove the main results.
In Section 6, we give some supplements for this paper.

\section{Preliminaries and main results}
\subsection{Notations and definitions}
For $r>0$, $d\in \N$, and $x \in \R^d$, we denote 
$B_r ^d (x) := \{ y \in \R^d \mid \vert x-y \vert <r \}$ (we often write this as $B_r (x)$ for simplicity). 
We define $\omega _d := \mathscr{L}^d (B_1 ^d (0))$.
For $d\times d$ matrix $A =(a_{ij})$ and $B=(b_{ij})$, we define
$
A\cdot B:= \sum_{i,j} a_{ij} b_{ij}.
$
For $a=(a_1 ,a_2,\dots, a_d) \in \R^d$, we define a $d\times d$ matrix $a\otimes a$
by $a\otimes a := (a_i a_j)$.
Next we recall notations and definitions from the geometric measure theory
and refer to \cite{MR3409135, giusti, simon, MR2040901} for more details.
For a Caccioppoli set $E \subset \R^d$, 
we denote the reduced boundary of $E$ by $\partial ^\ast E$.
For the characteristic function $\chi _{E}$, we denote
the total variation measure of the distributional derivative
$\nabla \chi _E$ by $\| \nabla \chi _E \|$.
Let $U \subset \mathbb{R}^d$ be an open set. We write
the space of bounded variation functions on $U$ as $BV(U)$.
For any Radon measure $\mu$ on $U$ and $\phi \in C_c (U)$, 
we often write $\int \phi \, d\mu$ as $\mu (\phi)$. For $p\geq 1$, 
we write $f \in L^p (\mu)$ if $f$ is $\mu$-measurable and $\int \vert f \vert^p \, d\mu <\infty$.
For $d,k \in\N$ with $k<d$, 
let $\mathbb{G}(d,k)$ be the space of $k$-dimensional subspace of $\R^d$. 
For an open set $U \subset \R^d$, let $G_k (U) := U \times \mathbb{G} (d,k)$.
We say $V$ is a general $k$-varifold on $U$ if $V$ is a Radon measure  on $G_k (U)$.
We denote the set of all general $k$-varifolds on $U$ by 
$\mathbb{V} _k (U)$.
For a general varifold $V \in \mathbb{V} _k (U)$, we define the weight measure $\| V\|$ by
\[
\| V \| (\phi) := \int _{G_k (U)} \phi (x) \, dV (x,S) \qquad \text{for any} \ \phi \in C_c (U). 
\]
We call $V \in \mathbb{V} _k (U)$ is rectifiable if there exist a $\mathscr{H}^k$-measurable
$k$-countably rectifiable set $M \subset U$ and $\theta \in L_{loc} ^1 (\mathscr{H}^{k}\lfloor_{M})$
such that
\[
V(\phi) =\int _M \phi (x, T_x M) \theta (x) \, d \mathscr{H}^k
\qquad \text{for any} \ \phi \in C_c (G_k(U)),
\]
where $T_x M$ is the approximate tangent space of $M$ at $x$.
Note that such $x$ does exists for $\mathscr{H}^k$-a.e. on $M$.
If $\theta \in \N$ $\mathscr{H}^k$-a.e. on $M$, we call $V$ is integral.
In addition, if $\theta =1$ $\mathscr{H}^k$-a.e. on $M$, we say 
$V$ has unit density.

For $V \in \mathbb{V} _k (U)$, we define the first variation $\delta V$ by
\[
\delta V ( \vec{\phi} ) := \int _{G_k(U)} \nabla \vec \phi (x) \cdot S \,
dV (x,S) \qquad \text{for any} \ \vec \phi \in C_c ^1 (U;\R^d).
\]
Here, we identify $S \in \mathbb{G} (d,k)$ with the corresponding
orthogonal projection of $\R^d$ onto $S$.
When the total variation $\| \delta V \|$ of $\delta V$
is locally bounded and absolutely continuous with respect to $\| V \|$,
there exists a measurable vector field $\vec h$ such that
\[
\delta V (\vec \phi ) = - \int _U \vec\phi (x) \cdot \vec h(x) \, d \| V \| (x)
\qquad \text{for any} \ \vec \phi \in C_c ^1 (U;\R^d).
\]
The vector valued function $\vec h$ is called the generalized mean curvature vector of $V$.
%Let $M \subset U$ be countably $k$-rectifiable, 
%$\mathscr{H}^k$-measurable and locally finite. For such $M$,
%there exists an approximate tangent space 
%$T_x M$ for $\mathscr{H}^k$-almost everywhere $x \in M$.
%Let $\theta$ be a positive $L_{loc} ^1$ function on $M$.
%For $\mu =\theta \mathscr{H}^{k} \lfloor_{M}$, 
%we call $\vec{h}$ the generalized mean curvature vector if
%$$
%\int \divergence \!_{M} \vec{g} \, d\mu = - \int \vec{g} \cdot \vec{h} \, d\mu
%$$
%holds for any $\vec{g} \in C_c ^1 (U ; \mathbb{R}^d)$.
%Moreover, we define the approximate tangent space $T_x \mu$ at $x$
%by $T_x M$.
In addition, a Radon measure $\mu$ is called a $k$-rectifiable if 
there exists a $k$-rectifiable varifold
such that $\mu$ is represented by $\mu = \| V \|$.
Note that this $V$ is uniquely determined, so 
the first variation and the generalized mean curvature vector of $\mu$ 
is naturally determined by $V$.
The definition of an integral Radon measure is determined in the same way.

The formulation of the following is similar to that of the Brakke flow~\cite{brakke,MR3930606}.
\begin{definition}[$L^2$-flow \cite{MR2383536}]\label{defL2}
Let $T>0$, $U\subset \mathbb{R}^d$ be an open set, 
and $\{\mu_t\} _{t \in [0,T)}$ be a family of Radon measures on $U$.
Set $d\mu := d\mu_t dt$. We call $\{\mu_t\} _{t \in [0,T)}$
an $L^2$-flow with a generalized velocity vector $\vec{v}$
if  the following hold:
\begin{enumerate}
\item For a.e. $t \in (0,T)$, $\mu_t$ is $(d-1)$-integral, and also has a generalized mean curvature vector
$\vec{h} \in L^2 (\mu_t ; \mathbb{R}^d)$.
\item The vector field $\vec{v}$ belongs to $L^2 (0,T; (L^2 (\mu _t))^d)$ and 
$$
\vec{v}(x,t) \perp T_x \mu_t \quad 
\text{for} \ 
\mu\text{-a.e.} \ (x,t) \in U \times (0,T),
$$
where $T_x \mu_t \in \mathbb{G} (d,d-1)$ 
is the approximate tangent space of $\mu_t$ at $x$.
\item There exists $C_T>0$ such that
\begin{equation}\label{ineq-L2}
\left \vert \int _0 ^T \int _U (\eta _t + \nabla \eta \cdot \vec{v}) \, d\mu _t dt \right \vert
\leq C_T \| \eta \|_{C^0 (U\times (0,T))}
\end{equation}
for any $\eta \in C_c ^1 (U\times (0,T))$.
\end{enumerate} 
\end{definition}

\begin{remark}
If there exists a family of smooth hypersurfaces $\{M_t\}_{t \in [0,T)}$ with
the normal velocity vector $\vec{w}$, then \eqref{ineq-L2} holds
with $\vec{v} =\vec{w}$ and $\mu _t = \mathscr{H} ^{d-1} \lfloor_{M_t}$.
In addition, if $\vec{v}$ satisfies \eqref{ineq-L2} with $\mu _t = \mathscr{H} ^{d-1} \lfloor_{M_t}$,
then $\vec{v} =\vec{w}$.
This proof is almost identical to the proof in \cite[Proposition 2.1]{MR3930606}.
\end{remark}

The $L^2$-flow has the following property.

\begin{proposition}[See Proposition 3.3 of \cite{MR2383536}]\label{prop2.3}
Assume that $\{ \mu_t \} _{t \in (0,T)}$ is an $L^2$-flow with the 
generalized velocity vector $\vec{v}$ and set $d\mu :=d\mu_t dt$. Then
\[
(\vec{v}(x_0,t_0),1) \in T_{(x_0,t_0)} \mu
\]
at $\mu$-a.e. $ (x_0,t_0) \in \Sigma (\mu)$, 
where $T_{(x_0,t_0)} \mu \in \mathbb{G} (d+1,d)$ is the 
approximate tangent space of $\mu$ at $(x_0, t_0)$ 
and 
$\Sigma(\mu)= \{ (x,t) \mid T_{(x,t)} \mu \ 
\text{exists at} \ (x,t) \}$.
\end{proposition}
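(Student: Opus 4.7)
The plan is to exhibit the inequality \eqref{ineq-L2} as saying that the space-time vector field $\vec{V}=(\vec{v},1)$, regarded as a vector-valued density against the $d$-dimensional space-time measure $\mu = \mu_t\, dt$ on $\R^{d+1}$, has its $\R^{d+1}$-divergence controlled in the total-variation sense. Writing the left-hand integrand as $\eta_t + \nabla\eta\cdot\vec{v} = \nabla_{(x,t)}\eta\cdot\vec{V}$, the assumption reads
\begin{equation*}
\Bigl| \int_0^T\!\!\int_U \nabla_{(x,t)}\eta\cdot\vec{V}\,d\mu \Bigr| \le C_T\|\eta\|_{C^0(U\times(0,T))},
\end{equation*}
so by Riesz representation the distribution $\eta\mapsto \int \nabla_{(x,t)}\eta\cdot\vec{V}\,d\mu$ is a signed Radon measure of bounded total variation; equivalently, $\operatorname{div}_{(x,t)}(\vec{V}\mu)$ is a bounded measure on $U\times(0,T)$.

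Next I would establish that $\mu$ is a $d$-rectifiable measure on $\R^{d+1}$ by slicing: since $\mu_t$ is $(d-1)$-integral for a.e.\ $t$, Fubini-type arguments for product measures show $\mu$ is $d$-rectifiable with tangent plane at $\mu$-a.e.\ point $(x_0,t_0)\in \Sigma(\mu)$. Moreover the product structure forces $T_{x_0}\mu_{t_0}\times\{0\}\subset T_{(x_0,t_0)}\mu$, so the $d$-plane $T_{(x_0,t_0)}\mu$ is obtained from the $(d-1)$-plane $T_{x_0}\mu_{t_0}$ by adjoining one direction of the form $(\vec{w},1)$ with $\vec{w}\perp T_{x_0}\mu_{t_0}$. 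The goal is to identify $\vec{w}=\vec{v}(x_0,t_0)$, or equivalently, to show $\vec{V}(x_0,t_0)\in T_{(x_0,t_0)}\mu$.

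For this I would blow up at a generic point. Fix $(x_0,t_0)\in\Sigma(\mu)$ which is also an $L^2(\mu)$-Lebesgue point of $\vec{v}$, and set $\eta_r(x,t) = \phi((x-x_0)/r,(t-t_0)/r)$ for an arbitrary $\phi\in C_c^1(\R^{d+1})$. Substituting into \eqref{ineq-L2} and changing variables to $(y,s) = ((x-x_0)/r,(t-t_0)/r)$, the rescaled measures $\mu^r$ converge weakly to $\theta_0\mathscr{H}^d\lfloor_{T_{(x_0,t_0)}\mu}$ and one obtains, after dividing by the appropriate power of $r$,
\begin{equation*}
\theta_0\int_{T_{(x_0,t_0)}\mu}\nabla_{(y,s)}\phi\cdot (\vec{v}(x_0,t_0),1)\,d\mathscr{H}^d = 0
\end{equation*}
for every $\phi\in C_c^1(\R^{d+1})$. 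Indeed, decomposing $(\vec{v}(x_0,t_0),1) = \vec{V}^T + \vec{V}^\perp$ with respect to $T_{(x_0,t_0)}\mu$ and integrating by parts on the plane, the tangential part $\vec{V}^T$ contributes zero, while the normal contribution $\int \nabla\phi\cdot\vec{V}^\perp\,d\mathscr{H}^d$ can be made arbitrarily large (by choosing $\phi$ with prescribed normal derivative) unless $\vec{V}^\perp=0$. Hence $(\vec{v}(x_0,t_0),1)\in T_{(x_0,t_0)}\mu$.

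The main obstacle I foresee is making the blow-up step rigorous: one must justify that the rescaled measures $\mu^r$ converge to the tangent plane (standard) \emph{together with} convergence of the velocity $\vec{v}(x_0+ry,t_0+rs)$ to the constant $\vec{v}(x_0,t_0)$ in an $L^2$ sense compatible with the weak convergence of $\mu^r$, which requires choosing $(x_0,t_0)$ at an $L^2(\mu)$-Lebesgue point of $\vec{v}$, available at $\mu$-a.e.\ point since $\vec{v}\in L^2(\mu)$. The orthogonality requirement $\vec{v}\perp T_x\mu_t$ from Definition \ref{defL2} then guarantees that the identification $\vec{w}=\vec{v}$ is unambiguous modulo directions already in $T_{x_0}\mu_{t_0}$.
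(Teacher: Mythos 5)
The paper states Proposition \ref{prop2.3} by citing Proposition 3.3 of \cite{MR2383536} and does not reproduce a proof, so there is no proof in this paper to compare against. As for your proposal: the overall scheme (reinterpret \eqref{ineq-L2} as a bound on the total variation of $\nu:=\mathrm{div}_{(x,t)}((\vec{v},1)\mu)$, blow up at a suitably generic point, then use the tangential/normal decomposition on the tangent plane) is the right one, and the final integration-by-parts step is correct. However, the blow-up step as you have written it does not close. Feeding $\eta_r(x,t)=\phi((x-x_0)/r,(t-t_0)/r)$ into \eqref{ineq-L2}, the left-hand side rescales to $r^{d-1}\bigl|\int\nabla_{(y,s)}\phi\cdot(\vec{v}(x_0+ry,t_0+rs),1)\,d\mu^r\bigr|$, but the right-hand side is $C_T\|\phi\|_{C^0}$, which is independent of $r$; after dividing by $r^{d-1}$ the right-hand side blows up for $d\geq 2$, and the claimed limit $\theta_0\int_{T_{(x_0,t_0)}\mu}\nabla_{(y,s)}\phi\cdot(\vec{v}(x_0,t_0),1)\,d\mathscr{H}^d=0$ does not follow.

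The missing ingredient is precisely the finite measure $\nu$ you introduced in your first paragraph but never used quantitatively. Since $\eta_r$ is supported in $B_{Rr}(x_0,t_0)$ (say $\supp\phi\subset B_R(0)$), the sharper bound is $\bigl|\int\eta_r\,d\nu\bigr|\leq\|\phi\|_{C^0}\,|\nu|(B_{Rr}(x_0,t_0))$; after rescaling you need $|\nu|(B_{Rr}(x_0,t_0))/r^{d-1}\to 0$ as $r\downarrow 0$. This holds at $\mu$-a.e.\ $(x_0,t_0)\in\Sigma(\mu)$: the set $\{(x,t):\limsup_{r\downarrow 0}|\nu|(B_r(x,t))/r^{d-1}>0\}$ is $\sigma$-finite with respect to $\mathscr{H}^{d-1}$ because $|\nu|$ is a finite measure, hence $\mathscr{H}^d$-null, hence $\mu$-null on $\Sigma(\mu)$ where the $d$-density $\Theta^d(\mu,\cdot)$ is finite and positive. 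Once this density estimate is in place, the remaining ingredients --- weak convergence $\mu^r\to\theta_0\mathscr{H}^d\lfloor_{T_{(x_0,t_0)}\mu}$, the $L^2(\mu)$-Lebesgue-point replacement of $\vec{v}(x_0+ry,t_0+rs)$ by $\vec{v}(x_0,t_0)$, and integration by parts on the tangent plane --- go through exactly as you describe. Incidentally, your middle paragraph about establishing $d$-rectifiability of $\mu$ by ``Fubini-type arguments'' is not actually needed: the statement is already restricted to $\Sigma(\mu)$, where by hypothesis an approximate tangent plane with positive finite density exists.
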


\subsection{Assumptions for initial data}
Let $U _0 \subset \subset (0,1)^d$ be a bounded open set with the following properties.
\begin{enumerate}
\item There exists $D_0 >0$ such that
\begin{equation}\label{U01}
\sup _{x \in (0,1) ^d , 0<R <1} \frac{\mathscr{H}^{d-1} (M_0 \cap B_r (x))}{\omega _{d-1} r^{d-1}}
\leq D_0,
\end{equation}
where $M_0 = \partial U_0$.
\item There exists a family of open sets $\{ U_0 ^i\}_{i=1} ^\infty$ such that
$U_0 ^i$ has a $C^3$ boundary 
$M_0 ^i = \partial U_0 ^i$ for any $i $ and the following hold:
\begin{equation}\label{U02}
\lim_{i\to \infty}
\mathscr{L}^d (U_0 \triangle U_0 ^i)=0
\qquad
\text{and}
\qquad
\lim_{i\to \infty}
\| \nabla \chi _{U_0 ^i} \|= \| \nabla \chi _{U_0} \|
\quad \text{as Radon measures}.
\end{equation}
\end{enumerate}
Note that the second assumption is satisfied when $U_0$ is a Caccioppoli set,
and both conditions are fulfilled when $M_0$ is $C^1$
(see \cite{giusti}).

\bigskip

We denote $q^\varepsilon (r) := \tanh (r/\varepsilon)$ for $r \in \R$.
Then $q^\varepsilon$ satisfies
\begin{equation}
\frac{\varepsilon (q_r ^\varepsilon (r)) ^2 }{2} 
= \frac{W(q^\varepsilon(r) )}{\varepsilon} \qquad \text{for any} \ r \in \R
\label{eq:3.16}
\end{equation}
and
\begin{equation}
q_{rr} ^\varepsilon (r)  
= \frac{W' (q^\varepsilon(r) )}{\varepsilon ^2} \qquad \text{for any} \ r \in \R.
\label{eq:3.17}
\end{equation}
In addition, \eqref{eq:3.16} yields
\begin{equation*}
\int _{\R} \left( \frac{\varepsilon (q_r ^\varepsilon (r)) ^2 }{2} 
+ \frac{W(q^\varepsilon(r) )}{\varepsilon} \right) \, dr
= \int_{\R} \sqrt{ 2W (q^\varepsilon) } q^\varepsilon _r \, dr
= \int _{-1} ^1 \sqrt{ 2W (q) } \, dq =: \sigma.
\end{equation*}
This means that the Radon measure $\mu_t ^\varepsilon$ defined below
needs to be normalized by $\sigma$.

%Let $\eta \in C ^\infty _c (B_1 (0))$ 
%be a radial symmetric nonnegative function with $\int _{B_1 (0)} \eta \, dx =1$
%and define the standard mollifier $\eta _\delta$ by $\eta _\delta (x) = \delta ^{-d} \eta (x/\delta)$
%for $\delta>0$.
Next we extend $U_0 ^i $ and $M_0 ^i $ periodically to $\R^d$ with period $\Omega$
and define
\begin{equation*}
r _i (x) = \left\{ 
\begin{array}{ll}
\dist (x, M_0 ^i), & \text{if} \ x \in U_0 ^i, \\
-\dist (x,M_0 ^i) , & \text{if} \  x \not \in U_0 ^i.
\end{array} \right.
\end{equation*}
Then $\vert \nabla r _i (x) \vert \leq 1$ for a.e. $x \in \R ^d$
and there exists $b_i >0$ such that
$r_i $ is $C^3$ on 
$N_{b_i} := \{ x \mid \dist (x, M_0 ^i)<b_i \}$ (see \cite{MR1279299}).
Let $d_i$ be a smooth monotone increase function such that
\begin{equation*}
d _i (r) = \left\{ 
\begin{array}{ll}
r, & \text{if} \  \vert r \vert < \frac14 b_i, \\
\frac23 b_i , & \text{if} \  r >\frac34 b_i\\
-\frac23 b_i , & \text{if} \  r <-\frac34 b_i
\end{array} \right.
\end{equation*}
and $\vert \frac{d}{dr}d_i  \vert \leq 1$.
Set $\overline{r_i} := d_i (r_i) $. Then $\overline{r_i} \in C^3 (\Omega)$, 
$\overline{r_i} =r_i$ on $N_{b_i/4}$, and
$\vert \nabla \overline{r_i} (x) \vert \leq 1$ for any $x \in \R ^d$. 
Let $\{ \varepsilon _i \}_{i=1} ^\infty$
be a positive sequence with $\varepsilon _i \to 0$
and $\displaystyle \frac{ \varepsilon _i }{b_i ^2} \to 0$ as $i \to \infty$,
and 
\begin{equation}\label{eq:2.6}
\sup _{x \in \R^d} \vert \nabla ^{j+1} \overline{r_i} (x) \vert \leq \varepsilon _i ^{-j}
\qquad \text{for any} \ i \in \N,  \ j=1,2.
\end{equation}
Note that \eqref{eq:2.6} corresponds to the condition \eqref{initial} below.
We define a periodic function $\varphi _0 ^{\varepsilon _i} \in C^3 (\Omega) $ by
\begin{equation}\label{eq:2.7}
\varphi _0 ^{\varepsilon _i} (x) := q ^{\varepsilon _i} ( \overline{r_i} (x) )
=\tanh \left( \frac{d_i (r_i (x))}{\varepsilon _i} \right)
\qquad \text{for any} \ i  \in \N.
\end{equation}
We define a Radon measure $\mu _t ^{\varepsilon_i}$ by
\begin{equation}
\mu_t ^{\varepsilon_i} (\phi)
:=
\frac{1}{\sigma}\int _{\Omega} \phi
\left( \frac{\varepsilon_i \vert \nabla \varphi ^{\varepsilon_i} (x,t) \vert ^2}{2} 
+ \frac{W(\varphi ^{\varepsilon_i} (x,t))}{\varepsilon_i} \right) \, dx, \qquad \phi \in C_c (\Omega), 
\label{mu}
\end{equation}
where $\varphi ^{\varepsilon_i}$ is the solution to \eqref{ac} 
with initial data $\varphi _0 ^{\varepsilon _i}$ defined by \eqref{eq:2.7}
and $\sigma = \int _{-1} ^1 \sqrt{2W(s)} \, ds$.

For $\varphi ^{\varepsilon _i} _0$ and $\mu _0 ^{\varepsilon _i}$, we have the following properties
(see \cite[p.\,423]{ilmanen1993} and \cite[Section 5]{liu-sato-tonegawa}).
\begin{proposition}\label{prop2.2}
There exists a subsequence $\{ \varepsilon _i \}_{i=1} ^\infty$ (denoted by the same index
and the subsequence is taken only for $\{ \varepsilon _i\}_{i=1} ^\infty $, not for $\{ M_0 ^i\} _{i=1} ^\infty$)
such that the following hold.
\begin{enumerate}
\item For any $i \in \N$ and $x \in \Omega$, we have
$\displaystyle
\frac{\varepsilon_i \vert \nabla \varphi^{\varepsilon_i} _0 (x) \vert ^2 }{2} 
\leq \frac{W(\varphi ^{\varepsilon_i} _0 (x))}{\varepsilon_i} 
$.
\item
There exists $D_1= D_1 (D_0) >0$ such that
\begin{equation}
\max \left\{
\sup _{i \in \N} \mu _0 ^{\varepsilon_i} (\Omega) 
, \sup _{i \in \N, \ x \in \Omega, \ r \in (0,1)} 
\frac{\mu _0 ^{\varepsilon_i} (B_r (x))}{\omega ^{d-1} r^{d-1}}
\right\}
\leq D_1.
\label{d1}
\end{equation}
%where $\omega _{d-1}:= \mathscr{L}^{d-1} (B_1 ^{d-1} (0))$.
\item $\mu _0 ^{\varepsilon_i} \to \mathscr{H}^{d-1} \lfloor _{M_0}$ as Radon measures, that is,
\[
\int _{\Omega} \phi \, d \mu _0 ^{\varepsilon _i} \to \int _{M_0} \phi \, d\mathscr{H}^{d-1}
\qquad 
\text{for any} \ \phi \in  C_c(\Omega).
\]
\item For $\psi ^{\varepsilon _i} =\frac12 ( \varphi ^{\varepsilon _i} +1)$,
$\lim _{i\to \infty} \psi ^{\varepsilon _i} = \chi _{U_0} $ in $L^1$
and $\lim _{i\to \infty} \| \nabla \psi ^{\varepsilon _i } \| = \| \nabla \chi _{U_0} \| $
as Radon measures.
\end{enumerate}
\end{proposition}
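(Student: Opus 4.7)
Item 1 will follow by direct computation. Writing $\varphi_0^{\varepsilon_i}(x) = q^{\varepsilon_i}(\overline{r_i}(x))$, the chain rule gives $\nabla \varphi_0^{\varepsilon_i} = q_r^{\varepsilon_i}(\overline{r_i}) \nabla \overline{r_i}$, and the bound $|\nabla \overline{r_i}| \le 1$ combined with \eqref{eq:3.16} yields
$$\frac{\varepsilon_i |\nabla \varphi_0^{\varepsilon_i}|^2}{2} \le \frac{\varepsilon_i (q_r^{\varepsilon_i}(\overline{r_i}))^2}{2} = \frac{W(q^{\varepsilon_i}(\overline{r_i}))}{\varepsilon_i} = \frac{W(\varphi_0^{\varepsilon_i})}{\varepsilon_i}.$$

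For items 3 and 4, the plan is first to fix $i$, establish convergence as $\varepsilon \to 0$ for the $C^3$ initial profile associated with $M_0^i$, and then diagonalize in $i$ using \eqref{U02}. Concretely, the co-area formula applied along the signed distance $r_i$ on $N_{b_i/4}$ (where $\overline{r_i} = r_i$ and $|\nabla r_i| = 1$), together with the identity $\int_{\R} \varepsilon (q_r^\varepsilon(s))^2 \, ds = \sigma$, gives the standard Modica--Mortola convergences $\mu_0^\varepsilon \to \mathscr{H}^{d-1}\lfloor_{M_0^i}$, $\| \nabla \psi_0^\varepsilon \| \to \| \nabla \chi_{U_0^i} \|$, and $\psi_0^\varepsilon \to \chi_{U_0^i}$ in $L^1$ for each fixed $C^3$ set $U_0^i$. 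Contributions from outside $N_{b_i/4}$ are $O(e^{-c b_i/\varepsilon})$ by exponential decay of $q_r^\varepsilon$, hence negligible under $\varepsilon \ll b_i^2$. Combining with \eqref{U02}, a diagonal extraction selects $\varepsilon_i \to 0$ satisfying $\varepsilon_i/b_i^2 \to 0$, $\mu_0^{\varepsilon_i} \to \| \nabla \chi_{U_0} \| = \mathscr{H}^{d-1}\lfloor_{M_0}$, and the analogous statements for $\psi_0^{\varepsilon_i}$.

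For item 2, I would split $B_r(x)$ into $B_r(x) \cap N_{b_i/4}$ and its complement. On the first piece, the energy density equals $\varepsilon_i (q_r^{\varepsilon_i}(r_i))^2$, and the co-area formula produces
\begin{equation*}
\sigma \mu_0^{\varepsilon_i}(B_r(x) \cap N_{b_i/4}) = \int_{-b_i/4}^{b_i/4} \varepsilon_i (q_r^{\varepsilon_i}(s))^2 \, \mathscr{H}^{d-1}(B_r(x) \cap \{r_i = s\}) \, ds.
\end{equation*}
The area of each level set $\{r_i = s\} \cap B_r(x)$ is controlled by $\mathscr{H}^{d-1}(M_0^i \cap B_{r+|s|}(x))$ through the nearest-point projection (whose Jacobian is bounded since $|s|$ is much smaller than $b_i$ where $r_i$ is $C^3$), and the latter is bounded by $D_0 \omega_{d-1} r^{d-1}$ up to error via the weak convergence $\| \nabla \chi_{U_0^i} \| \to \| \nabla \chi_{U_0} \|$ combined with $\| \nabla \chi_{U_0} \|(B_r(x)) \le \mathscr{H}^{d-1}(M_0 \cap B_r(x)) \le D_0 \omega_{d-1} r^{d-1}$. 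The complement contributes only $O(e^{-c b_i/\varepsilon_i})$, and for $r \lesssim \varepsilon_i$ the pointwise bound $(q_r^{\varepsilon_i})^2 \le \varepsilon_i^{-2}$ trivially yields $\mu_0^{\varepsilon_i}(B_r(x)) \lesssim r^d/\varepsilon_i \lesssim r^{d-1}$. The hard part will be making the density bound uniform in $i$ with a constant depending only on $D_0$: weak convergence only gives $\limsup_i \| \nabla \chi_{U_0^i} \|(\overline{B_r(x)}) \le D_0 \omega_{d-1} r^{d-1}$, so $D_1 = D_1(D_0)$ must absorb the finitely many initial smooth terms (each of which already has a finite global density bound since $M_0^i$ is $C^3$), and the subsequence $\varepsilon_i$ must be chosen fast enough that the asymptotic density transfer, the constraint $\varepsilon_i/b_i^2 \to 0$, and the convergences in items 3 and 4 hold simultaneously.
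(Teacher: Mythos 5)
Your treatments of items (1), (3), and (4) are sound and line up with what the paper does (the paper itself defers to Ilmanen~\cite{ilmanen1993} and Liu--Sato--Tonegawa~\cite{liu-sato-tonegawa}, and the Remark after the proposition confirms both the use of $\vert\nabla\overline{r_i}\vert\leq 1$ for item (1) and the role of $\varepsilon_i/b_i^2 \to 0$ in killing the exterior contribution). The co-area reduction you set up for item (2) is also the right machinery, and you have put your finger on the exact weak spot. However, the resolution you propose does not close the gap.

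The difficulty is this: your co-area computation correctly reduces the density of $\mu_0^{\varepsilon_i}$ at scale $r \gtrsim \varepsilon_i$ to the density of $\mathscr{H}^{d-1}\lfloor_{M_0^i}$ at comparable scales. But this quantity is \emph{not} controlled by $D_0$ via the hypotheses \eqref{U01}--\eqref{U02} alone. Weak-$\ast$ convergence $\|\nabla\chi_{U_0^i}\| \to \|\nabla\chi_{U_0}\|$ gives, for each \emph{fixed} $(x,r)$, $\limsup_i \|\nabla\chi_{U_0^i}\|(\overline{B_r(x)}) \leq D_0\omega_{d-1}r^{d-1}$, but this is not uniform as $r\downarrow 0$: a sequence of $C^3$ surfaces can develop thinner and thinner high-density bumps while the bump's total area tends to zero, so that measure convergence holds but $\sup_{x,r\in(0,1)}\mathscr{H}^{d-1}(M_0^i\cap B_r(x))/(\omega_{d-1}r^{d-1}) \to \infty$. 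Choosing the paired $\varepsilon_i$ ``fast enough'' makes things worse, not better: for $r\gg\varepsilon_i$ the profile measure $\mu_0^{\varepsilon_i}$ essentially reproduces $\mathscr{H}^{d-1}\lfloor_{M_0^i}$, so shrinking $\varepsilon_i$ only sharpens the resolution at which the bad density of $M_0^i$ is visible. And ``absorbing finitely many initial terms'' into $D_1$ is not available, since $D_1$ is required to depend only on $D_0$, whereas those initial $M_0^i$ may have arbitrarily large density constants of their own; moreover the blow-up in my counterexample scenario is along the entire tail, not just finitely many indices.

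What actually closes this gap is an additional property of the approximating sequence $\{M_0^i\}$ that neither you nor the raw statement of \eqref{U02} supply: the $M_0^i$ must be constructed so that $\sup_{x,r\in(0,1)}\mathscr{H}^{d-1}(M_0^i\cap B_r(x))/(\omega_{d-1}r^{d-1}) \leq C(D_0)$ uniformly in $i$. This can indeed be arranged (for $M_0$ of class $C^1$ one can take tubular-neighborhood or mollified-distance approximants, whose local graph representations converge in $C^1$; for general Caccioppoli sets one uses minimizing or comparison constructions as in \cite{giusti}), and is evidently assumed in the references the paper cites, but it is a design requirement on $\{M_0^i\}$ rather than a consequence of weak-$\ast$ convergence. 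Your proof should either state this as part of the construction of $\{M_0^i\}$ and verify it for a concrete approximation scheme, or replace the appeal to measure convergence in item (2) by the explicit $C^1$-approximation bound.
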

\begin{remark}
The first property (1) is obtained from $ \vert \nabla \overline{r_i}  \vert \leq 1$
(see the proof of Proposition \ref{prop3.2}).
The assumption $\frac{ \varepsilon _i }{b_i ^2} \to 0$
is used to show 
$\int _{\Omega \setminus N_{b_i /4} } 
\left( \frac{\varepsilon_i \vert \nabla \varphi ^{\varepsilon_i} _0 \vert^2}{2} 
+ \frac{W(\varphi ^{\varepsilon_i} _0 )}{\varepsilon_i} \right) \, dx\to 0
$.
%By this,
%$\frac{\varepsilon_i | \nabla \varphi^{\varepsilon_i} _0 (x) | ^2 }{2} 
%= \frac{W(\varphi ^{\varepsilon_i} _0 (x))}{\varepsilon_i}$ on $N_{b_i/4}$, 
%and the co-area formula,
%we obtain (3),
%because for any $M_0 ^i$, we can choose a sufficiently large $j$ such that
%\begin{equation*}
%\begin{split}
%\mu _0 ^{\varepsilon _{j}} (\phi)
%\approx & \, \frac{1}{\sigma} \int _{N_{b_i/4}} 
%\phi \sqrt{2W (\varphi ^{\varepsilon _j})} | \nabla \varphi ^{\varepsilon _j} | \, dx
%= 
%\frac{1}{\sigma} \int _{q^{\varepsilon_j}(-b_i /4)} ^{q^{\varepsilon_j}(b_i /4)} 
%\int _{\{ \varphi ^{\varepsilon _j} =q \}} 
%\phi \, d\mathscr{H}^{d-1} \sqrt{2W (q)} \, dq \\
%\approx & \,  \int_{M_0 ^i} \phi \, d \mathscr{H}^{d-1}
%\end{split}
%\end{equation*}
%holds 
%(see \cite{ilmanen1993,MR866718} for more details).
\end{remark}

\subsection{Main results}
We denote the approximate velocity vector $\vec{v} ^{\, \varepsilon _i}$ by
\begin{equation*}
\vec{v} ^{\, \varepsilon _i} = \left\{ 
\begin{array}{ll}
\dfrac{- \varphi ^{\varepsilon _i} _t }{\vert \nabla \varphi ^{\varepsilon _i} \vert }
\dfrac{\nabla \varphi ^{\varepsilon _i}}{\vert \nabla \varphi ^{\varepsilon _i} \vert }, & \text{if} \ 
\vert \nabla \varphi ^{\varepsilon _i} \vert \not=0, \\
\qquad 0 , & \text{otherwise}.
\end{array} \right.
\end{equation*}

The first main result of this paper is the following.
\begin{theorem}\label{mainthm1}
Suppose that $d\geq 2$ and $U_0$ satisfies \eqref{U01} and \eqref{U02}.
For any $i \in \N$, let $\varphi _0 ^{\varepsilon _i}$ be defined so that all the claims of Proposition \ref{prop2.2} are satisfied
and
$\varphi ^{\varepsilon_i} $ be a solution to
\eqref{ac} with initial data $\varphi _0 ^{\varepsilon _i}$.
Then there exists a subsequence $\{ \varepsilon _i \}_{i=1} ^\infty$ (denoted by the same index)
such that the following hold.
\begin{enumerate}
\item[(a)] There exist a countable subset $B \subset [0,\infty)$ and
a family of $(d-1)$-integral Radon measures $\{\mu _t\}_{t \in [0,\infty)}$
on $\Omega$ such that
\[
\mu _0 = \mathscr{H}^{d-1} \lfloor_{M_0}, 
\qquad 
\mu _t ^{\varepsilon _i} \to \mu _t \ \ \ \text{as Radon measures for any} \ t\geq 0,
\]
and
\[
\mu _s (\Omega) \leq \mu _t (\Omega) \qquad \text{for any} \ s,t \in [0,\infty) \setminus B 
\ \text{with} \  0 \leq t <s <\infty. 
\]
\item[(b)]
There exists $\psi \in BV_{loc} (\Omega \times [0,\infty)) \cap C_{loc} ^{\frac12} ([0,\infty); L^1 (\Omega))$
such that the following hold.
\begin{enumerate}
\item[(b1)] $\psi ^{\varepsilon _i} \to \psi $ in $L^1 _{loc} (\Omega \times [0,\infty))$
and a.e. pointwise, where $\psi ^{\varepsilon _i}=\frac12 (\varphi^{\varepsilon _i} +1)$.
\item[(b2)] $\psi \vert_{t=0}=\chi _{U_0}$ a.e. on $\Omega$.
\item[(b3)] For any $t \in [0,\infty )$, $\psi (\cdot ,t) =1$ or $0$ a.e. on $\Omega$ and
$\psi$ satisfies the volume preserving property, that is, 
\[
\int _{\Omega} \psi (x,t) \, dx  = \mathscr{L}^{d} (U_0)
\qquad \text{for all} \ t \in [0,\infty).
\]
\item[(b4)] For any $t \in [0,\infty)$ and for any $\phi \in C_c (\Omega; [0,\infty))$, 
we have $\| \nabla \psi (\cdot ,t) \| (\phi ) \leq \mu _t (\phi)$.
\end{enumerate}
\item[(c)] For $\lambda ^{\varepsilon _i}$ given by \eqref{lambdadef}, we have
\[
\sup_{i \in \N} \int _0 ^T \vert \lambda ^{\varepsilon_i} \vert^2 \, dt <\infty \qquad \text{for any} \ T>0
\]
and there exists $\lambda \in L_{loc} ^2 (0,\infty)$ such that
$\lambda ^{\varepsilon _i} \to \lambda$ weakly in $L^2 (0,T)$ for any $T>0$.
\item[(d)] 
There exists $\vec{f} \in L_{loc} ^2 ([0,\infty); (L^2 (\mu _t))^d)$ such that
%\[
%\vec{f} (\cdot ,t) =0 \qquad \text{on} \ 
%\spt \mu _t \setminus \spt \| \nabla \psi (\cdot ,t) \|
%\quad 
%\text{for a.e.} \ t \in [0,\infty)
%\]
%and
\begin{equation}\label{claim-e2}
\begin{split}
&\lim_{i \to \infty}
\frac{1}{\sigma} \int_0 ^\infty \int _{\Omega }
-\lambda ^{\varepsilon _i} \sqrt{2W (\varphi ^{\varepsilon _i})}
\nabla \varphi ^{\varepsilon _i} \cdot \vec{\phi} \, dxdt \\
= & \int_{0} ^\infty \int _{\Omega} \vec{f} \cdot \vec{\phi} \, d \mu_t dt 
= 
\int_{0} ^\infty \int _{\Omega} -\lambda \vec{\nu}\cdot \vec{\phi} 
\, d \| \nabla \psi (\cdot,t) \| dt
%=
%\int_{0} ^\infty \int _{\Omega} \frac{-\lambda}{\theta}\vec{\nu}\cdot \vec{\phi} 
%\, d \mu _t dt,
\end{split}
\end{equation}
for any $\vec{\phi} \in C_c (\Omega \times [0,\infty); \R^d)$,
where $\vec \nu$ is the inner unit normal vector of $\{ \psi (\cdot,t) =1 \}$
 on $\spt \| \nabla \psi (\cdot,t) \|$.

\item[(e)] The family of Radon measures $\{ \mu _t \}_{t \in [0,\infty)}$
is an $L^2$-flow with a generalized velocity vector $\vec{v} = \vec{h} +\vec{f}$,
where $\vec{h} \in L_{loc} ^2 ([0,\infty) ; (L ^2 (\mu_t) )^d) $ 
is the generalized mean curvature vector of $\mu _t$.
Moreover, for any $\vec{\phi} \in C_c (\Omega \times [0,\infty); \R^d)$,
\begin{equation}\label{claim-e}
\lim_{i \to \infty}
\int_0 ^\infty \int _{\Omega }
\vec{v}^{ \, \varepsilon _i} \cdot \vec{\phi} \, d\mu _t ^{\varepsilon _i}dt
= \int_{0} ^\infty \int _{\Omega} \vec{v} \cdot \vec{\phi} \, d \mu_t dt.
\end{equation}
%In addition, for a.e. $t \in [0,\infty)$, there exists a nonnegative function
%$\theta (\cdot,t) \in L^1 _{loc} (\mu _t )$
%such that $\theta (\cdot ,t) \in \N$ for $\mu_t$-a.e. $x$, and for a.e. $t \in [0,\infty)$
%\begin{equation}\label{claim-e2}
%\vec{v} = \vec{h} - \frac{\lambda }{\theta } \vec{\nu}
%\qquad 
%\text{for} \ \mathscr{H}^{d-1}\text{-a.e. on} \ \spt \| \nabla \psi (\cdot,t) \|,
%\end{equation}
%where $\vec \nu$ is the inner unit normal vector of $\{ \psi (\cdot,t) =1 \}$
% on $\spt \| \nabla \psi (\cdot,t) \|$.
\end{enumerate}

\end{theorem}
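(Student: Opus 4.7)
The plan rests on three a priori bounds immediate from \eqref{eq:1.12} and Lemma~\ref{lem3.2}:
$\sup_i E^{\varepsilon_i}_S(t)\leq C$,
$\sup_i \int_0^T\!\!\int_\Omega \varepsilon_i(\varphi^{\varepsilon_i}_t)^2\,dxdt\leq C_T$, and
$\sup_i \int_0^T|\lambda^{\varepsilon_i}|^2\,dt\leq C_T$.
For (a), a Helly--diagonal extraction applied to the almost-monotone total energy $E^{\varepsilon_i}=E_S^{\varepsilon_i}+E_P^{\varepsilon_i}$ produces $\mu_t^{\varepsilon_i}\to\mu_t$ for every $t\geq 0$; because \eqref{eq:1.5} forces $E_P^{\varepsilon_i}(t)\to 0$ for a.e.\ $t$, the dissipation \eqref{eq:1.11} survives as monotone non-increase of $\mu_t(\Omega)$ off a countable exceptional set $B$, and the integrality of $\mu_t$ for a.e.\ $t$ is the content of Section~4 (invoked as a black box). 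For (b), the spatial BV bound on $\psi^{\varepsilon_i}=(\varphi^{\varepsilon_i}+1)/2$ follows from the pointwise estimate $|\nabla\psi^{\varepsilon_i}|\sqrt{2W(\varphi^{\varepsilon_i})}/\varepsilon_i\leq \tfrac12 e^{\varepsilon_i}$ and integration; the $L^1$-H\"older-$\tfrac12$ regularity in time from Cauchy--Schwarz against the second bound; $\psi\in\{0,1\}$ a.e.\ from the standard Modica--Mortola argument; the initial condition from Proposition~\ref{prop2.2}(4); (b4) by lower semicontinuity; and volume conservation (b3) by passing to the limit in \eqref{eq:1.5} using $k(\pm 1)=\pm 2/3$. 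Part (c) is Lemma~\ref{lem3.2} plus Banach--Alaoglu.

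The key idea for (d) is the exact algebraic identity
\[
-\sqrt{2W(\varphi^{\varepsilon_i})}\,\nabla\varphi^{\varepsilon_i} \,=\, -\nabla\!\left(k(\varphi^{\varepsilon_i})\right),
\]
which turns the non-local forcing into a genuine spatial gradient. Since $|k(\varphi^{\varepsilon_i})|\leq 2/3$ and $\varphi^{\varepsilon_i}\to 2\psi-1$ a.e., dominated convergence gives $k(\varphi^{\varepsilon_i})\to\tfrac{2}{3}(2\psi-1)$ strongly in $L^p_{\mathrm{loc}}(\Omega\times(0,\infty))$ for every $p<\infty$; integration by parts combined with the weak convergence $\lambda^{\varepsilon_i}\rightharpoonup\lambda$ in $L^2(0,T)$ then identifies the limit of the left-hand side of \eqref{claim-e2} as $\int\!\int -\lambda\,\vec{\nu}\cdot\vec{\phi}\,d\|\nabla\psi(\cdot,t)\|\,dt$, the factor $\sigma=\int_{-1}^1\sqrt{2W}\,ds=4/3$ exactly absorbing the jump $k(1)-k(-1)=4/3$. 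To recast this limit as $\int\!\int\vec{f}\cdot\vec{\phi}\,d\mu_t dt$, I use (b4) and Radon--Nikodym to write $\|\nabla\psi(\cdot,t)\|=\theta(\cdot,t)\,\mu_t$ with $0\leq\theta\leq 1$, and set $\vec{f}:=-\lambda\,\theta\,\vec{\nu}$; the bound $\int|\vec{f}|^2\,d\mu_t\leq|\lambda(t)|^2\,\|\nabla\psi(\cdot,t)\|(\Omega)$ is integrable in $t$ thanks to (c), giving $\vec{f}\in L^2_{\mathrm{loc}}([0,\infty);L^2(\mu_t))$.

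For (e), the standard Ilmanen--Tonegawa testing of \eqref{ac} against $\nabla\eta\cdot\nabla\varphi^{\varepsilon_i}$ combined with the time derivative of the energy density gives, for $\eta\in C_c^1(\Omega\times(0,T))$,
\[
\int\!\!\int \eta_t\,d\mu_t^{\varepsilon_i}dt + \int\!\!\int\nabla\eta\cdot\vec{v}^{\,\varepsilon_i}\,d\tilde\mu_t^{\varepsilon_i}dt
= \frac{1}{\sigma}\int\!\!\int\eta\,\varepsilon_i(\varphi^{\varepsilon_i}_t)^2\,dxdt - \frac{1}{\sigma}\int\!\!\int\eta\,\lambda^{\varepsilon_i}\sqrt{2W(\varphi^{\varepsilon_i})}\,\varphi^{\varepsilon_i}_t\,dxdt,
\]
where $d\tilde\mu_t^{\varepsilon_i}=\varepsilon_i|\nabla\varphi^{\varepsilon_i}|^2/\sigma\,dx$. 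The right-hand side is $O(\|\eta\|_{C^0})$ by Cauchy--Schwarz and the three a priori bounds, which yields \eqref{ineq-L2}; equipartition of the energy (Proposition~\ref{prop3.2}) lets me replace $\tilde\mu_t^{\varepsilon_i}$ by $\mu_t^{\varepsilon_i}$ in the limit and so produces \eqref{claim-e}. The main obstacle is identifying the limit velocity as $\vec{v}=\vec{h}+\vec{f}$: rewriting $\vec{v}^{\,\varepsilon_i}\,d\mu_t^{\varepsilon_i}$ via \eqref{ac} as the first variation of $\mu_t^{\varepsilon_i}$ plus the non-local term treated in (d), one must pass this first variation through the limit and identify it with the generalized mean curvature $\vec{h}$ of the integral varifold $\mu_t$. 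This is where the monotonicity formula adapted to \eqref{ac} (Proposition~\ref{prop3.8}) is indispensable; its derivation exploits both the smallness \eqref{eq:1.7} of the rescaled perturbation $\varepsilon\lambda^\varepsilon$ and the $L^2$-in-time control on $\lambda^{\varepsilon}$ from Lemma~\ref{lem3.2}, after which the Ilmanen--Tonegawa machinery (adapted in Section~4 to the present non-local setting) delivers both the integrality of $\mu_t$ and the required $L^2$ bound on $\vec{h}$ needed to close the proof.
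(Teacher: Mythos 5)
Your proposal is essentially correct and covers the same ground as the paper's proof, but you take a genuinely different route in part (d): the paper first produces $\vec{f}$ abstractly by applying the compactness theorem for measure--function pairs (Hutchinson, Theorem~4.4.2) to $(\hat\mu^{\varepsilon_i}, -\lambda^{\varepsilon_i}\nabla\varphi^{\varepsilon_i}/|\nabla\varphi^{\varepsilon_i}|)$, and only afterwards identifies it with the BV quantity; you instead compute the weak limit directly from the algebraic identity $\sqrt{2W(\varphi^{\varepsilon_i})}\nabla\varphi^{\varepsilon_i}=\nabla k(\varphi^{\varepsilon_i})$, strong $L^p$ convergence of $k(\varphi^{\varepsilon_i})$, and the weak $L^2$ convergence of $\lambda^{\varepsilon_i}$, and then \emph{define} $\vec{f}:=-\lambda\,\theta\,\vec\nu$ with $\theta=\dfrac{d\|\nabla\psi(\cdot,t)\|}{d\mu_t}\in[0,1]$ given by (b4). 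This is cleaner and more elementary (no measure--function-pair compactness needed), and the required $L^2(\mu)$ bound for $\vec{f}$ falls out directly from $\theta\le1$ and Lemma~\ref{lem3.2}; you do need a small density argument to pass from $C^1_c$ test fields (where integration by parts applies) to $C_c$ ones, which is where the paper's compactness argument is convenient, but the uniform Cauchy--Schwarz estimate $\sup_i\int\!\!\int|\lambda^{\varepsilon_i}|\,d\hat\mu^{\varepsilon_i}\le C_T$ makes this routine. Parts (a)--(c) follow the paper's route essentially verbatim; for (e) your weak formulation and Cauchy--Schwarz bound are the standard computation and agree with the paper's appeal to \cite{takasao2017,MR2383536}.

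One attribution needs correcting: the step that lets you replace $\tilde\mu_t^{\varepsilon_i}$ by $\mu_t^{\varepsilon_i}$ in the limit is \emph{not} Proposition~\ref{prop3.2}. That proposition gives only the one-sided bound $\xi_\varepsilon\le 0$ (i.e.\ $\tfrac{\varepsilon|\nabla\varphi^\varepsilon|^2}{2}\le \tfrac{W(\varphi^\varepsilon)}{\varepsilon}$), which does not by itself force $\tilde\mu_t^{\varepsilon_i}$ and $\mu_t^{\varepsilon_i}$ to converge to the same limit. What you actually need is the vanishing of the discrepancy $|\xi|=0$, which is Theorem~\ref{thm4.5} -- a nontrivial result of Section~4 proved via the monotonicity formula of Proposition~\ref{prop3.8} and the clearing-out Lemma~\ref{lem4.3}. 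Since you invoke Section~4 as a black box anyway for integrality and for the identification of $\vec h$, this is a slip in citation rather than a gap, but the distinction between the one-sided $\xi_\varepsilon\le 0$ and the equipartition $|\xi|\to 0$ is an important one in this framework and worth being precise about.
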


\begin{remark}\label{rem2.6}
From (d) and (e), we have
\[
\int_{0} ^\infty \int _{\Omega} \vec{v} \cdot \vec{\phi} \, d \mu_t dt
=\int_{0} ^\infty \int _{\Omega} \left( \vec{h} 
- \lambda \frac{d \| \nabla \psi (\cdot,t) \|}{d\mu _t} \vec{\nu} \right) \cdot \vec{\phi} \, d \mu_t dt
\]
for any $\vec{\phi} \in C_c (\Omega \times (0,\infty); \R^d)$, 
where $\frac{d \| \nabla \psi (\cdot,t) \|}{d\mu _t}$ is the Radon--Nikodym derivative.
Hence we have $\vec{v} = \vec{h} -\lambda \vec{\nu}$ in the sense of $L^2$-flow
if $\mu _t = \| \nabla \psi (\cdot,t) \|$ for a.e. $t$ 
(from Theorem \ref{mainthm2} below, this is correct for a short time if the initial data
is sufficiently close to a ball).
Since $\mu_t$ is integral for a.e. $t$, for such $t$, we have
$\left( \frac{d \| \nabla \psi (\cdot,t) \|}{d\mu _t} \right) ^{-1} \in \N$ for $\mu_t$-a.e. $x \in \Omega$,
if
$\frac{d \| \nabla \psi (\cdot,t) \|}{d\mu _t}\not =0$.
\end{remark}

\bigskip

Set  %$U:=\{ (x,t) \in \Omega\times (0,T) \mid \psi (x,t)=1 \}$ and 
$U_t := \{ x \in \Omega \mid \psi (x,t)=1 \}$ for $t>0$.
Let $B \subset \subset (0,1) ^d$ be an open ball.
We also show that if $U_0 \approx B$ in the following sense,
then there exists $T_1 >0$ such that 
$\{\partial ^\ast U _t\} _{t \in [0,T_1)}$ is a distributional solution to \eqref{vpmcf} 
in the framework of BV functions.

\begin{theorem}\label{mainthm2}
For any $r \in (0,\frac{1}{4})$, there exists $\delta_1 >0$ depending only on $d$ and $r$ 
with the following properties.
Assume that $ U_0 \subset (\frac14 ,\frac34)^d $ 
satisfies $\mathscr{L}^d (U_0)=\mathscr{L}^d (B_{r} (0))$ 
and has a $C^1$ boundary $M_0$ with 
$\mathscr{H}^{d-1} (M_0) \leq 2 \mathscr{H}^{d-1} (\partial B_{r} (0))$ and
\begin{equation}\label{iso}
\mathscr{H}^{d-1} (M_0) - d \omega _d ^{\frac{1}{d}} (\mathscr{L}^d (U_0))^{\frac{d-1}{d}} \leq \delta_1.
\end{equation}
Then there exists $T_1 =T_1(d,r,M_0) >0$ such that the following hold.
\begin{enumerate}
\item[(a)] For a.e. $t \in [0,T_1)$, 
$
\mu _t = \| \nabla \psi (\cdot,t) \| = 
\mathscr{H}^{d-1} \lfloor_{\partial ^{\ast} U_t}
$, where $\{ \mu _t\} _{t \in [0,\infty)}$ is the $L^2$-flow 
with initial data $\mu_0 =\mathscr{H}^{d-1}\lfloor_{M_0}$, given by Theorem \ref{mainthm1}.
\item[(b)] 
Let $\vec{v}$, $\vec{h}$, $\vec{\nu}$, and $\lambda$ be functions given by Theorem \ref{mainthm1}.
Then $\{ \partial ^\ast U_t \}_{t \in [0,T_1)}$ is a distributional solution to \eqref{vpmcf} with initial data $\partial U_0 = M_0$
in the following sense.
\begin{enumerate}
\item[(b1)] For any $t \in [0,T_1)$, $\mathscr{L}^d (U_t) = \mathscr{L}^d (U_0)$.
\item[(b2)] For a.e. $t\in [0,T_1)$, $\vec{h}$ is
also a generalized mean curvature vector
of $\mathscr{H}^{d-1} \lfloor _{\partial^\ast U_t}$.
\item[(b3)] 
For any $\vec{\phi} \in C_c (\Omega \times [0,T_1);\R ^d)$, we have
\[
\int _0 ^{T_1} \int _{\partial ^\ast U_t} \{ \vec{v} -\vec{h} + \lambda \vec{\nu} \} \cdot \vec{\phi} \, 
d\mathscr{H} ^{d-1} dt =0.
\]
\item[(b4)] 
For any
$\phi \in C_c ^1 (\Omega \times (0,T_1))$, we have
\[
\int _0 ^{T_1} \int _{U_t} \phi _t \, dxdt 
= \int _0 ^{T_1} \int _{\partial ^\ast U_t} \vec{v} \cdot \vec{\nu} \phi \, d\mathscr{H}^{d-1} dt.
\]
\item[(b5)] (Additional volume preserving property). For a.e. $t \in [0,T_1)$, we have
\[
\int _{\partial ^\ast U_t} \vec{v} \cdot \vec{\nu} \, d \mathscr{H}^{d-1}
=
\int _{\Omega} \vec{v} \cdot \vec{\nu} \, d \| \nabla \psi (\cdot ,t)\| =0.
\]
\item[(b6)] For a.e. $t \in [0,T_1)$, we have
\[
\lambda (t) = \frac{1}{\mathscr{H}^{d-1} (\partial ^\ast U_t)} 
\int _{\partial ^\ast U_t} \vec{h} \cdot \vec{\nu} \, d \mathscr{H}^{d-1}.
\]
\end{enumerate}
\end{enumerate}
\end{theorem}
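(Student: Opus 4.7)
The plan is to first establish (a)---that $\mu_t = \|\nabla \psi(\cdot,t)\| = \mathscr{H}^{d-1}\lfloor_{\partial^* U_t}$ for a.e.\ $t \in [0, T_1)$---and then derive (b1)--(b6) by essentially routine manipulations using Theorem~\ref{mainthm1}. Once the unit-density identification (a) is in place, part (b) becomes a matter of BV- and varifold-calculus.

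Chaining the classical isoperimetric inequality on $\mathbb{T}^d$ with parts (a), (b3), (b4) of Theorem~\ref{mainthm1}, the monotonicity of $t \mapsto \mu_t(\Omega)$ and volume preservation give, for a.e.\ $t$,
\begin{equation*}
d\omega_d^{\frac{1}{d}} (\mathscr{L}^d(U_t))^{\frac{d-1}{d}}
\leq \mathscr{H}^{d-1}(\partial^* U_t)
= \|\nabla\psi(\cdot,t)\|(\Omega)
\leq \mu_t(\Omega)
\leq \mu_0(\Omega)
= \mathscr{H}^{d-1}(M_0).
\end{equation*}
Combined with \eqref{iso}, both the isoperimetric deficit of $U_t$ and the excess $\mu_t(\Omega) - \mathscr{H}^{d-1}(\partial^* U_t)$ remain $\leq \delta_1$ for all such $t$. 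The quantitative isoperimetric inequality of Fusco--Maggi--Pratelli then shows $U_t$ is $L^1$-close to some translate $B_r(y_t)$ of the initial ball. Together with $\mathscr{H}^{d-1}(M_0) \leq 2\mathscr{H}^{d-1}(\partial B_r(0))$, the monotonicity formula (Proposition~\ref{prop3.8}) and the $C^{1/2}$-in-time regularity of $\psi$ from Theorem~\ref{mainthm1}(b), this yields some $T_1 = T_1(d, r, M_0) > 0$ for which $\spt \mu_t \subset\subset (0,1)^d$ and $U_t$ stays $L^1$-close to a ball throughout $[0, T_1)$.

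The heart of the argument is the unit-density assertion in (a). Decompose $\mu_t = \mathscr{H}^{d-1}\lfloor_{\partial^* U_t} + \nu_t$, where by Theorem~\ref{mainthm1}(b4), (e) the measure $\nu_t$ is a nonnegative integer-multiplicity rectifiable Radon measure with $\nu_t(\Omega) \leq \delta_1$. If $\nu_t$ were nonzero it would either place density $\geq 2$ on a positive-$\mathscr{H}^{d-1}$ subset of $\partial^* U_t$, or have a connected component inside $\Omega \setminus \overline{\partial^* U_t}$. In either case, the monotonicity formula for integral $(d-1)$-varifolds with $L^2$ mean curvature combined with Allard's regularity theorem, applied at a scale controlled by the $C^1$ geometry of $M_0$ (and preserved for short time by the previous paragraph), forces a definite lower bound $\nu_t(\Omega) \geq c_0 > 0$ with $c_0 = c_0(d, r, M_0)$. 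Choosing $\delta_1 < c_0$ rules out both scenarios, so $\nu_t \equiv 0$ and (a) follows. This step is the main obstacle: quantitative Allard-type regularity and integer rigidity must jointly exclude any hidden multiplicity sheets, and the restriction $t < T_1$ reflects the regime in which the isoperimetric hypothesis keeps the excess below the threshold $c_0$.

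Once (a) is known, the remaining claims drop out quickly. Claim (b1) is Theorem~\ref{mainthm1}(b3); (b2) is immediate from (a); (b3) combines Theorem~\ref{mainthm1}(d), (e) and Remark~\ref{rem2.6} with the substitution $d\mu_t = d\mathscr{H}^{d-1}\lfloor_{\partial^* U_t}$, yielding $\vec{v} = \vec{h} - \lambda\vec{\nu}$ tested against $\vec{\phi}$; (b4) follows from the $L^2$-flow inequality \eqref{ineq-L2} and the BV chain rule $\partial_t \chi_{U_t} = -\vec{v}\cdot\vec{\nu}\,\mathscr{H}^{d-1}\lfloor_{\partial^* U_t}$; (b5) follows from (b1) by differentiation in time (equivalently, (b4) with a cutoff converging to $1_\Omega$); finally, (b6) is obtained by integrating $\vec{v}\cdot\vec{\nu} = \vec{h}\cdot\vec{\nu} - \lambda$ over $\partial^* U_t$ and using (b5) to cancel the $\vec{v}$-term.
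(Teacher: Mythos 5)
Your treatment of parts (b1)--(b6) is essentially sound and matches the paper's: (b1) is Theorem~\ref{mainthm1}(b3); (b2) is immediate once (a) is known; (b3) follows from Theorem~\ref{mainthm1}(d),(e) and Remark~\ref{rem2.6}; (b4), (b5), (b6) are obtained from the BV-chain-rule identity~\eqref{eq:5.3} (Proposition~\ref{prop5.1}) and density of $C_c(\Omega)$ in $L^2$ of the relevant measures. But there is a genuine gap in your argument for part~(a), which is the heart of the theorem.

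You reduce to showing $\nu_t := \mu_t - \mathscr{H}^{d-1}\lfloor_{\partial^*U_t} = 0$, correctly observe $\nu_t(\Omega)\leq\delta_1$, and then claim that a nonzero $\nu_t$ would force $\nu_t(\Omega)\geq c_0(d,r,M_0)$ via ``the monotonicity formula for integral varifolds with $L^2$ mean curvature combined with Allard's regularity theorem.'' This does not work as stated. The static monotonicity formula for $(d-1)$-varifolds with $\vec h\in L^2(\mu_t)$ is subcritical for $d\geq 3$ and in any case only produces \emph{density} lower bounds near points of the support; it does not produce a \emph{mass} lower bound $\nu_t(\Omega)\geq c_0$ that is uniform over all candidate $\nu_t$, and certainly not one depending only on $(d,r,M_0)$. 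Without a bound of the form ``small isoperimetric deficit $\Rightarrow$ density strictly less than $2$,'' the dichotomy you set up (extra sheet over $\partial^*U_t$ versus a hidden component) cannot be closed. The paper avoids this entirely: it first uses the barrier/comparison construction of Lemma~\ref{lem5.2}--\ref{lem5.3} to keep $U_t\subset\subset(0,1)^d$ on $[0,T_3)$, which together with the \emph{classical} (not quantitative) isoperimetric inequality and the near-monotonicity of $t\mapsto\mu_t(\Omega)$ gives $0\leq\mu_0(\Omega)-\mu_t(\Omega)\leq\delta_1$ for $t\in[0,T_3)$, hence via~\eqref{eq:3.5.additional} a uniform bound $\int_0^{T_4}|\lambda^{\varepsilon_i}|^2\,dt$ small enough that the exponential factor in Huisken's monotonicity formula~\eqref{eq:3.20} is at most $10/9$. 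Combined with the $C^1$-initial-data Gaussian-density bound $\int\rho_{(y,s)}(\cdot,0)\,d\mu_0\leq 3/2$ for small $s$, one gets for every $x_0\in\spt\mu_{t_0}$, $t_0<T_1$,
\begin{equation*}
\theta_{t_0}(x_0) \;=\; \lim_{s\downarrow t_0}\int\rho_{(x_0,s)}(x,t_0)\,d\mu_{t_0}(x)\;\leq\;\frac{10}{9}\cdot\frac32\;=\;\frac53\;<\;2,
\end{equation*}
so $\theta_{t_0}\equiv 1$ and $\mu_{t_0}=\|\nabla\psi(\cdot,t_0)\|$. This \emph{parabolic} argument, which ties $\mu_{t_0}$ back to $\mu_0$ uniformly over $t_0\in[0,T_1)$, is the mechanism your proposal is missing; the quantitative isoperimetric inequality and Allard's theorem are neither needed nor sufficient here. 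You should also replace the vague appeal to ``$C^{1/2}$-in-time regularity of $\psi$'' for containment of $\spt\mu_t$ with the explicit supersolution barrier of Lemma~\ref{lem5.2}.
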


\begin{remark}
The isoperimetric inequality tells us that
$d \omega _d ^{\frac{1}{d}} (\mathscr{L}^d (U))^{\frac{d-1}{d}} \leq \mathscr{H}^{d-1} (\partial^\ast U) $
for any Caccioppoli set $U \subset \R^d$ with $\mathscr{L}^d (U) <\infty$ 
and the equality holds if and only if 
there exists a ball $B \subset \R^d$ such that $\mathscr{L}^d (U \triangle B )=0$
(see \cite{MR2147710,MR1242977} and references therein).
Therefore the assumption \eqref{iso} means that $U_0$ is sufficiently close to
a ball in some sense.
On the other hand, $M_0$ does not have to be close to a sphere in $C^0$
(for example, $U_0$ does not have to be connected).
\end{remark}
\begin{remark}
The property (b4) claims that $\vec v$ is a normal velocity vector in a weak sense,
since
\[\frac{d}{dt} \int _{U_t} \phi \, dx
= \int _{U_t} \phi_t \, dx - \int _{\partial U _t} \vec v\cdot \vec \nu \phi \, d\mathscr{H}^{d-1} 
\]
holds for any $\phi \in C_c ^1 (\Omega \times (0,T_1))$ and $t \in (0,T_1)$, where 
$\{ U_t \}_{t\in [0,T_1)}$ is a family of open sets and the smooth boundary 
$\partial U_t$ moves by the normal velocity vector $\vec{v}$.
By \eqref{vpproperty}, we can regard (b5) as a volume preserving property
in a weak sense.
\end{remark}

%%%%%%%%%%%%%%%%%%%%%%%%%%%%%%%%%%%%%%%%%%%%%%%%%%%%%%%%%%%%%%%%%%%%%%%%%%%%%%%%%%%%%%%%%%%%%%%%%%%%%%%%%%%%%%%%%%%%%%%%%%%%%%%%%%%%%%%%%%%%%%%%%%%%%%%%%%%%%%%%%%%%%%%%%%%%%%%%%%%%

\section{Energy and pointwise estimates}
In this section we show standard estimates for \eqref{ac} such as 
the uniform $L^2$-estimate for $\lambda ^\varepsilon$ and the monotonicity formula.
\subsection{Assumptions}
Let $\{ \varepsilon _i \}_{i=1} ^\infty$
be a positive sequence with $\varepsilon _i \to 0$ as $i \to \infty$.
In this section, we assume that there exist $D_1>0$ and $\omega >0$ such that
\eqref{d1}
and
\begin{equation}
\frac{2}{3} - \left \vert  \int_{\Omega} k(\varphi ^{\varepsilon_i} _0 (x))\, dx \right \vert > \omega >0,
\label{omega}
\end{equation}
hold for any $i \in \N$. The set $\{ x \in \Omega \mid \varphi ^{\varepsilon_i} _0 (x) = 0 \}$ corresponds to the initial data $M_0$
of \eqref{vpmcf}, and \eqref{omega} yields that 
$\mathscr{L}^d (\{ x \in \Omega \mid \varphi ^{\varepsilon_i} _0 \approx 1 \}) >0$ formally,
since $\int_{\Omega} k(\pm 1)\, dx = \pm \frac23 \mathscr{L}^d(\Omega) =\pm \frac23 $.
%(if we assume that \eqref{d1}, \eqref{omega}, and $\{ x \in \Omega \mid \varphi ^\varepsilon _0 (x) = 0 \}= \emptyset$ for any $\varepsilon >0$, 
%then only one of $0<\varphi ^\varepsilon _0 <1$ or $-1 <\varphi ^\varepsilon _0 <0$ can happen and
%$\mu _0 ^\varepsilon (\Omega) \to \infty$ as $\varepsilon \to 0$. This is a contradiction).
For some $\Cl{const:initial} >0$, we also assume that the initial data $\varphi _0 ^{\varepsilon_i}$ of the solution to \eqref{ac} satisfies
\begin{equation}
\varphi ^{\varepsilon_i} _0 \in C^3 (\Omega), \quad \sup _{x \in \Omega} 
\vert \varphi ^{\varepsilon_i} _0 (x) \vert <1,
\quad \text{and} \quad \varepsilon_i ^j \sup _{x \in \Omega} \vert \nabla ^j \varphi ^{\varepsilon_i} _0 (x) \vert \leq \Cr{const:initial}
\label{initial}
\end{equation}
for any $i \in \N$ and $j=1,2,3$. In addition, 
to control the discrepancy measure $\xi _t ^\varepsilon$ defined below, we assume
\begin{equation}
\frac{\varepsilon_i \vert \nabla \varphi^{\varepsilon_i} _0 (x) \vert ^2 }{2} 
\leq \frac{W(\varphi ^{\varepsilon_i} _0 (x))}{\varepsilon_i} \qquad \text{for any} \ x \in \Omega
\ \text{and} \
i\in\N.
\label{eq:3.14}
\end{equation}
Note that the function $\varphi _0 ^{\varepsilon _i}$ defined by \eqref{eq:2.7} 
satisfies all the assumptions above, for sufficiently large $i$.
Throughout this paper, we often write $\varepsilon$ as $\varepsilon_i$ for simplicity.

\subsection{Pointwise estimates}
%Let $\varphi ^\varepsilon _{\pm}$ be a solution to \eqref{ac} with
%$\varphi ^\varepsilon _{+} (\cdot,0) = \sup_{x \in \Omega} \varphi_0 ^\varepsilon (x) $
%and
%$\varphi ^\varepsilon _{-} (\cdot,0) = \inf_{x \in \Omega} \varphi_0 ^\varepsilon (x) $.
%Then one can easily check that $|\varphi ^\varepsilon _{\pm} (x,t) | <1$ for any $x \in \Omega$ and $t\geq 0$, since $|\varphi ^\varepsilon _{\pm} (\cdot,0) |<1$.
The comparison principle implies the following estimate.
\begin{proposition}\label{prop3.1}
The solution $\varphi ^\varepsilon$ to \eqref{ac} with \eqref{initial} satisfies 
\begin{equation}
\vert \varphi ^\varepsilon (x,t) \vert <1,\qquad x \in \Omega , \ t\geq 0.
\label{eq:3.4}
\end{equation}
\end{proposition}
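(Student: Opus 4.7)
The plan rests on the observation that both constants $\bar\varphi \equiv 1$ and $\bar\varphi \equiv -1$ are classical solutions of \eqref{ac}: at $u = \pm 1$ both $W'(u)$ and $\sqrt{2W(u)}$ vanish, so the entire right-hand side of \eqref{ac} is zero regardless of $\lambda^\varepsilon(t)$. So the proof is essentially a comparison argument, with the mild twist that the equation is nonlocal through $\lambda^\varepsilon$.

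First I would invoke local existence of a classical solution (Remark \ref{rem3.3}) on some interval $[0, T^\ast)$. On this interval $\lambda^\varepsilon(t)$ is a continuous function of $t$ alone, so \eqref{ac} may be read as the semilinear parabolic equation
\[
\varepsilon \varphi^\varepsilon _t - \varepsilon \Delta \varphi^\varepsilon = -\frac{W'(\varphi^\varepsilon)}{\varepsilon} + \lambda^\varepsilon(t)\sqrt{2W(\varphi^\varepsilon)},
\]
whose nonlinearity is locally Lipschitz in the unknown and vanishes at $\pm 1$. The weak bound $|\varphi^\varepsilon| \leq 1$ then follows by comparison with the super/subsolution $\bar\varphi \equiv \pm 1$, using $|\varphi_0^\varepsilon| < 1$ from \eqref{initial}. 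Alternatively, testing \eqref{ac} against $(\varphi^\varepsilon - 1)_+$ and using the favorable sign of $-(u-1)_+ W'(u) \leq 0$ on $\{u > 1\}$ gives a Gronwall inequality for $\int_\Omega (\varphi^\varepsilon - 1)_+^2 \, dx$, which must therefore remain at zero since it starts there; the same works for $(\varphi^\varepsilon+1)_-$.

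To upgrade to the strict inequality I would set $g := 1 - \varphi^\varepsilon \in [0, 2]$. Using $W'(1-g) = -2g(1-g)(2-g)$ and $\sqrt{2W(1-g)} = g(2-g)$, a direct computation shows $g$ satisfies the linear parabolic equation
\[
g_t - \Delta g + \left[\frac{2(1-g)(2-g)}{\varepsilon^2} + \frac{\lambda^\varepsilon(t)(2-g)}{\varepsilon}\right] g = 0,
\]
whose zeroth-order coefficient is bounded (since $|\varphi^\varepsilon|\leq 1$ and $\lambda^\varepsilon$ is continuous on $[0,T^\ast)$). Since $g(\cdot, 0) \geq \eta > 0$ for some $\eta > 0$ by \eqref{initial}, and $g\geq 0$ globally, the strong maximum principle forbids $g$ from attaining the value $0$ at any interior parabolic point (otherwise $g\equiv 0$ on a backward parabolic neighborhood, contradicting $g(\cdot,0)>0$). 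This gives $\varphi^\varepsilon < 1$; the identical argument applied to $g:=1+\varphi^\varepsilon$ gives $\varphi^\varepsilon > -1$. Together with local-to-global continuation (justified by the now-established bound $|\varphi^\varepsilon|\leq 1$), this yields \eqref{eq:3.4} for all $t\geq 0$. The only conceptual obstacle is the nonlocality of $\lambda^\varepsilon$, which is handled cleanly by treating $\lambda^\varepsilon(t)$ as a prescribed coefficient once local existence is in hand.
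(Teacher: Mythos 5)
Your proof is correct, and it follows the same underlying idea as the paper -- a comparison argument that exploits the fact that $\pm 1$ are equilibria of the full reaction term (including the nonlocal piece, since both $W'$ and $\sqrt{2W}$ vanish at $\pm 1$) -- but the technical route differs enough to be worth noting. The paper argues by contradiction: it defines $t_0$ as the first time $\sup_x\varphi^\varepsilon$ reaches $1$, bounds $\lambda^\varepsilon$ a priori by a constant $L^\varepsilon$ on a slightly larger interval where $|\varphi^\varepsilon|\leq 2$, and constructs an explicit spatially-constant supersolution $\varphi_+^\varepsilon$ solving the corresponding ODE with $L^\varepsilon$ in place of $\lambda^\varepsilon$; uniqueness for that ODE keeps $\varphi_+^\varepsilon<1$, and the parabolic comparison principle then gives $\varphi^\varepsilon\leq\varphi_+^\varepsilon<1$, contradicting the definition of $t_0$. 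You instead observe that the constants $\pm 1$ are themselves exact solutions of \eqref{ac} for \emph{any} $\lambda^\varepsilon$, so a direct comparison with these constants gives $|\varphi^\varepsilon|\leq 1$ without needing the $L^\varepsilon$ bound or the auxiliary ODE; you then linearize around $\varphi^\varepsilon=1$ (writing $g=1-\varphi^\varepsilon$) and invoke the strong maximum principle for the resulting linear parabolic equation with bounded zeroth-order coefficient to upgrade $\leq$ to $<$. Both proofs treat $\lambda^\varepsilon(t)$ as a fixed coefficient once a local classical solution is in hand, and both terminate with a continuation/global argument. Your route is somewhat cleaner because the $\pm 1$ barriers are uniform in $\lambda^\varepsilon$, so the explicit a priori bound $L^\varepsilon$ on the nonlocal term (which requires the intermediate $|\varphi^\varepsilon|\leq 2$ hypothesis) is never needed. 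One small caveat: you cite Remark \ref{rem3.3} for local existence, but that remark itself leans on Proposition \ref{prop3.2}, which in turn uses the present proposition; what you actually need is only short-time local existence from the smoothness of $\varphi_0^\varepsilon$ and standard semilinear parabolic theory, which is independent of Proposition \ref{prop3.2} and should be cited as such to avoid the appearance of circularity.
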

\begin{remark}\label{rem3.3}
The estimate \eqref{eq:3.4} implies $\sqrt{2W (\varphi ^\varepsilon)} =1-(\varphi ^\varepsilon)^2 $.
By a priori estimates including Proposition \ref{prop3.2} below, 
standard PDE theories imply the global existence and uniqueness of the classical solution to \eqref{ac}
with initial data $\varphi ^\varepsilon _0$ satisfying \eqref{initial}.
\end{remark}

\begin{proof}
Suppose that 
$t_0:= \inf \{ t \in [0,\infty) \mid \sup_{x \in \Omega} \varphi ^\varepsilon (x,t) \geq 1 \} <\infty$.
Then $t_0 > 0 $ since $\sup _{x \in \Omega} \varphi ^\varepsilon _0 (x) <1 $. 
We may assume that there exists $t_1 \in (t_0, \infty)$ such that
$\sup_{x \in \Omega} \varphi ^\varepsilon (x,t) \leq 2$ for any $t <t_1$.
Let $\varphi ^\varepsilon _+ $ be a solution to
\begin{equation}\label{eq:super}
\varepsilon (\varphi ^{\varepsilon} _+)_t =\varepsilon \Delta \varphi_+ ^{\varepsilon} -\dfrac{W' (\varphi ^{\varepsilon} _+ )}{\varepsilon }+ L^\varepsilon \sqrt{2W(\varphi ^\varepsilon _+)} ,
\qquad (x,t) \in \Omega \times (0,t_1)
\end{equation}
with initial data $\varphi ^\varepsilon _+ (x,0) = \sup _{x \in \Omega} \varphi ^\varepsilon _0 (x) $,
where $L^\varepsilon := 2 \varepsilon ^{-\alpha} \max _{ \vert s \vert \leq 2} \vert k(s) \vert $.
Note that $\sup_{ t \in (0, t_1)} \vert \lambda^\varepsilon (t) \vert \leq L^\varepsilon $,
where $\lambda^\varepsilon$ is given by the solution $\varphi ^\varepsilon$ to \eqref{ac},
and this implies that $\varphi ^{\varepsilon} _+$ is a supersolution to \eqref{ac} if we regard
$\lambda^\varepsilon$ as a given function.
Since the initial data is constant and 
$W'(s), \sqrt{2W(s)} \to 0$ as $s\to 1$, 
one can easily check that the solution $\varphi_+ ^\varepsilon$ to
\eqref{eq:super} depends only on $t$ and satisfies $\varphi _+ ^\varepsilon (t) <1$ for any $t \in  (0,t_1)$.
Therefore the comparison principle implies that 
$\varphi ^\varepsilon (x,t) \leq \varphi _+ ^\varepsilon (t) <1$ for any $(x,t) \in \Omega \times (0,t_1)$.
This yields a contradiction. Hence $\varphi ^\varepsilon (x,t) <1$ 
for any $(x,t) \in \Omega \times [0,\infty)$ and the remained claim can be proved similarly.
\end{proof}

In addition, by Proposition \ref{prop3.1} and the maximum principle we have the following Proposition (see \cite{ilmanen1993}). 
\begin{proposition}\label{prop3.2}
If the solution $\varphi ^\varepsilon$ to \eqref{ac} satisfies \eqref{initial} 
and \eqref{eq:3.14},
then we have
\begin{equation}
\frac{\varepsilon \vert \nabla \varphi^\varepsilon (x,t) \vert ^2 }{2} 
\leq \frac{W(\varphi ^\varepsilon (x,t))}{\varepsilon} ,\qquad x \in \Omega, \ t\geq 0. 
\label{eq:3.15}
\end{equation}
\end{proposition}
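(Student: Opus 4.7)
\emph{Plan.} I would follow the Ilmanen discrepancy strategy used for the standard Allen--Cahn equation, modified to absorb the non-local perturbation. Introduce the discrepancy
\[
\xi^\varepsilon := \frac{\varepsilon|\nabla\varphi^\varepsilon|^2}{2} - \frac{W(\varphi^\varepsilon)}{\varepsilon};
\]
then \eqref{eq:3.14} says $\xi^\varepsilon(\cdot,0)\leq 0$ on $\Omega$, and the goal is to propagate this bound by a parabolic maximum principle on the compact torus.

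First, I would derive the PDE satisfied by $\xi^\varepsilon$. Differentiating \eqref{ac} in space and using the identities $W'(\varphi^\varepsilon)=-2\varphi^\varepsilon\sqrt{2W(\varphi^\varepsilon)}$ and $\nabla\sqrt{2W(\varphi^\varepsilon)}=-2\varphi^\varepsilon\nabla\varphi^\varepsilon$ (valid because $|\varphi^\varepsilon|<1$ by Proposition~\ref{prop3.1}), a direct calculation yields
\[
\xi^\varepsilon_t - \Delta\xi^\varepsilon
= -\varepsilon\sum_{j,k}(\varphi^\varepsilon_{x_jx_k})^2
+ \frac{(W'(\varphi^\varepsilon))^2}{\varepsilon^3}
+ 2\lambda^\varepsilon\varphi^\varepsilon\Bigl(\frac{2W(\varphi^\varepsilon)}{\varepsilon^2} - |\nabla\varphi^\varepsilon|^2\Bigr).
\]
The algebraic identity $2\xi^\varepsilon/\varepsilon = |\nabla\varphi^\varepsilon|^2 - 2W(\varphi^\varepsilon)/\varepsilon^2$ then collapses the last bracket to $-2\xi^\varepsilon/\varepsilon$, so the entire non-local contribution becomes $-4\lambda^\varepsilon\varphi^\varepsilon\xi^\varepsilon/\varepsilon$.

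Next, I would run the maximum principle on an arbitrary finite cylinder $\Omega\times[0,T]$ for the weighted function $\tilde\xi^\varepsilon := e^{-Kt}\xi^\varepsilon$, with $K>0$ to be chosen. Compactness of $\Omega=\mathbb{T}^d$ and smoothness of $\varphi^\varepsilon$ guarantee that $\tilde\xi^\varepsilon$ attains its maximum at some $(x_0,t_0)$. The case $t_0=0$ is settled by \eqref{eq:3.14}, so assume $t_0>0$; then $\nabla\xi^\varepsilon(x_0,t_0)=0$, $\Delta\xi^\varepsilon(x_0,t_0)\leq 0$, and $\xi^\varepsilon_t(x_0,t_0)\geq K\xi^\varepsilon(x_0,t_0)$. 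If $\nabla\varphi^\varepsilon(x_0,t_0)=0$ the conclusion $\xi^\varepsilon(x_0,t_0)\leq 0$ is immediate. Otherwise, $\nabla\xi^\varepsilon=0$ forces $\nabla\varphi^\varepsilon$ to be an eigenvector of $\mathrm{Hess}(\varphi^\varepsilon)$ with eigenvalue $W'(\varphi^\varepsilon)/\varepsilon^2$, and the Frobenius bound $\sum_{j,k}(\varphi^\varepsilon_{x_jx_k})^2\geq (W'(\varphi^\varepsilon))^2/\varepsilon^4$ for a symmetric matrix with a given eigenvalue exactly cancels the $(W'(\varphi^\varepsilon))^2/\varepsilon^3$ term. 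Combining the differential-inequality and maximum information gives $(K+4\lambda^\varepsilon\varphi^\varepsilon/\varepsilon)\xi^\varepsilon(x_0,t_0)\leq 0$. Since $|\varphi^\varepsilon|<1$ and \eqref{lambdadef} shows $\lambda^\varepsilon$ is bounded on $[0,T]$, choosing $K>4\sup_{[0,T]}|\lambda^\varepsilon|/\varepsilon$ forces the coefficient to be positive, hence $\xi^\varepsilon(x_0,t_0)\leq 0$. This gives $\tilde\xi^\varepsilon\leq 0$ on $\Omega\times[0,T]$, and letting $T\to\infty$ yields \eqref{eq:3.15}.

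The main obstacle is the presence of the sign-indefinite non-local term $\lambda^\varepsilon\sqrt{2W(\varphi^\varepsilon)}$: in Ilmanen's original setting the $(W')^2/\varepsilon^3$ piece is exactly cancelled by the Hessian lower bound and nothing else survives, but here the extra calculation produces new terms of both signs. The fortunate feature that makes the argument still go through is that, after the critical-point substitutions, those extra terms collapse into a single zero-order term $-4\lambda^\varepsilon\varphi^\varepsilon\xi^\varepsilon/\varepsilon$ proportional to $\xi^\varepsilon$ itself, with a coefficient controlled by $\|\lambda^\varepsilon\|_\infty$; this is exactly what an exponential-in-time weight $e^{-Kt}$ can absorb. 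Without this cancellation one would have to construct a delicate barrier.
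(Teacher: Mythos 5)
Your proof is correct, but it takes a genuinely different technical route from the paper's. The paper changes variables to $r^\varepsilon := (q^\varepsilon)^{-1}(\varphi^\varepsilon)$ (well-defined by Proposition~\ref{prop3.1}), derives the equation \eqref{ac-r} for $r^\varepsilon$, and then looks at $w^\varepsilon = |\nabla r^\varepsilon|^2 - 1$; since $\xi^\varepsilon = \frac{W(\varphi^\varepsilon)}{\varepsilon}\,w^\varepsilon$ and $W(\varphi^\varepsilon)>0$, the two discrepancies have the same sign. The decisive advantage of this change of variables is that $\lambda^\varepsilon$ enters \eqref{ac-r} as a spatially constant additive term, so it vanishes identically upon taking $\nabla$, and the zero-order coefficient in the resulting parabolic inequality for $w^\varepsilon$ is $-\frac{4q^\varepsilon_r}{\varepsilon}|\nabla r^\varepsilon|^2 \le 0$; the classical maximum principle then applies with no weight and no bound on $\lambda^\varepsilon$ needed. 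Your approach instead works with $\xi^\varepsilon$ directly in Ilmanen's style: you must invoke the Hessian eigenvector trick at the critical point to kill $-\varepsilon|\mathrm{Hess}\,\varphi^\varepsilon|^2 + (W')^2/\varepsilon^3$, and the surviving nonlocal contribution is a bounded zero-order term $-\tfrac{4\lambda^\varepsilon\varphi^\varepsilon}{\varepsilon}\xi^\varepsilon$ of indeterminate sign, which you absorb with the weight $e^{-Kt}$ after noting from \eqref{lambdadef} and $|\varphi^\varepsilon|<1$ that $\sup_t|\lambda^\varepsilon(t)| \le \tfrac{4}{3}\varepsilon^{-\alpha}$ for each fixed $\varepsilon$. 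Both routes are sound and rest on the same ideas (maximum principle plus the structure of $W$ and $q^\varepsilon$); the paper's version is slightly cleaner because the substitution eliminates $\lambda^\varepsilon$ structurally, whereas yours is self-contained at the level of $\xi^\varepsilon$ and makes the role of the bounded perturbation explicit. One small caveat worth noting: your compactness argument yields a maximum on $\Omega\times[0,T]$, and if it lies on the top slice $t_0=T$ you only get $\tilde\xi^\varepsilon_t(x_0,T)\ge 0$ rather than equality --- but that is exactly the inequality you use, so the argument is unaffected.
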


\begin{proof}
%Let $q(r)$ be a solution to 
%$$
%q _r (r) = \sqrt{2W (q (r))} ,\qquad r \in \R
%$$
%with $q(0) =0$, $q_r >0$ on $\R$, and $q(\pm \infty) =\pm 1$. In the case of $W(q) = \frac{(1-q^2)^2}{2}$, 
%$q(r) = \tanh r$. Set $q^\varepsilon (r) = q (r/\varepsilon)$. Then $q^\varepsilon$ satisfies
%\begin{equation}
%\frac{\varepsilon (q_r ^\varepsilon (r)) ^2 }{2} 
%= \frac{W(q^\varepsilon(r) )}{\varepsilon} ,\qquad r \in \R
%\label{eq:3.16}
%\end{equation}
%and
%\begin{equation}
%q_{rr} ^\varepsilon (r)  
%= \frac{W' (q^\varepsilon(r) )}{\varepsilon ^2} ,\qquad r \in \R.
%\label{eq:3.17}
%\end{equation}
By \eqref{eq:3.4}, we can define a function $r^\varepsilon$ by
\[
r^\varepsilon (x,t) = ( q^\varepsilon )^{-1} (\varphi ^\varepsilon (x,t)), \qquad x \in \Omega, \ t\geq 0,
\]
since $q^\varepsilon : \R \to (-1, 1)$ is one to one and surjective. 
We compute that
\begin{equation*}
\begin{split}
\varepsilon q_r ^\varepsilon r_t ^\varepsilon
&=
\varepsilon q_r ^\varepsilon \Delta r^\varepsilon
+ \varepsilon q_{rr} ^\varepsilon  \vert \nabla r ^\varepsilon \vert ^2
-\dfrac{W' (q ^\varepsilon )}{\varepsilon } 
+ \lambda^{\varepsilon} \sqrt{2W(q ^\varepsilon )}\\
&=
\sqrt{2W(q ^\varepsilon )} \Delta r^\varepsilon
+ \dfrac{W' (q ^\varepsilon )}{\varepsilon } ( \vert \nabla r ^\varepsilon \vert ^2 -1) 
+ \lambda^{\varepsilon} \sqrt{2W(q ^\varepsilon )},
\end{split}
\end{equation*}
where we used \eqref{eq:3.16} and \eqref{eq:3.17}.
Then we obtain
\begin{equation}
r^\varepsilon _t
=
\Delta r^\varepsilon
- \dfrac{2 q ^\varepsilon (r^\varepsilon) }{\varepsilon } ( \vert \nabla r ^\varepsilon \vert ^2 -1) 
+ \lambda^{\varepsilon} ,
\label{ac-r}
\end{equation}
where we used $W' (q^\varepsilon) / \sqrt{2W (q^\varepsilon)} =-2 q^\varepsilon$.
We compute
\begin{equation*}
\frac12 \partial _t \vert \nabla r^\varepsilon \vert^2
=
\frac12 \Delta \vert \nabla r^\varepsilon \vert^2 - \vert \nabla r^\varepsilon \vert ^2 
-\nabla r ^\varepsilon \cdot \nabla \left( \dfrac{2 q ^\varepsilon (r^\varepsilon) }{\varepsilon } 
( \vert \nabla r ^\varepsilon \vert ^2 -1) \right) ,
\end{equation*}
where we used $\nabla \lambda ^\varepsilon =0$.
Set $w^\varepsilon = \vert \nabla r ^\varepsilon \vert ^2 -1$. Then $w^\varepsilon$ satisfies
\begin{equation*}
w ^\varepsilon _t
\leq
\Delta w^\varepsilon  
- \dfrac{4 q ^\varepsilon (r^\varepsilon) }{\varepsilon } \nabla r^\varepsilon \cdot \nabla w^\varepsilon
-2\left(\nabla r^\varepsilon \cdot \nabla \dfrac{2 q ^\varepsilon (r^\varepsilon)}{\varepsilon} \right) w ^\varepsilon.
\end{equation*}
In addition, we have $w ^\varepsilon (x,0) \leq 0$, because 
\[
\frac{\varepsilon \vert \nabla \varphi^\varepsilon _0 (x) \vert ^2 }{2} 
- \frac{W(\varphi ^\varepsilon _0 (x))}{\varepsilon}
=
\frac{W(q ^\varepsilon (r^\varepsilon (x,0)) )}{\varepsilon}
(\vert \nabla r ^\varepsilon (x,0) \vert ^2 -1) \leq 0
\]
by \eqref{eq:3.14}. Hence the maximum principle implies $w^\varepsilon (x,t) \leq 0$ for any $x \in \Omega$ and $t \in [0,\infty)$,
and we obtain \eqref{eq:3.15} by 
$\frac{\varepsilon \vert \nabla \varphi^\varepsilon \vert ^2 }{2} 
- \frac{W(\varphi ^\varepsilon)}{\varepsilon}
= \frac{W(q ^\varepsilon)}{\varepsilon} w^\varepsilon \leq 0.
$
\end{proof}
\subsection{Energy estimates}
By $E_S ^\varepsilon (t)=\sigma \mu _t ^\varepsilon (\Omega)$, 
\eqref{eq:1.11}, \eqref{eq:1.12}, and \eqref{d1}, we can easily obtain the following estimates.
\begin{proposition}
For any $\varepsilon >0$ and $T>0$, we have
\begin{equation}
\mu _{T} ^\varepsilon (\Omega) 
+
\frac{1}{\sigma} \int _{0} ^{T} \int_{\Omega} \varepsilon (\varphi _t ^\varepsilon )^2 \, dxdt
\leq
\mu _{0} ^\varepsilon (\Omega) 
\label{eq:3.1}
\end{equation}
and
\begin{equation}
\sup _{\varepsilon \in (0,1)} \mu _T ^\varepsilon (\Omega) 
\leq \sup _{\varepsilon \in (0,1)} \mu _0 ^\varepsilon (\Omega) 
\leq D_1.
\label{eq:3.2}
\end{equation}
\end{proposition}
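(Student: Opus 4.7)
The plan is to read off both inequalities directly from the energy identity \eqref{eq:1.12} together with the definition of the measure $\mu_t^\varepsilon$. The key preliminary observation is that $E_S^\varepsilon(t) = \sigma \mu_t^\varepsilon(\Omega)$, which follows by inserting the test function $\phi \equiv 1$ in \eqref{mu} and comparing with the definition of $E_S^\varepsilon$. Once this identification is in hand, the proposition is essentially a bookkeeping step.

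For the first inequality, I start from \eqref{eq:1.12} with time $T$, namely
\[
E_S^\varepsilon(T) + E_P^\varepsilon(T) + \int_0^T \int_\Omega \varepsilon (\varphi_t^\varepsilon)^2\, dx\, dt = E_S^\varepsilon(0).
\]
Since $E_P^\varepsilon(T) \geq 0$ by its very definition as a squared quantity divided by $2\varepsilon^\alpha$, I may drop it from the left-hand side. Substituting $E_S^\varepsilon(t) = \sigma \mu_t^\varepsilon(\Omega)$ on both sides and dividing through by $\sigma$ yields exactly \eqref{eq:3.1}.

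For the second inequality, I simply discard the non-negative dissipation term $\frac{1}{\sigma}\int_0^T \int_\Omega \varepsilon (\varphi_t^\varepsilon)^2\, dx\, dt$ from \eqref{eq:3.1}, which gives $\mu_T^\varepsilon(\Omega) \leq \mu_0^\varepsilon(\Omega)$ uniformly in $\varepsilon$. Taking the supremum over $\varepsilon \in (0,1)$ and invoking the first component of assumption \eqref{d1}, namely $\sup_i \mu_0^{\varepsilon_i}(\Omega) \leq D_1$, produces \eqref{eq:3.2}.

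No substantial obstacle arises here: both estimates are immediate once one notes $E_P^\varepsilon \geq 0$ and $E_S^\varepsilon = \sigma \mu_t^\varepsilon(\Omega)$. The only mild point worth stating explicitly is that \eqref{eq:1.11}, which underlies \eqref{eq:1.12}, requires the classical solution to \eqref{ac}, whose existence is guaranteed by Remark \ref{rem3.3} under the standing assumption \eqref{initial}.
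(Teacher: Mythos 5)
Your proof is correct and takes essentially the same route as the paper, which simply cites $E_S^\varepsilon(t) = \sigma\mu_t^\varepsilon(\Omega)$, \eqref{eq:1.11}, \eqref{eq:1.12}, and \eqref{d1} and declares the estimates immediate. Your write-up just makes the implicit bookkeeping explicit: dropping $E_P^\varepsilon(T)\geq 0$ from \eqref{eq:1.12} and dividing by $\sigma$ gives \eqref{eq:3.1}, and dropping the dissipation term and taking the supremum with \eqref{d1} gives \eqref{eq:3.2}.
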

\begin{remark}
Generally, ``$\mu _{s} ^\varepsilon (\Omega) \leq \mu _{t} ^\varepsilon (\Omega)$
 for any $0\leq t <s <\infty$'' can not be shown from the energy estimates above. 
However, there exists a countable set $B$ such that 
$\mu _{s} (\Omega) \leq \mu _{t} (\Omega)$
holds for any $t,s \in [0,\infty) \setminus B$ with $t<s$,
where $\mu _{t} (\Omega) =\lim _{\varepsilon \to 0} \mu _{t} ^\varepsilon (\Omega) $ 
(see Proposition \ref{prop3.12}). 
\end{remark}
Set $D' _1 := \sup _{\varepsilon \in (0,1)} \mu _0 ^\varepsilon (\Omega)$. Note that $D' _1 \leq D_1$.
By an argument similar to that in \cite{bronsard-stoth}, we have
the following lemma.
\begin{lemma}\label{lem3.2}
There exist constants $\Cl{const:3.1-2}= \Cr{const:3.1-2}(\omega,d ,D' _1)>0$,
$\Cl{const:3.1}= \Cr{const:3.1}(\omega,d ,D' _1)>0$, and $\epsilon_1 =\epsilon_1 (\omega, d, D' _1, \alpha) >0$
such that
\begin{equation}
\int_ {0} ^{T} \vert \lambda ^\varepsilon (t) \vert ^2 \, dt 
\leq \Cr{const:3.1-2} (\mu _0 ^\varepsilon (\Omega) -\mu _{T} ^\varepsilon (\Omega) + T )
\quad \text{for any} \ \varepsilon  \in (0,\epsilon_1) \ \text{and} \ T>0,
\label{eq:3.5.additional}
\end{equation}
and
\begin{equation}
\sup _{\varepsilon \in (0,\epsilon_1)} 
\int_ {t_1} ^{t_2} \vert \lambda ^\varepsilon (t) \vert^2 \, dt 
\leq \Cr{const:3.1} (1+t_2 -t_1) 
\qquad \text{for any} \  0 \leq t_1 < t_2 <\infty.
\label{eq:3.5}
\end{equation}

\end{lemma}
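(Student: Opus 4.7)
The plan is to multiply \eqref{ac} by $\sqrt{2W(\varphi^{\varepsilon})}$ and integrate over $\Omega$. Using the pointwise identities $\nabla\sqrt{2W(\varphi^{\varepsilon})} = -2\varphi^{\varepsilon}\nabla\varphi^{\varepsilon}$ (since $\sqrt{2W(s)}=1-s^2$) and $W'(s)\sqrt{2W(s)}=-4sW(s)$, integration by parts on the torus yields
\[
\lambda^{\varepsilon}(t)\,I^{\varepsilon}(t)
= \int_{\Omega}\varepsilon \varphi^{\varepsilon}_t \sqrt{2W(\varphi^{\varepsilon})}\,dx
- 4\int_{\Omega}\varphi^{\varepsilon}\rho^{\varepsilon}\,dx,
\]
where $I^{\varepsilon}(t):=\int_\Omega 2W(\varphi^{\varepsilon})\,dx$ and $\rho^{\varepsilon}:=\tfrac{\varepsilon}{2}|\nabla\varphi^{\varepsilon}|^2+\tfrac{W(\varphi^{\varepsilon})}{\varepsilon}$ is the Modica--Mortola density (so $\int\rho^{\varepsilon}\,dx=\sigma\mu^{\varepsilon}_t(\Omega)$). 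Cauchy--Schwarz bounds the first term on the right by $\bigl(\varepsilon I^{\varepsilon}(t)\int_\Omega\varepsilon(\varphi^{\varepsilon}_t)^2\,dx\bigr)^{1/2}$, which after squaring and integrating in time will produce the $\mu^{\varepsilon}_0(\Omega)-\mu^{\varepsilon}_T(\Omega)$ contribution via \eqref{eq:3.1}.

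To convert the identity into a pointwise bound on $|\lambda^{\varepsilon}|^2$, I need a uniform lower bound $I^{\varepsilon}(t)\geq c_0(\omega,d,D_1')\,\varepsilon$ valid for $\varepsilon$ below some $\epsilon_1$. From \eqref{eq:1.5}, \eqref{eq:3.2} and \eqref{omega}, once $\varepsilon$ is small, $\bigl|\int_\Omega k(\varphi^{\varepsilon}(\cdot,t))\,dx\bigr|\leq \tfrac{2}{3}-\tfrac{\omega}{2}$. Combined with Proposition \ref{prop3.1} and $\int_\Omega W(\varphi^{\varepsilon})/\varepsilon\,dx\leq\sigma D_1'$, this forces both pure-phase regions $\{\varphi^{\varepsilon}>1/2\}$ and $\{\varphi^{\varepsilon}<-1/2\}$ to have Lebesgue measure at least $c(\omega,D_1')>0$: the transition set $\{|\varphi^{\varepsilon}|\leq 1/2\}$ has measure $O(\varepsilon)$ by the energy bound (there $W\geq c>0$), and saturation of $|\int k(\varphi^{\varepsilon})|$ near $\tfrac{2}{3}$ would require one pure phase to exhaust almost all of $\Omega$. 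A coarea/isoperimetric argument for $k(\varphi^{\varepsilon})$, whose BV-norm is controlled through $|\nabla k(\varphi^{\varepsilon})|=\sqrt{2W(\varphi^{\varepsilon})}|\nabla\varphi^{\varepsilon}|\leq\rho^{\varepsilon}$, then yields $\mu^{\varepsilon}_t(\Omega)\geq c_1(\omega,d,D_1')>0$. Combined with the pointwise bound $2W(\varphi^{\varepsilon})\geq\varepsilon\rho^{\varepsilon}$ (which follows from \eqref{eq:3.15}), this gives $I^{\varepsilon}(t)\geq\varepsilon\sigma\mu^{\varepsilon}_t(\Omega)\geq c_0\varepsilon$.

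Squaring the identity, dividing by $I^{\varepsilon}(t)^2$, and inserting the lower bound on $I^{\varepsilon}$ produces
\[
|\lambda^{\varepsilon}(t)|^2 \leq \frac{C}{c_0}\int_\Omega\varepsilon(\varphi^{\varepsilon}_t)^2\,dx + \frac{C}{I^{\varepsilon}(t)^2}\Bigl(\int_\Omega\varphi^{\varepsilon}\rho^{\varepsilon}\,dx\Bigr)^2.
\]
The first summand integrates over $[0,T]$ via \eqref{eq:3.1} to yield the $\mu^{\varepsilon}_0-\mu^{\varepsilon}_T$ contribution of \eqref{eq:3.5.additional}. The hard part will be the second, bulk drift summand: the naive estimate $|\int\varphi^{\varepsilon}\rho^{\varepsilon}|\leq\sigma D_1'$ paired with $I^{\varepsilon}(t)^2\geq c_0^2\varepsilon^2$ yields only $O(\varepsilon^{-2})$, which is not $\varepsilon$-uniform. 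Following the Bronsard--Stoth step, I must improve this to $\bigl|\int_\Omega\varphi^{\varepsilon}\rho^{\varepsilon}\,dx\bigr|\leq C\varepsilon$, reflecting the approximate odd symmetry of $\varphi^{\varepsilon}$ about its zero level set against the (leading-order) even density $\rho^{\varepsilon}$ --- for the optimal profile $\tanh(r/\varepsilon)$ one has $\rho^{\varepsilon} = \varepsilon^{-1}\mathrm{sech}^4(r/\varepsilon)$, and the leading term of $\int\tanh\cdot\mathrm{sech}^4$ vanishes by oddness, leaving only the $O(\varepsilon)$ curvature correction. Quantitatively this cancellation is extracted by a further integration by parts based on $\varphi^{\varepsilon}\nabla\varphi^{\varepsilon}=\nabla((\varphi^{\varepsilon})^2/2)$, substitution of the PDE \eqref{ac} itself to replace $\varepsilon\Delta\varphi^{\varepsilon}$, and absorption of the $\lambda^{\varepsilon}$-linear terms so generated. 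The resulting pointwise estimate $|\lambda^{\varepsilon}(t)|^2\leq C_1\int_\Omega\varepsilon(\varphi^{\varepsilon}_t)^2\,dx+C_2$ then gives \eqref{eq:3.5.additional} upon integration over $[0,T]$ with \eqref{eq:3.1}, and \eqref{eq:3.5} follows at once from \eqref{eq:3.5.additional} and \eqref{eq:3.2}.
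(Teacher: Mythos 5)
Your approach is genuinely different from the paper's, and it contains a gap at the crucial step. The paper multiplies \eqref{ac} by $\nabla\varphi^\varepsilon\cdot\vec\zeta$ with $\vec\zeta=\nabla u$, where $u$ solves a Poisson equation whose source is a mollification of $k(\varphi^\varepsilon)$ minus its mean; after integration by parts the coefficient of $\lambda^\varepsilon$ is $-\int_\Omega k(\varphi^\varepsilon)\div\vec\zeta\,dx$, which \eqref{eq:3.9}--\eqref{eq:3.13} bound \emph{below by a positive $\varepsilon$-independent constant} of order $\omega^2$, using \eqref{omega} and \eqref{eq:1.5}. You multiply instead by $\sqrt{2W(\varphi^\varepsilon)}$, which makes the coefficient of $\lambda^\varepsilon$ equal to $I^\varepsilon(t)=\int_\Omega 2W(\varphi^\varepsilon)\,dx$. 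Your lower bound $I^\varepsilon\geq c_0\varepsilon$ (via $\mu_t^\varepsilon(\Omega)\geq c_1(\omega,d)>0$) is sound, but \eqref{eq:3.15} also gives $I^\varepsilon\leq 2\varepsilon\sigma D_1'$, so this coefficient is a factor $\varepsilon$ smaller than the paper's. That forces you to prove the much stronger estimate $\bigl|\int_\Omega\varphi^\varepsilon\rho^\varepsilon\,dx\bigr|\leq C\varepsilon$, and you never do.

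The manipulation you sketch for the hard part is circular. Writing $4\int\varphi^\varepsilon\rho^\varepsilon = 2\int\varepsilon\varphi^\varepsilon|\nabla\varphi^\varepsilon|^2 + 4\int\varphi^\varepsilon W(\varphi^\varepsilon)/\varepsilon$, integrating by parts in the first summand via $\varphi\nabla\varphi=\nabla(\varphi^2/2)$ to get $-\int\varepsilon\Delta\varphi^\varepsilon(\varphi^\varepsilon)^2$, substituting $\varepsilon\Delta\varphi^\varepsilon=\varepsilon\varphi_t^\varepsilon+W'(\varphi^\varepsilon)/\varepsilon-\lambda^\varepsilon\sqrt{2W(\varphi^\varepsilon)}$ there and also in $\int W'(\varphi^\varepsilon)/\varepsilon$, and using $-W'(s)s^2+4sW(s)=-W'(s)$, one finds
\[
4\int_\Omega\varphi^\varepsilon\rho^\varepsilon\,dx
=\int_\Omega\varepsilon\varphi_t^\varepsilon\sqrt{2W(\varphi^\varepsilon)}\,dx-\lambda^\varepsilon(t)\,I^\varepsilon(t),
\]
which is exactly the identity you started from, rearranged; ``absorbing the $\lambda^\varepsilon$-linear terms'' annihilates all content and yields no bound on $\int\varphi^\varepsilon\rho^\varepsilon$. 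Nor does the obvious decomposition help: in $\varphi^\varepsilon\rho^\varepsilon=\varphi^\varepsilon\sqrt{2W}|\nabla\varphi^\varepsilon|+\varphi^\varepsilon(\rho^\varepsilon-\sqrt{2W}|\nabla\varphi^\varepsilon|)$ the second term is only controlled by $|\xi_\varepsilon|$, whose integral is not known to be $O(\varepsilon)$ at fixed $\varepsilon$ (its vanishing is established only in the limit, for a.e.\ $t$, in Theorem \ref{thm4.5}), while the first, by the coarea formula, equals $\int_{-1}^1\tau(1-\tau^2)\mathscr{H}^{d-1}(\{\varphi^\varepsilon=\tau\})\,d\tau$ and is small only to the extent that the level-set areas are symmetric in $\pm\tau$ --- a quantitative fact not available here. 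The paper's Poisson test function sidesteps all of this by arranging an $O(1)$ coefficient of $\lambda^\varepsilon$ from the outset.
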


\begin{proof}
Let $\vec{\zeta}=(\zeta ^1 ,\zeta^2,\dots , \zeta^d) : \Omega \times [0,\infty) \to \mathbb{R}^d$ 
be a smooth periodic test function.
Multiply \eqref{ac} by $\nabla \varphi ^\varepsilon \cdot \vec{\zeta}$ and integrate over $\Omega$.
Then we have
\begin{equation}
\begin{split}
&\int _{\Omega} \varepsilon \varphi ^\varepsilon _t \nabla \varphi ^\varepsilon \cdot \vec \zeta \, dx
+ \sum _{i,j=1} ^d \int_{\Omega} 
\varepsilon \varphi _{x_i} ^\varepsilon \varphi _{x_j} ^\varepsilon \zeta ^j _{x_i} \, dx
- \int_{\Omega} \left( \frac{\varepsilon \vert \nabla \varphi ^\varepsilon \vert^2}{2} 
+\frac{W(\varphi ^\varepsilon)}{\varepsilon}\right) \div \vec \zeta \, dx \\
=& \, -\lambda ^\varepsilon \int _{\Omega} k(\varphi ^\varepsilon) \div \vec \zeta \, dx,
\end{split}
\label{eq:3.6}
\end{equation}
by the integration by parts. Here we used
$\nabla k (\varphi ^\varepsilon) = \sqrt{2W(\varphi ^\varepsilon)} \nabla \varphi ^\varepsilon$
and 
\[
\int _\Omega \Delta \varphi ^\varepsilon \nabla \varphi ^\varepsilon \cdot \vec \zeta \, dx
= -\sum _{i,j=1} ^d \int_{\Omega} 
 \varphi _{x_i} ^\varepsilon \varphi _{x_j} ^\varepsilon \zeta ^j _{x_i} \, dx
+ \int _\Omega \frac{ \vert \nabla \varphi ^\varepsilon \vert ^2}{2} \div \vec \zeta \, dx.
\]
The Cauchy--Schwarz inequality, \eqref{eq:3.1}, and \eqref{eq:3.6} imply
\begin{equation}
\begin{split}
&\left \vert
\int _{\Omega} \varepsilon \varphi ^\varepsilon _t \nabla \varphi ^\varepsilon \cdot \vec \zeta \, dx
+ \sum _{i,j=1} ^d \int_{\Omega} 
\varepsilon \varphi _{x_i} ^\varepsilon \varphi _{x_j} ^\varepsilon \zeta ^j _{x_i} \, dx
- \int_{\Omega} \left( \frac{\varepsilon \vert \nabla \varphi ^\varepsilon \vert^2}{2} 
+\frac{W(\varphi ^\varepsilon)}{\varepsilon}\right) \div \vec \zeta \, dx
\right \vert \\
\leq & \, 
\Cl{const:3.2} \| \vec \zeta (\cdot, t) \|_{C^1 (\Omega)} \left( (D'_1) ^\frac{1}{2} 
\left( \int_\Omega \varepsilon (\varphi _t ^\varepsilon)^2 \, dx \right) ^{\frac12}
+ D' _1 \right),
\end{split}
\label{eq:3.7}
\end{equation}
where $\Cr{const:3.2}>0$ depends only on $d$. 
Let $\eta \in C ^\infty _c (B_1 (0))$ 
be a smooth nonnegative function with $\int _{B_1 (0)} \eta \, dx =1$
and define the standard mollifier $\eta _\delta$ by $\eta _\delta (x) = \delta ^{-d} \eta (x/\delta)$
for $\delta>0$.
Let $u=u(x,t)$ be a periodic solution to
\begin{equation*}
\left\{ 
\begin{array}{ll}
-\Delta u &= k (\varphi ^\varepsilon) \ast \eta_\delta - \dashint _\Omega (k (\varphi ^\varepsilon) \ast \eta_\delta) \qquad \text{in} \ \Omega, \\
\int _\Omega u \, dx &=0.
\end{array} \right.
\end{equation*}
Note that 
\[
\int _{\Omega} \left\{ k (\varphi ^\varepsilon) \ast \eta_\delta - \dashint _\Omega (k (\varphi ^\varepsilon) \ast \eta_\delta) \right\} =0
\]
and there exists $C>0$ depending only on $\mathscr{L}^d (\Omega)$ such that
\[
\left\| k (\varphi ^\varepsilon (\cdot ,t)) \ast \eta_\delta - \dashint _\Omega (k (\varphi ^\varepsilon (\cdot ,t)) \ast \eta_\delta) \right\| _{C^1 (\Omega)} \leq C(1+ \delta ^{-1}), \qquad t\geq 0,
\]
where we used $\| \varphi ^\varepsilon \|_{L^\infty} \leq 1$.
Therefore the standard PDE arguments imply the existence and uniqueness of the solution $u$ and
\[
\| u(\cdot, t) \|_{C^{2,\beta} (\Omega)} \leq \Cl{const:3.3}, \qquad t\geq 0,
\] 
where $\beta \in (0,1)$ and
$\Cr{const:3.3}>0$ depends only on $\beta$, $d$, and $\delta$. %(see Proposition \ref{prop7.1}).
Set $\vec \zeta (x,t) = \nabla u(x,t)$. Then, by \eqref{eq:3.6} and \eqref{eq:3.7}, 
we have
\begin{equation}
\begin{split}
\vert \lambda ^\varepsilon \vert 
\left \vert \int _{\Omega} k(\varphi ^\varepsilon) \div \vec \zeta \, dx \right \vert
\leq  \Cr{const:3.2} \Cr{const:3.3} \left( (D' _1) ^\frac{1}{2} 
\left( \int_\Omega \varepsilon (\varphi _t ^\varepsilon)^2 \, dx \right) ^{\frac12}
+ D' _1 \right).
\end{split}
\label{eq:3.8}
\end{equation}
We compute
\begin{equation}
\begin{split}
& -\int _{\Omega} k(\varphi ^\varepsilon) \div \vec \zeta \, dx  
= \int _{\Omega} k(\varphi ^\varepsilon) (-\Delta u )\, dx \\
= & \, 
\int _\Omega k(\varphi^ \varepsilon) 
\left\{ k (\varphi ^\varepsilon) \ast \eta_\delta - \dashint _\Omega (k (\varphi ^\varepsilon) \ast \eta_\delta) \right\} \, dx\\
= & \, 
\frac49 \mathscr{L}^d (\Omega) 
+ \int _{\Omega} (k (\varphi ^\varepsilon)) ^2 -\frac49 \, dx
+ \int _{\Omega} k (\varphi ^\varepsilon) \{ k(\varphi ^\varepsilon) \ast \eta _\delta -k(\varphi ^\varepsilon) \} \, dx \\
& \, -\frac{1}{\mathscr{L}^d (\Omega)} \left( \int _{\Omega} k (\varphi ^\varepsilon) \, dx \right)^2
+ \frac{1}{\mathscr{L}^d (\Omega)} \int _{\Omega} k (\varphi ^\varepsilon) \, dx \left( \int _{\Omega} k (\varphi ^\varepsilon) \, dx
 -\int _{\Omega} k (\varphi ^\varepsilon) \ast \eta_\delta \, dx \right).
\end{split}
\label{eq:3.9}
\end{equation}
By $(k(s))^2 -\frac49 \geq -W(s)$ for any $s \in [-1,1]$, we have
\begin{equation}
\int _{\Omega} (k (\varphi ^\varepsilon)) ^2 -\frac49 \, dx \geq -\varepsilon \sigma \mu _t ^\varepsilon (\Omega) \geq -\varepsilon \sigma D' _1.
\label{eq:3.10}
\end{equation}
By using
\[
\int_{\Omega} \vert \nabla (k(\varphi ^\varepsilon)) \vert \, dx
=
\int_{\Omega} \sqrt{2W (\varphi ^\varepsilon)} \vert \nabla \varphi ^\varepsilon \vert \, dx
\leq \sigma \mu _t ^\varepsilon (\Omega) \leq \sigma D' _1,
\]
$\| \varphi ^\varepsilon \|_{L^\infty} \leq 1$, and Proposition \ref{prop7.2}, we have
\begin{equation}
\left\vert \int _{\Omega} k (\varphi ^\varepsilon) \{ k(\varphi ^\varepsilon) \ast \eta _\delta 
-k(\varphi ^\varepsilon) \} \, dx \right \vert
\leq \Cl{const:3.4} \delta,
\label{eq:3.11}
\end{equation}
and 
\begin{equation}
\left \vert 
\frac{1}{\mathscr{L}^d (\Omega)} \int _{\Omega} k (\varphi ^\varepsilon) \, dx \left( \int _{\Omega} k (\varphi ^\varepsilon) \, dx
 -\int _{\Omega} k (\varphi ^\varepsilon) \ast \eta_\delta \, dx \right)
 \right \vert
\leq \Cr{const:3.4} \delta,
\label{eq:3.12}
\end{equation}
where $\Cr{const:3.4} >0$ depends only on $D' _1$ and $\mathscr{L}^d (\Omega)$.
Set $\delta = \frac{\omega ^2}{4\Cr{const:3.4} \mathscr{L}^d (\Omega)}$.
By \eqref{eq:1.5}, \eqref{omega},
\eqref{eq:3.9}, \eqref{eq:3.10}, \eqref{eq:3.11}, and \eqref{eq:3.12}, there exists $\epsilon_1 >0$
depending only on $\alpha$, $D' _1$, $\mathscr{L}^d (\Omega)$, and $\omega$ such that 
\begin{equation}
\begin{split}
& -\int _{\Omega} k(\varphi ^\varepsilon) \div \vec \zeta \, dx  \\
\geq & \, 
\frac49 \mathscr{L}^d (\Omega)
-\frac{1}{\mathscr{L}^d (\Omega)} \left( \int _{\Omega} k (\varphi ^\varepsilon) \, dx \right)^2
-\varepsilon \sigma D' _1 -2 \Cr{const:3.4} \delta \\
%= & \, 
%\frac{1}{\mathscr{L}^d (\Omega)}
%\left(
%(\frac23 \mathscr{L}^d (\Omega))^2
%-\left( \int _{\Omega} k (\varphi ^\varepsilon _0) \, dx \right)^2
%+\left( \int _{\Omega} k (\varphi ^\varepsilon _0) \, dx \right)^2
%-\left( \int _{\Omega} k (\varphi ^\varepsilon ) \, dx \right)^2
%\right)
%-\varepsilon \sigma D' _1 -2 \Cr{const:3.4} \delta \\
\geq & \, 
\frac{1}{\mathscr{L}^d (\Omega)}
\left( \omega ^2 -\frac{4\sqrt{2}}{3} \varepsilon ^{\frac{\alpha}{2}} (D' _1)^{\frac12} \right)
-\varepsilon \sigma D' _1 -2 \Cr{const:3.4} \delta \\
%\geq & \,
%\frac{1}{ 2 \mathscr{L}^d (\Omega)}
%\left( \omega ^2 -\frac{4\sqrt{2}}{3} \varepsilon ^{\frac{\alpha}{2}} (D' _1)^{\frac12} \right)
%-\varepsilon \sigma D' _1\\
\geq & \, 
\frac{1}{4\mathscr{L}^d (\Omega)}
\omega ^2
\end{split}
\label{eq:3.13}
\end{equation}
holds for any $\varepsilon \in (0,\epsilon_1)$,
where we used 
$(\int _{\Omega} k (\varphi _0 ^\varepsilon) \, dx)^2
-(\int _{\Omega} k (\varphi ^\varepsilon) \, dx)^2 
\leq 
\frac{4\sqrt{2}}{3} \varepsilon ^{\frac{\alpha}{2}} (D' _1)^{\frac12}$ by \eqref{eq:1.5}.
From \eqref{eq:3.1}, \eqref{eq:3.2}, \eqref{eq:3.8}, and \eqref{eq:3.13}, we obtain 
\eqref{eq:3.5.additional} and \eqref{eq:3.5}.
\end{proof}

\begin{remark}
For the classical solution to the volume preserving mean curvature flow, we can obtain a similar estimate
for the non-local term (see Proposition \ref{prop7.3}).
\end{remark}

We define the discrepancy measure $\xi _t ^\varepsilon$ on $\Omega$ by
\begin{equation}
\xi _t ^\varepsilon ( \phi ) 
:= 
\frac{1}{\sigma} \int _{\Omega} \phi (x) \left( \frac{\varepsilon \vert \nabla \varphi^\varepsilon (x,t) \vert ^2 }{2} 
- \frac{W(\varphi ^\varepsilon (x,t))}{\varepsilon} \right) \, dx,
\qquad \phi \in C_c (\Omega).
\label{xi}
\end{equation}
In addition, we denote
$$
\xi _\varepsilon (x,t) = \frac{\varepsilon \vert \nabla \varphi^\varepsilon (x,t) \vert ^2 }{2} 
- \frac{W(\varphi ^\varepsilon (x,t))}{\varepsilon}.
$$
Proposition \ref{prop3.2} implies the following lemma.
\begin{lemma}\label{lem3.5}
Assume \eqref{eq:3.14}. Then 
$\xi _\varepsilon (x,t) \leq 0$ for any $ (x,t) \in \Omega\times [0,\infty)$.
In addition, $\xi _t ^\varepsilon $ is a non-positive measure for any $t \geq 0$.
\end{lemma}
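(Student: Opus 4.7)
The plan is to observe that the lemma is essentially an immediate rewriting of Proposition~\ref{prop3.2}, together with a one-line unpacking of the definition of $\xi_t^\varepsilon$ as a signed measure. There is no real obstacle here; the content of the lemma has already been done in the proof of Proposition~\ref{prop3.2}, which used the maximum principle on $w^\varepsilon = |\nabla r^\varepsilon|^2 - 1$.

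More concretely, first I would invoke Proposition~\ref{prop3.2}: since the solution $\varphi^\varepsilon$ to \eqref{ac} inherits the assumptions \eqref{initial} (in force throughout Section~3) and \eqref{eq:3.14} by hypothesis, the proposition gives
\[
\frac{\varepsilon |\nabla \varphi^\varepsilon (x,t)|^2}{2} \leq \frac{W(\varphi^\varepsilon (x,t))}{\varepsilon}, \qquad (x,t)\in \Omega\times [0,\infty).
\]
Rearranging, this is precisely $\xi_\varepsilon(x,t) \leq 0$ pointwise, which proves the first claim.

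For the second claim, I would recall the definition \eqref{xi}: for $\phi \in C_c(\Omega)$,
\[
\xi_t^\varepsilon(\phi) = \frac{1}{\sigma} \int_\Omega \phi(x)\, \xi_\varepsilon(x,t)\, dx.
\]
Since $\xi_\varepsilon(\cdot,t)$ is a locally integrable non-positive function (it is bounded, as $\varphi^\varepsilon$ and $\nabla\varphi^\varepsilon$ are smooth and bounded on $\Omega$), the map $\phi \mapsto \xi_t^\varepsilon(\phi)$ is a signed Radon measure whose density with respect to $\mathscr{L}^d$ is non-positive. Hence $\xi_t^\varepsilon(\phi) \leq 0$ whenever $\phi \geq 0$, i.e.\ $\xi_t^\varepsilon$ is a non-positive measure for every $t \geq 0$. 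This completes the proof.
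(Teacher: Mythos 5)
Your proof is correct and follows exactly the paper's approach: the paper itself states that Lemma \ref{lem3.5} is an immediate consequence of Proposition \ref{prop3.2}, and you have simply made explicit the rearrangement of \eqref{eq:3.15} into $\xi_\varepsilon \le 0$ and the resulting non-positivity of the measure $\xi_t^\varepsilon$ from its definition \eqref{xi}.
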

We denote the backward heat kernel $\rho =\rho _{(y,s)} (x,t)$ by
\[
\rho _{(y,s)} (x,t) = \frac{1}{(4\pi (s-t)) ^{\frac{d-1}{2}}} e ^{-\frac{\vert x-y\vert^2}{4(s-t)}},
\qquad 
x,y \in \R^d, \ 0\leq t <s.
\]
With exactly the same proof as in \cite[p.\,2028]{takasao2017}, we obtain the following estimates
similar to the monotonicity formula obtaind by Huisken~\cite{huisken1990}
(for convenience, we call the following as the monotonicity formula).
\begin{proposition}[See \cite{takasao2017}]\label{prop3.8}
Let $\xi _\varepsilon (x,0) \leq 0$ for any $ x \in \Omega$. Assume \eqref{eq:3.14}. Then 
\begin{equation}\label{eq:3.21}
\frac{d}{dt} \int_{\mathbb{R}^d} \rho _{(y,s)} (x,t) \, d\mu _t ^\varepsilon (x)
\leq 
\frac{1}{2(s-t)} \int_{\mathbb{R}^d} \rho _{(y,s)} (x,t) \, d\xi _t ^\varepsilon (x)
+
\frac12 (\lambda ^\varepsilon)^2 \int_{\mathbb{R}^d} \rho _{(y,s)} (x,t) \, d\mu _t ^\varepsilon (x)
\end{equation}
holds for any $0\leq t<s<\infty$ and for any $y \in \R^d$.
Here, $\mu _t ^\varepsilon$ and $\xi ^\varepsilon _t$ are extended periodically to $\mathbb{R}^d$.
In addition, we have
\begin{equation}\label{eq:3.20}
\begin{split}
\int_{\mathbb{R}^d} \rho _{(y,s)} (x,t) \, d\mu _t ^\varepsilon (x) \Big\vert_{t=t_2} 
\leq & \, 
\left( \int_{\mathbb{R}^d} \rho _{(y,s)} (x,t) \, d\mu _t ^\varepsilon (x) \Big \vert_{t=t_1} \right)
e^{\frac{1}{2} \int _{t_1} ^{t_2} \vert \lambda ^{\varepsilon} \vert ^2 \, dt }
\\
\leq & \, \left( \int_{\mathbb{R}^d} \rho _{(y,s)} (x,t) \, d\mu _t ^\varepsilon (x) \Big \vert_{t=t_1} \right)
e^{\Cr{const:3.1} (t_2 -t_1 +1)}
\end{split}
\end{equation}
for any $y \in \mathbb{R}^d$, $0\leq t_1 < t_2 <\infty$, and $\varepsilon \in (0,\epsilon_1)$.
\end{proposition}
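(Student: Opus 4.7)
The plan is to follow the Huisken--Ilmanen style monotonicity computation in \cite{takasao2017}, modifying it only where the non-local term $\lambda^\varepsilon\sqrt{2W(\varphi^\varepsilon)}$ appears. First I would differentiate $\int_{\R^d}\rho_{(y,s)}\,d\mu_t^\varepsilon$ in $t$, using the PDE \eqref{ac} to substitute $\varepsilon\Delta\varphi^\varepsilon - W'(\varphi^\varepsilon)/\varepsilon = \varepsilon\varphi_t^\varepsilon - \lambda^\varepsilon\sqrt{2W(\varphi^\varepsilon)}$ after integrating by parts on $\varepsilon\nabla\varphi^\varepsilon\cdot\nabla\varphi_t^\varepsilon$. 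This produces, beyond the $\rho_t$ contribution, three pieces: the dissipative term $-\sigma^{-1}\int\rho\,\varepsilon(\varphi_t^\varepsilon)^2\,dx$, a transport cross term $-\sigma^{-1}\int\varepsilon\varphi_t^\varepsilon\,\nabla\rho\cdot\nabla\varphi^\varepsilon\,dx$, and the new non-local cross term $\sigma^{-1}\int\rho\,\lambda^\varepsilon\sqrt{2W(\varphi^\varepsilon)}\,\varphi_t^\varepsilon\,dx$.

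The $\rho_t$ term and the transport cross term are handled exactly as on p.~2028 of \cite{takasao2017}: using the codimension-one backward heat kernel identities $\rho_t+\Delta\rho = -\rho/(2(s-t))$ and $\nabla\rho = -\rho(x-y)/(2(s-t))$, together with the splitting $\sigma\,d\mu_t^\varepsilon/dx = W(\varphi^\varepsilon)/\varepsilon+\varepsilon|\nabla\varphi^\varepsilon|^2/2$ and $\sigma\,d\xi_t^\varepsilon/dx = \varepsilon|\nabla\varphi^\varepsilon|^2/2 - W(\varphi^\varepsilon)/\varepsilon$, one completes a square in $\varphi_t^\varepsilon$ and in the normal component of $\nabla\rho\cdot\nabla\varphi^\varepsilon$. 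The outcome is a non-positive perfect-square term plus exactly the discrepancy contribution $\frac{1}{2(s-t)}\int\rho\,d\xi_t^\varepsilon$ with the coefficient claimed in \eqref{eq:3.21}.

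The new non-local cross term is then absorbed by Young's inequality in the form
\[
\int\rho\,\lambda^\varepsilon\sqrt{2W(\varphi^\varepsilon)}\,\varphi_t^\varepsilon\,dx
\le \int\rho\,\varepsilon(\varphi_t^\varepsilon)^2\,dx + \int\rho\,(\lambda^\varepsilon)^2\frac{W(\varphi^\varepsilon)}{2\varepsilon}\,dx,
\]
so that the first summand cancels precisely against the remaining dissipation $-\sigma^{-1}\int\rho\,\varepsilon(\varphi_t^\varepsilon)^2\,dx$, and the second, via the pointwise bound $\sigma^{-1}W(\varphi^\varepsilon)/\varepsilon \le d\mu_t^\varepsilon/dx$ (which is trivial because $W/\varepsilon$ is one of the two non-negative summands of $\sigma\,d\mu_t^\varepsilon/dx$), is controlled by $\tfrac{1}{2}(\lambda^\varepsilon)^2\int\rho\,d\mu_t^\varepsilon$. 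Collecting everything gives \eqref{eq:3.21}.

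Finally, \eqref{eq:3.20} follows from \eqref{eq:3.21} by Gronwall's lemma: Lemma \ref{lem3.5} guarantees $\int\rho\,d\xi_t^\varepsilon \le 0$, so the discrepancy term can be dropped, leaving $\frac{d}{dt}\log\int\rho\,d\mu_t^\varepsilon \le \tfrac{1}{2}|\lambda^\varepsilon|^2$; integration from $t_1$ to $t_2$ yields the first line of \eqref{eq:3.20}, and Lemma \ref{lem3.2} then converts this into the second line for $\varepsilon\in(0,\epsilon_1)$. The main technical step is the second paragraph --- the codimension-one completion-of-squares producing exactly the coefficient $1/(2(s-t))$ on the discrepancy --- but since this is identical to the computation in \cite{takasao2017}, no genuinely new obstacle is introduced by $\lambda^\varepsilon$; the essential new content is only the Young-inequality step in the third paragraph.
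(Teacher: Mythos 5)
There is a genuine gap in the accounting of the dissipation. In your second paragraph you state that completing the square in $\varphi_t^\varepsilon$ (and in the normal component of $\nabla\rho\cdot\nabla\varphi^\varepsilon$) leaves ``a non-positive perfect-square term plus exactly the discrepancy contribution.'' That is correct, but the non-positive square is (up to the $\sigma^{-1}$ normalization) $-\int\varepsilon\rho\bigl(\varphi_t^\varepsilon-\tfrac{(x-y)\cdot\nabla\varphi^\varepsilon}{2(s-t)}\bigr)^2\,dx$; it has \emph{absorbed} the term $-\int\rho\,\varepsilon(\varphi_t^\varepsilon)^2\,dx$. In the third paragraph you nevertheless claim that ``the remaining dissipation $-\sigma^{-1}\int\rho\,\varepsilon(\varphi_t^\varepsilon)^2\,dx$'' is still available to cancel the first Young summand. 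It is not --- it was consumed in the completion of squares. Conversely, if you apply Young first (to keep the raw dissipation for that cancellation), you lose all means of controlling the transport cross term $-\int\varepsilon\varphi_t^\varepsilon\,\nabla\rho\cdot\nabla\varphi^\varepsilon\,dx$, which has no sign.

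The argument is rescued by noticing that the discrepancy manipulation itself produces a \emph{second} non-local cross term. Writing $e_\varepsilon=\tfrac{\varepsilon|\nabla\varphi^\varepsilon|^2}{2}+\tfrac{W(\varphi^\varepsilon)}{\varepsilon}$ and using $\rho_t=-\Delta\rho-\tfrac{\rho}{2(s-t)}$ together with the identity $\nabla e_\varepsilon=\mathrm{div}(\varepsilon\nabla\varphi^\varepsilon\otimes\nabla\varphi^\varepsilon)-\bigl(\varepsilon\Delta\varphi^\varepsilon-\tfrac{W'(\varphi^\varepsilon)}{\varepsilon}\bigr)\nabla\varphi^\varepsilon$, the term $\int\rho_t\,e_\varepsilon$ generates $-\int\bigl(\varepsilon\Delta\varphi^\varepsilon-\tfrac{W'}{\varepsilon}\bigr)\nabla\rho\cdot\nabla\varphi^\varepsilon$; substituting \eqref{ac} here gives $+\int\lambda^\varepsilon\sqrt{2W(\varphi^\varepsilon)}\,\nabla\rho\cdot\nabla\varphi^\varepsilon\,dx$ as well as a second copy of the transport cross term, which is what makes the square in $\varphi_t^\varepsilon-\tfrac{(x-y)\cdot\nabla\varphi^\varepsilon}{2(s-t)}$ close with the correct coefficient $\tfrac{1}{2(s-t)}$ on the discrepancy. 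That second cross term combines with the one you keep track of to give exactly $\int\rho\,\lambda^\varepsilon\sqrt{2W(\varphi^\varepsilon)}\bigl(\varphi_t^\varepsilon-\tfrac{(x-y)\cdot\nabla\varphi^\varepsilon}{2(s-t)}\bigr)\,dx$, and Young's inequality should be applied to \emph{this} quantity against the \emph{shifted} dissipation $-\int\varepsilon\rho\bigl(\varphi_t^\varepsilon-\tfrac{(x-y)\cdot\nabla\varphi^\varepsilon}{2(s-t)}\bigr)^2\,dx$, producing $\tfrac{1}{2}(\lambda^\varepsilon)^2\int\rho\tfrac{W(\varphi^\varepsilon)}{\varepsilon}\,dx\le\tfrac{\sigma}{2}(\lambda^\varepsilon)^2\int\rho\,d\mu_t^\varepsilon$. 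Your final Gronwall step for \eqref{eq:3.20} (drop the discrepancy by Lemma \ref{lem3.5}, integrate, invoke Lemma \ref{lem3.2}) is fine as written.
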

\begin{remark}\label{rem3.9}
Ilmanen~\cite{ilmanen1993} proved the monotone decreasing of 
$\int_{\mathbb{R}^d} \rho _{(y,s)} (x,t) \, d\mu _t ^\varepsilon (x)$
with respect to $t$,
for the solution to the Allen--Cahn equation without the non-local term
under suitable assumptions.
In general, one can show that
the Brakke flow with smooth initial data has unit density for a short time
by using the monotonicity formula (see \cite{takasao-tonegawa}). However, 
in order to show a similar conclusion for our problem, 
it is necessary that $\mu _0 ^\varepsilon (\Omega) -\mu _ {t} ^\varepsilon (\Omega)$ is small enough,
due to \eqref{eq:3.5.additional} (see Lemma \ref{lem5.3} below).
\end{remark}
As a corollary of the monotonicity formula, we can obtain the 
following upper bounds of the densities of $\mu _t ^\varepsilon$.
\begin{corollary}[See \cite{ilmanen1993, MR3656492}]
There exists $0<D_2 <\infty$ depending only on $d$, $\Cr{const:3.1}$, $D_1$, and $T$ such that
\begin{equation}\label{eq:3.22}
\mu _t ^\varepsilon (B_R (y)) \leq D_2 R^{d-1}
\end{equation}
for all $y \in \mathbb{R}^d$, $R \in (0,1)$, $\varepsilon \in (0,\epsilon_1)$, and $t \in [0, T]$.
\end{corollary}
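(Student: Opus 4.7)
The plan is to read off \eqref{eq:3.22} as a short application of the monotonicity formula of Proposition \ref{prop3.8}, tested against a backward heat kernel whose spatial scale matches the ball radius $R$, then pulled back to time $0$ where the initial density bound \eqref{d1} takes over.

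First I would fix $y \in \R^d$, $R \in (0,1)$, $t \in [0,T]$, and set $s := t + R^2$. For $x \in B_R(y)$ one has the pointwise lower bound
\[
\rho_{(y,s)}(x,t) = \frac{1}{(4\pi R^2)^{(d-1)/2}} e^{-|x-y|^2/(4R^2)} \geq \frac{e^{-1/4}}{(4\pi R^2)^{(d-1)/2}},
\]
and therefore
\[
\mu_t^\varepsilon(B_R(y)) \leq e^{1/4}(4\pi)^{(d-1)/2} R^{d-1} \int_{\R^d} \rho_{(y,s)}(x,t)\, d\mu_t^\varepsilon(x).
\]
Next, apply the second inequality of \eqref{eq:3.20} with $t_1 = 0$ and $t_2 = t$ (which is legitimate since $s > t_2$) to get
\[
\int_{\R^d} \rho_{(y,s)}(x,t)\, d\mu_t^\varepsilon(x) \leq e^{\Cr{const:3.1}(T+1)} \int_{\R^d} \rho_{(y,s)}(x,0)\, d\mu_0^\varepsilon(x).
\]
This reduces everything to uniformly bounding the initial integral $I(y,s) := \int_{\R^d} \rho_{(y,s)}(x,0)\, d\mu_0^\varepsilon(x)$ in terms of $d$, $D_1$, and $T$.

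To bound $I(y,s)$ for $s \in (0, T+1]$, I decompose $\R^d$ into dyadic annuli $A_k := B_{(k+1)\sqrt{s}}(y) \setminus B_{k\sqrt{s}}(y)$ for $k \geq 0$, on which $\rho_{(y,s)}(x,0) \leq (4\pi s)^{-(d-1)/2} e^{-k^2/4}$. The periodic extension of $\mu_0^\varepsilon$ to $\R^d$ satisfies the crude growth estimate $\mu_0^\varepsilon(B_r(y)) \leq C_d D_1 (r^{d-1} + r^d)$ for every $r > 0$: for $r < 1$ this is \eqref{d1} directly, while for $r \geq 1$ one covers $B_r(y)$ by $O(r^d)$ copies of the fundamental domain, each of total mass at most $D_1$ by \eqref{eq:3.2}. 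Summing the contributions of the $A_k$ and absorbing the $e^{-k^2/4}$ factor against the polynomial growth yields $I(y,s) \leq C(d, D_1, T)$, uniformly in $y$ and in $s \in (0, T+1]$.

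Chaining the three estimates gives \eqref{eq:3.22} with $D_2 := e^{1/4}(4\pi)^{(d-1)/2} e^{\Cr{const:3.1}(T+1)} C(d,D_1,T)$, depending only on $d$, $\Cr{const:3.1}$, $D_1$, and $T$, as required. The only mildly delicate point is the handling of the periodic extension of $\mu_0^\varepsilon$ in the dyadic sum, since \eqref{d1} is stated only for balls of radius less than $1$; the $r \geq 1$ case is settled by the trivial covering argument above, so no real obstacle appears.
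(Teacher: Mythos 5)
Your proof is correct and follows essentially the same route as the paper: a pointwise lower bound for the backward heat kernel on $B_R(y)$, an application of the second inequality in \eqref{eq:3.20} to pull the estimate back to $t=0$, and a uniform bound on the resulting initial integral. The one point where you are slightly more careful than the printed argument is the initial estimate: the paper's \eqref{eq:3.70} implicitly invokes the density bound of \eqref{d1} at all radii, whereas your annular decomposition explicitly covers the $r\geq 1$ regime by periodicity and absorbs the resulting $O(r^d)$ growth into the Gaussian tail, which is the right way to justify the claim when $s$ stays bounded.
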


\begin{proof}
Using the same calculation as \eqref{eq:7.15} below, we have
\begin{equation}\label{eq:3.70}
\int _{\R^d} \rho _{(y,s)} (x,0) \, d \mu _0 ^\varepsilon
\leq \frac{D_1 \omega _{d-1}}{\pi^{\frac{d-1}{2}}} \int _0 ^1 \left( \log \frac{1}{k} \right) ^{\frac{d-1}{2}} \, dk
=D_1
\end{equation}
for any $s>0$ and $y \in \R^d$,
where we used $
\int _0 ^1 \left( \log \frac{1}{k} \right)^{\frac{d-1}{2}} \, dk
=\Gamma (\frac{d-1}{2} +1)
=\pi ^{\frac{d-1}{2}} /\omega_{d-1}
$.
By \eqref{eq:3.20} and \eqref{eq:3.70},
\begin{equation}\label{eq:3.71}
\int _{\R^d} \rho _{(y,s)} (x,t) \, d \mu _t ^\varepsilon
\leq D_1 e^{\Cr{const:3.1} (T+1)}
\end{equation}
for any $t \in [0,T)$ with $0<t<s$ and $y \in \R^d$. Set $R=2\sqrt{s-t}$. 
We compute 
\begin{equation}\label{eq:3.72}
\int _{\R^d} \rho _{(y,s)} (x,t) \, d \mu _t ^\varepsilon
=\frac{1}{\pi ^{\frac{d-1}{2} } R^{d-1}} \int _{\R^d} e^{- \frac{\vert x-y \vert^2}{R^2}} \, d \mu _t ^\varepsilon
\geq \frac{1}{\pi ^{\frac{d-1}{2} } R^{d-1}} \int _{B_R (y)} e^{- 1} \, d \mu _t ^\varepsilon.
\end{equation}
Therefore we have \eqref{eq:3.22} by \eqref{eq:3.71} and \eqref{eq:3.72}.
\end{proof}

By the integration by parts, we have the following estimate.
\begin{lemma}
For any nonnegative test function $\phi \in C_c ^2 (\Omega)$, there exists
$\Cl{const:3.10} >0$ depends only on $D_1$, $\| \phi \|_{C^2 (\Omega)}$, $\omega$, and $d$
such that
\begin{equation}\label{eq:3.23}
\int_0 ^T \left\vert \frac{d}{dt} \mu _t ^\varepsilon (\phi) \right\vert \, dt \leq \Cr{const:3.10}(1+T)
\qquad \text{for any} \ \varepsilon \in (0,\epsilon_1) \ \text{and} \ T>0.
\end{equation}
\end{lemma}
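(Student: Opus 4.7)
The plan is to compute $\frac{d}{dt}\mu_t^\varepsilon(\phi)$ explicitly using \eqref{ac}, then bound each resulting piece using the energy bounds \eqref{eq:3.1}--\eqref{eq:3.2}, the $L^2$-estimate \eqref{eq:3.5} for $\lambda^\varepsilon$, and the equipartition inequality \eqref{eq:3.15}. Differentiating the Modica--Mortola integrand and integrating by parts (with $\phi$ periodic and $C^2$) gives
\[
\frac{d}{dt}\mu_t^\varepsilon(\phi)=\frac{1}{\sigma}\int_\Omega \phi\,\varphi_t^\varepsilon\!\left(-\varepsilon\Delta\varphi^\varepsilon+\frac{W'(\varphi^\varepsilon)}{\varepsilon}\right)dx-\frac{1}{\sigma}\int_\Omega \varepsilon\varphi_t^\varepsilon\,\nabla\phi\cdot\nabla\varphi^\varepsilon\,dx.
\]
Substituting $-\varepsilon\Delta\varphi^\varepsilon+\frac{W'(\varphi^\varepsilon)}{\varepsilon}=-\varepsilon\varphi_t^\varepsilon+\lambda^\varepsilon\sqrt{2W(\varphi^\varepsilon)}$ from \eqref{ac}, this becomes
\[
\frac{d}{dt}\mu_t^\varepsilon(\phi)=-\frac{1}{\sigma}\int_\Omega\phi\,\varepsilon(\varphi_t^\varepsilon)^2\,dx+\frac{\lambda^\varepsilon}{\sigma}\int_\Omega\phi\,\varphi_t^\varepsilon\sqrt{2W(\varphi^\varepsilon)}\,dx-\frac{1}{\sigma}\int_\Omega\varepsilon\varphi_t^\varepsilon\nabla\phi\cdot\nabla\varphi^\varepsilon\,dx.
\]

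Next I would estimate the three terms separately after taking absolute values and integrating on $[0,T]$. The first term is controlled by $\|\phi\|_\infty\,\mu_0^\varepsilon(\Omega)\leq \|\phi\|_\infty D_1$ thanks to \eqref{eq:3.1}, which is bounded uniformly in $T$. For the third term, Cauchy--Schwarz in $x$ combined with \eqref{eq:3.15} gives
\[
\Bigl|\int_\Omega\varepsilon\varphi_t^\varepsilon\nabla\phi\cdot\nabla\varphi^\varepsilon\,dx\Bigr|\leq\|\nabla\phi\|_\infty\Bigl(\int_\Omega\varepsilon(\varphi_t^\varepsilon)^2 dx\Bigr)^{1/2}(2\sigma D_1)^{1/2},
\]
and then Cauchy--Schwarz in $t$ together with \eqref{eq:3.1} yields a bound of order $\sqrt{T}$, hence $\leq C(1+T)$.

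The term requiring the most care is the middle one with $\lambda^\varepsilon$. Using Cauchy--Schwarz in $x$ and the equipartition \eqref{eq:3.15}:
\[
\Bigl(\int_\Omega \phi\,\varphi_t^\varepsilon\sqrt{2W(\varphi^\varepsilon)}\,dx\Bigr)^2\leq \|\phi\|_\infty\int_\Omega\phi\,\varepsilon(\varphi_t^\varepsilon)^2 dx\cdot\int_\Omega\frac{2W(\varphi^\varepsilon)}{\varepsilon}dx\leq 2\|\phi\|_\infty^2\sigma D_1\int_\Omega\varepsilon(\varphi_t^\varepsilon)^2 dx.
\]
Integrating in $t$ and applying \eqref{eq:3.1} shows $\int_0^T\bigl(\int_\Omega\phi\,\varphi_t^\varepsilon\sqrt{2W(\varphi^\varepsilon)}\,dx\bigr)^2 dt$ is bounded by a constant independent of $T$. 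Then Cauchy--Schwarz in $t$ combined with \eqref{eq:3.5} gives
\[
\int_0^T|\lambda^\varepsilon|\Bigl|\int_\Omega\phi\,\varphi_t^\varepsilon\sqrt{2W(\varphi^\varepsilon)}\,dx\Bigr|dt\leq C\,(\Cr{const:3.1}(1+T))^{1/2}\leq C'(1+T).
\]

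Summing the three contributions yields \eqref{eq:3.23} with a constant $\Cr{const:3.10}$ depending only on $D_1$, $\|\phi\|_{C^2(\Omega)}$, $\omega$, and $d$ (through $\Cr{const:3.1}$). No step is genuinely delicate; the only point one must watch is that the $\lambda^\varepsilon$-term is estimated by pairing the $L^2$-bound on $\lambda^\varepsilon$ (which is $O(1+T)$) with an $L^2$-bound on $\int \phi\,\varphi_t^\varepsilon\sqrt{2W(\varphi^\varepsilon)}\,dx$ that is $O(1)$ uniformly in $T$, so that the product is $O(\sqrt{1+T})\leq O(1+T)$ rather than something worse.
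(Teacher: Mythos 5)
Your proof is correct and the overall strategy matches the paper's: differentiate $\mu_t^\varepsilon(\phi)$, integrate by parts, substitute the PDE \eqref{ac}, and estimate the three resulting terms using the energy dissipation \eqref{eq:3.1}, the mass bound \eqref{eq:3.2}, and the $\lambda^\varepsilon$-estimate \eqref{eq:3.5}. The difference lies in how the two cross terms are absorbed. The paper applies Young's inequality pointwise in $t$ to land on
$\left|\frac{d}{dt}\mu_t^\varepsilon(\phi)\right|\leq C\sigma D_1\bigl(1+|\lambda^\varepsilon|^2+\int_\Omega\varepsilon(\varphi_t^\varepsilon)^2\,dx\bigr)$
and then integrates; to control the gradient cross term this way it must bound $\sup\frac{|\nabla\phi|^2}{\phi}\leq c(d)\|\nabla^2\phi\|_{C^0(\Omega)}$, a fact about nonnegative $C^2$ functions which is where $\phi\geq 0$ and the $C^2$-norm enter. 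You instead apply Cauchy--Schwarz in $x$ followed by Cauchy--Schwarz in $t$, accepting $O(\sqrt{1+T})$ growth on the cross terms, which still yields $O(1+T)$. Your route avoids the $|\nabla\phi|^2/\phi$ lemma entirely (and in fact needs neither $\phi\geq 0$ nor the full $C^2$-norm, only $\|\phi\|_{C^1}$); the paper's route gives a cleaner pointwise-in-time differential inequality that is occasionally handy elsewhere. Both produce the stated constant dependencies and the final bound. One small cosmetic point: your references to the equipartition inequality \eqref{eq:3.15} are superfluous — the bounds $\int_\Omega\frac{2W}{\varepsilon}\leq 2\sigma D_1$ and $\int_\Omega\varepsilon|\nabla\varphi^\varepsilon|^2\leq 2\sigma D_1$ follow directly from the definition of $\mu_t^\varepsilon$ and \eqref{eq:3.2} without \eqref{eq:3.15}.
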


\begin{proof}
By the integration by parts, we have
\begin{equation}
\begin{split}
& \frac{d}{dt} \int _{\Omega} \phi
\left( \frac{\varepsilon \vert \nabla \varphi ^\varepsilon \vert^2}{2} 
+ \frac{W(\varphi ^\varepsilon )}{\varepsilon} \right) \, dx \\
= & \,
- \int _{\Omega} \varepsilon \phi (\varphi ^\varepsilon _t) ^2 \, dx
+\lambda ^\varepsilon \int _{\Omega} \phi \sqrt{2 W (\varphi ^\varepsilon)} \varphi ^\varepsilon _t \, dx
- \int _{\Omega} \varepsilon (\nabla \phi \cdot \nabla \varphi ^\varepsilon ) \varphi ^\varepsilon _t \, dx.
\end{split}
\end{equation}
Hence,
\begin{equation}\label{eq:3.26}
\begin{split}
& \left\vert \frac{d}{dt} \int _{\Omega} \phi
\left( \frac{\varepsilon \vert\nabla \varphi ^\varepsilon \vert^2}{2} 
+ \frac{W(\varphi ^\varepsilon )}{\varepsilon} \right) \, dx \right\vert \\
\leq & \,
\vert \lambda ^\varepsilon \vert^2 \int _{\Omega} \phi \frac{2W(\varphi ^\varepsilon)}{\varepsilon} \, dx
+ \int _{\Omega} \frac{\vert \nabla \phi \vert^2}{\phi} \varepsilon \vert \nabla \varphi ^\varepsilon \vert^2 \, dx
+ 2 \int _{\Omega} \varepsilon \phi ( \varphi ^\varepsilon _t ) ^2 \, dx
\\
\leq & \,
C \sigma D_1 
\left( 
1+ \vert \lambda ^\varepsilon \vert^2 + \int _{\Omega} \varepsilon ( \varphi ^\varepsilon _t ) ^2 \, dx
\right),
\end{split}
\end{equation}
where $C>0$ depends only on $\|\phi\|_{C^2 (\Omega)}$. Note that 
there exists $c=c(d) >0$ such that $\sup \frac{\vert \nabla \phi \vert^2}{\phi} 
\leq c(d) \| \nabla ^2 \phi \|_{C^0 (\Omega)}$ by Cauchy's mean-value theorem.
Thus \eqref{eq:3.1}, \eqref{eq:3.5}, and \eqref{eq:3.26} imply \eqref{eq:3.23}.
\end{proof}

With an argument similar to \cite{ilmanen1993}, 
we can show the following proposition by using 
\eqref{eq:3.23}.
\begin{proposition}\label{prop3.11}
There exist a subsequence $\{ \varepsilon _{i_j} \}_{j=1} ^\infty$ and
a family of Radon measures $\{ \mu _t \}_{t\geq 0}$ such that
\begin{equation}\label{eq:3.24}
\mu _t ^{\varepsilon _{i_j}} \to \mu _t \qquad \text{as Radon measures on} \ \Omega
\end{equation}
for any $t \in [0,\infty)$ and for any $d\geq 2$.
In addition, there exists a countable set $B \subset [0,\infty)$ such that
$\mu _t (\Omega)$ is continuous on $[0,\infty) \setminus B$.
\end{proposition}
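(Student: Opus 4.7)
The plan is to extract the subsequence by combining Helly's selection theorem---made available by the bounded variation estimate \eqref{eq:3.23}---with a diagonal argument, and then to read off the countable exceptional set $B$ from a relaxed monotonicity of $t \mapsto \mu_t(\Omega)$ that is inherited from the energy identity \eqref{eq:1.11}.

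Fix a countable subset $\mathcal{D} = \{\phi_k\}_{k=1}^\infty \subset C^2(\Omega;[0,\infty))$ which is dense in $C(\Omega)$ in the sup norm (possible since $\Omega$ is compact). For each $k \in \mathbb{N}$ and each $T > 0$, the map $t \mapsto \mu_t^{\varepsilon_i}(\phi_k)$ is $C^1$ on $[0,T]$ with pointwise bound $\|\phi_k\|_{C^0} D_1$ uniformly in $i$ by \eqref{eq:3.2}, and with total variation on $[0,T]$ uniformly bounded in $i$ (for $i$ large enough that $\varepsilon_i < \epsilon_1$) by \eqref{eq:3.23}. Helly's selection theorem applied to each pair $(k,T)$, followed by a diagonal argument over $k \in \mathbb{N}$ and $T \in \mathbb{N}$, yields a subsequence $\{\varepsilon_{i_j}\}_{j=1}^\infty$ along which $\mu_t^{\varepsilon_{i_j}}(\phi_k)$ converges for every $k$ and every $t \in [0,\infty)$. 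For each fixed $t$, the family $\{\mu_t^{\varepsilon_{i_j}}\}_j$ has uniformly bounded total mass by \eqref{eq:3.2}, hence is weakly-$\ast$ precompact in the space of Radon measures on $\Omega$; density of $\mathcal{D}$ in $C(\Omega)$ together with the decomposition $\phi = \phi^+ - \phi^-$ pins down a unique weak-$\ast$ limit $\mu_t$ for every $t \geq 0$, which we take as the desired family.

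To produce $B$, observe that \eqref{eq:1.11} gives that $t \mapsto E_S^\varepsilon(t) + E_P^\varepsilon(t)$ is non-increasing, hence so is $t \mapsto \mu_t^\varepsilon(\Omega) + \sigma^{-1} E_P^\varepsilon(t)$. By \eqref{eq:1.5}, $E_P^\varepsilon(t) \leq \varepsilon^\alpha E_S^\varepsilon(0) \leq \varepsilon^\alpha \sigma D_1 \to 0$ uniformly in $t$. Applying the convergence established above with $\phi \equiv 1 \in C(\Omega)$ yields $\mu_t^{\varepsilon_{i_j}}(\Omega) \to \mu_t(\Omega)$ for every $t \geq 0$; passing to the limit in the monotone inequality shows that $t \mapsto \mu_t(\Omega)$ is non-increasing on $[0,\infty)$. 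Since any monotone real-valued function has at most countably many discontinuities, the set $B$ of discontinuities of $\mu_t(\Omega)$ is countable.

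The main technical ingredient---really the only substantive one beyond soft compactness---is the bounded variation estimate \eqref{eq:3.23}, which itself relies on the uniform $L^2$-bound for $\lambda^\varepsilon$ from Lemma \ref{lem3.2}. Once these are in place, the proposition reduces to a routine diagonalization; the possible subtlety is just ensuring a single subsequence works for all test functions and all times, which the compactness of $\Omega$ and the density of $\mathcal{D}$ in $C(\Omega)$ make straightforward.
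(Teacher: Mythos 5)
The Helly-plus-diagonal extraction in your first paragraph is sound and matches the paper's approach in substance (the paper phrases it via a decomposition $\mu_t^{\varepsilon_i}(\phi_k) = f_k^i(t) - g_k^i(t)$ into bounded nondecreasing pieces, while you invoke Helly directly for BV functions; these are equivalent). The problem is the second paragraph.

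Your production of $B$ hinges on the claim that $E_P^\varepsilon(t) \leq \varepsilon^\alpha E_S^\varepsilon(0) \to 0$ \emph{uniformly} in $t$, from which you deduce that $t \mapsto \mu_t(\Omega)$ is non-increasing. This misreads \eqref{eq:1.5}: that identity states
\[
\Bigl( \int_\Omega k(\varphi_0^\varepsilon) - \int_\Omega k(\varphi^\varepsilon(\cdot,t)) \Bigr)^2 = 2\varepsilon^\alpha E_P^\varepsilon(t) \leq 2\varepsilon^\alpha E_S^\varepsilon(0),
\]
and the $\varepsilon^\alpha$'s cancel, so the correct consequence is merely $E_P^\varepsilon(t) \leq E_S^\varepsilon(0) \leq \sigma D_1$, a $t$-uniform \emph{bound} but not vanishing. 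Indeed, if $E_P^\varepsilon$ did vanish uniformly, monotonicity of $\mu_t(\Omega)$ would follow trivially and Proposition \ref{prop3.12} would be immediate; the paper's remark immediately after \eqref{eq:3.2} explicitly states that monotone decay of $\mu_t^\varepsilon(\Omega)$ \emph{cannot} be extracted from the energy estimates alone, and that only an a.e.-in-$t$ / off-a-countable-set version is available (via Fatou's lemma applied to $\int_0^T \frac{\varepsilon_i^\alpha}{2}|\lambda^{\varepsilon_i}|^2\,dt$, which controls $E_P^{\varepsilon_i}$ only in a time-integrated sense). So your monotonicity claim is precisely what the paper says one cannot prove at this stage.

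The fix is the route the paper actually takes: apply \eqref{eq:3.23} with $\phi \equiv 1$ (admissible since $\Omega = \T^d$ is compact) to conclude that $t \mapsto \mu_t^\varepsilon(\Omega)$ has total variation on $[0,T]$ bounded uniformly in $\varepsilon$, write $\mu_t^\varepsilon(\Omega) = f^\varepsilon(t) - g^\varepsilon(t)$ with $f^\varepsilon, g^\varepsilon$ nondecreasing and bounded, pass to the limit via Helly to get $\mu_t(\Omega) = f(t) - g(t)$ with $f, g$ monotone, and take $B$ to be the (countable) union of their discontinuity sets. No monotonicity of $\mu_t(\Omega)$ itself is needed for Proposition \ref{prop3.11}; the partial monotonicity in Proposition \ref{prop3.12} is then a separate, subsequent argument.
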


\begin{proof}
%Let $T>0$ and $B_0 \subset [0,T)$ be countable and dense. The compactness of Radon measures
%and the diagonal argument imply that there exists a subsequence $\varepsilon _i\to 0$ 
%(denoted by the same index) such that
%\eqref{eq:3.24} holds for any $t \in B_0$.
Let $\{ \phi_k \} _{k=1} ^\infty \subset C_c (\Omega) $ be a dense subset 
with $\phi _k \in C_c ^2 (\Omega)$ for any $k$, and 
for any $x \in \Omega \cap \Q^d$ and $r \in (0,1) \cap \Q$, there exists $k \in \N$
such that $\phi _k \in C_c ^2 (B_r (x))$.
Let %$f _k ^i (t):= \mu _t ^{\varepsilon_i} (\phi _k)$, 
$f _k ^i (t)=\mu _0 ^{\varepsilon _i} (\phi _k) 
+ \int _0 ^t \left( \frac{d}{ds} \mu _s ^{\varepsilon _i} (\phi _k) \right)_+ \, ds$
and
$ g _k ^i (t) = \int _0 ^t \left( \frac{d}{ds} \mu _s ^{\varepsilon _i} (\phi _k) \right)_- \, ds $. Then
\[
\mu _t ^{\varepsilon _i} (\phi _k) = f_k ^i (t) - g_k ^i (t),
\]
and $f _k ^i (t)$ and $g _k ^i (t)$ are non decreasing functions with
\[
0 \leq f_k ^i (t) \leq C _k
\qquad 
\text{and}
\qquad
0 \leq g_k ^i (t) \leq C _k
\]
for any $i$ and $t \in [0,T)$, where $C_k >0$ depends only on 
$D_1$, $\| \phi_k \|_{C^2 (\Omega)}$, $\omega$, and $d$, by \eqref{eq:3.23}.
Then Helly's selection theorem implies that there exist a subsequence $\varepsilon_i \to 0$
(denoted by the same index),
$f_k ,g_k :[0,T) \to [0,\infty)$ such that
$\lim _{i\to \infty } f_k ^i (t) = f_k (t)$ and $\lim _{i\to \infty } g_k ^i (t) = g_k (t)$
for any $t \in [0,T)$.
% (see \cite{MR0385023}). 
 Therefore we have
\begin{equation}\label{eq:3.27}
\lim _{i \to \infty} \mu _t ^{\varepsilon _i} (\phi _k) = f_k (t) - g_k (t)
\end{equation}
for any $t \in [0,T)$. By this and the diagonal argument, we can choose
a subsequence such that \eqref{eq:3.27} holds for any $t \in [0,T)$
and $k \in \N$.
On the other hand, for any $t \in [0,T)$, the compactness of Radon measures
yields that there exist $\mu _t $ and a subsequence $\varepsilon _i \to 0$ (depending on $t$) such that 
$\mu _t ^{\varepsilon_i} \to \mu _t$ as Radon measures. 
However, $\mu_t$ is uniquely determined
by \eqref{eq:3.27}. Hence we obtain \eqref{eq:3.24} for any $t \in [0,T)$. By the diagonal argument
with $T\to \infty$,
we have \eqref{eq:3.24} for any $t \in [0,\infty)$.

From the similar argument as above, 
there exists monotone increasing functions $f$ and $g$ such that
$\mu _t (\Omega) = f(t) - g(t)$. By the monotonicity, there exists a countable set $B$
such that $f$ and $g$ are continuous on $[0,\infty ) \setminus B$.
This concludes the proof.
\end{proof}

\begin{proposition}\label{prop3.12}
Let $B$ be the countable set given by Proposition \ref{prop3.11}.
For any $t,s \in [0,\infty) \setminus B$ with $t<s$, we have
$\mu _s (\Omega) \leq \mu _t (\Omega)$.
\end{proposition}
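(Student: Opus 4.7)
The plan is to couple the gradient-flow monotonicity of the total energy $E^\varepsilon$ from \eqref{eq:1.11} with Lemma~\ref{lem3.2}, which forces the penalization energy $E_P^\varepsilon$ to be small on average as $\varepsilon\to 0$, and then close the argument by collapsing a time-averaging window using continuity off of $B$. Fix $t,s\in[0,\infty)\setminus B$ with $t<s$ and choose $\eta\in(0,s-t)$.

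First I would apply the monotonicity of $E^\varepsilon$ to obtain, for every $t'\in[t,t+\eta]$ and every $\varepsilon\in(0,\epsilon_1)$,
\[
\sigma\mu_s^\varepsilon(\Omega)+E_P^\varepsilon(s)\le \sigma\mu_{t'}^\varepsilon(\Omega)+E_P^\varepsilon(t').
\]
Discarding the nonnegative $E_P^\varepsilon(s)$ on the left and averaging in $t'\in[t,t+\eta]$ yields
\[
\sigma\mu_s^\varepsilon(\Omega)\le \frac{\sigma}{\eta}\int_t^{t+\eta}\mu_{t'}^\varepsilon(\Omega)\,dt'+\frac{1}{\eta}\int_t^{t+\eta}E_P^\varepsilon(t')\,dt'.
\]

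The tail term is controlled by the identity $E_P^\varepsilon(t')=\frac{\varepsilon^\alpha}{2}|\lambda^\varepsilon(t')|^2$, immediate from \eqref{lambdadef} and the definition of $E_P^\varepsilon$, combined with the uniform $L^2$-bound \eqref{eq:3.5}:
\[
\frac{1}{\eta}\int_t^{t+\eta}E_P^\varepsilon(t')\,dt'\le \frac{\varepsilon^\alpha}{2\eta}\Cr{const:3.1}(1+s),
\]
which tends to $0$ as $\varepsilon\to 0$. Passing $\varepsilon=\varepsilon_i\to 0$ along the subsequence of Proposition~\ref{prop3.11}, one has $\mu_{t'}^{\varepsilon_i}(\Omega)\to\mu_{t'}(\Omega)$ for every $t'\ge 0$ (the constant $\phi\equiv 1$ is an allowable test function because $\Omega=\mathbb{T}^d$ is compact), with uniform bound $D_1$ from \eqref{eq:3.2}. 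Dominated convergence then gives
\[
\sigma\mu_s(\Omega)\le \frac{\sigma}{\eta}\int_t^{t+\eta}\mu_{t'}(\Omega)\,dt'.
\]
Since $t\notin B$, Proposition~\ref{prop3.11} says $t'\mapsto\mu_{t'}(\Omega)$ is continuous at $t$, so letting $\eta\to 0^+$ reduces the right-hand side to $\sigma\mu_t(\Omega)$, giving $\mu_s(\Omega)\le\mu_t(\Omega)$.

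The delicate point is the averaging step: at an arbitrary fixed time $t'$ there is no pointwise control on $E_P^\varepsilon(t')$ as $\varepsilon\to 0$; only the time-integrated quantity $\int E_P^\varepsilon\,dt'=O(\varepsilon^\alpha)$ is at hand, via the identification $E_P^\varepsilon=\frac{\varepsilon^\alpha}{2}|\lambda^\varepsilon|^2$ and the $L^2$-bound in Lemma~\ref{lem3.2}. Averaging in $t'$ converts this integral smallness into a usable pointwise inequality, and the continuity of $\mu_\cdot(\Omega)$ off the countable set $B$ then allows us to collapse the window; the role of $B$ is precisely to absorb the jump discontinuities inherent in the Helly-type construction of Proposition~\ref{prop3.11}.
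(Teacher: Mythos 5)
Your proof is correct, and it takes a genuinely different route from the paper's. The paper's argument applies Helly's selection theorem to the uniformly bounded, monotone-decreasing sequence $E^{\varepsilon_i}$ to extract a further subsequence converging pointwise to a monotone decreasing limit $E(t)$; it then uses Fatou's lemma together with $E_P^{\varepsilon_i}(t)=\tfrac{\varepsilon_i^\alpha}{2}|\lambda^{\varepsilon_i}(t)|^2$ and \eqref{eq:3.5} to conclude $\liminf_i E_P^{\varepsilon_i}(t)=0$ for a.e.\ $t$, whence $E(t)=\sigma\mu_t(\Omega)$ a.e., and finally transfers the monotonicity of $E$ to $\mu_\cdot(\Omega)$ through the continuity off $B$. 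You instead bypass the Helly step entirely: you average the energy monotonicity $E^\varepsilon(s)\le E^\varepsilon(t')$ over $t'\in[t,t+\eta]$, discard $E_P^\varepsilon(s)\ge 0$, use the same identity $E_P^\varepsilon=\tfrac{\varepsilon^\alpha}{2}|\lambda^\varepsilon|^2$ together with \eqref{eq:3.5} to show the averaged penalization term is $O(\varepsilon^\alpha/\eta)$, pass to the limit in $\varepsilon$ by dominated convergence (legitimate since $\mu_{t'}^{\varepsilon_i}(\Omega)\to\mu_{t'}(\Omega)$ for \emph{every} $t'$ with the uniform bound $D_1$), and then collapse the averaging window using continuity at $t\notin B$. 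What the paper's route buys is the auxiliary monotone function $E$, which packages the information cleanly and also gives the a.e.\ identification $E=\sigma\mu_\cdot(\Omega)$ as a byproduct; what your route buys is the avoidance of a further subsequence extraction and Fatou, converting the time-integral smallness of $E_P^\varepsilon$ directly into a usable pointwise-in-$s$ inequality by averaging. Both hinge on the same three inputs: monotonicity of $E^\varepsilon$ from \eqref{eq:1.11}, the $L^2$-control of $\lambda^\varepsilon$ from Lemma~\ref{lem3.2}, and continuity of $\mu_\cdot(\Omega)$ off $B$.
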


\begin{proof}
From Proposition \ref{prop3.11}, 
we may assume that $\mu _t ^{\varepsilon _i} (\Omega) \to \mu _t (\Omega)$ for any 
$t \in [0,\infty)$.
By \eqref{eq:1.11} and $E ^{\varepsilon _i} (t) \leq E_S ^{\varepsilon _i} (0) \leq D_1$,
Helly's selection theorem yields that there exist a subsequence $\varepsilon_i \to 0$
(denoted by the same index) and a monotone decreasing function $E (t)$ such that
$E^{\varepsilon _i} (t) \to E (t)$ for any $t \in [0,\infty)$. 
For any $T>0$, the estimate \eqref{eq:3.5} and Fatou's lemma imply
\[ 
\int _0 ^T \liminf_{i\to \infty } E_P ^{\varepsilon _i} (t) \, dt
\leq \liminf_{i\to \infty } \int _0 ^T E_P ^{\varepsilon _i} (t) \, dt
=
\liminf_{i\to \infty } \int _0 ^T \frac{\varepsilon _i ^\alpha}{2} \vert \lambda^{\varepsilon _i} \vert^2 \, dt=0.
\]
Therefore $\liminf_{i\to \infty } E_P ^{\varepsilon _i} (t)=0$ a.e. $t \geq 0$
and hence $ E(t) = \sigma \mu _t (\Omega) $ for a.e. $t\geq 0$.
By this, the monotonicity of $E (t)$, and the continuity of $\mu _t (\Omega) $ on $[0,\infty) \setminus B$, 
we obtain the claim.
\end{proof}

We define a Radon measure $\mu $ on $\Omega \times [0,\infty)$ by $d\mu:=d\mu _t dt$.
By the boundedness of $\sup _i \mu _t ^{\varepsilon_i} (\Omega)$, 
the dominated convergence theorem implies
\[
\lim _{i\to \infty} \int _0 ^T \int _{\Omega} \phi \, d\mu _t ^{\varepsilon _i} dt
= \int _{\Omega \times [0,T)} \phi \,  d\mu 
\qquad \text{for any} \ \phi \in C_c (\Omega \times [0,T)).
\]
For measures $\mu$ and $\mu _t$, we have the following property.
\begin{proposition}%[Lemma 5.1 in \cite{takasao-tonegawa}]
\label{prop3.13}
There exists a countable set $\tilde B \subset [0,\infty)$ such that
\begin{equation}\label{eq:3.50}
\spt \mu_t \subset \{ x \in \Omega \mid (x,t) \in \spt \mu \}
\end{equation}
for any $t \in (0,\infty) \setminus \tilde B$.
%and
%\begin{equation}\label{eq:3.51}
%\mathscr{H}^{d-1} (\spt \mu_t) \leq C \liminf_{r \downarrow 0} \mu _{t-r^2} (\Omega)
%\end{equation}
%for any $t \in (0,\infty) \setminus \tilde B$.
\end{proposition}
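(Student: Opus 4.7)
The plan is to produce a countable set $\tilde B$ outside of which $s \mapsto \mu_s$ is weakly continuous at $t$, and then deduce the support inclusion directly from the Portmanteau-type lower semicontinuity of measures of open sets under weak convergence, together with Fubini applied to $d\mu = d\mu_s\,ds$.

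First, I would fix a countable family $\{\phi_k\}_{k \in \N} \subset C_c^2(\Omega)$ that is dense in $C(\Omega) = C_c(\Omega)$. Revisiting the Helly-type decomposition used in the proof of Proposition~\ref{prop3.11}, each function $t \mapsto \mu_t(\phi_k)$ equals a difference $f_k(t) - g_k(t)$ of two bounded monotone nondecreasing functions on $[0,\infty)$, hence is continuous off a countable set $B_k \subset [0,\infty)$. Set $\tilde B := B \cup \bigcup_{k \in \N} B_k$, which remains countable. For $t_0 \in (0,\infty) \setminus \tilde B$, combining continuity of $s \mapsto \mu_s(\phi_k)$ at $t_0$ for every $k$, density of $\{\phi_k\}$ in $C(\Omega)$, and the uniform bound $\sup_{s \geq 0}\mu_s(\Omega) \leq D_1$ (obtained by passing \eqref{eq:3.2} to the limit $i \to \infty$) yields weak convergence $\mu_s \to \mu_{t_0}$ as $s \to t_0$ in the sense of Radon measures.

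Next, take any $x_0 \in \spt \mu_{t_0}$, so that $\mu_{t_0}(B_r(x_0)) > 0$ for every $r > 0$. By the Portmanteau-type lower semicontinuity on open sets for uniformly bounded weakly convergent Radon measures,
\[
\liminf_{s \to t_0} \mu_s(B_r(x_0)) \geq \mu_{t_0}(B_r(x_0)) > 0.
\]
Choose $\delta = \delta(r) \in (0, \min(r, t_0))$ small enough that $\mu_s(B_r(x_0)) \geq \tfrac{1}{2}\mu_{t_0}(B_r(x_0))$ for $|s - t_0| < \delta$. Integrating in time gives
\[
\mu\bigl(B_r(x_0) \times (t_0 - \delta, t_0 + \delta)\bigr)
= \int_{t_0 - \delta}^{t_0 + \delta} \mu_s(B_r(x_0))\,ds
\geq \delta\,\mu_{t_0}(B_r(x_0)) > 0.
\]
Since these open boxes form a neighborhood basis of $(x_0, t_0)$ as $r \downarrow 0$, this shows $(x_0, t_0) \in \spt \mu$, establishing \eqref{eq:3.50}.

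The main obstacle is arranging the weak-continuity step so that a single countable exceptional set $\tilde B$ handles every test function simultaneously; this is what forces the passage to a countable dense family and the Helly decomposition, but no new estimates beyond those in Proposition~\ref{prop3.11} are needed. Once weak convergence off $\tilde B$ is available, the remainder is a short application of Portmanteau and Fubini.
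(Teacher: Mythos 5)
Your proof is correct and follows essentially the same route as the paper: both proofs rely on the Helly-type decomposition $\mu_t(\phi_k)=f_k(t)-g_k(t)$ from Proposition~\ref{prop3.11} to obtain continuity of $t\mapsto\mu_t(\phi_k)$ off a countable set $\tilde B$, and then integrate in time via Fubini to pass positivity of $\mu_{t_0}$ near $x_0$ to positivity of $\mu$ near $(x_0,t_0)$. The only stylistic difference is that you upgrade the pointwise continuity on the dense family $\{\phi_k\}$ (using the uniform mass bound) to full weak continuity of $s\mapsto\mu_s$ and then invoke Portmanteau on open balls, whereas the paper stays at the level of the individual test functions $\phi_k$ (chosen in Proposition~\ref{prop3.11} to include bump functions in all rational balls) and runs the argument by contradiction. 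Both are valid; your version is a clean and slightly more standard packaging of the same idea, with no new estimates required.
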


%\begin{proof}
%Assume that there exists $(x_0,t_0)$ such that $x_0 \in \spt \mu_{t_0}$ and $ (x_0,t_0) \not \in \spt \mu$.
%Then $\mu (B_\delta (x_0)\times (t_0 -\delta, t_0 +\delta))=0$
%for sufficiently small $\delta>0$. By the monotonicity formula \eqref{eq:3.20}
%with $\varepsilon _i \to 0$, we have
%\[
%0<\int _{B_{\delta} (x_0)} \rho _{(x_0,t_0 +\delta^2 )} (\cdot,t_0) \, d\mu _{t_0}
%\leq e ^{c_M (\delta +1)} \int _{\R^d} \rho _{(x_0, t_0 +\delta^2)} (\cdot, t_0 -\delta ^2) \, 
%d\mu _{t_0 -\delta^2}. 
%\] 
%\end{proof}

\begin{proof}
Let $f_k$ and $g_k$ be monotone increase functions given by Proposition \ref{prop3.11}. 
Then there exists a countable set $\tilde B$ such that $f_k$ and $g_k$ are continuous
on $[0,\infty) \setminus \tilde B$ for any $k$.
Suppose that there exists $t_0 \in [0,\infty) \setminus \tilde B$ such that 
$x \in \spt \mu_{t_0}$ and $(x,t_0) \not \in \spt \mu$. 
Then we may assume that there exists $k$ such that $x \in \spt \phi _k $ and 
$\mu (\phi _k \times (t_0 -\delta , t_0 +\delta))=0$ for sufficiently small $\delta>0$,
where $\phi _k$ is a function given by Proposition \ref{prop3.11}.
From $x \in \spt \mu _{t_0}$, $\mu _{t_0 } (\phi _k)>0$ and there exists $\delta' >0$
such that $ \mu _{t } (\phi _k)>0$ for any $t \in (t_0 -\delta', t_0 +\delta')$ by the
continuity of $f_k$ and $g_k$. However, this contradicts $\mu (\phi _k \times (t_0 -\delta , t_0 +\delta))=0$.
Therefore we obtain \eqref{eq:3.50} for $t \in [0,\infty) \setminus \tilde B$.
%The proof of \eqref{eq:3.51} is the same as the proof in \cite[Corollary 6.1]{takasao-tonegawa},
%so we skip it.
\end{proof}

\subsection{Integrarity of $\mu_t$ for $d \leq 3$}

In the case of $d\leq 3$, we can use the results of \cite{roger-schatzle}.
For $d\geq 4$, we employ the arguments of \cite{ilmanen1993, MR3348119, takasao-tonegawa}
in Section 4 below.
\begin{theorem}\label{thm3.11}
Assume that $d=2$ or $3$ and \eqref{eq:3.24}. Then $\mu _t$ is integral for a.e. $t \geq 0$.
\end{theorem}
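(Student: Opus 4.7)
The plan is to verify, for a.e. $t\in[0,\infty)$, the hypotheses of the integrality theorem of R\"oger and Schätzle~\cite{roger-schatzle}, which for $d\in\{2,3\}$ asserts that if $\{\varphi^{\varepsilon}\}$ has non-positive discrepancy, locally uniformly bounded diffuse surface measure $\mu^{\varepsilon}\to\mu$, and
\begin{equation*}
\sup_{\varepsilon}\int_\Omega \frac{1}{\varepsilon}\Bigl(\varepsilon\Delta\varphi^\varepsilon-\tfrac{W'(\varphi^\varepsilon)}{\varepsilon}\Bigr)^{\!2}\,dx<\infty,
\end{equation*}
then $\mu$ is $(d-1)$-integral. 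Lemma~\ref{lem3.5} furnishes the discrepancy sign, \eqref{eq:3.22} the density bound, and Proposition~\ref{prop3.11} the convergence, so the whole argument reduces to producing a pointwise-in-$t$ bound on the Willmore-type quantity.

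To that end, I would use \eqref{ac} to write $\varepsilon\Delta\varphi^\varepsilon-W'(\varphi^\varepsilon)/\varepsilon=\varepsilon\varphi^\varepsilon_t-\lambda^\varepsilon\sqrt{2W(\varphi^\varepsilon)}$, and then apply $(a-b)^2\leq 2a^2+2b^2$ to obtain the pointwise bound
\begin{equation*}
\frac{1}{\varepsilon}\Bigl(\varepsilon\Delta\varphi^\varepsilon-\tfrac{W'(\varphi^\varepsilon)}{\varepsilon}\Bigr)^{\!2}
\leq 2\varepsilon(\varphi^\varepsilon_t)^2+2(\lambda^\varepsilon)^2\,\frac{2W(\varphi^\varepsilon)}{\varepsilon}.
\end{equation*}
Integrating over $\Omega\times(0,T)$, the first contribution is bounded by $2\sigma D_1$ thanks to \eqref{eq:3.1}, while the second, using $\int_\Omega W(\varphi^\varepsilon)/\varepsilon\,dx\leq\sigma\mu_t^\varepsilon(\Omega)\leq\sigma D_1$ from \eqref{eq:3.2}, is bounded by $4\sigma D_1\int_0^T(\lambda^\varepsilon)^2\,dt$, which by the $L^2$-estimate~\eqref{eq:3.5} of Lemma~\ref{lem3.2} is at most $4\sigma D_1\Cr{const:3.1}(1+T)$, uniformly in $\varepsilon\in(0,\epsilon_1)$.

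Fatou's lemma now yields, for a.e. $t\in[0,\infty)$,
\begin{equation*}
\liminf_{i\to\infty}\int_\Omega \frac{1}{\varepsilon_i}\Bigl(\varepsilon_i\Delta\varphi^{\varepsilon_i}(\cdot,t)-\tfrac{W'(\varphi^{\varepsilon_i}(\cdot,t))}{\varepsilon_i}\Bigr)^{\!2}dx<\infty.
\end{equation*}
For such a $t$, I would pass to a further subsequence $\{\varepsilon_{i_j}\}$ (depending on $t$) along which the $\liminf$ is realized; since Proposition~\ref{prop3.11} already gives $\mu_t^{\varepsilon_i}\to\mu_t$ along the whole sequence, the same convergence persists along $\{\varepsilon_{i_j}\}$, so all R\"oger--Sch\"atzle hypotheses hold and $\mu_t$ is $(d-1)$-integral.

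The substantive input is the $L^2$-control of $\lambda^\varepsilon$ from Lemma~\ref{lem3.2}: without the linear-in-$T$ growth in \eqref{eq:3.5}, Fatou's lemma would not deliver a.e. finiteness of the Willmore-type quantity. The restriction $d\leq 3$ is intrinsic to the R\"oger--Sch\"atzle argument (it relies on Hélein-type curvature estimates that fail in higher codimension of the parameter space), and the general $d\geq 2$ case, where one must build the integer multiplicity directly from varifold arguments in the spirit of \cite{ilmanen1993,MR3348119,takasao-tonegawa}, is the object of Section~4.
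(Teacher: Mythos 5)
Your proof is correct and takes essentially the same route as the paper: rewrite $\varepsilon\Delta\varphi^\varepsilon-W'(\varphi^\varepsilon)/\varepsilon=\varepsilon\varphi^\varepsilon_t-\lambda^\varepsilon\sqrt{2W(\varphi^\varepsilon)}$, bound the space--time Willmore-type integral uniformly using \eqref{eq:3.1}, \eqref{eq:3.2}, and \eqref{eq:3.5}, apply Fatou to get a.e.\ $t$ finiteness, and invoke R\"oger--Sch\"atzle. If anything, your $(a-b)^2\leq 2a^2+2b^2$ step is a bit more careful than the coefficient-$1$ inequality in \eqref{eq:3.25}, since the dropped cross term there integrates in time to $\varepsilon^\alpha\lambda^\varepsilon(T)^2\geq 0$, which has the wrong sign for a naive estimate (though it is harmlessly bounded by $2E_P^\varepsilon(T)\leq 2\sigma D_1$).
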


\begin{proof}
The estimates \eqref{eq:3.1}, \eqref{eq:3.2}, and \eqref{eq:3.5} imply
\begin{equation}\label{eq:3.25}
\begin{split}
& \, \int _0 ^T \int _{\Omega} \varepsilon 
\left( \Delta \varphi ^{\varepsilon} -\dfrac{W' (\varphi ^{\varepsilon})}{\varepsilon^2 } \right)^2 
\,dxdt \leq 
\int _0 ^T \int _{\Omega} \varepsilon 
( \varphi ^{\varepsilon} _t )^2
\,dxdt
+
\int _0 ^T \vert \lambda ^\varepsilon \vert^2  \int _{\Omega} 
\dfrac{2 W (\varphi ^{\varepsilon})}{\varepsilon } 
\,dxdt \\
\leq & \, 
\sigma \mu _0 ^\varepsilon (\Omega)
+ 2 D_1 \Cr{const:3.1} (1+T)
\leq \sigma D_1 + 2 D_1 \Cr{const:3.1} (1+T)
\end{split}
\end{equation}
for any $T>0$. 
Then Fatou's lemma yields
\begin{equation*}
\begin{split}
& \int _0 ^T \liminf_{i\to \infty } \int _{\Omega} \varepsilon_i 
\left( \Delta \varphi ^{\varepsilon_i} -\dfrac{W' (\varphi ^{\varepsilon_i})}{\varepsilon^2 _i } \right)^2 
\,dx dt \\
\leq & \, 
\liminf _{i\to \infty} \int _0 ^T \int _{\Omega} \varepsilon_i 
\left( \Delta \varphi ^{\varepsilon_i} -\dfrac{W' (\varphi ^{\varepsilon_i})}{\varepsilon^2 _i } \right)^2 
\,dxdt <\infty.
\end{split}
\end{equation*}
Therefore
\[
\liminf_{i\to \infty } \int _{\Omega} \varepsilon_i \left(
\Delta \varphi ^{\varepsilon_i} -\dfrac{W' (\varphi ^{\varepsilon_i})}{\varepsilon^2 _i } \right)^2 
\,dx  < \infty \qquad \text{for a.e. } \ t\geq 0.
\]
By this, $2\leq d \leq 3$, and \eqref{eq:3.2}, 
$\mu _t$ is integral for a.e. $t \geq 0$ (see \cite[Theorem 5.1]{roger-schatzle}).
\end{proof}

%%%%%%%%%%%%%%%%%%%%%%%%%%%%%%%%%%%%%%%%%%%%%%%%%%%%%%%%%%%%%%%%%%%%%%%%%%%%%%%%%%%%%%%%%%%%%%%%%%%%%%%%%%%%%%%%%%%%%%%%%%%%%%%%%%%%%%%%%%%%%%%%%%%%%%%%%%%%%%%%%%%%%%%%%%%%%%%%%%%%%%%%%%%%%%%%%%%%%%%%%%%%%%%%%%%%%%%%%%%%%%%%%%%%%%%%%%%%%%%%%%%%%%%%%%%%%%%%%%%%%%%%%%

\section{Rectifiability and integrality of $\mu_t$}
We already proved the rectifiability and integrality of $\mu_t$ with $d\leq 3$ in Theorem \ref{thm3.11}.
Next we consider the case of $d\geq 2$
and basically follow \cite{ilmanen1993, MR3348119, takasao-tonegawa}.

\subsection{Assumptions}
We assume \eqref{d1} and \eqref{omega}--\eqref{eq:3.14} again in this section.
Let $\{ \varepsilon _i \}_{i=1} ^\infty$ 
be a positive sequence such that 
$\varepsilon _i \to 0 $ as $i \to \infty$.
By the weak compactness of the Radon measures and
Proposition \ref{prop3.11}, we may assume that
there exist Radon measures $\mu$, $\vert \xi \vert$ and a family of Radon measures $\{ \mu _t\} _{t \in [0,T)}$
such that
\[
\mu ( \phi ) = \lim _{i \to \infty} \int _ 0 ^T \mu _t ^{\varepsilon _i} (\phi) \, dt,
\qquad
\vert \xi \vert ( \phi ) = \lim _{i \to \infty} \int _ 0 ^T \int _{\Omega} \sigma^{-1}
\vert \xi _{\varepsilon _i} \vert \phi \, dxdt,
\qquad \phi \in C_c (\Omega \times (0,T))
\]
and
\[
\mu_t ( \phi ) = \lim _{i \to \infty} \mu _t ^{\varepsilon _i} (\phi) ,
\qquad \phi \in C_c (\Omega ), \ t \in [0,T).
\]
%Note that we have
%\begin{equation}
%\spt \mu_t \subset \{ x \in \Omega \mid (x,t) \in \spt \mu \} =:(\spt \mu)_t
%\end{equation}
%for any $t \in (0,T)$ (see Lemma 5.1 in \cite{takasao-tonegawa}).
%
\begin{remark}
In the discussion above, we proved that there exists 
$\mu_t = \lim _{\varepsilon \to 0} \mu _t ^\varepsilon$ for any $t\geq 0$, 
however such a property does not necessarily hold for $\xi _t ^\varepsilon$.
\end{remark}

%The main properties we use in this section are the monotonicity formula and the non-positivity of 
%$\xi ^\varepsilon _t$. 
%For theorems, propositions, and lemmas, 
%proofs are given only if the proof is important or
%it is significantly different from \cite{takasao-tonegawa}. 
%Otherwise, they are omitted.
%
%Moreover, we may assume that $h=T$ in this section, since 
%in the case of $[T/h] \geq 2$, we only need to repeat same argument.

By the standard PDE theories and the rescaling arguments, we obtain the following lemma.
The proof is almost the same as \cite[Lemma 4.1]{takasao-tonegawa}. So, we skip this.
\begin{lemma}\label{lem:holder}
There exists $\Cl{const:3.5}>0$ depending only on $d$ and $\Cr{const:initial}$ such that
\begin{equation}
\sup _{\Omega \times [0,T) } \varepsilon \vert \nabla \varphi ^\varepsilon \vert 
+ \sup _{x,y \in \Omega, \ t \in [0,T)} 
\frac{\varepsilon ^{\frac{3}{2}} \vert \nabla \varphi ^\varepsilon(x,t) - \nabla \varphi ^\varepsilon(y,t) \vert }{\vert x-y \vert^{\frac12}} \leq \Cr{const:3.5}
\end{equation}
for any $\varepsilon \in (0,1)$.
\end{lemma}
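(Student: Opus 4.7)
The natural approach is the parabolic rescaling used throughout the paper. Define $\tilde\varphi^\varepsilon(\tilde x,\tilde t) := \varphi^\varepsilon(\varepsilon\tilde x,\varepsilon^2\tilde t)$. Then $\tilde\varphi^\varepsilon$ satisfies \eqref{eq:1.6}, namely
\[
\tilde\varphi^\varepsilon_{\tilde t} = \Delta_{\tilde x}\tilde\varphi^\varepsilon - W'(\tilde\varphi^\varepsilon) + \varepsilon\lambda^\varepsilon(\varepsilon^2\tilde t)\sqrt{2W(\tilde\varphi^\varepsilon)}.
\]
The plan is to obtain uniform (in $\varepsilon$) interior $C^{1,1/2}$ bounds for $\tilde\varphi^\varepsilon$ on the unit parabolic cylinder and then scale back to $\varphi^\varepsilon$; the claimed exponents of $\varepsilon$ follow from the scaling $\nabla_x = \varepsilon^{-1}\nabla_{\tilde x}$.

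First I would verify that the right-hand side of the rescaled equation is uniformly bounded. Proposition \ref{prop3.1} gives $|\tilde\varphi^\varepsilon|<1$, so both $W'(\tilde\varphi^\varepsilon)$ and $\sqrt{2W(\tilde\varphi^\varepsilon)}$ are bounded by a universal constant. By \eqref{eq:1.7} we have $|\varepsilon\lambda^\varepsilon(\varepsilon^2\tilde t)|\le \tfrac{4}{3}\varepsilon^{1-\alpha}\le \tfrac{4}{3}$ for $\varepsilon\in(0,1)$, so the non-local term is also uniformly bounded. Hence $\|\tilde\varphi^\varepsilon_{\tilde t}-\Delta_{\tilde x}\tilde\varphi^\varepsilon\|_{L^\infty}\le C$ for a constant $C$ depending only on $d$.

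Next I would use assumption \eqref{initial} to control the rescaled initial data. Since $\varepsilon^j\sup|\nabla^j\varphi_0^\varepsilon|\le \Cr{const:initial}$ for $j=1,2,3$, the rescaled initial data $\tilde\varphi^\varepsilon(\cdot,0)=\varphi^\varepsilon_0(\varepsilon\cdot)$ satisfies $\|\nabla^j_{\tilde x}\tilde\varphi^\varepsilon(\cdot,0)\|_{L^\infty}\le \Cr{const:initial}$ for $j=1,2,3$ uniformly in $\varepsilon$, together with $\|\tilde\varphi^\varepsilon(\cdot,0)\|_{L^\infty}\le 1$. With an $L^\infty$ bound on the right-hand side and a uniformly $C^{2,\beta}$-bounded initial datum, classical interior parabolic $L^p$/Schauder estimates applied to the linear heat operator $\partial_{\tilde t}-\Delta_{\tilde x}$ yield a uniform bound
\[
\|\nabla_{\tilde x}\tilde\varphi^\varepsilon\|_{L^\infty} + [\nabla_{\tilde x}\tilde\varphi^\varepsilon]_{C^{1/2}_{\tilde x}} \le \Cl{const:tildehold}
\]
for a constant depending only on $d$ and $\Cr{const:initial}$. (The half-Hölder exponent in space is the standard one produced by the heat kernel representation applied to the bounded source term together with the $C^{2,\beta}$ initial data; one may alternatively use $W^{2,p}$ estimates for large $p$ combined with Morrey embedding.)

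Finally, undoing the rescaling gives $\varepsilon|\nabla_x\varphi^\varepsilon(x,t)|=|\nabla_{\tilde x}\tilde\varphi^\varepsilon(x/\varepsilon,t/\varepsilon^2)|\le \Cr{const:tildehold}$, and for any $x,y\in\Omega$,
\[
|\nabla_x\varphi^\varepsilon(x,t)-\nabla_x\varphi^\varepsilon(y,t)| = \varepsilon^{-1}\bigl|\nabla_{\tilde x}\tilde\varphi^\varepsilon(x/\varepsilon,t/\varepsilon^2)-\nabla_{\tilde x}\tilde\varphi^\varepsilon(y/\varepsilon,t/\varepsilon^2)\bigr| \le \Cr{const:tildehold}\varepsilon^{-1}\bigl|\tfrac{x-y}{\varepsilon}\bigr|^{1/2},
\]
which rearranges to $\varepsilon^{3/2}|\nabla\varphi^\varepsilon(x,t)-\nabla\varphi^\varepsilon(y,t)|\le \Cr{const:tildehold}|x-y|^{1/2}$, giving the lemma with $\Cr{const:3.5}=\Cr{const:tildehold}$. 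The principal subtlety is ensuring that the interior parabolic regularity constant is uniform in $\varepsilon$; this is where the $L^\infty$ bound on the non-local term (which requires $|\tilde\varphi^\varepsilon|<1$ and the $\varepsilon^{1-\alpha}$ decay from \eqref{eq:1.7}) and the uniform $C^{2,\beta}$ bound on the rescaled initial data from \eqref{initial} are essential.
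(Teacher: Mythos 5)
Your approach is exactly the one the paper has in mind: the paper explicitly skips the proof and cites Lemma~4.1 of \cite{takasao-tonegawa}, which uses precisely this parabolic rescaling $\tilde\varphi^\varepsilon(\tilde x,\tilde t)=\varphi^\varepsilon(\varepsilon\tilde x,\varepsilon^2\tilde t)$, the uniform $L^\infty$ control of the right-hand side via $|\tilde\varphi^\varepsilon|<1$ and \eqref{eq:1.7}, the uniform $C^3$ bound on the rescaled initial data from \eqref{initial}, standard parabolic regularity, and the scaling back. Your proof is correct and follows essentially the same route.
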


\subsection{Vanishing of $\xi$}
First we show $\vert \xi \vert=0$ for any $d \geq 2$.
\begin{lemma}\label{lem4.1}
Assume $(x',t') \in \spt \mu$ and $\alpha_1 \in (0,1)$. Then there exist a sequence
$\{ (x_j ,t_j) \}_{j=1} ^\infty$ and a subsequence $\{ \varepsilon _{i_j} \}_{j=1} ^\infty$
such that
$\vert (x_j ,t_j)-(x', t') \vert < \frac{1}{j}$ and 
$\vert \varphi ^{\varepsilon_{i_j}} (x_j ,t_j) \vert <\alpha_1 $ for all $j$.
\end{lemma}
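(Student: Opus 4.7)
The plan is to argue by contradiction. Suppose the conclusion fails; then there exist $\delta_0 > 0$ and $N \in \N$ such that $|\varphi^{\varepsilon_i}(y, s)| \geq \alpha_1$ on the cylinder $Q := B_{\delta_0}(x') \times (t' - \delta_0, t' + \delta_0)$ for every $i \geq N$. Since $Q$ is connected and $\varphi^{\varepsilon_i}$ is continuous and cannot cross the interval $(-\alpha_1, \alpha_1)$ on $Q$, after passing to a subsequence I may assume $\varphi^{\varepsilon_i} \geq \alpha_1$ on $Q$. I will then derive a contradiction by proving $\mu(Q'') = 0$ for a suitable sub-cylinder $Q''$ containing $(x', t')$, which is incompatible with $(x', t') \in \spt \mu$.

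The first step shows that $\varphi^{\varepsilon_i} \to 1$ uniformly on a cylinder $Q' \subset\subset Q$, via a parabolic rescaling/compactness argument. For any sequence of centers $(y_i, s_i) \in Q'$, set $\tilde\varphi^{\varepsilon_i}(\tilde x, \tilde t) := \varphi^{\varepsilon_i}(y_i + \varepsilon_i \tilde x, s_i + \varepsilon_i^2 \tilde t)$. By \eqref{eq:1.6}--\eqref{eq:1.7} and Lemma \ref{lem:holder}, $\tilde\varphi^{\varepsilon_i}$ is uniformly $C^{1/2}$ in $\tilde x$, its domain exhausts $\R^d \times \R$, and the non-local forcing $\varepsilon_i \lambda^{\varepsilon_i} \sqrt{2W(\tilde\varphi^{\varepsilon_i})}$ is of size $O(\varepsilon_i^{1-\alpha}) \to 0$. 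Hence a subsequential limit $\tilde\varphi$ is a bounded entire solution to the standard Allen--Cahn equation $\tilde\varphi_{\tilde t} = \Delta \tilde\varphi - W'(\tilde\varphi)$ with $|\tilde\varphi| \geq \alpha_1$ everywhere. Writing $w := 1 - \tilde\varphi \in [0, 1-\alpha_1]$, one computes $w_t - \Delta w = -2w(1-w)(2-w) \leq -2\alpha_1 w$, and a parabolic maximum principle on arbitrarily large backwards cylinders then forces $\tilde\varphi \equiv 1$. Extracting $(y_i, s_i) \in \overline{Q'}$ at which $|\varphi^{\varepsilon_i} - 1|$ is nearly maximal and running the same argument yields $\sup_{Q'} |\varphi^{\varepsilon_i} - 1| \to 0$.

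The second step converts this uniform smallness into exponentially fast decay of $w^{\varepsilon_i} := 1 - \varphi^{\varepsilon_i}$ via a barrier argument. From \eqref{ac}, together with $W'(\varphi) = -2\varphi(1-\varphi^2)$ and $\sqrt{2W(\varphi)} = 1 - \varphi^2$ for $|\varphi| < 1$, a direct computation yields $\varepsilon_i w^{\varepsilon_i}_t - \varepsilon_i \Delta w^{\varepsilon_i} + \varepsilon_i^{-1} 2 w^{\varepsilon_i}(1 - w^{\varepsilon_i})(2 - w^{\varepsilon_i}) = -\lambda^{\varepsilon_i} w^{\varepsilon_i}(2 - w^{\varepsilon_i})$. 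Since $w^{\varepsilon_i}$ is uniformly small on $Q'$, on this cylinder one obtains the linear sub-equation $w^{\varepsilon_i}_t - \Delta w^{\varepsilon_i} \leq -\beta \varepsilon_i^{-2} w^{\varepsilon_i} + C |\lambda^{\varepsilon_i}| \varepsilon_i^{-1} w^{\varepsilon_i}$ for some fixed $\beta > 0$. On a sub-cylinder $Q'' \subset\subset Q'$, Lemma \ref{lem3.2} gives $\int |\lambda^{\varepsilon_i}| \, dt \leq C$, and an explicit exponential supersolution of the form $(1-\alpha_1) \exp(-c/\varepsilon_i^2 + C \int|\lambda^{\varepsilon_i}|/\varepsilon_i)$, together with a spatial cutoff handling the lateral boundary of $Q'$, yields $w^{\varepsilon_i} \leq C \exp(-c/\varepsilon_i^2)$ on $Q''$. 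Consequently $W(\varphi^{\varepsilon_i})/\varepsilon_i \leq C (w^{\varepsilon_i})^2/\varepsilon_i \to 0$ uniformly on $Q''$, and Proposition \ref{prop3.2} bounds the gradient term by the potential term, so $\mu^{\varepsilon_i}_t$ vanishes uniformly on the space-slices of $Q''$. Passing to the limit gives $\mu(Q'') = 0$, the desired contradiction.

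The main obstacle is the barrier construction in the last step, which must absorb the non-local coefficient $\lambda^{\varepsilon_i}$. This works because the damping of order $\varepsilon_i^{-2}$ overwhelms the $L^1$-in-time perturbation of order $\varepsilon_i^{-1}$, but some care is needed in choosing the spatial cutoff so that the barrier actually dominates $w^{\varepsilon_i}$ on the parabolic boundary of $Q''$ while still decaying exponentially in the interior.
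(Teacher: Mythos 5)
Your proposal is correct in outline but takes a genuinely different route from the paper. After the same contradiction setup, the paper does not perform a blow-up or construct a supersolution. Instead it multiplies the equation by $\phi^2((\varphi^\varepsilon)^2-1)$ for a spatial cutoff $\phi$, uses the algebraic identity $W(s)=\tfrac{1}{4s}W'(s)(s^2-1)\le \tfrac{1}{4\alpha_1}W'(s)(s^2-1)$ for $s\in[\alpha_1,1)$ to represent $\int_{Q_r}\phi^2 W/\varepsilon^2$ via the equation, and then bounds the resulting time-derivative, Laplacian, and non-local terms by elementary integration by parts, Young's inequality, and Lemma~\ref{lem3.2}. This gives $\int_{Q_r}\phi^2 W/\varepsilon^2\le C$ \emph{uniformly} in $\varepsilon$, whence (using Proposition~\ref{prop3.2}) $\int\mu_t^\varepsilon(B_{r/2}(x'))\,dt\le 2\sigma^{-1}\varepsilon C\to 0$, directly contradicting $(x',t')\in\spt\mu$. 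That is a cleaner, purely integral argument and avoids all the machinery you invoke.

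Two remarks on your route. First, Step~1 (the blow-up/Liouville argument giving uniform convergence to $1$) is unnecessary: from $\varphi^{\varepsilon_i}\ge\alpha_1$ on $Q$ you already have $w^{\varepsilon_i}\le 1-\alpha_1$, which yields $(1-w^{\varepsilon_i})(2-w^{\varepsilon_i})\ge\alpha_1(1+\alpha_1)>0$, so the linear sub-equation $w_t-\Delta w\le -\beta\varepsilon^{-2}w+C|\lambda^\varepsilon|\varepsilon^{-1}w$ holds immediately on $Q$; moreover the blow-up compactness requires parabolic regularity beyond the spatial H\"older bound of Lemma~\ref{lem:holder}, which you would have to supply. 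Second, the decay you can actually extract with an additive barrier $A(t)+B\psi(x)$ (where $\psi=\cosh(\kappa(x-x')/\varepsilon)/\cosh(\kappa\delta'/\varepsilon)$ handles the lateral boundary) is $O(e^{-c/\varepsilon})$, not $O(e^{-c/\varepsilon^2})$: a purely temporal exponential cannot dominate $w^{\varepsilon_i}$ on the lateral boundary at later times, so the spatial part necessarily controls the interior value and sets the rate. This is still more than enough for $W(\varphi^{\varepsilon_i})/\varepsilon_i\to 0$ on $Q''$ and hence $\mu(Q'')=0$, so the conclusion stands, but the stated rate should be corrected.
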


\begin{proof}
Define $Q_r = \overline{ B_r (x') \times (t' -r , t' +r)}$ for $r>0$.
If the claim is not true, then there are $r>0$ and $N>1$ such that
$\inf _{Q_r} \vert \varphi ^{\varepsilon_i } \vert \geq \alpha_1 $ for any $i >N$.
Without loss of generality, we may assume that 
$\inf _{Q_r} \varphi ^{\varepsilon_i } \geq \alpha_1 $ for any $i >N$.
For $s \in [\alpha_1 ,1)$, we have
$W(s) = \frac{1}{4s}W'(s) (s^2 -1) \leq \frac{1}{4\alpha_1 }W'(s) (s^2 -1)$.
Assume that $\phi \in C_c ^\infty (B_r (x'))$ satisfies
$0 \leq \phi \leq 1$ and $\phi=1 $ on $B_{r/2} (x')$.
We compute
\begin{equation*}
\begin{split}
\int _{Q_r} \phi ^2 \frac{W(\varphi ^\varepsilon)}{\varepsilon ^2} \, dx dt
\leq & \, \frac{1}{4\alpha_1} \int _{Q_r} \phi ^2 \frac{W'(\varphi ^\varepsilon)}{\varepsilon ^2} 
( (\varphi ^\varepsilon)^2 - 1 ) \, dx dt \\
= & \, \frac{1}{4\alpha_1}  \int _{Q_r} \phi ^2 
\left( -\varphi _t ^\varepsilon +\Delta \varphi ^{\varepsilon} 
+ \lambda ^{\varepsilon} \frac{\sqrt{2W(\varphi ^\varepsilon)}}{\varepsilon} \right) 
( (\varphi ^\varepsilon)^2 - 1 ) \, dx dt.
\end{split}
\end{equation*}
We compute
\begin{equation*}
\begin{split}
\left\vert \int _{Q_r} \phi ^2 \varphi _t ^\varepsilon  
( (\varphi ^\varepsilon)^2 - 1 ) \, dx dt\right\vert
=
\left\vert \int _{t'- r} ^{t' +r} \frac{d}{dt} \int _{B_r (x')} \phi ^2  
\left( \frac13 (\varphi ^\varepsilon)^3 - \varphi ^\varepsilon \right) \, dx dt\right\vert
\leq C,
\end{split}
\end{equation*}
where $C>0$ depends only on $r$. Here we used 
$\| \varphi ^\varepsilon \| _{L^\infty} \leq 1$.
By $\inf _{Q_r} \varphi ^{\varepsilon_i } \geq \alpha_1 $, 
the integration by parts, and Young's inequality,
\begin{equation*}
\begin{split}
\int _{Q_r} \phi ^2 \Delta \varphi  ^\varepsilon  
( (\varphi ^\varepsilon)^2 - 1 ) \, dx dt
= & \,
\int _{Q_r} 
-2\phi (\nabla \phi \cdot \nabla \varphi ^\varepsilon) ( (\varphi ^\varepsilon)^2 - 1 )
-2 \phi ^2 \vert \nabla \varphi ^\varepsilon \vert ^2 \varphi ^\varepsilon 
 \, dx dt \\
\leq & \,
\int _{Q_r} 
\alpha_1 \phi ^2 \vert \nabla \varphi ^\varepsilon \vert ^2
+ \frac{1}{\alpha_1} \vert \nabla \phi \vert^2 ( (\varphi ^\varepsilon)^2 - 1 )^2
-2 \phi ^2 \vert \nabla \varphi ^\varepsilon \vert ^2 \alpha_1 
 \, dx dt  \\
 \leq & \,
\frac{1}{\alpha_1} \int _{Q_r} 
\vert \nabla \phi \vert^2 
 \, dx dt 
 \leq C,
\end{split}
\end{equation*}
where $C>0$ depends only on $\alpha_1$, $r$, and $\| \nabla \phi \|_{L^\infty}$.
By $\sqrt{2W (\varphi ^\varepsilon)} ( (\varphi ^\varepsilon)^2 - 1 ) =-2 W (\varphi ^\varepsilon) $,
\begin{equation*}
\begin{split}
& \left \vert \int _{Q_r} \phi ^2 \lambda ^{\varepsilon} \frac{\sqrt{2W(\varphi ^\varepsilon)}}{\varepsilon}  
( (\varphi ^\varepsilon)^2 - 1 ) \, dx dt\right\vert
\leq
\int _{Q_r} \phi ^2 \vert \lambda ^{\varepsilon} \vert \frac{2W(\varphi ^\varepsilon)}{\varepsilon}  
\, dx dt \\
\leq & \,
\int _{t' -r} ^{t' +r} \vert \lambda ^{\varepsilon} \vert 
\int_{B_r (x')}  \frac{2W(\varphi ^\varepsilon)}{\varepsilon}  
\, dx dt \\
\leq & \, 2 \sigma D_1 \sqrt{2r} 
\left(\int _{t' -r} ^{t' +r} \vert \lambda ^{\varepsilon} \vert^2 \, dt\right) ^{\frac12} 
\leq 2 \sigma D_1 \sqrt{2r} \Cr{const:3.1} ^{\frac12} (1+ t')^{\frac12}.
\end{split}
\end{equation*}
Therefore
there exists $C>0$ depending only on $\alpha_1$, 
$r$, $\Cr{const:3.1}$, $\| \nabla \phi \|_{L^\infty}$, $t'$, and $D_1$ 
such that
\[
\int _{Q_r} \phi ^2 \frac{W(\varphi ^\varepsilon)}{\varepsilon ^2} \, dx dt
\leq C.
\]
By \eqref{eq:3.15}, $\mu _t ^\varepsilon (B_{r/2} (x')) \leq 2\sigma ^{-1} \int _{B_{r/2} (x')} \frac{W}{\varepsilon} \, dx$.
Thus
\[
\int _{t' -r} ^{t'+r} \mu _t ^\varepsilon (B_{r/2} (x')) \, dt 
\leq
2 \sigma ^{-1} \int _{Q_r} \phi ^2 \frac{W(\varphi ^\varepsilon)}{\varepsilon } \, dx dt
\leq 2 \sigma ^{-1} \varepsilon C,
\]
where $C>0$ depends only on $\alpha_1$, $r$, $\Cr{const:3.1}$, $\| \nabla \phi \|_{L^\infty}$, $t'$, and $D_1$.
However, this implies $(x',t')  \not \in \spt \mu$. This is a contradiction.
\end{proof}

Set
\begin{equation}\label{defrhor}
\rho _{y} ^r (x) := \frac{1}{( \sqrt{2 \pi } r ) ^{d-1} }e^{-\frac{\vert x-y \vert^2}{2 r^2}}, \qquad r>0, \ x,y\in \mathbb{R}^d.
\end{equation}
Note that $\rho _{(y,s)}(x,t)=\rho _y ^r (x)=\rho _x ^r (y)$ for $r=\sqrt{2(s-t)}$. 

\begin{lemma}\label{lem4.2}
There exist $\gamma _1, \eta_1, \eta _2 \in (0,1)$ depending only on 
$d$, $W$, $T$, $D_2$, and $\Cr{const:3.1}$ such that the following hold. For 
$t,s \in [0,T/2)$ with $0<s-t\leq \eta_1$, we denote $r = \sqrt{2(s-t)}$
and $t' = s+ r^2/2$. If $x \in \Omega$ satisfies
\begin{equation}
\int_{\R^d } \rho ^r _{x} (y) \, d \mu _s (y)
=
\int_{\R^d } \rho _{(x,t')} (y,s) \, d \mu _s (y) <\eta_2,
\label{eq:4.4}
\end{equation}
then $(B_{\gamma _1 r} (x) \times \{ t' \}) \cap \spt \mu = \emptyset$.
\end{lemma}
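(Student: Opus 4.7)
The plan is to argue by contradiction. Suppose there exists $\bar y \in B_{\gamma_1 r}(x)$ with $(\bar y, t') \in \spt \mu$. Applying Lemma \ref{lem4.1} with $\alpha_1 = 1/2$, I extract $(y_j, t_j) \to (\bar y, t')$ and a subsequence $\varepsilon_{i_j} \to 0$ with $\lvert \varphi^{\varepsilon_{i_j}}(y_j, t_j)\rvert < 1/2$; by passing to a further subsequence I arrange $\varepsilon_{i_j}^2 \geq 1/j$, so $\lvert y_j - \bar y\rvert + \sqrt{\lvert t_j - t'\rvert} = O(\varepsilon_{i_j})$.

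By the gradient Hölder estimate of Lemma \ref{lem:holder}, $\lvert \varphi^{\varepsilon_{i_j}}\rvert \leq 3/4$ on $B_{c_1 \varepsilon_{i_j}}(y_j)$ with $c_1 := 1/(4\Cr{const:3.5})$, hence $W(\varphi^{\varepsilon_{i_j}}) \geq W(3/4) > 0$ there. Using $\mu_t^\varepsilon \geq \sigma^{-1} W(\varphi^\varepsilon)/\varepsilon \, \mathscr{L}^d$, this produces the $\varepsilon$-scale lower bound
\[
\mu_{t_j}^{\varepsilon_{i_j}}(B_{c_1 \varepsilon_{i_j}}(y_j)) \geq c_2 \varepsilon_{i_j}^{d-1}.
\]

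I then apply the monotonicity formula \eqref{eq:3.20} with a backward heat kernel whose parabolic scale is tuned to $\varepsilon_{i_j}$. Set $s'_j := t_j + c_3 \varepsilon_{i_j}^2$ with $c_3$ a small fixed constant, and centre the kernel at $(\bar y, s'_j)$. For $z \in B_{c_1 \varepsilon_{i_j}}(y_j)$ one has $\lvert z - \bar y\rvert \leq (c_1+1)\varepsilon_{i_j}$ and $s'_j - t_j = c_3 \varepsilon_{i_j}^2$, whence $\rho_{(\bar y, s'_j)}(z, t_j) \geq c_4 \varepsilon_{i_j}^{-(d-1)}$. Multiplying by the previous $\varepsilon$-scale bound, the left-hand side of \eqref{eq:3.20} is bounded below by a positive constant $c_5 > 0$ independent of $j$, $r$, and $\gamma_1$. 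On the right-hand side, the kernel at time $s$ equals $\rho_{\bar y}^{r_j^*}$ with $r_j^* := \sqrt{2(s'_j - s)} \to r$, and Proposition \ref{prop3.11} combined with Gaussian continuity in the scale yields convergence of the right-hand side to $e^{\Cr{const:3.1}(T+1)} \int \rho_{\bar y}^r \, d\mu_s$.

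It remains to transfer the hypothesised smallness at centre $x$ to centre $\bar y$. Using the upper density \eqref{eq:3.22} (which passes to $\mu_s$ by lower semicontinuity, giving $\mu_s(B_R(y)) \leq D_2 R^{d-1}$ uniformly) together with the Gaussian moment inequality $\lvert z-y\rvert \rho_y^r(z) \leq C r \rho_y^{\sqrt{2} r}(z)$, I estimate $\lvert \nabla_y \int \rho_y^r \, d\mu_s\rvert \leq C_6/r$ for some $C_6 = C_6(d, D_2)$. Since $\lvert \bar y - x\rvert < \gamma_1 r$, this gives $\int \rho_{\bar y}^r \, d\mu_s < \eta_2 + C_6 \gamma_1$. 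Choosing $\eta_1$ first (so $r^2/2 \leq \eta_1$ stays bounded), then $\eta_2$ small and $\gamma_1 \leq \eta_2$, all depending only on $d, W, T, D_2, \Cr{const:3.1}$, produces $e^{\Cr{const:3.1}(T+1)}(\eta_2 + C_6 \gamma_1) < c_5$, contradicting the lower bound. The principal obstacle is precisely this centre-change step: the derivative $\lvert \nabla_y \int \rho_y^r \, d\mu_s\rvert$ must be controlled uniformly in $r$, which is delicate because $r$ can be small, and requires exploiting the $(d-1)$-dimensional density bound rather than any crude $L^\infty$ estimate on $\rho_y^r$.
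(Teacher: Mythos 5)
Your overall strategy matches the paper's: extract, via Lemma \ref{lem4.1}, approximating points where $\varphi^{\varepsilon_{i_j}}$ takes an intermediate value, obtain an $\varepsilon$-scale lower bound on the local energy, push it back to time $s$ via the monotonicity formula \eqref{eq:3.20}, and contradict the smallness hypothesis after changing the centre of the Gaussian. The centre-change via the estimate $\lvert\nabla_y \int \rho_y^r \, d\mu_s\rvert \leq C/r$ is a valid alternative to the paper's layer-cake estimate \eqref{rhoest1}; both rely only on the density bound and give the needed stability in the centre.

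There is, however, a genuine gap in the step ``by passing to a further subsequence I arrange $\varepsilon_{i_j}^2 \geq 1/j$.'' After re-indexing a subsequence $j_1 < j_2 < \dots$ by $m$, one has $j_m \geq m$, so the requirement $\varepsilon_{i_{j_m}}^2 \geq 1/m$ forces $1/\varepsilon_{i_{j_m}}^2 \leq m \leq j_m$, i.e.\ the set $\{j : \varepsilon_{i_j}^2 \geq 1/j\}$ would have to be infinite. Lemma \ref{lem4.1} gives no control on the rate of $(y_j,t_j)\to(\bar y, t')$ relative to $\varepsilon_{i_j}$: nothing prevents $\varepsilon_{i_j}$ from decaying much faster than $j^{-1/2}$, in which case that set is finite and no such subsequence exists. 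This is not cosmetic: your lower bound on the left-hand side of \eqref{eq:3.20} requires $\rho_{(\bar y, s'_j)}(z,t_j) \gtrsim \varepsilon_{i_j}^{-(d-1)}$ for $z \in B_{c_1\varepsilon_{i_j}}(y_j)$, and since the kernel concentrates at scale $\varepsilon_{i_j}$ about $\bar y$, this fails unless $\lvert y_j - \bar y\rvert = O(\varepsilon_{i_j})$. You flag the centre-change as the delicate step, but the real obstruction you have introduced is here.

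The fix, and what the paper actually does, is to centre the backward heat kernel at the approximating point: use $\rho_{(y_j, s'_j)}$ with $s'_j = t_j + c_3\varepsilon_{i_j}^2$. The lower bound $\int \rho_{(y_j,s'_j)}(\cdot, t_j)\,d\mu_{t_j}^{\varepsilon_{i_j}} \geq c_5$ then needs no rate assumption, while the limit on the right-hand side of \eqref{eq:3.20} still goes through because $(y_j,s'_j)\to(\bar y, t')$, $s'_j - s \to r^2/2 > 0$, $\mu_s^{\varepsilon_{i_j}}\to\mu_s$, and the kernel depends continuously on its centre when the time gap is bounded away from zero. One minor remark: you invoke Lemma \ref{lem:holder} for the $\varepsilon^{-1}$ gradient bound, which brings in $\Cr{const:initial}$; the pointwise bound $\varepsilon\lvert\nabla\varphi^\varepsilon\rvert\leq\sqrt{2W(0)}$ from \eqref{eq:3.15} already suffices and matches the dependencies stated in the lemma.
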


\begin{proof}
First we remark that $0\leq t < s <t' <T$, $s=\frac{t+t'}{2}$, and
$r=\sqrt{2(s-t)}=\sqrt{2(t'-s)}$.
Assume that $x \in \Omega$ satisfies \eqref{eq:4.4},
$(x' ,t') \in \spt \mu$, and $x' \in B_{\gamma _1 r} (x)$. 
We choose $\gamma _1$, $\eta_1$, and $\eta_2$ later.
Let $\alpha _1 \in (0,1)$ be a constant.
By Lemma \ref{lem4.1}, there exist a sequence $\{ (x_j ,t_j) \}_{j=1} ^\infty$
and a subsequence $\varepsilon _j \to 0$ such that
$\lim _{j\to \infty} (x_j ,t_j) = (x' ,t' )$ and
$ \vert \varphi ^{\varepsilon _j} (x_j ,t_j) \vert <\alpha_1$ for all $j$. 
Then we may assume that for $\alpha' = (\alpha_1 + 1)/2 >\alpha_1$, there exists 
$\gamma _2 =\gamma _2 (W,\alpha_1) >0$
such that
$
\frac{W(\varphi ^{\varepsilon_j} (y ,t_j) )}{\varepsilon _j}
\geq \frac{W(\alpha')}{\varepsilon _j}
$
for any $j$ and for any $y \in B_{\gamma_2 \varepsilon _j } (x_j)$,
because $W(\alpha_1) >W(\alpha')$ and
\[
\vert \varphi ^{\varepsilon_j} (y ,t_j) -\varphi ^{\varepsilon_j} (x_j ,t_j) \vert
\leq \sup _{z \in \Omega} \| \nabla \varphi ^{\varepsilon_j} (z ,t_j) \| \vert y-x_j \vert
\leq \varepsilon _j ^{-1} W(0) \vert y-x_j \vert \leq W(0) \gamma _2
\]
for any $y \in B_{\gamma_2 \varepsilon _j } (x_j)$, where we used \eqref{eq:3.15}.
Thus,
there exists $\eta _3 = \eta _3 (d, \gamma _2)>0$ such that
\begin{equation*}
\begin{split}
\eta_3 \leq \int _{B_{\gamma _2 \varepsilon _j (x_j)}} 
\frac{W(\alpha')}{\varepsilon _j} \rho _{(x_j, t_j +\varepsilon _j ^2)} (y,t_j) \, dy
\leq \int _{B_{\gamma _2 \varepsilon _j (x_j)}} 
\frac{W(\varphi ^{\varepsilon_j} (y ,t_j) )}{\varepsilon _j} \rho _{(x_j, t_j +\varepsilon _j ^2)} (y,t_j) \, dy.
\end{split}
\end{equation*}
Here we used 
\[
\inf _{y \in B_{\gamma _2 \varepsilon} (x_j)} \rho _{x_j, t_j +\varepsilon _j ^2} (y, t_j) 
> \Cl{const:4.2} \varepsilon _j ^{1-d} >0,
\]
where $\Cr{const:4.2}>0$ depends only on $d$ and $\gamma _2$.
By the monotonicity formula, we have
\begin{equation*}
\begin{split}
\eta_3 
\leq
\int _{\R ^d} 
\rho _{(x_j, t_j +\varepsilon _j ^2)} (y,t_j) \, d\mu_{t_j} ^{\varepsilon_j} (y)
\leq
e^{\Cr{const:3.1} (T +1)}
\int _{\R ^d} 
\rho _{(x_j, t_j +\varepsilon _j ^2)} (y,s) \, d\mu_{s} ^{\varepsilon_j}(y) .
\end{split}
\end{equation*}
Choose $\eta _2 = \eta_2 (d,\gamma_2, T, \Cr{const:3.1}) >0$ such that
\begin{equation*}
\begin{split}
2 \eta_2 
\leq
\int _{\R ^d} 
\rho _{(x_j, t_j +\varepsilon _j ^2)} (y,s) \, d\mu_{s} ^{\varepsilon_j} (y)
\end{split}
\end{equation*}
and letting $j\to \infty$, we have
\begin{equation}\label{eq:est-lem4.2}
\begin{split}
2 \eta_2 
\leq
\int _{\R ^d} 
\rho _{(x', t')} (y,s) \, d\mu_{s} (y).
\end{split}
\end{equation}
Changing the center of the backward heat kernel by using \eqref{rhoest1},
we have
\begin{equation*}
\begin{split}
\eta_2 
\leq
\int _{\R ^d} 
\rho _{(x, t')} (y,s) \, d\mu_{s}(y)
\end{split}
\end{equation*}
when $\vert x-x' \vert \leq \gamma _1 r$. Here $\gamma _1$ depends only on $\eta_2$ and $D_2$.
This is a contradiction to \eqref{eq:4.4}. Therefore $(x', t') \not \in \spt \mu$.
\end{proof}

We can also show the following using the estimate \eqref{eq:est-lem4.2}.

\begin{lemma}\label{lem4.5}
There exists $\Cl{const:cor4.5} >0$ depending only on 
$d$, $T$, $\Cr{const:3.1}$, and $D_2$
such that
\begin{equation}\label{eq:3.51}
\mathscr{H}^{d-1} (\spt \mu_t \cap U) \leq \Cr{const:cor4.5} \liminf_{r \downarrow 0} \mu _{t-r^2} (U)
\end{equation}
for any $t \in (0,T) \setminus \tilde B$ and for any open set $U \subset \Omega$, where
$\tilde B$ is the countable set given by Proposition \ref{prop3.13}.
\end{lemma}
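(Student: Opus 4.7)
The plan is to convert \eqref{eq:est-lem4.2} into a local lower bound of the form $\mu_s(B_{\beta r}(x))\geq C r^{d-1}$ valid at every $x\in \spt\mu_t\cap U$ with $s=t-r^2/2$, and then run a $5r$-covering argument at scale $\beta r$, followed by a standard exhaustion in $\delta$ to handle points near $\partial U$.

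First, since $t\in(0,T)\setminus\tilde B$, Proposition \ref{prop3.13} gives $\spt\mu_t\subset\{x\in\Omega:(x,t)\in\spt\mu\}$, so \eqref{eq:est-lem4.2} applied with $(x',t')=(x,t)$ produces
\begin{equation*}
2\eta_2\leq\int_{\R^d}\rho_x^r(y)\,d\mu_s(y)
\end{equation*}
for every $x\in\spt\mu_t\cap U$ and all small $r>0$. To localize, I would split this integral into $B_{\beta r}(x)$ and its complement and control the tail via a dyadic decomposition: on the annulus $B_{2^{k+1}\beta r}(x)\setminus B_{2^k\beta r}(x)$, combine the Gaussian bound $\rho_x^r\leq(2\pi r^2)^{-(d-1)/2}e^{-4^k\beta^2/2}$ with \eqref{eq:3.22} while $2^{k+1}\beta r<1$, and use the total-mass bound \eqref{eq:3.2} on the finitely many remaining annuli. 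Choosing $\beta=\beta(d,T,\Cr{const:3.1},D_1,D_2)$ large enough makes the tail at most $\eta_2$, leaving
\begin{equation*}
\eta_2\leq(2\pi r^2)^{-(d-1)/2}\mu_s(B_{\beta r}(x)),\qquad \mu_s(B_{\beta r}(x))\geq C_0\,r^{d-1},
\end{equation*}
with $C_0=\eta_2(2\pi)^{(d-1)/2}$, uniformly in $x$ and all sufficiently small $r$.

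For the covering step, fix $\delta>0$, set $U_\delta:=\{x\in U:\dist(x,\partial U)>\delta\}$, and take $r<\delta/\beta$ so that $B_{\beta r}(x)\subset U$ whenever $x\in U_\delta$. The $5r$-covering lemma applied to $\{B_{\beta r}(x)\}_{x\in\spt\mu_t\cap U_\delta}$ yields a disjoint subfamily $\{B_{\beta r}(x_j)\}_{j=1}^{N_r}$ with $\spt\mu_t\cap U_\delta\subset\bigcup_j B_{5\beta r}(x_j)$. Disjointness together with the lower bound gives $N_r\cdot C_0\,r^{d-1}\leq\mu_s(U)$, hence
\begin{equation*}
\mathscr{H}^{d-1}_{5\beta r}(\spt\mu_t\cap U_\delta)\leq \omega_{d-1}(5\beta)^{d-1}C_0^{-1}\,\mu_{t-r^2/2}(U).
\end{equation*}
Passing to the $\liminf$ as $r\downarrow 0$ (which coincides with $\liminf_{r\downarrow 0}\mu_{t-r^2}(U)$ after reparametrizing $r\mapsto r/\sqrt{2}$), and then $\delta\downarrow 0$ using $U_\delta\nearrow U$ and the monotone convergence of $\mathscr{H}^{d-1}$ on increasing sequences, yields \eqref{eq:3.51} with $\Cr{const:cor4.5}:=\omega_{d-1}(5\beta)^{d-1}/C_0$.

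The main obstacle will be extracting the pointwise lower bound $\mu_s(B_{\beta r}(x))\geq C_0\,r^{d-1}$ from the integral estimate, which requires choosing $\beta$ uniformly in $r$: the density bound \eqref{eq:3.22} applies only to balls of radius $<1$, so for the periodic extension on $\T^d$ one must handle the large annuli separately using only \eqref{eq:3.2}, and then verify that the Gaussian weight $(2\pi r^2)^{-(d-1)/2}e^{-c/r^2}$ there is negligible. Once this tail is absorbed, the remainder is a clean Vitali-style counting argument.
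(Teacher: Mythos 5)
Your proof is correct and follows essentially the same route as the paper's: the local lower density bound $\mu_{t-r^2/2}(B_{\beta r}(x)) \geq C_0 r^{d-1}$ at every $x \in \spt\mu_t \cap U$ is extracted from \eqref{eq:est-lem4.2} (using \eqref{eq:3.50} to place $(x,t)$ in $\spt\mu$) by absorbing the Gaussian tail, and then a covering argument at scale $\beta r$ bounds the spherical Hausdorff premeasure and hence $\mathscr{H}^{d-1}(\spt\mu_t\cap U)$. The only differences are technical choices, not a different strategy: the paper invokes Besicovitch's covering theorem and the ready-made tail bound \eqref{rhoest2} where you use the $5r$-Vitali lemma and a direct dyadic decomposition (carefully handling the failure of the density bound \eqref{eq:3.22} at scales $\geq 1$), and it exhausts $U$ by arbitrary compact subsets rather than by $U_\delta$.
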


\begin{proof}
We need only prove \eqref{eq:3.51} for any compact set $K \subset U$.
Let $X_t:= \{ x \in K \mid (x,t) \in \spt \mu \}$ with $t \in (0,T) \setminus \tilde B$. For any $x \in X_t$,
by \eqref{eq:est-lem4.2}, we have
\begin{equation*}
\begin{split}
2 \eta_2
\leq
\int _{\R ^d} 
\rho _{(x, t)} (y,t-r^2) \, d\mu_{t-r^2} (y).
\end{split}
\end{equation*}
for sufficiently small $r>0$. By \eqref{rhoest2}, we deduce that
\begin{equation*}
\begin{split}
& \int _{\R ^d} 
\rho _{(x, t)} (y,t-r^2) \, d\mu_{t-r^2} (y) \\
\leq & \,
\int _{B _{Lr} (x)} 
\rho _{(x, t)} (y,t-r^2) \, d\mu_{t-r^2} (y)
+2^{d-1} e^{- \frac{3L^2}{8}} D_2
\end{split}
\end{equation*}
for any $L>0$. Therefore for sufficiently large $L>0$
depending only on $d$, $\gamma_2$, $T$, $\Cr{const:3.1}$, and $D_2$, 
we have
\begin{equation*}
\begin{split}
\eta_2
\leq
\int _{B _{Lr} (x)} 
\rho _{(x, t)} (y,t-r^2) \, d\mu_{t-r^2} (y)
\leq (4\pi )^{-\frac{d-1}{2}} r^{1-d} \mu_{t-r^2} (B_{Lr} (x)),
\end{split}
\end{equation*}
where we used $\rho _{(x,t)} (y,t-r^2) \leq (4\pi )^{-\frac{d-1}{2}} r^{1-d} $.
Hence there exists $\Cl{const:cor4.5-1} >0$ depending only on
$d$, $\gamma_2$, $T$, $\Cr{const:3.1}$, and $D_2$ such that
\begin{equation}\label{eq:lem4.5-1}
\begin{split}
\omega _{d-1} r^{d-1} \leq \Cr{const:cor4.5-1} \mu_{t-r^2} (B_{Lr} (x))
\end{split}
\end{equation}
holds for any sufficiently small $r >0$.
Set $\mathcal{B} :=\{ \overline{B} _{Lr} (x) \subset U \mid x \in X_t \}$.
By the Besicovitch covering theorem, there exists a finite sub-collection
$\mathcal{B}_1$, $\mathcal{B}_2$, \dots, $\mathcal{B}_{N(d)}$ such that
each $\mathcal{B}_i$ is a family of the disjoint closed balls and
\begin{equation}\label{eq:lem4.5-2}
X_t \subset \cup _{i=1} ^{N(d)} \cup _{\overline{B} _{Lr} (x_j) \in \mathcal{B}_i} \overline{B} _{Lr} (x_j).
\end{equation}
Let $\mathscr{H}^{d-1} _\delta$ be defined in \cite[Chapter 2]{MR3409135}.
Note that $\mathscr{H}^{d-1} = \lim _{\delta \downarrow 0} \mathscr{H}^{d-1} _\delta$.
By \eqref{eq:lem4.5-1} and \eqref{eq:lem4.5-2}, we compute
\begin{equation*}
\begin{split}
\mathscr{H}^{d-1} _{2Lr} (X_t) 
\leq & \,
\sum _{i=1} ^{N(d)} 
\sum _{\overline{B} _{Lr} (x_j) \in \mathcal{B}_i} \omega_{d-1} (Lr) ^{d-1} \\
\leq & \, 
\sum _{i=1} ^{N(d)} 
\sum _{\overline{B} _{Lr} (x_j) \in \mathcal{B}_i} L^{d-1}
\Cr{const:cor4.5-1} \mu_{t-r^2} (\overline{B}_{Lr} (x_j)) \\
\leq & \, 
\sum _{i=1} ^{N(d)} L^{d-1}
\Cr{const:cor4.5-1} \mu_{t-r^2} (U)
= N(d) L^{d-1} \Cr{const:cor4.5-1} \mu_{t-r^2} (U).
\end{split}
\end{equation*}
Letting $r\downarrow 0$, we have 
$\mathscr{H}^{d-1} (X_t) \leq N(d) L^{d-1} \Cr{const:cor4.5-1} \liminf _{r\downarrow 0} \mu_{t-r^2} (U) $.
By this and \eqref{eq:3.50}, we obtain \eqref{eq:3.51}.
\end{proof}

%Using the same argument as in Lemma \ref{lem4.2}, we can show the following lemma.
%\begin{lemma}[see Corollary 6.1 in \cite{takasao-tonegawa}]
%Let $U \subset \Omega$ be an open set.
%For $ t \in (0,T]$, there exists $\Cl{const:4.3}>0$ such that
%\begin{equation}
%\mathscr{H}^{d-1} ( (\spt \mu)_t \cap U) \leq \Cr{const:4.3} \liminf_{r\to 0} \mu _{t-r^2} (U)
%\end{equation}
%and
%\begin{equation}
%\mathscr{H}^{d-1} ( \spt \mu_t \cap U) \leq \Cr{const:4.3} \liminf_{r\to 0} \mu _{t-r^2} (U)
%\end{equation}
%hold.
%\end{lemma}

By Lemma \ref{lem4.2}, we obtain the following lemma.

\begin{lemma}[see \cite{ilmanen1993,MR3348119}]\label{lem4.3}
For $T \in [1,\infty)$, let $\eta_2$ be a constant as in Lemma \ref{lem4.2}.
Set
\[
Z_T := 
\left\{
(x,t) \in \spt \mu \mid 
0\leq t \leq T/2, \ 
\limsup_{s\downarrow t}
\int_{\R^d} \rho _{(y,s)} (x,t) \, d \mu _s (y) \leq \eta_2 /2
\right \}.
\]
Then $\mu (Z_T) =0$ holds.
\end{lemma}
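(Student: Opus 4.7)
The plan is to upgrade Lemma~\ref{lem4.2} into a forward parabolic clearing on entire cones above each point of $Z_T$, stratify $Z_T$ by the scale at which such clearing is uniform, and conclude via a parabolic covering argument together with the density bound \eqref{eq:3.22} and the monotonicity formula~\eqref{eq:3.21}.

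\emph{Step 1 (forward clearing cones).} Fix $(x_0, t_0) \in Z_T$. By the $\limsup$ hypothesis one can choose $\tau_0 = \tau_0(x_0, t_0) \in (0, \eta_1)$ such that
\[
\int_{\R^d} \rho_{(y,s)}(x_0, t_0) \, d\mu_s(y) < \eta_2 \qquad \text{for every } s \in (t_0, t_0 + \tau_0/2].
\]
Applying Lemma~\ref{lem4.2} to each such pair $(t_0, s)$, with $r := \sqrt{2(s - t_0)} \in (0, \sqrt{\tau_0}]$ and $t' = s + r^2/2 = t_0 + r^2$, yields $\spt \mu_{t_0 + r^2} \cap B_{\gamma_1 r}(x_0) = \emptyset$ for every $r \in (0, \sqrt{\tau_0}]$. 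Reparametrizing $u = t_0 + r^2$ and invoking Proposition~\ref{prop3.13} to identify, off a countable set of times, $\spt \mu_u$ with the $u$-slice of $\spt \mu$, the forward parabolic cone
\[
\mathcal{C}(x_0, t_0) := \{(y, u) \in \Omega \times (t_0, t_0 + \tau_0] : |y - x_0| < \gamma_1 \sqrt{u - t_0} \}
\]
then satisfies $\mu(\mathcal{C}(x_0, t_0)) = 0$.

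\emph{Step 2 (stratification and covering).} Decompose $Z_T = \bigcup_{N \in \N} Z_T^N$, where $Z_T^N := \{(x_0, t_0) \in Z_T : \tau_0(x_0, t_0) \geq 1/N\}$, so that it suffices to prove $\mu(Z_T^N) = 0$ for each fixed $N$. Every point $(x_0, t_0) \in Z_T^N$ is the vertex of a forward cone of fixed aperture $\gamma_1$ and depth at least $1/N$ on which $\mu$ vanishes, while \eqref{eq:3.22} provides the upper bound $\mu(B_r(x_0) \times [t_0, t_0 + r^2]) \leq D_2 r^{d+1}$ at every scale. Since the vanishing cone $\mathcal{C}(x_0, t_0)$ occupies a fixed positive parabolic-volume fraction $c = c(d, \gamma_1)$ of each forward cylinder $B_r(x_0) \times [t_0, t_0 + r^2]$ for $r < 1/\sqrt{N}$, combining a parabolic Besicovitch selection with the monotonicity formula~\eqref{eq:3.21}--\eqref{eq:3.20} (to transport the forward-cone vanishing into a vanishing of the parabolic lower density of $\mu$ at $(x_0, t_0)$) allows one to cover $Z_T^N$ by cylinders whose total $\mu$-mass can be made arbitrarily small, so $\mu(Z_T^N) = 0$.

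\emph{Main obstacle.} The subtlety of Step~2 is that the null set $\mathcal{C}(x_0, t_0)$ lies strictly \emph{above} the vertex in time, whereas the natural two-sided parabolic balls centered at $(x_0, t_0)$ enjoy no a priori improved $\mu$-bound on their backward half. This is bypassed by always centering the covering cylinders at the vertices of $Z_T^N$ and by invoking the monotonicity formula to transfer the forward-cone vanishing into a neighborhood of the vertex, at which point a standard density-type covering argument, using the uniform $(d+1)$-parabolic upper bound \eqref{eq:3.22}, yields $\mu(Z_T^N) = 0$.
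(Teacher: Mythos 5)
Your Step~1 and the stratification $Z_T = \bigcup_N Z_T^N$ are essentially the paper's opening moves: the paper defines $Z^\tau$ (points whose Gaussian density ratio stays below $\eta_2$ on $(t,t+\tau]$), shows $Z_T\subset\bigcup_m Z^{\tau_m}$, and uses Lemma~\ref{lem4.2} to produce the same forward clearing cone. But there is a genuine gap in Step~2. The mechanism you invoke — ``use the monotonicity formula \eqref{eq:3.21}--\eqref{eq:3.20} to transport the forward-cone vanishing into vanishing of the parabolic lower density of $\mu$ at $(x_0,t_0)$'' — does not work: \eqref{eq:3.20} propagates information \emph{forward} in time (the Gaussian ratio at a later time is bounded by the one at an earlier time), so knowing that $\mu$ vanishes on a cone \emph{above} $(x_0,t_0)$ gives no control on $\mu$ at or near $(x_0,t_0)$ via monotonicity. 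Moreover, even granting a ``parabolic lower density zero'' statement at every point of $Z_T^N$, that together with the upper bound \eqref{eq:3.22} is not enough by itself to conclude $\mu(Z_T^N)=0$; the observation that the cleared cone occupies a fixed Lebesgue fraction of the forward cylinder is likewise inconclusive since $\mu$ is singular with respect to Lebesgue measure.

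What is actually needed — and is the heart of the paper's proof, recorded as \eqref{eq:4.7} — is that the forward-cone clearing yields a \emph{mutual avoidance}: if $(x,t),(x',t')\in Z^\tau$ with $0<|t-t'|<\tau$, then $|x-x'|\geq\gamma_1\sqrt{|t-t'|}$ (apply Lemma~\ref{lem4.2} from whichever vertex is earlier; since both points lie in $\spt\mu$, neither can lie in the other's cleared cone). Your Step~1 in fact implies this, but you never state or exploit it. The avoidance forces $Z_T^N$ to have time spread $O(r^2)$ over any spatial $r$-ball, so covering the spatial projection of a compact block of $Z_T^N$ by a Besicovitch/Vitali family of $O(r^{-d})$ disjoint $r$-balls with centers projecting from $Z_T^N$, and fattening each to a cylinder of height $O(\gamma_1^{-2}r^2)$, covers the block by cylinders of total $\mu$-mass $O(r^{-d})\cdot D_2\,O(r^{d+1}) = O(r)\to0$ by \eqref{eq:3.22}. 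This avoidance-plus-counting argument is what makes the covering close, and it replaces — it is not implied by — the monotonicity-formula/lower-density step you sketch.
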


\begin{proof}
Let $\eta _1$, $\eta _2$, and $\gamma _1$ be constants as in Lemma \ref{lem4.2}.
For $\tau \in (0, \eta _1)$, we denote
\[
Z^{\tau} :=
\left\{
(x,t) \in \spt \mu \mid 
0\leq t \leq T/2, \ 
\int_{\R^d} \rho _{(y,s)} (x,t) \, d \mu _s (y) < \eta_2,
\ \text{for any} \ s \in (t, t+\tau ]
\right \}.
\]
Let $\{ \tau _m \}_{m=1} ^\infty$ be a positive sequence with $\tau _m \to 0$ as $m \to \infty$.
Then $Z_T \subset \cup _{m=1} ^{\infty} Z ^{\tau _m}$.
Therefore we need only show $\mu (Z ^\tau) =0$ for any $\tau \in (0,\eta_1)$.
Set
\[
P_\tau (x,t) := 
\{ (x' , t') 
\mid \tau > \vert t-t' \vert >\gamma _1 ^{-2} \vert x-x' \vert ^2\}, \quad x \in \Omega, \ t \in [0, T/2).
\]
We now show that if $(x,t) \in Z^\tau$, then
\begin{equation}\label{eq:4.7}
P_\tau (x,t) \cap Z^\tau =\emptyset.
\end{equation}
Assume that $(x', t') \in P_\tau (x,t) \cap Z^\tau$ for a contradiction. 
First we consider the case of $t' >t$. 
Set $s=\frac{t'+t}{2}$ and $r=\sqrt{t' -t} = \sqrt{2(s-t)}$.
Since $(x,t) \in Z^\tau$,
\[
\int_{\R^d} \rho _{x} ^r (y) \, d \mu _s (y) 
=
\int_{\R^d} \rho _{(y,s)} (x,t) \, d \mu _s (y) < \eta_2.
\]
Therefore Lemma \ref{lem4.2} yields
$(x', t') \not \in \spt \mu$, because $x' \in B_{\gamma _1 r} (x)$ by the definition of $P_\tau (x,t)$.
This yields a contradiction.
In the case of $t' <t$, we can show $(x,t) \not \in \spt \mu$ similarly. This is a contradiction.
Therefore \eqref{eq:4.7} holds.

For $(x_0, t_0) \in \Omega \times [\tau /2 , T/2]$, we denote
\[
Z^{\tau ,x_0 ,t_0} 
= Z^\tau \cap \left( B_{\frac{\gamma_1}{2} \sqrt{\tau}} (x_0) \times (t_0 -\tau /2 , t_0 +\tau /2)\right).
\]
We can choose a countable set $\{ (x_j , t_j ) \}_{j=1} ^\infty$ such that
$ Z^\tau \subset \cup _{j=1} ^\infty Z^{\tau ,x_j ,t_j} $.
Thus we need only prove
$\mu (Z^{\tau ,x_0 ,t_0}) =0$. 
Let $P: \R^{d+1} \to \R^d$ be a projection such that $P(x,t)=x$.
For $\rho \in (0,1)$ and $r \leq \rho$, 
let $\{\overline{B}_{r/5} (x_\lambda) \}_{\lambda\in\Lambda} $ be a covering of
$P(Z^{\tau ,x_0 ,t_0}) \subset B_{\frac{\gamma_1}{2} \sqrt{\tau}} (x_0)$.
Then, we may choose a countable covering  
$\mathcal{F}=\{ \overline{B}_{r} (x_i) \}_{i=1} ^\infty$ 
of $P(Z^{\tau ,x_0 ,t_0})$
with
$(x_i, t_i) \in Z^{\tau ,x_0 ,t_0}$ for some $t_i$, by Vitali's covering theorem.
Let $A$ be a set of centers of all balls in $\{ \overline{B}_{r} (x_i) \}_{i=1} ^\infty$.
Then, by Besicovitch's covering theorem, there exist $N(d)$ and  
subcollections $\mathcal{F} _1$, $\mathcal{F} _2$, \dots, $\mathcal{F} _{N(d)} \subset \mathcal{F}$
of disjoint balls such that
\begin{equation}\label{eq:4.5}
A \subset \cup _{k=1} ^{N (d)} \cup_{B_{k,i} \in \mathcal{F}_k} B_{k,i} . 
\end{equation}
Note that $\mathcal{F}_k$ is finite ($\mathcal{F} _k = \{ B_{k,1}, \dots ,B_{k,n_k} \}$) and
\[
\mathscr{L} ^d (\cup _{i=1} ^{n_k} B_{k,i}) 
=
\sum _{i=1} ^{n_k} \mathscr{L} ^d (B_{k,i})
\leq \mathscr{L}^d ( \overline{B}_{\frac{\gamma_1}{2} \sqrt{\tau} +\rho } (x_0) )
\]
since each balls in $\mathcal{F} _k$ are disjoint and
$B_{k,i} \subset \overline{B}_{\frac{\gamma_1}{2} \sqrt{\tau} +\rho } (x_0)$.
Therefore
\begin{equation}\label{eq:4.6}
\sum_{k=1} ^{N(d)} \sum _{i=1} ^{n_k}
\omega _d r^d 
=\sum _{k=1} ^{N(d)} \sum _{B_{k,i} \in \mathcal{F} _k} \mathscr{L} ^d (B_{k,i})
\leq N(d) \mathscr{L}^d ( \overline{B}_{\frac{\gamma_1}{2} \sqrt{\tau} +\rho } (x_0) ) =:N'.
\end{equation}
If $(x,t) \in Z^{\tau ,x_0 ,t_0}$, then there exists 
$B_{k,i} = \overline{B}_r (x_{k,i}) \in \mathcal{F}_k$ for some $k$ and $i$
such that $x \in \overline{B}_{2r} (x_{k,i})$ and
 $ \vert t_{k,i} -t \vert \leq \gamma_1 ^{-1} \vert x_{k,i}  -x \vert ^2 \leq 4 \gamma_1 ^{-1}  r ^2$ by \eqref{eq:4.7}
 and \eqref{eq:4.5} 
 (note that we should change the radius because
 $A$ is not a covering of $Z^{\tau ,x_0 ,t_0}$).
Hence, we have
\begin{equation*}
\begin{split}
Z^{\tau ,x_0 ,t_0}
\subset 
\cup _{k=1} ^{N(d)} \cup _{i=1} ^{n_k} 
\overline{B}_{2r} (x_{k,i}) \times (t_{k,i} - 4 r ^2 \gamma _1 ^{-2}, t_{k,i} + 4 r ^2 \gamma _1 ^{-2})
\end{split}
\end{equation*}
By this, \eqref{eq:3.22}, and \eqref{eq:4.6} we obtain
\begin{equation*}
\begin{split}
\mu (Z^{\tau ,x_0 ,t_0})
\leq & \,
\sum _{k=1} ^{N(d)} \sum _{i=1} ^{n_k} 
\mu (\overline{B}_{2r} (x_i) \times (t_i - 4 r ^2 \gamma _1 ^{-2}, t_i + 4 r ^2 \gamma _1 ^{-2})) \\
\leq & \, 
\sum _{k=1} ^{N(d)} \sum _{i=1} ^{n_k}
D_2 (2r) ^{d-1} \times 8 \gamma _1 ^{-2} r ^2
\leq 2^{d+2} \gamma _1 ^{-2} \omega_d ^{-1} N' D_2 \rho,
\end{split}
\end{equation*}
where we used \eqref{eq:4.6}.
Letting $\rho \to 0$, we have $\mu (Z^{\tau ,x_0 ,t_0})=0$. Thus $\mu (Z_T) =0$ holds. 
\end{proof}

\begin{theorem}[see \cite{ilmanen1993}]\label{thm4.5}
We see that $\vert \xi \vert =0$ and $\lim _{i \to \infty} \vert \xi _t ^{\varepsilon _i} \vert (\Omega)=0$ for a.e. $t\in [0,T)$.
\end{theorem}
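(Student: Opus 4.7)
The plan is to follow Ilmanen's approach~\cite{ilmanen1993}, handling the non-local term $\lambda^\varepsilon$ as an additive error controlled by the $L^2$-bound of Lemma~\ref{lem3.2}. The starting observation is that $\spt|\xi|\subset\spt\mu$: the pointwise inequality $|\xi_\varepsilon|\leq W(\varphi^\varepsilon)/\varepsilon+\varepsilon|\nabla\varphi^\varepsilon|^2/2$ gives $\sigma^{-1}|\xi_{\varepsilon_i}|\,dxdt\leq$ (integrand of $\mu_t^{\varepsilon_i}$)\,dxdt pointwise, and passing to the limit in the sense of Radon measures on $\Omega\times(0,T)$ yields $|\xi|\leq\sigma\mu$. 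By the Radon--Nikodym theorem I may write $d|\xi|=f\,d\mu$ with $0\leq f\leq\sigma$, and the goal becomes showing $f=0$ $\mu$-a.e.

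The key step is a weighted integrability estimate for $|\xi|$ coming from the monotonicity formula. Since $\xi_t^\varepsilon\leq 0$ by Lemma~\ref{lem3.5}, the monotonicity formula \eqref{eq:3.21} rearranges to
\[
\frac{d}{dt}\int_{\R^d}\rho_{(y,s)}\,d\mu_t^\varepsilon+\frac{1}{2(s-t)}\int_{\R^d}\rho_{(y,s)}\,d|\xi_t^\varepsilon|\leq\frac{(\lambda^\varepsilon)^2}{2}\int_{\R^d}\rho_{(y,s)}\,d\mu_t^\varepsilon.
\]
Integrating in $t\in[t_1,t_2]$ with $t_2<s$, using \eqref{eq:3.20} to control both the boundary values and the heat-kernel integral on the right-hand side, and using \eqref{eq:3.5} to bound $\int_{t_1}^{t_2}(\lambda^\varepsilon)^2\,dt$ uniformly in $\varepsilon\in(0,\epsilon_1)$, I obtain an $\varepsilon$-uniform bound. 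Passing to the limit $\varepsilon_i\to 0$ via Fatou's lemma (the integrand $(x,t)\mapsto\rho_{(y,s)}(x,t)/(s-t)$ is continuous and nonnegative on the compact region of integration) yields
\[
\int_{[t_1,t_2)\times\R^d}\frac{\rho_{(y,s)}(x,t)}{s-t}\,d|\xi|(x,t)\leq C(y,s,t_1,T)
\]
for every $(y,s)\in\R^d\times(0,T)$.

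To conclude $f=0$ I combine this singular-weight bound with the positive density lower bound from Lemma~\ref{lem4.3}: for $\mu$-a.e.\ $(y_0,s_0)\in\spt\mu$ one has $\limsup_{s\downarrow s_0}\int\rho_{(y_0,s)}(x,s_0)\,d\mu_{s_0}(x)>\eta_2/2$. If $f$ were positive on a set of positive $\mu$-measure then a Lebesgue differentiation argument (comparing the $r^{1-d}$-weighted Gaussian averages of $d|\xi_{s-r^2}|$ and $d\mu_{s-r^2}$ about a density point of $f$) would transfer this positive lower density from $\mu$ to $|\xi|$; the change of variables $r=\sqrt{s-t}$ then turns $(s-t)^{-1}\rho_{(y,s)}$ into an $r^{-d}$ weight, so the resulting integral $\int_0^\bullet r^{-1}\,dr$ would diverge, contradicting the weighted bound above. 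Hence $|\xi|=0$. The pointwise-in-time statement $\lim_{i\to\infty}|\xi_t^{\varepsilon_i}|(\Omega)=0$ for a.e.\ $t$ then follows: since $\Omega$ is compact, testing the convergence $\sigma^{-1}|\xi_{\varepsilon_i}|\rightharpoonup|\xi|=0$ against $\chi_\Omega\otimes\phi$ with $\phi\in C_c(0,T)$ and combining with the uniform upper bound $|\xi_t^{\varepsilon_i}|(\Omega)\leq\sigma D_1$ from \eqref{eq:3.2} gives $\int_0^T|\xi_t^{\varepsilon_i}|(\Omega)\,dt\to 0$, so a further subsequence converges pointwise a.e.

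The principal obstacle is the Lebesgue-differentiation step in the last paragraph: one must quantitatively link the singular integrability of $|\xi|$ with the positive $\mu$-density to rule out $f>0$. In Ilmanen's setting this is carried out by exhausting the space-time by a countable family of parabolic cylinders and iterating the monotonicity estimate. The only new ingredient here is the additive $(\lambda^\varepsilon)^2$ correction, whose $L^2$-integrability (Lemma~\ref{lem3.2}) is precisely what allows the above bound to survive the passage to the limit.
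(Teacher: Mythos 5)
Your proposal captures the correct starting point---the weighted bound $\int_{\Omega\times(0,s)}(s-t)^{-1}\rho_{(y,s)}(x,t)\,d|\xi|(x,t)\leq C$ from the monotonicity formula, with the $L^2$-bound on $\lambda^\varepsilon$ absorbing the extra term---but there is a genuine gap in the passage from this bound to $|\xi|=0$. The paper does not argue by Lebesgue differentiation of $f=d|\xi|/d\mu$. Instead it applies Fubini (integrating the weighted bound against $d\mu_s\,ds$ in the $(y,s)$ variables) to conclude $\int_{\Omega\times(t,T)}(s-t)^{-1}\rho_{(y,s)}(x,t)\,d\mu_s(y)\,ds<\infty$ for $|\xi|$-a.e.\ $(x,t)$, and then runs a nontrivial iteration of the monotonicity estimate \eqref{eq:3.20} along a logarithmic sequence of times $t+e^{\beta_i}$ (see \eqref{eq:4.9.1}--\eqref{eq:4.9.2}) to show this finiteness forces $a(x,t):=\limsup_{s\downarrow t}\int\rho_{(y,s)}(x,t)\,d\mu_s(y)=0$ for $|\xi|$-a.e.\ $(x,t)$. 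The conclusion then follows from the disjoint decomposition $\{a=0\}\cup\{a>0\}$: $|\xi|$ vanishes on $\{a>0\}$ by the above, and on $\{a=0\}$ because $\mu(\{a=0\})=0$ by Lemma~\ref{lem4.3} together with $|\xi|\ll\mu$. Your sketch omits both the Fubini transfer and the logarithmic-iteration step, which is the technical core; you acknowledge this as ``the principal obstacle,'' but it is precisely the part the proof must supply, and the adaptation is not immediate because the monotonicity formula here carries the extra $(\lambda^\varepsilon)^2$ term.

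A second, concrete issue: your invocation of Lemma~\ref{lem4.3} is misstated. The quantity controlled there is $\limsup_{s\downarrow t}\int\rho_{(y,s)}(x,t)\,d\mu_s(y)$, which integrates $\mu_s$ at \emph{future} times $s>t$, not $\int\rho_{(y_0,s)}(x,s_0)\,d\mu_{s_0}(x)$; the latter, as $s\downarrow s_0$, measures the $(d-1)$-dimensional density of the present-time slice $\mu_{s_0}$ at $y_0$, a quantity not yet available since rectifiability and integrality of $\mu_t$ are Theorem~\ref{thm4.6}, proved \emph{after} Theorem~\ref{thm4.5}. The differentiation route would also need lower parabolic-density bounds for $|\xi|$ uniformly across dyadic scales, which do not follow from $d|\xi|=f\,d\mu$ at a Lebesgue point of $f$ alone; this is exactly what the Fubini--monotonicity argument sidesteps. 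Your final step---deducing a.e.\ vanishing of $|\xi_t^{\varepsilon_i}|(\Omega)$ along a further subsequence from $L^1$ convergence and the uniform bound---is correct.
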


\begin{proof}
First we show that
\begin{equation}\label{eq:4.8}
\int _{\Omega \times (0,s) } \frac{\rho _{(y,s)} (x,t) }{s-t} \, d \vert \xi \vert (x,t) \leq C
\end{equation}
for some $C>0$. By \eqref{eq:3.15} and \eqref{eq:3.20},
 integrating \eqref{eq:3.21} on $(0, s-\delta)$ with $\delta>0$,
we obtain
\begin{equation*}
\begin{split}
& \int _0 ^{s-\delta}
\frac{1}{2\sigma (s-t)} \int_{\mathbb{R}^d} \rho _{(y,s)} (x,t) 
\left \vert
\frac{W(\varphi ^\varepsilon)}{\varepsilon}
-
\frac{\varepsilon \vert \nabla \varphi^\varepsilon \vert ^2 }{2} \right\vert
\, dx dt \\
\leq & \,
\left( 1+ e^{\Cr{const:3.1} (s+1)}  \frac12 \int_{0} ^{s-\delta} \vert \lambda ^\varepsilon \vert ^2 \, dt \right)
\int _{\R ^d} \rho _{(y,s)} (x,0) \, d \mu _0 ^\varepsilon.
\end{split}
\end{equation*}
Letting $\delta \to 0$ and $\varepsilon \to 0$, we obtain \eqref{eq:4.8}.
Next, integrating \eqref{eq:4.8} on $\Omega \times (0,T)$ by $d\mu _s ds$ we have
\[
\int _{\Omega \times (0,T)}
\left(
\int _{\Omega \times (t,T)}
\frac{\rho _{(y,s)} (x,t) }{s-t}  \, d\mu _s (y) ds
\right)
d \vert \xi \vert (x,t) 
\leq CD_1 T,
\]
where we used Fubini's theorem. Then this boundedness implies
\begin{equation}\label{eq:4.8.1}
\int _{\Omega \times (t,T)}
\frac{\rho _{(y,s)} (x,t) }{s-t}  \, d\mu _s (y) ds
<\infty
\qquad \text{for} \ \vert \xi \vert \text{-a.e.} \ (x,t) \in \Omega \times (0,T).
\end{equation}
%Then the monotonicity formula \eqref{eq:3.20} yields
Next we claim
\begin{equation}\label{eq:4.9}
a(x,t):= \limsup_{s\downarrow t} \int _{\Omega} \rho _{(y,s)} (x,t) \, d\mu _s (y)=0
\qquad \text{for} \ \vert \xi \vert \text{-a.e.} \ (x,t) \in \Omega \times (0,T).
\end{equation}
%(see \cite[p.\,898]{takasao-tonegawa}). 
Define $\beta:= \log (s-t)$ and
\[
h (s) := \int _{\Omega} \rho _{(y,s)} (x,t) \, d \mu _s (y).
\]
Assume that $(x,t)$ satisfies \eqref{eq:4.8.1}. Then 
\begin{equation}\label{eq:4.9.1}
\int _{-\infty} ^{\log (T-t)} h (t+ e^\beta) \, d \beta
<\infty.
\end{equation}
Let $\theta \in (0,1]$ and $\{ \beta _i \} _{i=1} ^\infty$ 
be a negative monotone decreasing sequence
such that
\[
\beta _i \downarrow -\infty, \quad 
0< \beta_i - \beta_{i+1} \leq \theta, \quad \text{and} \quad
h(t + e^{\beta_i}) \leq \theta.
\]
For any $\beta \in (-\infty, \beta_1)$, choose $i$ such that $\beta \in [\beta_{i} ,\beta_{i-1})$ holds.
One can check that
\begin{equation}\label{eq:4.9.4}
\sup _{y \in B_{Mr} (x)} 
\frac{\rho _{(y,t+2 e^\beta - e^{\beta_i })} (x,t )}{ \rho _{(y,t+ e^{\beta_i })} (x,t )}
\leq e^{M^2 (1-e ^{\beta -\beta_i})} \leq  e^{M^2 (1-e ^{\theta})}
\end{equation}
for $M>0$,
where $r=\sqrt{2(2 e ^{\beta} -e ^{\beta_i})}$.
We compute
\begin{equation}\label{eq:4.9.2}
\begin{split}
& h(t+e^\beta)
= \int _{\Omega} \rho _{(y,t+e^\beta)} (x,t) \, d \mu_{t+e^\beta} (y)
= \int _{\Omega} \rho _{(y,t+2 e^\beta)} (x,t + e^\beta) \, d \mu_{t+e^\beta} (y)\\
\leq & \,
e^{\Cr{const:3.1} (\beta-\beta_i +1) }
\int _{\Omega} \rho _{(y,t+2 e^\beta)} (x,t + e^{\beta_i }) \, d \mu_{t+e^{\beta_i}} (y) \\
\leq & \,
e^{2 \Cr{const:3.1} }
\int _{\Omega} \rho _{(y,t+2 e^\beta - e^{\beta_i })} (x,t ) \, d \mu_{t+e^{\beta_i}} (y) \\
\leq & \,
e^{2 \Cr{const:3.1} }
\int _{B_{Mr} (x)} \rho _{(y,t+2 e^\beta - e^{\beta_i })} (x,t ) \, d \mu_{t+e^{\beta_i}} (y) 
+ e^{2 \Cr{const:3.1} } 2^{d-1} e^{-\frac{3M^2}{8}} D_2\\
\leq & \,
e^{2 \Cr{const:3.1} } e^{M^2 (1-e ^{\theta})}
\int _{B_{Mr} (x)} \rho _{(y,t+ e^{\beta_i })} (x,t ) \, d \mu_{t+e^{\beta_i}} (y) 
+ e^{2 \Cr{const:3.1} } 2^{d-1} e^{-\frac{3M^2}{8}} D_2\\
\leq & \,
e^{2 \Cr{const:3.1} } e^{M^2 (1-e ^{\theta})} \theta
+ e^{2 \Cr{const:3.1} } 2^{d-1} e^{-\frac{3M^2}{8}} D_2,
\end{split}
\end{equation}
where we used \eqref{eq:3.20}, \eqref{rhoest2}, and
\begin{equation*}
\begin{split}
\int _{\Omega} \rho _{(y,t+e^{\beta_i })} (x,t ) \, d \mu_{t+e^{\beta_i}} (y)
=h(t+e^{\beta_i}) \leq \theta.
\end{split}
\end{equation*}
Thus, for any $\delta >0$, we can choose $\theta \in (0,1]$ and $M>0$
such that $h (t + e^{\beta}) \leq \delta$ for any $\beta < \beta_1$.
This proves \eqref{eq:4.9}.
Set 
%\[
%a(x,t):= \limsup_{s\downarrow t} \int _{\Omega} \rho _{(y,s)} (x,t) \, d \mu _s (y),
%\]
\[
A:= \{ (x,t) \in \Omega \times (0,T) \mid a(x,t)=0 \} 
\ \ \text{and} \ \
B:=\{ (x,t) \in \Omega \times (0,T) \mid a(x,t)>0 \}.
\] 
Then 
$\Omega \times (0,T)= A\cup B$ and 
$\vert \xi \vert (B)=0$ by \eqref{eq:4.9}.
Moreover, Lemma \ref{lem4.3} and \eqref{rhoest2} imply
$\mu (A) =0$ and thus $\vert \xi \vert (A) =0$, because 
$\vert \xi \vert$ is absolute continuous with respect to $\mu$.
Therefore $\vert \xi \vert (\Omega\times (0,T)) =0$.
The rest of the claim can be shown from the dominated convergence theorem.
\end{proof}

\subsection{Rectifiability}

Next we show the rectifiability of $\mu_t$.

\begin{definition}
For $\phi \in C_c (G_{d-1} (\Omega))$, we define
$V_t ^\varepsilon \in \mathbb{V} _{d-1} (\Omega)$ by
\begin{equation}\label{eq:4.19}
V_t ^\varepsilon (\phi):=
\int _{\Omega \cap \{ \vert \nabla \varphi ^\varepsilon (x,t) \vert \not=0 \}}
\phi \left(x, I -\frac{\nabla \varphi ^\varepsilon (x,t) }{ \vert \nabla \varphi ^\varepsilon (x,t) \vert} 
\otimes \frac{\nabla \varphi ^\varepsilon (x,t) }{\vert \nabla \varphi ^\varepsilon (x,t) \vert}
\right) \, d\mu _t ^\varepsilon (x).
\end{equation}
Here, $\varphi ^\varepsilon$ is a solution to \eqref{ac}.
\end{definition}

Note that the first variation of $V_t ^\varepsilon$ is given by
\begin{equation*}
\begin{split}
\delta V_t ^\varepsilon (\vec{\phi})
= & \,
\int _{G_{d-1} (\Omega)} 
\nabla \vec{\phi} (x) \cdot S \, dV_t ^\varepsilon (x,S) \\
= & \,
\int _{\Omega \cap \{ \vert \nabla \varphi ^\varepsilon (x,t) \vert \not=0 \}}
\nabla \vec{\phi} (x) \cdot \left(I -\frac{\nabla \varphi ^\varepsilon (x,t) }{\vert \nabla \varphi ^\varepsilon (x,t) \vert} 
\otimes \frac{\nabla \varphi ^\varepsilon (x,t) }{\vert \nabla \varphi ^\varepsilon (x,t) \vert}
\right) \, d\mu _t ^\varepsilon (x)
\end{split}
\end{equation*}
for $\vec{\phi} \in C_c ^1 (\Omega; \R^d)$.
By the integration by parts, we have
\begin{equation}\label{eq:4.20}
\begin{split}
\delta V_t ^\varepsilon (\vec{\phi})
= & \,
\int _{\Omega} (\vec{\phi} \cdot \nabla \varphi ^\varepsilon)
\left( \varepsilon \Delta \varphi ^\varepsilon -\frac{W'(\varphi ^\varepsilon)}{\varepsilon} \right)
\, dx
-\int _{\Omega \cap \{ \vert \nabla \varphi ^\varepsilon (x,t) \vert \not=0 \}}
\frac{W(\varphi ^\varepsilon)}{\varepsilon} \div \vec{\phi} \, dx \\
& + \int _{\Omega \cap \{ \vert \nabla \varphi ^\varepsilon (x,t) \vert \not=0 \}}
\nabla \vec{\phi} \cdot 
\left(
\frac{\nabla \varphi ^\varepsilon (x,t) }{\vert \nabla \varphi ^\varepsilon (x,t) \vert} 
\otimes \frac{\nabla \varphi ^\varepsilon (x,t) }{\vert\nabla \varphi ^\varepsilon (x,t) \vert}
\right)
\xi _\varepsilon
 \, dx.
\end{split}
\end{equation}
Note that the second and third terms of the right hand side converges to $0$
for a.e. $t \in [ 0,T)$ by Theorem \ref{thm4.5}.
By \eqref{eq:3.1} and \eqref{eq:3.5}, we have
\[
\sup _{i \in \N } \int _0 ^T \int _{\Omega} \varepsilon_i 
\left( \Delta \varphi ^{\varepsilon_i } -\dfrac{W' (\varphi ^{\varepsilon_i})}{\varepsilon_i ^2 } \right)^2 
\,dxdt \leq C
\]
for some $C>0$ (see the proof of Theorem \ref{thm3.11}).
Thus Fatou's lemma implies
\begin{equation}\label{eq:4.21}
\liminf_{i \to \infty}
\int _\Omega \varepsilon _i 
\left( \Delta \varphi ^{\varepsilon_i} -\frac{W'(\varphi ^{\varepsilon_i})}{\varepsilon_i ^2} \right)^2
\, dx < \infty
\end{equation}
for a.e. $t \in[ 0, T)$. Hence, \eqref{eq:4.20} and \eqref{eq:4.21} show that
\begin{equation}\label{eq:4.22}
\begin{split}
& \liminf _{i\to \infty}
\vert \delta V_t ^{\varepsilon _i} (\vec{\phi}) \vert \\
\leq & \, \liminf_{i\to \infty}
\left( \int _{\Omega} \varepsilon_i \vert \nabla \varphi ^{\varepsilon _i} \vert^2 \, dx \right) ^{\frac12} 
\left(
\int _\Omega \varepsilon _i 
\left( \Delta \varphi ^{\varepsilon_i} -\frac{W'(\varphi ^{\varepsilon_i})}{\varepsilon_i ^2} \right)^2
\, dx
\right) ^{\frac12} \\
\leq & \,
D_1 ^{\frac12} 
\liminf_{i\to \infty}
\left(
\int _\Omega \varepsilon _i 
\left( \Delta \varphi ^{\varepsilon_i} -\frac{W'(\varphi ^{\varepsilon_i})}{\varepsilon_i ^2} \right)^2
\, dx
\right) ^{\frac12}
<\infty
\end{split}
\end{equation}
for a.e. $t \in[ 0, T)$ and for any $\vec{\phi} \in C_c ^1 (\Omega;\R^d)$ with $\sup \vert \vec{\phi} \vert \leq 1$.
Let $t \in[ 0, T) \setminus \tilde B$ satisfy \eqref{eq:4.22}, 
where $\tilde B$ is given by Proposition \ref{prop3.13}.
Taking a subsequence $i_j \to \infty$, 
there exists a varifold $V_t$ such that
$V_t ^{\varepsilon _{i_j}} \to V_t$ as Radon measures and 
$\delta V_t$ is a Radon measure by \eqref{eq:4.22}. 
In addition, Proposition \ref{prop3.13}, Lemma \ref{lem4.5}, 
and the standard measure theoretic argument imply
\[
V_t =V_t \lfloor_{\{ x \in \Omega \mid \limsup_{r\downarrow 0} r^{1-d} \| V_t \| (B_r (x)) >0 \} 
\times \mathbb{G}(d,d-1)}.
\] 
Therefore Allard's rectifiability theorem yields the following theorem.
\begin{theorem}\label{thm4.6}
For a.e. $t \geq 0$, $\mu_t$ is rectifiable.
In addition, for a.e. $t \geq 0$, $\mu _t$ has a generalized 
mean curvature vector $\vec{h} (\cdot,t)$ with
\[
\delta V_t (\vec{\phi}) =- \int _{\Omega} \vec{\phi} \cdot h (\cdot,t) \, d\mu _t
=\lim _{i \to \infty}
\int _{\Omega} (\vec{\phi} \cdot \nabla \varphi ^{\varepsilon_i})
\left( \varepsilon_i \Delta \varphi ^{\varepsilon_i} -\frac{W'(\varphi ^{\varepsilon_i})}{\varepsilon_i} \right)
\, dx
\]
and
\[
\int _\Omega \phi \vert \vec{h} \vert^2 \, d\mu _t
\leq
\frac{1}{\sigma} \liminf_{i\to \infty}
\int _{\Omega} \varepsilon _i \phi 
\left( 
\Delta \varphi ^{\varepsilon_i} -\frac{W'(\varphi ^{\varepsilon_i})}{\varepsilon_i ^2} 
\right)^2
\, dx <\infty
\]
for any $\phi \in C_c (\Omega ;[0,\infty))$ and $\vec \phi \in C_c (\Omega ;\R^d)$.
\end{theorem}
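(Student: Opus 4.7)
The plan is to run the standard varifold subsequence plus Allard argument at generic time slices, extract the generalized mean curvature via a Cauchy--Schwarz / duality argument, and obtain rectifiability using Lemma \ref{lem4.5} to verify the density lower bound. First fix a full-measure set of times $t \in [0,T) \setminus \tilde B$ on which simultaneously $\mu_t^{\varepsilon_i} \to \mu_t$ (Proposition \ref{prop3.11}), $\lim_i \lvert \xi_t^{\varepsilon_i} \rvert(\Omega) = 0$ (Theorem \ref{thm4.5}), and the $\liminf$ in \eqref{eq:4.21} is finite. At such $t$, pass to a further subsequence (depending on $t$) so that $V_t^{\varepsilon_i}$ converges to some $V_t \in \mathbb{V}_{d-1}(\Omega)$. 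The projection-valued integrand in \eqref{eq:4.19} is bounded, and $\mu_t^\varepsilon$ agrees with $\sigma^{-1} \varepsilon_i \lvert \nabla\varphi^{\varepsilon_i} \rvert^2 dx$ up to the vanishing discrepancy $\xi_{t}^{\varepsilon_i}$, so $\lVert V_t \rVert = \mu_t$. Combining \eqref{eq:4.22} with the vanishing of the second and third terms on the right of \eqref{eq:4.20} (both controlled by the discrepancy as noted after \eqref{eq:4.20}), we conclude that $\delta V_t$ is a Radon vector measure on $\Omega$ with the pairing formula written in the theorem.

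Absolute continuity $\delta V_t \ll \mu_t$ and the $L^2$ bound both follow from the Cauchy--Schwarz inequality applied at the $\varepsilon$-level. For $\phi \in C_c(\Omega;[0,\infty))$ and $\vec g \in C_c^1(\Omega;\R^d)$ with $\spt \vec g \subset \{ \phi > 0 \}$,
\begin{equation*}
\Bigl\lvert \int \phi \, \vec g \cdot \nabla\varphi^{\varepsilon_i} \bigl( \varepsilon_i \Delta\varphi^{\varepsilon_i} - W'(\varphi^{\varepsilon_i})/\varepsilon_i \bigr) dx \Bigr\rvert^2 \le \int \phi \, \varepsilon_i \lvert \vec g \rvert^2 \lvert \nabla\varphi^{\varepsilon_i} \rvert^2 dx \cdot \int \phi \, \varepsilon_i \bigl( \Delta\varphi^{\varepsilon_i} - W'(\varphi^{\varepsilon_i})/\varepsilon_i^2 \bigr)^2 dx.
\end{equation*}
Proposition \ref{prop3.2} gives $\varepsilon \lvert \nabla\varphi \rvert^2 \le \sigma \, d\mu_t^\varepsilon/dx$, so the first factor is bounded by $\sigma \int \phi \lvert \vec g \rvert^2 d\mu_t^{\varepsilon_i}$. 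Passing $i \to \infty$ and using the limit identification of $\delta V_t$ yields
\begin{equation*}
\Bigl( \int \phi \, \vec g \cdot \vec h \, d\mu_t \Bigr)^2 \le \sigma^{-1} \int \phi \lvert \vec g \rvert^2 d\mu_t \cdot \liminf_{i \to \infty} \int \phi \, \varepsilon_i \bigl( \Delta\varphi^{\varepsilon_i} - W'(\varphi^{\varepsilon_i})/\varepsilon_i^2 \bigr)^2 dx
\end{equation*}
after tracking the $\sigma$-normalization. Taking $\phi$ a cutoff and varying $\vec g$ gives $\delta V_t \ll \mu_t$, so $\vec h$ exists via Radon--Nikodym; approximating $\vec h$ in $L^2(\phi \, d\mu_t; \R^d)$ by such $\vec g$ then produces the stated $L^2$ curvature bound.

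For rectifiability, apply Allard's theorem to $V_t$: we need locally bounded first variation (just proved) and $\Theta^{*\,d-1}(\lVert V_t \rVert, x) > 0$ for $\lVert V_t \rVert$-a.e.\ $x$. Upper density boundedness is \eqref{eq:3.22}. For positivity of the upper density $\mu_t$-a.e., Lemma \ref{lem4.5} gives $\mathscr{H}^{d-1}(\spt \mu_t \cap U) < \infty$ on bounded open $U \subset \Omega$; the standard density lemma for Radon measures with locally finite $\mathscr{H}^{d-1}$ support then yields $\mu_t(\{ x : \Theta^{*\,d-1}(\mu_t, x) = 0 \}) = 0$. Hence $V_t$, and therefore $\mu_t$, is $(d-1)$-rectifiable. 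The main obstacle is the identification $\lVert V_t \rVert = \mu_t$ combined with the positive upper density, both of which rely critically on Theorem \ref{thm4.5} (vanishing of $\xi$) and Lemma \ref{lem4.5} (spacetime density control via the modified monotonicity \eqref{eq:3.20}); once those ingredients are in place, the rectifiability and the $L^2$ curvature inequality follow by standard varifold arguments.
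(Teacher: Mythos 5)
Your argument is correct and matches the approach the paper outlines in the text preceding the theorem and then delegates to the cited references: extract a $t$-dependent subsequential varifold limit with locally bounded first variation from \eqref{eq:4.22}, identify $\|V_t\|=\mu_t$ via the vanishing discrepancy (Theorem \ref{thm4.5}), use Cauchy--Schwarz at the $\varepsilon$-level together with Proposition \ref{prop3.2} to obtain $\delta V_t \ll \mu_t$ and the stated $L^2$ bound on $\vec{h}$, and invoke Allard's rectifiability theorem after verifying positive upper density through Lemma \ref{lem4.5} and the standard density comparison for Radon measures. The paper explicitly omits the detailed proof on the grounds that the essential part is already in the surrounding discussion, and your proposal supplies precisely those essentials.
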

Detailed proof of this is in \cite{ilmanen1993, takasao-tonegawa}, so we omit it
(however, the essential part has already been discussed above).

\subsection{Integrality}
To prove the integrality, we mainly follow \cite{MR1803974, takasao-tonegawa, MR2040901}.
The propositions that are directly applicable to our problem are in Appendix 
for readers' convenience. Let $\{ 	r _i\} _{i=1} ^\infty$ be a positive
sequence with $r _i \to 0$ and $\frac{\varepsilon _i}{r_i} \to 0$ as $i \to \infty$.
Set $u ^{\tilde \varepsilon} (\tilde x, \tilde t) = \varphi ^\varepsilon (x,t)$
and $g ^{\tilde \varepsilon} (\tilde t) = r \lambda ^\varepsilon (t)$
for $\tilde x= \frac{x}{r}$, $\tilde t= \frac{t}{r^2}$, 
and $\tilde \varepsilon = \frac{\varepsilon}{r}$.
Then, $u ^{\tilde \varepsilon}$ is a solution to
\begin{equation}\label{eq:u}
\tilde \varepsilon u ^{\tilde \varepsilon} _{\tilde t} =\tilde \varepsilon \Delta_{\tilde x} u ^{\tilde \varepsilon} 
-\dfrac{W' (u^{\tilde \varepsilon})}{\tilde \varepsilon }
+ g ^{\tilde \varepsilon} \sqrt{2W(u ^{\tilde \varepsilon})} .
\end{equation}
%Note that $u^{\tilde \varepsilon}$ also satisfies almost all of the assumptions 
%mentioned at the beginning of this section. 
We remark that the monotonicity formula \eqref{eq:3.20} and the upper bound
of the density \eqref{eq:3.22} hold for 
$d\tilde \mu_{\tilde t} ^{\tilde \varepsilon} (\tilde x)= \sigma ^{-1} 
(\frac{\tilde \varepsilon \vert \nabla_{\tilde x} u ^{\tilde \varepsilon} \vert^2}{2} +
\frac{W(u ^{\tilde \varepsilon})}{\tilde \varepsilon} )\, d\tilde x$, 
because the value
\[
\int_{\mathbb{R}^d} \rho _{(y,s)} (x,t) \, d\mu _t ^\varepsilon (x)
\]
is invariant under this rescaling, and for any $s>0$ we have
\[ 
\frac{1}{s^{d-1}}
\int _{B_s (0)} \left( \frac{\tilde \varepsilon \vert \nabla_{\tilde x} u ^{\tilde \varepsilon} \vert^2}{2} +
\frac{W(u ^{\tilde \varepsilon})}{\tilde \varepsilon} \right) \, d\tilde x
=
\frac{1}{(sr)^{d-1}}\int _{B_{sr} (0)} \left( \frac{\varepsilon \vert\nabla \varphi ^{\varepsilon}\vert^2}{2} +
\frac{W(\varphi ^{\varepsilon})}{\varepsilon} \right) \, dx
\leq \sigma D_2
\]
by \eqref{eq:3.22}. 
We subsequently drop $\tilde \cdot$ for simplicity.
First we consider the energy estimate on 
$\{x \in B_1 (0) \mid \vert u ^\varepsilon (x,t) \vert \geq 1 -b  \}$.

\begin{proposition}[See \cite{MR2040901}]\label{prop4.7}
For any $s >0$ and $a \in (0,T)$, there exist positive constants $b$ and $\epsilon_2$
depending only on $D_1$, $D_2$, $\Cr{const:3.1}$, $a$, $\alpha$, and $s$ such that
\[
\int_{\{ x \in B_1 (0) \mid \vert u ^\varepsilon (x,t) \vert \geq 1-b \}}
\frac{W( u ^\varepsilon (x,t))}{\varepsilon } \, dx \leq s
\]
for all $t \in (a, T)$ whenever $\varepsilon \in (0,\epsilon_2)$.
\end{proposition}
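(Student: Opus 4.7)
This is the analogue for the rescaled equation \eqref{eq:u} of the ``clearing-out of the pure phase'' result of \cite[Proposition 4.3]{MR2040901}, stating that the integral of the potential density over the set where $u^\varepsilon$ has already saturated to $\pm 1$ is negligible. My strategy is to follow Tonegawa's proof closely. The key observation enabling this transfer is that the non-local forcing $g^\varepsilon\sqrt{2W(u^\varepsilon)}$ in \eqref{eq:u}, upon a further parabolic blow-up at scale $\varepsilon/r$ about a point of the interface, becomes $\varepsilon\lambda^\varepsilon\sqrt{2W(\cdot)}$, whose $L^\infty$-norm is bounded by $\frac{4}{3}\varepsilon^{1-\alpha}$ thanks to \eqref{eq:1.7}, hence vanishes as $\varepsilon\to 0$. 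So in the blow-up limit the equation reduces to the standard Allen--Cahn equation.

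I would argue by contradiction. Assuming the conclusion fails, one obtains sequences $\varepsilon_i\to 0$, $b_i\to 0$, and $t_i\in[a,T)$ violating the estimate by $s$. The rescaling-invariant monotonicity formula \eqref{eq:3.20} and density bound \eqref{eq:3.22}, combined with the rectifiability machinery of Section 4.3 applied to the rescaled measures (justified by the vanishing of the perturbation after blow-up), yield a $(d-1)$-rectifiable limit $\mu^{\ast}$ of $\mu^{\varepsilon_i}_{t_i}$ on $B_2$. Cover $\overline{B_1}$ by finitely many balls of small radius $\rho>0$ and split the index set into a ``regular'' family $J_R$ of balls within $\rho$ of a density-$\geq\eta_2\rho^{d-1}$ point of $\spt\mu^{\ast}$ (where $\eta_2$ is the constant of Lemma \ref{lem4.2}) and a complementary ``clearing-out'' family $J_C$. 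On clearing-out balls, Lemma \ref{lem4.2} together with the gradient bound of Proposition \ref{prop3.2} force $|u^{\varepsilon_i}|\geq 1-b_i$ throughout, and the energy is bounded by $\sigma\mu^{\varepsilon_i}_{t_i}(B_\rho(x_j))\to\sigma\mu^{\ast}(B_\rho(x_j))$; since $J_C$ can be chosen disjoint from a neighborhood of $\spt\mu^{\ast}$, this total is arbitrarily small in the limit. On regular balls, a further blow-up by $\varepsilon_i$ together with parabolic Schauder estimates on \eqref{eq:u} yield $C^1_{\mathrm{loc}}$-convergence of $u^{\varepsilon_i}$ to the one-dimensional stationary profile $\tanh$ transverse to the tangent hyperplane of $\mu^{\ast}$; the direct computation $\int_{\{|\tanh s|\geq 1-b_i\}}W(\tanh s)\,ds=O(b_i^2)$ then gives a per-ball contribution of $O(b_i^2\rho^{d-1})$, and since $|J_R|\lesssim\rho^{-(d-1)}$ by rectifiability, the total regular contribution is $O(b_i^2)\to 0$.

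Combining, the sum of the two contributions tends to $0$, contradicting the assumed lower bound $s$. The principal technical obstacle is establishing the $C^1_{\mathrm{loc}}$-convergence to the 1D profile in the regular regime; this is where the pointwise $L^\infty$-smallness of the non-local perturbation inherited from \eqref{eq:1.7}---as opposed to the merely $L^2$-bound of Lemma \ref{lem3.2}---is essential, since Schauder estimates on the short parabolic cylinders (of unit size in $\varepsilon_i$-blow-up coordinates) cannot absorb a merely $L^2$-bounded forcing into $C^0$-convergence.
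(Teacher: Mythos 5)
Your proposal takes a fundamentally different route from the paper's (and, despite your opening claim, from Tonegawa's \cite{MR2040901}, which the paper reproduces). The paper's proof is a purely quantitative argument with no limit object in sight: Lemma \ref{lem4.8} (a parabolic barrier/comparison argument) shows that if $|u^\varepsilon(x_0,t_0)|$ lies between $1-\varepsilon^{1/2^{j+1}}$ and $1-\varepsilon^{1/2^j}$ then a nearby space-time point has $|u^\varepsilon|<\tfrac12$; Lemma \ref{lem4.9} (a Besicovitch covering estimate via the monotonicity formula) bounds $\mathscr{L}^d$ of the set of such points by $O(r)$; one then sums over the dyadic annuli $A_j$, using $W(1-\varepsilon^{1/2^{j+1}})/\varepsilon \lesssim \varepsilon^{2^{-j}-1}$, to get a total of $O(\sqrt{b})$. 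No blow-up, no rectifiability, no structure of a limit.

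Your contradiction-plus-blow-up strategy has two genuine gaps. First, you invoke the ``rectifiability machinery of Section 4.3'' to produce a rectifiable limit $\mu^\ast$ of $\mu^{\varepsilon_i}_{t_i}$. But Section 4.3 establishes rectifiability of $\mu_t$ for a.e.\ \emph{fixed} $t$, along the original subsequence; it says nothing about the limit of $\mu^{\varepsilon_i}_{t_i}$ when the times $t_i$ vary with $i$. The vanishing of the discrepancy (Theorem \ref{thm4.5}) and the $L^2$ bound on the approximate mean curvature \eqref{eq:4.21} are a.e.-in-$t$ statements obtained via Fatou; the $t_i$ in your contradiction argument can accumulate on the exceptional null set, and then nothing in the paper gives you rectifiability of $\mu^\ast$. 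This is not a technical quibble — it is why Tonegawa proves Proposition \ref{prop4.7} by direct estimate rather than by blow-up. Second, the $C^1_{\mathrm{loc}}$ convergence to the one-dimensional $\tanh$ profile at ``regular'' balls is a major structural claim. Schauder estimates and Arzel\`a--Ascoli give subsequential $C^{2}_{\mathrm{loc}}$ convergence of the $\varepsilon_i$-blow-up to \emph{some} bounded entire solution of the stationary Allen--Cahn equation, but identifying that limit as a single $\tanh$ layer requires a classification result and, crucially, requires the density of $\mu^\ast$ at the blow-up point to equal $1$. At points of multiplicity $N\geq 2$ — precisely the points the integrality theorem (which relies on Proposition \ref{prop4.7}) is designed to handle — the blow-up would exhibit $N$ layers and your per-ball estimate $O(b_i^2\rho^{d-1})$ has no basis. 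This makes the argument circular: you would be using unit-density structure of the limit to prove a proposition that is then used to prove integrality.

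A smaller point: on your ``clearing-out'' balls the deduction ``Lemma \ref{lem4.2} together with the gradient bound force $|u^{\varepsilon_i}|\geq 1-b_i$'' does not follow; Lemma \ref{lem4.2} controls the Gaussian density, hence $\spt\mu$, not a pointwise lower bound on $|u^\varepsilon|$. (It turns out you don't need that pointwise bound — the weak convergence of measures already kills the far-from-$\spt\mu^\ast$ contribution — but as written it is a non-sequitur.) The upshot is that your approach would require substantial new machinery that the paper does not develop and whose correctness in the multiplicity-$\geq 2$ case is exactly what is at issue; you should instead prove the proposition via the dyadic decomposition and the two lemmas, which avoid the limit entirely.
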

To prove Proposition \ref{prop4.7}, we prepare following two lemmas.
\begin{lemma}[See \cite{MR2040901}]\label{lem4.8}
For any $\delta \in (0,T)$, there exist positive constants $\Cl{const:lem4.7}$ and $\epsilon_3$ depending only
on $d$, $\delta$, $\alpha$, and $\Cr{const:initial}$ with the following property.
Assume that there exist $(x_0 ,t_0) \in B_1 (0) \times (\delta ,T)$
and $\gamma \in (0,\frac23]$ such that
\begin{equation}\label{eq:4.23}
u ^\varepsilon (x_0 ,t_0) <1 -\varepsilon ^\gamma
\qquad 
(\text{or} \ u ^\varepsilon (x_0 ,t_0) > -1 + \varepsilon ^\gamma)
\end{equation}
and
\begin{equation}\label{eq:4.24}
1\leq \tilde r := \Cr{const:lem4.7} \gamma \vert \log \varepsilon \vert
\leq \varepsilon ^{-1} \min \left\{ \sqrt{\frac{\delta}{2}} , \ \frac12 \right\}.
\end{equation}
Then 
\[
\inf _{B_{\varepsilon \tilde r} (x_0) \times (t_0 -\varepsilon ^2 \tilde r^2, t_0)} u ^\varepsilon
< \frac12 
\qquad
\left( 
\text{resp.} \ 
\sup _{B_{\varepsilon \tilde r} (x_0) \times (t_0 -\varepsilon ^2 \tilde r^2, t_0)} u ^\varepsilon
> - \frac12 
\right) 
\]
for any $\varepsilon \in (0,\epsilon_3)$.
\end{lemma}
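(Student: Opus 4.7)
The plan is to argue by contradiction: assume $u^\varepsilon \geq 1/2$ throughout the closed cylinder $Q_\varepsilon := \overline{B}_{\varepsilon \tilde r}(x_0) \times [t_0 - \varepsilon^2 \tilde r^2, t_0]$ and deduce $u^\varepsilon(x_0,t_0) \geq 1 - \varepsilon^\gamma$, contradicting \eqref{eq:4.23}. Setting $\varphi := 1 - u^\varepsilon \in [0,1/2]$, dividing equation \eqref{eq:u} by $\varepsilon$ and rearranging yields
\[
\varphi_t = \Delta \varphi + \frac{W'(1-\varphi)}{\varepsilon^2} - \frac{g^\varepsilon}{\varepsilon}\,\sqrt{2W(1-\varphi)}.
\]
For $\varphi \in [0,1/2]$ one checks $W'(1-\varphi) = -2(1-\varphi)\varphi(2-\varphi) \leq -\tfrac{3}{2}\varphi$ and $\sqrt{2W(1-\varphi)} = \varphi(2-\varphi) \leq 2\varphi$. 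Under the rescaling of Section 4, $\varepsilon|g^\varepsilon| = \varepsilon_i|\lambda^{\varepsilon_i}|$ is controlled uniformly by $\tfrac{4}{3}\varepsilon_i^{1-\alpha}$ via \eqref{eq:1.7}; shrinking $\epsilon_3$ (depending on $\alpha$) forces $\varepsilon|g^\varepsilon| \leq 1/4$. Combining these bounds produces the linear inequality $\varphi_t \leq \Delta \varphi - \varphi/\varepsilon^2$ on $Q_\varepsilon$.

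Next I parabolically rescale via $y = (x-x_0)/\varepsilon$, $s = (t-t_0)/\varepsilon^2$, transforming $Q_\varepsilon$ into $\tilde Q := \overline{B}_{\tilde r}(0) \times [-\tilde r^2, 0]$ and the inequality into $\varphi_s \leq \Delta_y \varphi - \varphi$, with $0 \leq \varphi \leq 1/2$. The heart of the argument is constructing a supersolution whose value at the vertex $(0,0)$ decays exponentially in $\tilde r$. I would take
\[
\Psi(y,s) := \tfrac{1}{2}\, e^{-(s+\tilde r^2)} + \tfrac{1}{2}\, \frac{F_d(|y|)}{F_d(\tilde r)},
\]
where $F_d$ is the unique smooth positive radial solution on $[0,\infty)$ of $F_d'' + \tfrac{d-1}{r}F_d' - F_d = 0$ with $F_d(0) = 1$, $F_d'(0) = 0$ (explicitly $F_d(r) = \Gamma(d/2)(r/2)^{-(d-2)/2} I_{(d-2)/2}(r)$). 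A direct check yields $\Psi_s - \Delta_y \Psi + \Psi = 0$, and on the parabolic boundary of $\tilde Q$ we have $\Psi \geq 1/2 \geq \varphi$ (the bottom face $s = -\tilde r^2$ contributes through the first summand, while on the side $|y|=\tilde r$ the second summand equals $1/2$). The parabolic comparison principle applied to $w := \varphi - \Psi$, which satisfies $w_s - \Delta w + w \leq 0$ with $w \leq 0$ on the parabolic boundary, then gives $\varphi \leq \Psi$ throughout $\tilde Q$.

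Evaluating at the vertex $(0,0)$, the image of $(x_0, t_0)$, gives $\Psi(0,0) = \tfrac{1}{2}e^{-\tilde r^2} + \tfrac{1}{2}/F_d(\tilde r)$. The classical asymptotic $F_d(\tilde r) \sim c_d\, \tilde r^{-(d-1)/2} e^{\tilde r}$ as $\tilde r \to \infty$ yields $\Psi(0,0) \leq C_d\, \tilde r^{(d-1)/2} e^{-\tilde r}$ for $\tilde r \geq 1$. Substituting $\tilde r = \Cr{const:lem4.7}\gamma|\log\varepsilon|$ and taking logarithms, the bound $\Psi(0,0) \leq \varepsilon^\gamma$ reduces to $(\Cr{const:lem4.7} - 1)\gamma|\log\varepsilon| \geq \tfrac{d-1}{2}\log\tilde r + \log C_d$, which holds for $\Cr{const:lem4.7}$ large enough (depending on $d$) and $\epsilon_3$ small enough to make $|\log\varepsilon|$ dominate the $\log\tilde r$ correction. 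This produces $\varphi(x_0, t_0) \leq \Psi(0,0) \leq \varepsilon^\gamma$, contradicting \eqref{eq:4.23}; the symmetric case $u^\varepsilon(x_0,t_0) > -1 + \varepsilon^\gamma$ is handled identically with $\varphi := 1 + u^\varepsilon$. The main technical subtlety is the pointwise control of $\varepsilon g^\varepsilon$: the $L^2$-estimates on $\lambda^\varepsilon$ from Lemma \ref{lem3.2} alone would not suffice for a pointwise comparison argument, so we must invoke the sharp sup-bound \eqref{eq:1.7}, and this is precisely why $\epsilon_3$ must depend on $\alpha$.
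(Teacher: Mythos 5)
Your overall strategy — argue by contradiction, double rescaling, and invoke the sup bound $\varepsilon|g^\varepsilon|\le\tfrac43\varepsilon^{1-\alpha}$ from \eqref{eq:1.7} — matches the paper's; the genuine difference lies in the barrier. The paper keeps the nonlinear structure and sets $\phi^\varepsilon=1-\varepsilon^\gamma\psi$ with $\psi$ an elementary exponential satisfying \eqref{eq:4.27}, then derives the contradiction by evaluating the PDE at an interior positive maximum of $\phi^\varepsilon-\tilde u^\varepsilon$, where the damping factor $\tfrac32\bigl(-1+\tfrac83\varepsilon^{1-\alpha}\bigr)$ overcomes $\tfrac1{10}$. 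You instead linearize first (via $W'(1-\varphi)\le-\tfrac32\varphi$ and $\sqrt{2W(1-\varphi)}\le 2\varphi$ for $\varphi\in[0,\tfrac12]$, giving $\varphi_s\le\Delta\varphi-\varphi$) and compare against the sharp Bessel-type supersolution $\Psi$. Both routes work, but your write-up conceals one delicate point: since the lemma permits $\gamma\sim1/|\log\varepsilon|$, $\tilde r$ can be exactly $1$ no matter how small $\varepsilon$ is, so shrinking $\epsilon_3$ does \emph{not} make $|\log\varepsilon|$ ``dominate the $\log\tilde r$ correction.'' The reduction is actually the $\varepsilon$-free inequality $\tfrac{C-1}{C}\tilde r\ge\tfrac{d-1}{2}\log\tilde r+\log C_d$ for all $\tilde r\ge1$, and one must verify this at $\tilde r=1$ (and near the maximizer of $\tilde r^{(d-1)/2}e^{-\tilde r}$) for a suitably large $C=C(d)$; moreover the estimate ``$\Psi(0,0)\le C_d\,\tilde r^{(d-1)/2}e^{-\tilde r}$ for all $\tilde r\ge1$'' does not follow from the large-argument asymptotics of $F_d$ alone but also needs positivity and monotonicity of $F_d$ together with a compactness argument on bounded $\tilde r$-intervals. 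The paper's elementary $\psi$ sidesteps all Bessel estimates and makes the $d$-dependence of $\Cr{const:lem4.7}$ transparent. Your closing observation that the $L^2$-control of $\lambda^\varepsilon$ from Lemma~\ref{lem3.2} is inadequate for a pointwise comparison, so the sup bound \eqref{eq:1.7} is essential and forces $\epsilon_3$ to depend on $\alpha$, is correct and is exactly the role played by \eqref{eq:4.26} in the paper.
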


\begin{proof}
We may assume that 
$B_{\varepsilon \tilde r} (x_0) \times (t_0 -\varepsilon ^2 \tilde r^2, t_0) \subset B_2 (0)
\times (0,T)$ by \eqref{eq:4.24}.
We consider the rescaling of \eqref{eq:u}
by $\tilde x= \frac{x-x_0}{\varepsilon}$ and $\tilde t= \frac{t-t_0}{\varepsilon ^2}$.
Then we obtain
\begin{equation}\label{eq:4.25}
\tilde u ^{\varepsilon} _{\tilde t} 
=
\Delta_{\tilde x} \tilde u ^{\varepsilon} 
- W' (\tilde u ^{\varepsilon})
+ \varepsilon \tilde g ^{\varepsilon} \sqrt{2W(\tilde u ^\varepsilon)} ,
\qquad
(x,t) \in B_{\tilde r} (0) \times (-\tilde r ^2 ,0),
\end{equation}
where $\tilde u ^{\varepsilon} (\tilde x,\tilde t) = u ^\varepsilon (x,t)$ and
$\tilde g ^{\varepsilon} (\tilde t) = g ^\varepsilon (t)$.
Note that \eqref{eq:1.7} and $\mathscr{L}^d (\Omega)=1$ yield
\begin{equation}\label{eq:4.26}
 \| \varepsilon \tilde g ^{\varepsilon} \|_{L^\infty} \leq \frac43 \varepsilon ^{1- \alpha}
\end{equation}
for $\alpha \in (0,1)$.
Let $\psi $ be a function with
\begin{equation}\label{eq:4.27}
\left\{ 
\begin{array}{ll}
\psi _{\tilde t} \, & \geq \Delta _{\tilde x} \psi -\frac{1}{10} \psi \qquad \text{on} \ \R^d \times (-\infty ,0), \\
\psi (\tilde x,\tilde t) \,  & \geq   
e^{\frac{\vert \tilde x \vert+\vert \tilde t \vert}{\Cl{const:lem4.7-2}}} \qquad \text{on} \ (\R^d \times (-\infty ,0))
\setminus B_1 (0,0), \\
\psi (0,0) \, & = 1,
\end{array} \right.
\end{equation}
for some constant $\Cr{const:lem4.7-2} >0$.
For example, $\psi = e^{-\frac{\tilde t}{100} -1} 
e^{\frac{1}{100d}\sqrt{1+\vert \tilde x \vert^2}}$ satisfies \eqref{eq:4.27}.
Set $\tilde r := \Cr{const:lem4.7-2} \gamma \vert \log \varepsilon \vert$. We may assume that $\tilde r\geq 1$
for sufficiently small $\varepsilon$. 
Note that
\begin{equation}\label{eq:4.28}
1- \varepsilon ^\gamma e^{\frac{\tilde r}{\Cr{const:lem4.7-2}}} =0.
\end{equation}
The assumption \eqref{eq:4.23} is equivalent to
\begin{equation}\label{eq:4.29}
\tilde u ^\varepsilon (0 ,0) <1 -\varepsilon ^\gamma.
\end{equation}
For a contradiction, we assume that
\begin{equation}\label{eq:4.30}
\inf _{B_{\tilde r} (0) \times (-\tilde r ^2 , 0)} \tilde u ^\varepsilon \geq \frac12.
\end{equation}
Set $\phi^\varepsilon := 1 - \varepsilon ^\gamma \psi$. Then \eqref{eq:4.27} and \eqref{eq:4.29} imply
\[
\phi ^\varepsilon _{\tilde t} \leq \Delta _{\tilde x} \phi ^\varepsilon
+\frac{1}{10} (1- \phi ^\varepsilon) \qquad \text{on} \ \R^d \times (-\infty ,0)
\]
and
\begin{equation}\label{eq:4.31}
\phi ^\varepsilon (0,0) =1- \varepsilon ^\gamma \psi (0,0) 
=1 -\varepsilon ^\gamma
> \tilde u ^\varepsilon (0 ,0).
\end{equation}
Moreover, by $\tilde r \geq 1$, 
\[
\psi \geq e^{\frac{\vert \tilde x \vert + \vert \tilde t \vert}{\Cr{const:lem4.7-2}}} \geq e^{\frac{\tilde r}{\Cr{const:lem4.7-2}}}
\qquad \text{on} \ \partial ( B_{\tilde r} (0) \times (-\tilde r ^2 ,0)).
\]
Therefore
\begin{equation}\label{eq:4.32}
\phi ^\varepsilon = 1 - \varepsilon ^\gamma \psi \leq 1- \varepsilon ^\gamma e^{\frac{\tilde r}{\Cr{const:lem4.7-2}}}
= 0 <\frac12 \leq \tilde u ^\varepsilon
\qquad \text{on} \ \partial ( B_{\tilde r} (0) \times (-\tilde r ^2 ,0))
\end{equation}
by \eqref{eq:4.28} and \eqref{eq:4.30}.
We consider a function $w = \phi ^\varepsilon - \tilde u^\varepsilon $ on 
$B_{\tilde r} (0) \times (-\tilde r ^2 ,0)$.
By \eqref{eq:4.31} and \eqref{eq:4.32},
$w$ attains its positive maximum at an interior point $(x',t') \in B_{\tilde r} (0) \times (-\tilde r ^2 ,0)$,
and hence $w_{\tilde t} - \Delta _{\tilde x} w \geq 0$ and $w >0$ at $(x',t')$.
At $(x',t')$, we compute that
\begin{equation*}
\begin{split}
0 \leq & \,  w_{\tilde t} - \Delta _{\tilde x} w 
\leq \frac{1}{10} (1- \phi ^\varepsilon) 
+ W' (\tilde u ^\varepsilon ) - \varepsilon \tilde g ^\varepsilon 
\sqrt{2W (\tilde u ^\varepsilon)} \\
= & \,  \frac{1}{10} (1- \phi ^\varepsilon)
-2 \tilde u ^\varepsilon 
(1- (\tilde u ^\varepsilon)^2) 
- \varepsilon \tilde g ^\varepsilon (1- (\tilde u ^\varepsilon)^2) \\
\leq & \,  \frac{1}{10} (1- \phi ^\varepsilon)
 +(-1 + \frac83 \varepsilon^{1-\alpha})
(1- (\tilde u ^\varepsilon)^2) \\
\leq & \,  \frac{1}{10} (1- \phi ^\varepsilon)
 + \frac32 (-1 + \frac83 \varepsilon^{1-\alpha})
(1- \phi ^\varepsilon) <0
\end{split}
\end{equation*}
for sufficiently small $\varepsilon$,
where we used \eqref{eq:4.26} 
and $1>\phi ^\varepsilon > \tilde u ^\varepsilon \geq \frac12$ at $(x',t')$.
This is a contradiction. The other case can be proved similarly.
\end{proof}

\begin{lemma}[See \cite{MR2040901}]\label{lem4.9}
For any $\delta \in (0,T)$, 
there exist positive constants $\Cl{const:lem4.8}$ and $\epsilon _4$
depending only on $\delta$, $\alpha$, $d$, $\Cr{const:3.1}$, and $D_2$
such that the following holds.
For $t \in (\delta, T)$ and $r \in (0,\frac12)$, set
\[
Z_{r,t_0} :=
\left\{ 
x_0 \in B_1 (0) \mid \inf _{B_r (x_0) \times ( t_0 -r^2 ,t_0)} \vert u ^\varepsilon \vert <\frac12 
\right\}.
\]
Then for any $\varepsilon \in (0,\epsilon_4)$, we have
\begin{equation}\label{eq:4.33}
\mathscr{L}^{d} (Z_{r,t_0}) \leq \Cr{const:lem4.8} r, \qquad
\varepsilon \leq r <\frac12.
\end{equation}
\end{lemma}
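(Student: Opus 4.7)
The strategy is to show that every $x_0\in Z_{r,t_0}$ carries a quantitative share of the diffuse-interface mass $\mu^\varepsilon$ at a scale comparable to $r$, and then count these witnesses via a Besicovitch covering argument balanced against the upper density bound \eqref{eq:3.22}.

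\emph{Step 1 (local microscopic bound).} For each $x_0\in Z_{r,t_0}$, the definition produces a witness $(y_0,s_0)\in B_r(x_0)\times(t_0-r^2,t_0)$ with $|u^\varepsilon(y_0,s_0)|<1/2$. Applying the uniform gradient estimate of Lemma \ref{lem:holder} to the rescaled equation \eqref{eq:u}, there is a universal $c_0>0$ such that $|u^\varepsilon(\cdot,s_0)|<3/4$ on the ball $B_{c_0\varepsilon}(y_0)$. Since $W(s)\geq W(3/4)>0$ on $[-3/4,3/4]$, this yields the local energy bound
\[
\int_{B_{c_0\varepsilon}(y_0)}\frac{W(u^\varepsilon(y,s_0))}{\varepsilon}\,dy \;\geq\; c_1\varepsilon^{d-1}.
\]

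\emph{Step 2 (amplification to scale $r$).} The central task is to upgrade this $\varepsilon$-scale bound to a macroscopic estimate of the form
\[
\int_{t_0-2r^2}^{t_0}\mu^\varepsilon_s(B_{C_2 r}(x_0))\,ds \;\geq\; c_2\, r^{d+1}
\]
for constants $c_2,C_2$ independent of $\varepsilon$. A naive application of the monotonicity formula \eqref{eq:3.20} with base point $(x_0,t_0+r^2)$ produces only a factor $(\varepsilon/r)^{d-1}$, which is too weak. To close the $\varepsilon$-to-$r$ gap I plan to invoke Lemma \ref{lem4.8}: this guarantees that every parabolic $\varepsilon|\log\varepsilon|$-neighborhood of a point where $u^\varepsilon$ is not already $\varepsilon^{2/3}$-close to a pure phase contains a further point with $|u^\varepsilon|<1/2$. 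Iterating this propagation, together with the spatial H\"older control of Lemma \ref{lem:holder}, I expect to show that an entire parabolic cylinder of size $\sim r$ around $(y_0,s_0)$ contains values of $u^\varepsilon$ uniformly bounded away from $\pm1$, so that \eqref{eq:3.15} converts a spacetime $W/\varepsilon$-integral of order $r^{d+1}$ on $B_{C_2 r}(x_0)\times(t_0-2r^2,t_0)$ into the required mass integral.

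\emph{Step 3 (covering).} Granting the amplified bound, apply Besicovitch's covering theorem to $\{B_r(x_0)\}_{x_0\in Z_{r,t_0}}$ to extract a subcover $\{B_r(x_j)\}_{j=1}^N$ of bounded overlap $N(d)$ with $x_j\in Z_{r,t_0}$. Since $\sum_j\chi_{B_{C_2 r}(x_j)}\leq N(d)C_2^d$ pointwise, the bound \eqref{eq:3.22} and Step 2 combine to give
\[
N\,c_2\,r^{d+1} \;\leq\; \int_{t_0-2r^2}^{t_0}\sum_{j=1}^{N}\mu^\varepsilon_s(B_{C_2 r}(x_j))\,ds \;\leq\; N(d)\,C_2^d\,D_2\,(2+C_2)^{d-1}\cdot 2r^2,
\]
so $N\leq C/r^{d-1}$ and hence $\mathscr{L}^d(Z_{r,t_0})\leq N\omega_d r^d\leq \Cr{const:lem4.8}\,r$, as desired.

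The principal obstacle is Step 2: bridging the $\varepsilon$-to-$r$ energy gap without degeneration as $\varepsilon\to 0$. Since the non-local term of the rescaled equation \eqref{eq:u} is controlled by $O(\varepsilon^{1-\alpha})$ in $L^\infty$ via \eqref{eq:4.26}, and the $L^2$ norm of $\lambda^\varepsilon$ is uniformly bounded by Lemma \ref{lem3.2}, I expect Tonegawa's scheme from \cite{MR2040901} for the plain Allen--Cahn equation to carry over with only minor modifications, the non-local term being absorbed into the error terms of the monotonicity formula.
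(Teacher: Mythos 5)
Your Steps 1 and 3 are on the right track (the covering argument is essentially the paper's), but Step 2 — which you yourself flag as the "principal obstacle" — contains a misdiagnosis that leads you to a dead end, and this is precisely where your plan would fail.

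You observe that the monotonicity formula with base point $(x_0,t_0+r^2)$ produces only a factor $(\varepsilon/r)^{d-1}$, conclude that this is "too weak," and then propose to bridge the gap by invoking Lemma \ref{lem4.8} iteratively. But Lemma \ref{lem4.8} propagates in the wrong direction for this purpose: it says that a point where $u^\varepsilon$ is within $\varepsilon^\gamma$ of $\pm1$ forces a nearby crossing of $\pm1/2$, not that a crossing of $\pm1/2$ spreads to macroscopic scale. Chaining such implications does not generate a spacetime set of size $\sim r$ on which $u^\varepsilon$ is bounded away from $\pm1$, and the iteration has no directional control. The intermediate target you set yourself, $\int_{t_0-2r^2}^{t_0}\mu^\varepsilon_s(B_{C_2r}(x_0))\,ds \gtrsim r^{d+1}$, is therefore not reachable by the route you describe.

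The fix is that no bridge is needed at all: the $(\varepsilon/r)^{d-1}$ loss is an artifact of the choice of base point. The paper's proof places the backward heat kernel at $(x_1,t_1+\varepsilon^2)$, i.e.\ only $\varepsilon^2$ in the future of the \emph{witness} $(x_1,t_1)$ with $|u^\varepsilon(x_1,t_1)|<1/2$. At time $t_1$ the kernel has spatial scale $\varepsilon$ and normalization $\varepsilon^{1-d}$, so your Step 1 microscopic bound $\mu_{t_1}^\varepsilon(B_{c_0\varepsilon}(x_1))\gtrsim\varepsilon^{d-1}$ gives $\int\rho_{(x_1,t_1+\varepsilon^2)}\,d\mu_{t_1}^\varepsilon\geq\eta>0$ with $\eta$ independent of both $\varepsilon$ and $r$ (this is the same argument as in Lemma \ref{lem4.2}). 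Applying \eqref{eq:3.20} back to time $t_0-2r^2$ (time lag $\lesssim r^2$, so the exponential factor is uniformly bounded) and cutting the tail via \eqref{rhoest2}, one gets $\eta'\leq\int_{B_{\gamma r}(x_1)}\rho_{(x_1,t_1+\varepsilon^2)}\,d\mu^\varepsilon_{t_0-2r^2}$. Since at that time $\rho_{(x_1,t_1+\varepsilon^2)}\leq C r^{1-d}$, this yields $\mu^\varepsilon_{t_0-2r^2}(B_{cr}(x_0))\gtrsim r^{d-1}$ at a \emph{single} time slice, which is all the covering argument needs. Your Step 3 then closes as stated. In short: the amplification you were trying to engineer via Lemma \ref{lem4.8} is delivered automatically by the scale-invariance of the parabolic backward heat kernel, once the base point is tied to the witness rather than to the center $x_0$ and the window $(t_0,t_0+r^2)$.
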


\begin{proof}
First we claim that
there exist some constants $\epsilon _4$, $\Cl{const:lem4.9-1}$, and $\Cl{const:lem4.9-2}$
such that if $x_0 \in Z_{r, t_0}$ and $\varepsilon \in (0,\epsilon _4)$ then
\begin{equation}\label{eq:4.34}
\sigma \mu _{t_0 -2r^2} ^\varepsilon (B_{\Cr{const:lem4.9-1} r } (x_0))
=
\left. \int _{B_{\Cr{const:lem4.9-1} r } (x_0)} 
\frac{\varepsilon |\nabla u ^\varepsilon|^2}{2} +
\frac{W(u ^\varepsilon)}{\varepsilon} \, dx \right\vert_{t=t_0-2r^2} \geq 
\Cr{const:lem4.9-2} r^{d-1}.
\end{equation}
holds for any $r \in [\varepsilon ,\frac12)$.
We may assume that $(x_1, t_1) \in B_r (x_0) \times (t_0 -r^2, t_0)$ with 
$\vert u ^\varepsilon (x_1 ,t_1) \vert <\frac12$.
By the monotonicity formula \eqref{eq:3.20}, for any $\varepsilon \in (0,\epsilon_1)$ we have
\begin{equation}\label{eq:4.35}
\int_{\mathbb{R}^d} \rho _{(x_1, t_1 +\varepsilon ^2)} (x,t) \, d\mu _t ^\varepsilon (x) \Big\vert_{t=t_1}
\leq \left( \int_{\mathbb{R}^d} \rho _{(x_1, t_1 +\varepsilon ^2)} (x,t) \, d\mu _t ^\varepsilon (x) 
\Big\vert_{t=t_0 -2r^2} \right)
e^{\Cr{const:3.1} (3r^2 +1)}.
\end{equation}
By $\vert u ^\varepsilon (x_1 ,t_1)\vert <\frac12$, repeating the proof of Lemma \ref{lem4.2},
there exists $ \eta =\eta (\alpha ,d) >0$ such that
\begin{equation}\label{eq:4.36}
\eta \leq \int_{\mathbb{R}^d} \rho _{(x_1, t_1 +\varepsilon ^2)} (x,t) \, d\mu _t ^\varepsilon (x) \Big\vert_{t=t_1}.
\end{equation}
Then \eqref{eq:4.35}, \eqref{eq:4.36}, and \eqref{rhoest2} imply
\[
\eta' \leq 
\int_{B_R (x_1) } \rho _{(x_1, t_1 +\varepsilon ^2)} (x,t) \, d\mu _t ^\varepsilon (x) 
\Big\vert_{t=t_0 -2r^2}
+ 2^{d-1} e^{-\frac{3R^2}{16(t_1 +\varepsilon ^2 -t_0 +2 r^2)}} D_2,
\]
where $\eta' =\eta'(\alpha,d,\Cr{const:3.1}) >0$.
By $\vert t_1 -t_0 \vert <r^2$ and $\varepsilon \leq r$, 
we have $e^{-\frac{3R^2}{16(t_1 +\varepsilon ^2 -t_0 +2 r^2)}} \leq e^{-\frac{3R^2}{64 r^2}}$.
Thus there exists $\gamma >0$ depending only on $\alpha,d,\Cr{const:3.1}, D_2$ such that
\[
\frac{\eta'}{2} \leq 
\int_{B_{\gamma r} (x_1) } \rho _{(x_1, t_1 +\varepsilon ^2)} (x,t_0 -2r^2) \, d\mu _{t_0 -2r^2} ^\varepsilon (x).
\]
Note that since $t_1 +\varepsilon ^2 - (t_0 -2r^2) \geq 2r^2$
there exists $C>0$ depending only on $d$ such that
\[
\rho _{(x_1, t_1 +\varepsilon ^2)} (x,t_0 -2r^2)
\leq \frac{C}{r^{d-1}}.
\]
Hence we obtain \eqref{eq:4.34} for some $\Cr{const:lem4.9-1}$, and $\Cr{const:lem4.9-2}$.
Finally we prove \eqref{eq:4.33}. 
The inequality \eqref{eq:4.34} yields that there exists $\Cl{const:lem4.9-3}>0$ 
depending only on $\alpha,d,\Cr{const:3.1}, D_2$ such that
\begin{equation}\label{eq:4.37}
\mathscr{L} ^d (\overline{B}_{\Cr{const:lem4.9-1} r} (x_0)) \leq 
r \Cr{const:lem4.9-3} \mu _{t_0 -2 r^2} ^\varepsilon (\overline{B}_{\Cr{const:lem4.9-1} r} (x_0))
\end{equation}
for any $x _0 \in Z_{r,t_0}$ and $r \in [\varepsilon ,\frac12)$.
Set $\tilde r := \Cr{const:lem4.9-1} r$.
By an argument similar to that in the proof of Lemma \ref{lem4.3}, 
there exist $\mathcal{F}_1,\dots ,\mathcal{F} _{N(d)}$ such that
$N(d)$ depends only on $d$,
$\mathcal{F} _k= \{ \overline{B}_{\tilde r} (x_{k,1}), \dots, \overline{B}_{\tilde r} (x_{k,n_k}) \}$
is a family of disjoint closed balls for any $k$, and
\[
Z_{r, t_0} \subset \cup _{k=1} ^{N(d)} \cup _{i=1} ^{n_k} \overline{B}_{2\tilde r} (x_{k,i}), \qquad
x_{k,i} \in Z_{r,t_0} \quad \text{for any} \ k  \ \text{and} \ i. 
\]
Therefore
\begin{equation*}
\begin{split}
\mathscr{L}^d (Z_{r,t_0}) \leq & \,
\sum_{k=1} ^{N(d)} \sum _{i=1} ^{n_k} \mathscr{L}^d ( \overline{B} _{2\tilde r} (x_{k,i}) )
=
2^d \sum_{k=1} ^{N(d)} \sum _{i=1} ^{n_k} \mathscr{L}^d ( \overline{B} _{\tilde r} (x_{k,i}) ) \\
\leq & \, 
2^d \sum_{k=1} ^{N(d)} \sum _{i=1} ^{n_k} 
r \Cr{const:lem4.9-3} \mu _{t_0 -2 r^2} ^\varepsilon (\overline{B}_{\tilde r} (x_{k,i}))
=
2^d r \Cr{const:lem4.9-3} 
\sum_{k=1} ^{N(d)}
\mu _{t_0 -2 r^2} ^\varepsilon \left( \cup _{i=1} ^{n_k} \overline{B}_{\tilde r} (x_{k,i}) \right) \\
\leq & \, 
2^d r \Cr{const:lem4.9-3} 
\sum_{k=1} ^{N(d)}
\mu _{t_0 -2 r^2} ^\varepsilon (B_{1+ \Cr{const:lem4.9-1}/2} (0)) \leq 2^d r \Cr{const:lem4.9-3} N(d) D_2,
\end{split}
\end{equation*}
where we used \eqref{eq:3.22}, \eqref{eq:4.37}, and the property that $\mathcal{F} _k$ 
is a family of disjoint balls.
Hence, we obtain \eqref{eq:4.33}.
\end{proof}

\begin{proof}[Proof of Proposition \ref{prop4.7}]
First, we restrict $b\in (0,1)$ to be small enough so that 
\begin{equation}\label{eq:4.38}
1-\sqrt{b} >\frac12, \qquad \log \sqrt{b} \leq -1, \qquad \Cr{const:lem4.7} \vert \log b \vert \geq 1.
\end{equation}
and restrict $\varepsilon $ to be small enough to use lemmas 
\ref{lem4.8} and \ref{lem4.9}.
We choose a positive integer $J$ such that
\begin{equation}\label{eq:4.39}
\varepsilon ^{\frac{1}{2^{J+1}}} \in (b, \sqrt{b} ].
\end{equation}
Then, \eqref{eq:4.24}, \eqref{eq:4.38}, and \eqref{eq:4.39} imply
\begin{equation}\label{eq:4.40}
1\leq \Cr{const:lem4.7} \vert \log b \vert \leq \frac{1}{2^J}\Cr{const:lem4.7} \vert \log \varepsilon \vert .
\end{equation}
Set $t_0 \in (\delta ,T)$ and 
\[
A_j :=
\left\{ 
x \in B_1 (0) \mid
1- \varepsilon ^{\frac{1}{2^{j+1}}} 
\leq 
\vert u ^\varepsilon (x, t_0) \vert 
\leq
1- \varepsilon ^{\frac{1}{2^{j}}} 
\right\}, \qquad
j=1,\dots, J.
\]
For $x_0 \in A_j$, we use Lemma \ref{lem4.8} with 
$\gamma =\frac{1}{2^j}$.
Note that \eqref{eq:4.24} holds with 
$\tilde r = \frac{1}{2^j} \Cr{const:lem4.7} \vert \log \varepsilon \vert$
by \eqref{eq:4.40}.
Then we obtain 
\begin{equation*}
\inf _{B_{\varepsilon \tilde r} (x_0) \times (t_0 -\varepsilon ^2 \tilde r^2, t_0)} \vert u ^\varepsilon \vert
< \frac12
\end{equation*}
and hence
\begin{equation}\label{eq:4.41}
A_j \subset Z_{\varepsilon \tilde r ,t_0}.
\end{equation}
By \eqref{eq:4.24}, we have $\varepsilon \leq \varepsilon \tilde r <\frac12$ 
for sufficiently small $\varepsilon$. Therefore \eqref{eq:4.33} and \eqref{eq:4.41} yield
\begin{equation}\label{eq:4.42}
\mathscr{L}^{d} (A_j) 
\leq
\mathscr{L}^{d} (Z_{\varepsilon \tilde r,t_0}) \leq 
\frac{1}{2^j} \Cr{const:lem4.7} \Cr{const:lem4.8} \varepsilon \vert \log \varepsilon \vert
\end{equation}
for any $j=1,\dots ,J$. On the other hand,
since $\vert  u ^\varepsilon (x,t_0) \vert \geq 1- \varepsilon ^{\frac{1}{2^{j+1}}}$ for 
any $x \in A_j$, we obtain
\begin{equation}\label{eq:4.43}
\frac{W (u ^\varepsilon (x,t_0))}{\varepsilon}
\leq 
\frac{W (1- \varepsilon ^{\frac{1}{2^{j+1}}})}{\varepsilon}
\leq \Cl{const:prop4.7-1} \varepsilon ^{\frac{1}{2^{j}} -1}
\end{equation}
for some constant $\Cr{const:prop4.7-1}$ depending only on $W$.
We define
$
Y:= \{
x \in B_1(0) \mid
1-b \leq \vert u ^\varepsilon (x,t_0) \vert \leq 1-\sqrt{\varepsilon}
\}
$. Note that
\begin{equation}\label{eq:4.44}
Y \subset \cup _{j=1} ^J A_j
\end{equation}
by \eqref{eq:4.39}. Set $\Cl{const:prop4.7-2} = \Cr{const:lem4.7} \Cr{const:lem4.8} \Cr{const:prop4.7-1}$.
Then from \eqref{eq:4.39},
\eqref{eq:4.42}, \eqref{eq:4.43}, and \eqref{eq:4.44} we have
\begin{equation}\label{eq:4.45}
\begin{split}
&\int _Y \frac{W ( u ^\varepsilon (x,t_0))}{\varepsilon} \, dx
\leq
\sum _{j=1} ^J \int _{A_j} \frac{W (u ^\varepsilon (x,t_0))}{\varepsilon} \, dx
\leq \Cr{const:prop4.7-2}
\vert \log \varepsilon \vert \sum _{j=1} ^J 2^{-j} \varepsilon ^{2^{-j}} \\
\leq & \, 
\Cr{const:prop4.7-2}
\vert \log \varepsilon \vert  \int _1 ^{J+1}  2^{-t} \varepsilon ^{2^{-t}} \, dt
=
\Cr{const:prop4.7-2}
\frac{ \varepsilon ^{\frac{1}{2^{J+1}}} - \sqrt{\varepsilon} }{\log 2} 
\leq 
\Cr{const:prop4.7-2} \frac{\sqrt{b}}{\log 2},
\end{split}
\end{equation}
where we used that a function
$p(t) = 2^{-t} \varepsilon ^{2^{-t}}$ satisfies
$p' (t) >0$ for $t \in [1 , J +1]$. Note that 
$2^{-J-1} \log \varepsilon \leq \log \sqrt{b} \leq -1$ by \eqref{eq:4.38} and \eqref{eq:4.39}.
%Therefore 
%$\int _Y \frac{W (\varphi ^\varepsilon (x,t_0))}{\varepsilon} \, dx \leq \frac{s}{2}$
%for sufficiently small $b$.
Using the same argument above, we can show that
\begin{equation}\label{eq:4.46}
\begin{split}
& \int 
_{\{ x\in B_1 (0) \mid 1-\sqrt{\varepsilon} \leq \vert u^\varepsilon (x,t_0) \vert \leq 1 -\varepsilon^{\frac{2}{3}} \}}
\frac{W(u ^\varepsilon)}{\varepsilon} \, dx \\
\leq & \, \Cr{const:prop4.7-1} \mathscr{L} ^d
(\{ x \in B_1 (0) \mid 1-\sqrt{\varepsilon} \leq \vert u^\varepsilon (x,t_0) \vert \leq 1 -\varepsilon^{\frac{2}{3}} \}) \\
\leq & \, \frac23 \Cr{const:prop4.7-2} \varepsilon \vert \log \varepsilon\vert,
\end{split}
\end{equation}
where we used Lemma \ref{lem4.8} with $\gamma= \frac23$. 
Since $\vert u ^\varepsilon \vert \leq 1$, we have
\begin{equation}\label{eq:4.47}
\begin{split}
& \int 
_{\{ x \in B_1 (0) \mid 1-\varepsilon ^{\frac23} \leq \vert u^\varepsilon (x,t_0) \vert  \}}
\frac{W( u ^\varepsilon)}{\varepsilon} \, dx \\
\leq & \, \frac{W(1-\varepsilon ^{\frac23})}{\varepsilon} 
\mathscr{L}^d 
(\{ x \in B_1 (0) \mid 1-\varepsilon ^{\frac23} \leq \vert u^\varepsilon (x,t_0) \vert  \})\\
\leq & \, \varepsilon ^{\frac13} \mathscr{L}^d (B_1 (0)). 
\end{split}
\end{equation}
By \eqref{eq:4.45}, \eqref{eq:4.46}, and \eqref{eq:4.47}, Proposition \ref{prop4.7}
holds for sufficiently small $b$ and $\varepsilon$.
\end{proof}

Now we prove the integrality of $\mu_t$.

\begin{theorem}
For a.e. $t>0$, there exist a countably $(d-1)$-rectifiable set $M_t$
and $\mathscr{H}^{d-1}$-measurable function $\theta _t : M_t \to \N$ with
$\theta _t \in L_{loc} ^{1} (\mathscr{H}^{d-1} \lfloor_{M_t})$ such that
$\mu _t = \theta _t \mathscr{H}^{d-1} \lfloor_{M_t}$ holds.
\end{theorem}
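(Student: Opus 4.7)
The plan is to identify the density $\theta_t(x_0)$ at $\mathcal{H}^{d-1}$-a.e.\ $x_0 \in M_t$ as the number of one-dimensional transitions of a blow-up of $\varphi^{\varepsilon}$, following the Tonegawa--R\"oger--Sch\"atzle slicing strategy of \cite{MR2040901, takasao-tonegawa, MR1803974}. First I fix a time $t$ in the full-measure set where $\mu_t$ is rectifiable (Theorem \ref{thm4.6}), the discrepancy vanishes in the limit (Theorem \ref{thm4.5}), and $\liminf_i \int_\Omega \varepsilon_i(\Delta\varphi^{\varepsilon_i} - W'(\varphi^{\varepsilon_i})/\varepsilon_i^2)^2\,dx < \infty$ (cf.\ \eqref{eq:4.21}). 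Writing $\mu_t = \theta_t\,\mathcal{H}^{d-1}\lfloor_{M_t}$, I then fix a point $x_0 \in M_t$ where the approximate tangent plane $P = T_{x_0} M_t$ exists and $\theta_t(x_0) < \infty$; such points are $\mathcal{H}^{d-1}$-a.e.

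Next I perform the parabolic blow-up. I choose a diagonal subsequence of radii $r_j \downarrow 0$ satisfying the two-sided pinching
\[
\varepsilon_{i_j} \ll r_j \ll \varepsilon_{i_j}^{\alpha},
\]
which is possible because $\alpha \in (0,1)$, and set $u_j(\tilde x,\tilde t) := \varphi^{\varepsilon_{i_j}}(x_0 + r_j \tilde x,\, t + r_j^{2}\tilde t)$ and $g_j(\tilde t) := r_j\,\lambda^{\varepsilon_{i_j}}(t + r_j^{2}\tilde t)$, so that $u_j$ solves the rescaled equation \eqref{eq:u} with parameter $\tilde\varepsilon_j := \varepsilon_{i_j}/r_j \to 0$. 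By \eqref{eq:1.7} one has $\|g_j\|_{L^{\infty}} \leq \frac{4}{3}\,r_j\,\varepsilon_{i_j}^{-\alpha} \to 0$, and the change of variables $s = t + r_j^2\tilde t$ combined with \eqref{eq:3.5} gives
\[
\int_{-T}^{T}|g_j(\tilde t)|^{2}\,d\tilde t \;=\; \int_{t - r_j^{2}T}^{\,t + r_j^{2}T} |\lambda^{\varepsilon_{i_j}}(s)|^{2}\,ds \;\longrightarrow\; 0.
\]
Hence the perturbation $g_j\sqrt{2W(u_j)}$ disappears in the limit and \eqref{eq:u} degenerates into the standard Allen--Cahn equation. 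By the choice of $x_0$, the rescaled measures $\tilde\mu_{0}^{j}$ converge on compact subsets to $\theta_t(x_0)\,\mathcal{H}^{d-1}\lfloor_{P}$.

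The final step is the one-dimensional slicing, to be carried out with the propositions collected in the Appendix (which adapt the unperturbed statements of \cite{MR2040901, takasao-tonegawa, MR1803974} to our setting with vanishing $g_j$). For $\mathcal{H}^{d-1}$-a.e.\ $y \in P$, the restriction of $u_j(\cdot,0)$ to the line $\ell_y = y + P^{\perp}$ has vanishing discrepancy (Theorem \ref{thm4.5}), its bulk satisfies $|u_j| \geq 1-b$ off a set of small energy (Proposition \ref{prop4.7}), and each transition of $u_j$ between $+1$ and $-1$ on $\ell_y$ contributes exactly one unit $\sigma = \int_{-1}^{1}\sqrt{2W(s)}\,ds$ to the 1D energy, yielding a 1D density equal to $\sigma N(y)$ with $N(y) \in \mathbb{N}$. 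Applying Fubini to match the total energy with $\sigma\,\theta_t(x_0)\,\mathcal{H}^{d-1}\lfloor_{P}$ forces $\theta_t(x_0) = N(y) \in \mathbb{N}$ for $\mathcal{H}^{d-1}$-a.e.\ $y$.

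The main obstacle will be controlling the non-local perturbation uniformly during the blow-up: $L^{2}$-smallness of $g_j$ is immediate from \eqref{eq:3.5}, but $L^{\infty}$-smallness is more delicate and dictates the choice $r_j \ll \varepsilon_{i_j}^{\alpha}$, which must remain compatible with $\tilde\varepsilon_j \to 0$. Once this pinching is fixed, the remaining technical engine is the transition-counting lemma from the Appendix, whose proof simultaneously exploits the vanishing discrepancy, the monotonicity formula \eqref{eq:3.20}, and the rectifiability at $x_0$ to exclude concentration of less-than-unit 1D mass.
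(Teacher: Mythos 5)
Your blow-up-plus-slicing strategy is indeed the one the paper follows (eq.~\eqref{eq:u}, Propositions~\ref{prop4.7}, \ref{prop7.5}, \ref{prop7.6}), and your identification of the three a.e.-in-$t$ inputs (rectifiability, vanishing discrepancy, finite $\liminf$ of the $L^2$-mean-curvature integral) is correct. However, the pinching $r_j \ll \varepsilon_{i_j}^{\alpha}$ is based on a misreading of which quantity must be $L^\infty$-small. After the $r$-scale rescaling, the term $g^{\tilde\varepsilon}$ plays the role that $\lambda^{\varepsilon}$ plays in \eqref{ac}, and $\lambda^\varepsilon$ is never assumed $L^\infty$-small there (it is only $L^2$-controlled). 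The $L^\infty$ bound that the super/sub-solution argument in Lemma~\ref{lem4.8} actually uses is on $\tilde\varepsilon\, g^{\tilde\varepsilon}$, not on $g^{\tilde\varepsilon}$; and $\tilde\varepsilon\, g^{\tilde\varepsilon} = (\varepsilon/r)\cdot r\,\lambda^{\varepsilon} = \varepsilon\,\lambda^{\varepsilon}$, which satisfies $\|\varepsilon\lambda^\varepsilon\|_{L^\infty}\le \tfrac43\varepsilon^{1-\alpha}\to 0$ by \eqref{eq:1.7} \emph{independently of} $r$. So only $\varepsilon_i/r_i\to 0$ is needed, and the extra constraint $r_j \ll \varepsilon_{i_j}^\alpha$ is both unnecessary and potentially incompatible with the diagonal extraction for $(\Phi_{r_i})_\#V_{t_0}^{\varepsilon_i}\to\theta|P|$, since the threshold below which $\varepsilon_i$ must lie (relative to a fixed $r_i$) to ensure closeness of the varifolds is not a rate you control, and could sit below $r_i^{1/\alpha}$.

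The second, more substantive omission is that your choice of $x_0$ is not strong enough to feed into Propositions~\ref{prop7.5}--\ref{prop7.6}. Those slicing propositions require, at the blow-up scale, that the local integral $\int_{B_r(x)}\varepsilon|H^\varepsilon\nabla u^\varepsilon|$ is small compared to $\mu_0^{\varepsilon}(B_r(x))$ uniformly down to scale $\varepsilon L$. A merely $L^1(\mu_{t_0})$-finite $\liminf$ of $\int\varepsilon(\Delta\varphi^{\varepsilon}-W'/\varepsilon^2)^2$ does not give this pointwise control. The paper obtains it by a Besicovitch density argument: it introduces the sets $A_{i,m}$ of good density points for $\varepsilon_i|H^{\varepsilon_i}\nabla\varphi^{\varepsilon_i}|$, proves $\mu_{t_0}(\Omega\setminus\cup_m A_m)=0$, and then restricts $x_0$ to $A_m$; it is precisely membership in $A_{i,m}$ (together with \eqref{eq:3.22}) that yields the decay in \eqref{eq:4.56} and later lets one verify hypothesis~(2) of Proposition~\ref{prop7.5} on the good set $G_i$ of \eqref{eq:4.61}. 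Your proof should incorporate this step explicitly, otherwise the transition-counting lemma has no verified hypotheses.
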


\begin{proof}
Set $H ^\varepsilon := \Delta \varphi ^\varepsilon -\frac{W' (\varphi ^\varepsilon)}{\varepsilon ^2}$.
Then for a.e. $t _0 > 0$, we can choose a subsequence $\{ V_{t_0} ^{\varepsilon _{i_j}} \} _{j=1} ^\infty$ 
such that $V_{t_0} ^{\varepsilon _{i_j}} \to V_{t_0}$,
\begin{equation}\label{eq:4.48}
\lim _{j\to \infty} \int_{\Omega} \vert \xi _{\varepsilon_{i_j}} (x,t_0) \vert \, dx =0,
\end{equation}
and
\begin{equation}\label{eq:4.49}
c_H (t_0) := \sup _{j \in \N} 
\int_{\Omega} 
\varepsilon _{i_j} \vert H ^{\varepsilon_{i_j}} \nabla \varphi ^{\varepsilon _{i_j}} \vert (x,t_0) \, dx <\infty
\end{equation}
hold by Theorem \ref{thm4.5} and \eqref{eq:4.21}.
Note that $V_{t_0}$ is a countably $(d-1)$-rectifiable varifold and determined by $\mu _{t_0}$ uniquely
from Theorem \ref{thm4.6}. We fix such $t_0 >0$ and show the claim for $\mu _{t_0}$.
In this proof, even if we take a subsequence $\varepsilon _{i_j}$, 
we always write $\varepsilon _{i_j}$ by $\varepsilon _i$ for simplicity.
Set
\begin{equation*}
A_{i,m} :=
\left\{
x \in \Omega \mid
\int _{B_r (x)} \varepsilon _i \vert H^{\varepsilon _i} \nabla \varphi ^{\varepsilon _i} \vert (x,t_0) \, dx
\leq m \mu _{t_0} ^{\varepsilon _i} (B_r (x)) \ \ \text{for any} \ r \in \left(0, \frac12\right)
\right\}
\end{equation*}
and
\begin{equation*}
A_m := \{
x \in \Omega \mid
\text{there exists} \ x_i \in A_{i,m} \ \text{for any} \ i \in \N  \ \text{such that} \ x_i \to x
\}
\end{equation*}
for any $m \in \N$.
Then the Besicovitch covering theorem implies
\begin{equation}\label{eq:4.50}
\mu _{t_0} ^{\varepsilon_i} (\Omega \setminus A_{i,m}) \leq \frac{c(d) c_H (t_0)}{m},
\end{equation}
where $c(d) >0$ is a constant depending only on $d$. Set
$
A:= \cup _{m=1} ^\infty A_m.
$
Next we prove 
\begin{equation}\label{eq:4.51}
\mu _{t_0} (\Omega \setminus A)=0.
\end{equation}
If \eqref{eq:4.51} is not true, there exists a compact set $K \subset \Omega \setminus A$
with $\mu _{t_0} (K) >\frac12 \mu _{t_0} (\Omega \setminus A) >0$.
Since $A_1 \subset A_2 \subset A_3 \subset \cdots$, 
we have $K \subset \Omega \setminus A_m$
for any $m \in \N$.
For any $x \in K$, there is a neighborhood $B_r (x)$ such that $B_r (x) \cap A_{i,m} =\emptyset$
for sufficiently large $i$, by the definition of $A_m$. This and the compactness of $K$
imply that there exist an open set $O_m$ and $i _0 \in \N$ such that $K \subset O_m$ and
$O_m \cap A_{i,m} =\emptyset$ for any $i \geq i_0$.
Let $\phi_m \in C_c (O_m)$ be a nonnegative test function such that
$0\leq \phi _m \leq 1$ and $\phi_m =1$ on $K$.
We compute
\begin{equation}\label{eq:4.52}
\begin{split}
\mu _{t_0} (K) \leq \int_\Omega \phi _m \, d \mu _{t_0}
=&\, \lim _{i\to \infty} \int_\Omega \phi_m \, d \mu _{t_0} ^{\varepsilon _i} 
=\lim _{i\to \infty} \int_{\Omega \setminus A_{k,m}} \phi_m \, d \mu _{t_0} ^{\varepsilon _i}\\
\leq &\,
\liminf_{i\to \infty} \mu _{t_0} ^{\varepsilon _i} (\Omega \setminus A_{k,m})
\end{split}
\end{equation}
for any $k \geq i_0$. Combining \eqref{eq:4.50} and \eqref{eq:4.52}, we obtain
$\mu _{t_0} (K) =0$. Therefore we have proved \eqref{eq:4.51}.

By the rectifiability of $\mu_{t_0}$ and \eqref{eq:4.51}, for $\mu_{t_0}$ a.e. $x \in \spt \mu_{t_0}$,
it has an approximate tangent space $P$ and $x \in A_m$ for some $m$.
Fix such $x$. We may assume that $x=0$ and $P= \{ x \in \R^d \mid x_d=0 \}$
by a parallel translation and a rotation.
Set $\theta := \lim _{r \downarrow 0} \frac{\mu _{t_0} (B_r (0))}{\omega_{d-1} r^{d-1}}$.
We need only prove $\theta \in \N$. Let $\Phi_{r} (x) = \frac{x}{r}$ for $r>0$ and
$(\Phi _r)_\# V_{t_0}$ be the usual push forward of the varifold. Then for any positive sequence 
$r_i \to 0$, we have $\lim _{i\to \infty} (\Phi _{r_i})_\# V_{t_0} = \theta \vert P \vert$, where 
$\vert P \vert$ is the unit density varifold generated by $P$.
By the assumption $0 \in A_m$, there exists $\{ x_i \}_{i=1} ^\infty$ such that
$x_i \in A_{i,m}$ and $x_i \to 0$ as $i\to \infty$.
Passing to a subsequence if necessary, we may assume that
\begin{equation}\label{eq:4.53}
\lim_{i\to \infty} \frac{x_i}{r_i}=0, \qquad \lim_{i\to \infty} \frac{\varepsilon_i}{r_i}=0, 
\end{equation}
and
\begin{equation}\label{eq:4.54}
\lim _{i\to \infty} (\Phi _{r_i})_\# V_{t_0} ^{\varepsilon _i} = \theta \vert P \vert.
\end{equation}
Set $u ^{\tilde \varepsilon_i } (\tilde x, \tilde t) = \varphi ^{\varepsilon_i} (x,t)$
and $g ^{\tilde \varepsilon_i } (\tilde t) = r_i \lambda ^{\varepsilon_i} (t)$
for $\tilde x= \frac{x}{r_i}$, $\tilde t= \frac{t-t_0}{r^2 _i}$, 
and $\tilde \varepsilon_i = \frac{\varepsilon_i}{r_i}$ (another functions 
$\tilde \xi _{\tilde \varepsilon _i}$ and 
$\tilde H ^{\tilde \varepsilon _i}$ are defined in the same way).
Note that $\tilde x_i := \frac{x_i}{r_i} \to 0$ and $\tilde \varepsilon _i \to 0$ by \eqref{eq:4.53}
and $u ^{\tilde \varepsilon_i }$ is a solution to \eqref{eq:u} with 
$\tilde \varepsilon _i$ instead of $\tilde \varepsilon$.
We compute
\[
\int _{B_3 (0)} \vert \tilde \xi _{\tilde \varepsilon _i} (\tilde x,0) \vert \, d \tilde x
= \frac{1}{r_i ^{d-1}} 
\int _{B_{3r_i} (0)} \vert \xi _{\varepsilon _i} (x,0) \vert \, d x.
\]
Thus, by \eqref{eq:4.48} we may assume that
\begin{equation}\label{eq:4.55}
\lim _{i\to \infty} \int _{B_3 (0)} \vert \tilde \xi _{\tilde \varepsilon _i} (\tilde x,0) \vert \, d \tilde x=0,
\end{equation}
passing to a subsequence if necessary. We compute
\begin{equation}\label{eq:4.56}
\begin{split}
\tilde \varepsilon _i \int _{B_3 (0)}
\vert \tilde H ^{\tilde \varepsilon _i} \nabla_{\tilde x} \tilde u ^{\tilde \varepsilon_i} (\tilde x, 0 )\vert \, d \tilde x
= & \,
\frac{\varepsilon _i}{r_i ^{d-2}}
\int _{B_{3r_i} (0)}
\vert H ^{ \varepsilon _i} \nabla \varphi^{\varepsilon_i} (x, t_0 ) \vert \, d x
\leq \frac{m}{r_i ^{d-2}} \mu _{t_0} ^{\varepsilon _i} (B_{4r_i} (x_i)) \\
\leq & \, 
4^{d-1} m \omega_{d-1} D _2 r_i \to 0 \quad \text{as} \ i\to \infty,
\end{split}
\end{equation}
where we used \eqref{eq:3.22}, \eqref{eq:4.53}, and $x_i \in A_{i,m}$.
Let $\tilde V_{\tilde t} ^{\tilde \varepsilon _i} $ be a varifold defined by \eqref{eq:4.19}
with $u ^{\tilde \varepsilon_i}$ instead of $\varphi ^\varepsilon$.
Then $\tilde V_0 ^{\tilde \varepsilon _i} = (\Phi _{r_i})_\# V_{t_0} ^{\varepsilon _i}$.
Next we show 
\begin{equation}\label{eq:4.57}
\lim_{i\to \infty} \int _{B_3(0)} (1 - (\nu_d ^i) ^2 ) \tilde \varepsilon_i
\vert \nabla _{\tilde x} u ^{\tilde \varepsilon _i} \vert ^2 \, d \tilde x \Big\vert _{\tilde t=0}=0,
\end{equation}
where $\nu^i = (\nu _1 ^i ,\nu _2 ^i ,\dots , \nu _d ^i) = 
\frac{\nabla _{\tilde x} u ^{\tilde \varepsilon _i}}{\vert \nabla _{\tilde x} u ^{\tilde \varepsilon _i} \vert }$.
For $S \in \mathbb{G}(d,d-1)$, set $\psi (S) :=1- \nu_d ^2$, where
$\nu \in \S ^{d-1}$ be one of the unit normal vectors to $S$.
Then $\psi :\mathbb{G}(d,d-1) \to \R$ is well-defined, continuous, and $\psi (P)=0$.
Hence, for any $\phi \in C_c (\R^d)$, $\phi \psi \in C_c (G_{d-1}(\R^d) )$ and
\begin{equation}\label{eq:4.58}
\begin{split}
\lim _{i\to \infty} \tilde V_0 ^{\tilde \varepsilon _i} (\phi \psi) 
= & \, 
\int \phi (\tilde x ) (1 - ( \nu _d ^i) ^2) \, d \|\tilde  V_0 ^{\tilde \varepsilon _i} \| (\tilde x)\\
= & \, \lim _{i\to \infty} (\Phi _{r_i})_\# V_{t_0} ^{\varepsilon _i} (\phi \psi)
= \theta \vert P \vert (\phi \psi) \\
= & \, \theta \int \phi (\tilde x) \psi (P) \, d \mathscr{H} ^{d-1} (\tilde x)=0,
\end{split}
\end{equation}
where we used \eqref{eq:4.54} and $\psi (P)=0$.
Thus \eqref{eq:4.58} proves \eqref{eq:4.57}.
We subsequently fix the subsequence and drop $\tilde \cdot$ and time variable 
(for example, we write $u^{\tilde \varepsilon_i} (\tilde x, 0)$ as $u^{\varepsilon _i} $) 
for simplicity.
We assume that $N \in \N$ is a smallest positive integer grater than $\theta$, namely,
\begin{equation}\label{eq:4.59}
\theta \in [N-1,N).
\end{equation}
Let $s >0$ be an arbitrary number. Then Proposition \ref{prop4.7} and \eqref{eq:3.15}
imply that there exists $b >0$ such that
\begin{equation}\label{eq:4.60}
\int_{\{ x \in B_3 (0) \mid \vert u ^{\varepsilon_i} (x) \vert \geq 1-b \}}
\frac{\varepsilon_i \vert \nabla u ^{\varepsilon_i} \vert^2}{2} +
\frac{W( u ^{\varepsilon_i})}{\varepsilon_i } \, dx \leq s
\end{equation}
for sufficiently large $i$. Note that we may use Proposition \ref{prop4.7} with $t=0$
since $\tilde t = \frac{t -t_0}{r_i ^2}$ in this proof.
For these $s >0$, $b>0$, and $c>0$ given by Lemma \ref{lem:holder}, 
we choose $\varrho$ and $L$ given by propositions \ref{prop7.5} and \ref{prop7.6} in Appendix with $R=2$
(we may restrict $\varrho$ to be small if necessary).
We choose $a=L\varepsilon _i$ as a constant in Proposition \ref{prop7.5}. Set 
\begin{equation}\label{eq:4.61}
\begin{split}
G_i := & \,  B_2 (0) \cap \{ \vert u^{\varepsilon _i} \vert \leq 1-b \} \\
&\, \cap 
\left\{  
x \mid \int _{B_r(x)} \varepsilon _i \vert H^{\varepsilon _i} \nabla u ^{\varepsilon _i} \vert 
+ \vert \xi _{\varepsilon _i} \vert
+(1-\nu _d ^2) \varepsilon _i \vert \nabla u ^{\varepsilon _i} \vert ^2 \, dx \leq
\varrho \mu _0 ^{\varepsilon _i} (B_r (x)) \quad \text{if} \ \varepsilon _i L \leq r \leq 1
\right\}
\end{split}
\end{equation}
for sufficiently large $i$. The Besicovitch covering theorem, \eqref{eq:4.55}, \eqref{eq:4.56}, and \eqref{eq:4.57} yield
\begin{equation}\label{eq:4.62}
\begin{split}
& \mu_0 ^{\varepsilon_i} 
( (B_2 \cap \{ \vert u^{\varepsilon _i} \vert \leq 1-b \}) \setminus G_i ) \\
\leq & \, 
\frac{c(d)}{\varrho} 
\int _{B_3 (0)} \varepsilon _i \vert H^{\varepsilon _i} \nabla u ^{\varepsilon _i} \vert 
+ \vert \xi _{\varepsilon _i}\vert
+(1-\nu _d ^2) \varepsilon _i \vert \nabla u ^{\varepsilon _i} \vert^2 \, dx \to 0 \qquad \text{as} \ i\to \infty.
\end{split}
\end{equation}
Next we show that for sufficiently large $i$ 
\begin{equation}\label{eq:4.63}
\begin{split}
\frac{\mu _0 ^{\varepsilon _i} (B_r (x))}{ \omega _{d-1} r^{d-1} } \geq 1- 2s ,
\qquad \text{for any} \ x \in G_i \ \text{and} \ r \in [L\varepsilon _i , 1]
\end{split}
\end{equation}
Note that all the assumptions in Proposition \ref{prop7.6}
are satisfied by
Lemma \ref{lem:holder}, \eqref{eq:3.15}, and \eqref{eq:4.61}.
Thus we have \eqref{eq:4.63} with $r=L \varepsilon _i$.
By the integration by parts, we have
\begin{equation*}
\begin{split}
\frac{d}{d\tau } \left\{ \frac{1}{\tau ^{d-1}} \int _{B_\tau (x)} e_{\varepsilon _i} \, dy \right\}
+ \frac{1}{\tau ^{d}} \int _{B_\tau (x)} (\xi _{\varepsilon _i} 
+\varepsilon _i H^{\varepsilon _i} (y\cdot \nabla u ^{\varepsilon _i})) \, dy \\
-\frac{\varepsilon _i}{\tau ^{d+1}} \int _{\partial B_\tau (x)} (y\cdot \nabla u ^{\varepsilon _i}) ^2 
d \mathscr{H}^{d-1} (y)
=0. 
\end{split}
\end{equation*}
Thus we can compute
\begin{equation*}
\begin{split}
\frac{1}{ \sigma \tau ^{d-1}} \int _{B_\tau (x)} e_{\varepsilon _i} \, dy \Big \vert_{\tau=L\varepsilon _i} ^r
\geq & \,-
\int _{L\varepsilon _i} ^r \frac{1}{\sigma \tau ^{d}}
\int _{B_\tau (x)} \varepsilon _i H^{\varepsilon _i} (y\cdot \nabla u^{\varepsilon _i}) \, dy d\tau \\
\geq & \, - 
\int _{L\varepsilon _i} ^1 \frac{1}{\sigma \tau ^{d}}
\int _{B_\tau (x)} \varepsilon _i \tau \vert H^{\varepsilon _i} \nabla u^{\varepsilon _i} \vert \, dy d\tau \\
\geq & \, -\frac{\varrho D_2}{\sigma},
\end{split}
\end{equation*}
where we used \eqref{eq:3.22}, \eqref{eq:4.61}, and $\xi _{\varepsilon _i} \leq 0$.
Therefore we obtain \eqref{eq:4.63} for sufficiently large $i$
by restricting $\varrho$ to be small.
Let $\delta >0$ and $\phi \in C_c (B_3 (0))$ be a nonnegative test function such that
$\phi =1$ on $B_2(0) \cap \{ \vert x_d \vert > \delta \}$.
Then there exists $i_0 \geq 1$ such that
\begin{equation}\label{eq:4.64}
\mu _0 ^{\varepsilon _i} (\phi) \leq (1-2s) \omega_{d-1} \frac{\delta ^{d-1}}{2},
 \qquad \text{for any} \ i \geq i_0,
\end{equation}
since
$\mu _0 ^{\varepsilon _i} =\| V_0 ^{\varepsilon _i} \| \to \theta \mathscr{H}^{d-1} \lfloor _{P}$. 
Assume that $x \in G_i \cap \{ \vert x_d \vert > 2 \delta \}$ for $i \geq i_0$. 
Then \eqref{eq:4.63} and \eqref{eq:4.64} imply 
\[
(1-2s) \omega _{d-1} \delta ^{d-1}
\leq \mu _0 ^{\varepsilon _i} (B _{\delta_1} (x)) \leq \mu _0 ^{\varepsilon _i} (\phi)
\leq (1-2s) \omega_{d-1} \frac{\delta ^{d-1}}{2}, \qquad 
\text{for any} \ i \geq i_0.
\]
This is a contradiction. Thus 
\begin{equation}\label{eq:4.65}
\dist (P, G_i) \to 0 \qquad \text{as} \ i \to \infty.
\end{equation}
Set $Y:=P^{-1} (x) \cap G_i \cap \{ x \mid u ^{\varepsilon _i} (x) =l \}$ for $x \in P \cap B_1 (0)$.
Next we show that for sufficiently large $i$
\begin{equation}\label{eq:4.66}
\# Y \leq N-1, \quad
\text{for any} \ x\in P \cap B_1 (0) \ \text{and} \ \vert l \vert \leq 1-b. 
\end{equation}
For a contradiction, assume that $\# Y \geq N$ and choose $y_j \in Y$ for $j=1,2,\dots, N$.
We use Proposition \ref{prop7.5} with $R=1$, $a=L\varepsilon _i$ and
$Y' =\{ y_j \}_{j=1} ^N$ instead of $Y$.
Note that the smallness of $\diam Y'$ is true from \eqref{eq:4.65} and
$\vert y_j -y_k \vert >3 L\varepsilon _i$ for any $1\leq j < k \leq N$ holds by
\eqref{eq:7.19}. Then \eqref{eq:7.18} yields
\begin{equation}\label{eq:4.67}
\sum_{j=1} ^N \frac{1}{(L\varepsilon _i )^{d-1}} 
\mu _0 ^{\varepsilon _i} (B_{L\varepsilon _i} (y_j))
\leq s+ (1+s) \mu _0 ^{\varepsilon _i} 
(\{ z \mid \dist (z,Y') <1 \})
\end{equation}
for sufficiently large $i$. By \eqref{eq:4.65} and
$\mu _0 ^{\varepsilon _i} =\| V_0 ^{\varepsilon _i} \| \to \theta \mathscr{H}^{d-1} \lfloor _{P}$,
\[
\limsup_{i\to \infty} 
\mu _0 ^{\varepsilon _i} 
(\{ z \mid \dist (z,Y') <1 \})
\leq \theta \mathscr{H} ^{d-1} \lfloor_{P} (\overline{B_1 (0)}) = \theta \omega _{d-1}. 
\]
By this, $\# Y' = N$, \eqref{eq:4.63}, and \eqref{eq:4.67} we have
\[
N\omega_{d-1} (1-2s) \leq s+(1+s) \theta \omega_{d-1}.
\]
However, this contradicts \eqref{eq:4.59} by restricting $s$ to be small.
Thus \eqref{eq:4.66} holds for sufficiently large $i$.

Finally, we complete the proof. Set $\hat V_0 ^{\varepsilon _i} 
:= V_0 ^{\varepsilon _i} \lfloor_{\{ \vert x_d \vert \leq 1 \} \times \mathbb{G}(d,d-1)}$.
We regard $P$ as a diagonal matrix 
$(p_{jk})$ with $p_{kk}=1$ for $1\leq k\leq d-1$ and $p_{dd}=0$.
Then the push-forward of $\hat V_0 ^{\varepsilon _i}$ by $P$ is given by
\begin{equation*}
\begin{split}
P_{\#} \hat V_0 ^{\varepsilon _i} (\phi)
= & \, 
\int _{\{ \vert x_d \vert \leq 1 \}}
\phi (Px, \nabla Px \circ (I -\nu^i \otimes \nu ^i)) \vert \Lambda _{d-1} \nabla Px \circ (I -\nu^i \otimes \nu ^i) \vert 
\, d \mu _0 ^{\varepsilon _i} \\
= & \, 
\int _{\{ \vert x_d \vert \leq 1 \}}
\phi (Px, P \circ (I -\nu^i \otimes \nu ^i)) \vert \nu ^i _d \vert 
\, d \mu _0 ^{\varepsilon _i}
\end{split}
\end{equation*}
for any $\phi \in C_c (P \cap B_2 (0) \times \mathbb{G}(d,d-1))$.
Here $\vert \Lambda _{d-1} \nabla Px \circ (I -\nu^i \otimes \nu ^i) \vert$ is the Jacobian
and $\nu ^ i _d = \frac{\partial _{x_d} u ^{\varepsilon _i}}{\vert \nabla u ^{\varepsilon _i} \vert}$.
Due to \eqref{eq:4.54}, $P_\# \hat V_0 ^{\varepsilon _i} \to P_\# (\theta \mathscr{H}^{d-1} \lfloor_{P}) 
=\theta \mathscr{H}^{d-1} \lfloor_{P}$ as $i\to \infty$.
By \eqref{eq:4.55},
\begin{equation}\label{eq:4.68}
\lim_{i\to \infty} \int _{B_3 (0)}
\left\vert
\frac{\varepsilon_i \vert\nabla u ^{\varepsilon_i} \vert^2}{2} +
\frac{W( u ^{\varepsilon_i})}{\varepsilon_i }
- \sqrt{2W(u^{\varepsilon _i})} \vert \nabla u^{\varepsilon _i}\vert 
\right\vert
\, dx =0
\end{equation}
holds (see \eqref{eq:5.1} below).
We compute
\begin{equation}\label{eq:4.69}
\begin{split}
\omega _{d-1} \theta 
= & \,
\mathscr{H}^{d-1} \lfloor _{P} (B_1 (0)) 
\leq \liminf _{i\to \infty} \| P_\# \hat V_0 ^{\varepsilon _i} \| (B_1 (0)) 
= \liminf _{i\to \infty} \int _{B_1 (0)} \vert \nu _d ^i \vert \, d \mu _0 ^{\varepsilon _i} \\
\leq & \,
\liminf _{i\to \infty} \int _{B_1 (0) \cap G_i } 
\vert \nu _d ^i \vert \, d \mu _0 ^{\varepsilon _i} + 2s \\
\leq & \,
\liminf _{i\to \infty} \frac{1}{\sigma} \int _{B_1 (0) \cap G_i } 
\vert \nu _d ^i \vert \sqrt{2W(u^{\varepsilon _i})} \vert \nabla u^{\varepsilon _i} \vert  \, dx + 2s ,
\end{split}
\end{equation}
where we used \eqref{eq:4.60}, \eqref{eq:4.62}, and \eqref{eq:4.68}.
By the co-area formula and the area formula, we have
\begin{equation}\label{eq:4.70}
\begin{split}
& \int _{B_1 (0) \cap G_i } 
\vert \nu _d ^i \vert \sqrt{2W(u^{\varepsilon _i})} \vert \nabla u^{\varepsilon _i} \vert  \, dx \\
= & \, 
\int _{-1+b} ^{1-b} \sqrt{2W(\tau)} \int _{ B_1 (0) \cap G_i \cap \{ u ^{\varepsilon_i } =\tau \} }
\vert \Lambda _{d-1} \nabla Px \circ (I -\nu^i \otimes \nu ^i) \vert \, d\mathscr{H}^{d-1} (x) d\tau \\
= & \,
\int _{-1+b} ^{1-b} \sqrt{2W(\tau)} 
\int _{B_1 (0) \cap \{ x_d=0 \} } \mathscr{H}^{0} 
( \{ u ^{\varepsilon _i} =\tau \} \cap G_i \cap P^{-1} (x) )
 \, d\mathscr{H}^{d-1} (x) d\tau \\
\leq & \,
\int _{-1+b} ^{1-b} \sqrt{2W(\tau)} 
\int _{B_1 (0) \cap \{ x_d=0 \} } 
(N-1) 
 \, d\mathscr{H}^{d-1} (x) d\tau \\
 \leq & \, 
\sigma (N-1) \omega _{d-1} ,
\end{split}
\end{equation}
where we used \eqref{eq:4.66} and $\sigma = \int _{-1} ^{1} \sqrt{2W (\tau)} \, d\tau$.
Hence 
$
\theta \leq N-1
$
due to \eqref{eq:4.69} and \eqref{eq:4.70} and the arbitrariness of $s$. By this and \eqref{eq:4.59},
$\theta =N-1$.
\end{proof}

%%%%%%%%%%%%%%%%%%%%%%%%%%%%%%%%%%%%%%%%%%%%%%%%%%%%%%%%%%%%%%%%%%%%%%%%%%%%%%%%%%%%%%%%%%%%%%%%%%%%%%%%%%%%%%%%%%%%%%%%%%%%%%%%%%%%%%%%%%%%%%%%%%%%%%%%%%%%%%%%%%%%%%%%%%%%%%%%%%%%%%%%%%%%%%%%%%%%%%%%%%%%%%%%%%%%%%%%%%%%%%%%%%%%%%%%%%%%%%%%%%%%%%%%%%%%%%%%%%

\section{Proofs of main theorems}

In this section we prove Theorem \ref{mainthm1} and Theorem \ref{mainthm2}
on the existence of the weak solution in the sense of $L^2$-flow and distributional $BV$-solution.
%The proof of Theorem \ref{mainthm1} is based on \cite{takasao2017}.
\begin{proof}[Proof of Theorem \ref{mainthm1}]
Let $\{ \varphi ^{\varepsilon _i} _0 \} _{i=1} ^\infty$ be a family of functions such that
all the claims of Proposition \ref{prop2.2} are satisfied.
Then one can check that all the assumptions in Section 3 and 4 are fulfilled.
Therefore (a) holds by propositions \ref{prop2.2}, \ref{prop3.11}, and \ref{prop3.12}.
By taking a subsequence $\varepsilon _i \to 0$, we obtain (b) 
(the proof is standard and is exactly the same as that in \cite{takasao2017}, so we omit it). 
By Lemma \ref{lem3.2} and the weak compactness of $L^2 (0,T)$, we may
take a subsequence $\varepsilon_i \to 0$
such that (c) holds (for the weak convergence for all $T>0$, 
we only need to use the diagonal argument).

Next we show (d). We compute
\begin{equation}\label{eq:5.1}
\begin{split}
\frac{\varepsilon \vert \nabla \varphi ^\varepsilon \vert^2}{2} 
+ \frac{W(\varphi ^\varepsilon )}{\varepsilon}
- \sqrt{2W(\varphi ^\varepsilon )} \vert\nabla \varphi ^\varepsilon \vert
=\left(\frac{\sqrt{\varepsilon} \vert\nabla \varphi ^\varepsilon \vert}{\sqrt{2}} 
- \frac{\sqrt{W(\varphi ^\varepsilon )}}{\sqrt{\varepsilon}}
 \right) ^2 \leq \vert \xi _\varepsilon\vert .
\end{split}
\end{equation}
Set $d \hat{\mu} ^\varepsilon := \frac{1}{\sigma}\sqrt{2W(\varphi ^\varepsilon )} 
\vert \nabla \varphi ^\varepsilon \vert \, dxdt$.
By \eqref{eq:5.1}, Proposition \ref{prop3.11}, and Theorem \ref{thm4.5}, we have
\begin{equation}\label{eq:5.2}
\hat{\mu} ^\varepsilon \to \mu \qquad \text{as Radon measures}, 
\end{equation}
where $d\mu := d\mu _t dt$. By \eqref{eq:3.1}, \eqref{eq:3.5},
 \eqref{eq:5.1}, and \eqref{eq:5.2}, we obtain
\[
\sup _{i \in \N} \int _{\Omega \times (0,T)} \vert \lambda ^{\varepsilon _i} \vert^2 \, d \hat{\mu} ^{\varepsilon_i} 
\leq
\sup _{i \in \N} \int _{\Omega \times (0,T)} \vert \lambda ^{\varepsilon _i} \vert^2 \, d {\mu} ^{\varepsilon_i} _t dt
\leq D_1 \Cr{const:3.1} (1+T).
\]
Then there exist $\vec{f} \in (L_{loc} ^2 (\mu))^d$ and the subsequence $\varepsilon_i \to 0$ such that
\begin{equation*}
\begin{split}
\frac{1}{\sigma}
\int _{\Omega \times (0,T) } - \lambda^{\varepsilon_i}
\sqrt{2W(\varphi ^{\varepsilon_i} )} \nabla \varphi ^{\varepsilon_i} \cdot \vec{\phi} \, dxdt
= & \, 
\int _{\Omega \times (0,T) \cap \{ \vert \nabla \varphi ^{\varepsilon _i}\vert \not = 0 \} } 
- \lambda^{\varepsilon_i}
\frac{\nabla \varphi ^{\varepsilon_i}}{\vert \nabla \varphi ^{\varepsilon_i}\vert} \cdot \vec{\phi} 
\, d\hat{\mu} ^{\varepsilon _i}\\
\to & \, \int _{\Omega \times (0,T)} \vec{f} \cdot \vec{\phi} \, d \mu _t dt
\end{split}
\end{equation*} 
for any $\vec{\phi} \in C_c (\Omega\times [0,T) ; \R^d)$ (see \cite[Theorem 4.4.2]{hutchinson}).
Moreover, if $\vec {\phi} $ is smooth,
we have
\begin{equation*}
\begin{split}
& \int _{\Omega\times (0,T)} \vec{f} \cdot \vec{\phi} \, d \mu
=
\lim_{i\to \infty}
\frac{1}{\sigma}
\int _{\Omega \times (0,T) } - \lambda^{\varepsilon_i}
\sqrt{2W(\varphi ^{\varepsilon_i} )} \nabla \varphi ^{\varepsilon_i} \cdot \vec{\phi} \, dxdt \\
= & \,
\lim_{i\to \infty}
\frac{1}{\sigma}
\int _{\Omega \times (0,T) } \lambda^{\varepsilon_i}
k( \varphi ^{\varepsilon_i}) \div \vec{\phi} \, dxdt
= \int _0 ^T \lambda \int _{\Omega} \psi \div \vec{\phi} \, dxdt \\
= & \, - \int _0 ^T \lambda \int _{\Omega} \vec{\phi} \cdot \nu \, d \| \nabla \psi (\cdot, t)\| dt,
\end{split}
\end{equation*}
where we used (c), $k'(s) =\sqrt{2W(s)}$, 
$\lim _{i\to \infty} k(\varphi ^{\varepsilon_i})
= \sigma (\psi -\frac12) $ for a.e. $(x,t)$, and the dominated convergence theorem.
Hence we have \eqref{claim-e2}.

Now we prove (e). 
By replacing $d\hat \mu ^{\varepsilon}$ with 
$d \tilde \mu^\varepsilon := \frac{\varepsilon }{\sigma} \vert \nabla \varphi ^{\varepsilon}\vert^2 \, dxdt$,
the convergence \eqref{claim-e} is obtained in the same way as (d).
In addition, for any $\vec{\phi} \in C_c (\Omega\times [0,T) ; \R^d)$, we compute
\begin{equation*}
\begin{split}
&\int_{0} ^T \int _{\Omega} \vec{v} \cdot \vec{\phi} \, d \mu_t dt
= 
\lim_{i \to \infty}
\int_0 ^T \int _{\Omega }
\vec{v}^{ \, \varepsilon _i} \cdot \vec{\phi} \, d\mu _t ^{\varepsilon _i} dt
=
\lim_{i \to \infty}
\int_0 ^T \int _{\Omega }
\vec{v}^{ \, \varepsilon _i} \cdot \vec{\phi} \, d \tilde  \mu ^{\varepsilon _i}\\
= & \,
\lim_{i \to \infty}
\int_0 ^T \int _{\Omega }
-\varepsilon _i 
\left(
\Delta \varphi ^{\varepsilon_i} -\frac{W' (\varphi ^{\varepsilon_i})}{\varepsilon_i ^2} 
+ \lambda ^{\varepsilon _i} \frac{\sqrt{2W(\varphi ^{\varepsilon_i})}}{\varepsilon _i} 
\right) \nabla \varphi ^{\varepsilon_i}  \cdot \vec{\phi} 
 \, dxdt
 \\
= & \,
\int _0 ^T \int _{\Omega} (\vec{h} +\vec{f}) \cdot \vec{\phi} \, d\mu _t dt ,
\end{split}
\end{equation*}
where we used Theorem \ref{thm4.6} and (d). Thus $\vec{v}=\vec{h} +\vec{f}$. 
One can check that $\{ \mu _t \}_{t \in [0,\infty)}$ is an $L^2$-flow
with the generalized velocity vector $\vec{v}$
(see \cite[Proposition 4.3]{takasao2017} for the inequality \eqref{ineq-L2} and \cite[Lemma 6.3]{MR2383536}
for the perpendicularity).
\end{proof}

%\begin{remark}
%We checked that $\lim_{\varepsilon \to 0} \hat \mu ^\varepsilon = \mu _t$ when
%$\lim _{\varepsilon \to 0}|\xi _t ^\varepsilon| =0$ in the proof above.
%As related to that, we can show that $\lim _{\varepsilon \to 0}|\xi _t ^\varepsilon| =0$
%when $\lim_{\varepsilon \to 0} \mu _t ^\varepsilon (\Omega) = \|\nabla \psi (\cdot ,t)\| (\Omega)$.
%\end{remark}

To prove Theorem \ref{mainthm2}, 
we use the following proposition and lemmas.
In the original proof of the following proposition, $2\leq d\leq 3$ is assumed to
use the results of \cite[Proposition 4.9, Theorem 5.1]{roger-schatzle}.
However, we already know that $\vert \xi \vert=0$ and $\mu_t$ is integral for a.e. $t$, 
so we can show the claim in the same way.
\begin{proposition}[See Proposition 4.5 of \cite{MR2383536}]\label{prop5.1}
Let $\psi$, $\vec{v}$, and $\vec{\nu}$ are given by Theorem \ref{mainthm1}.
Then $\int  \vert \vec{v} \cdot \vec{\nu} \vert d \| \nabla \psi (\cdot, t)\| dt <\infty$ and
\begin{equation}\label{eq:5.3}
\int _0 ^T \int _{\Omega} \phi \vec{v} \cdot \vec{\nu} \, d \| \nabla \psi (\cdot, t)\| dt
= \int _0 ^T \int _{\Omega} \phi _t \psi  \, dxdt
\end{equation}
for any $\phi \in C_c ^1 (\Omega\times (0,T))$ and for any $T>0$.
\end{proposition}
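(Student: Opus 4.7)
The integrability of $\vec{v}\cdot\vec{\nu}$ against $\|\nabla\psi(\cdot,t)\|\,dt$ follows from Cauchy--Schwarz, since $\vec{v}\in L^2_{loc}([0,\infty);(L^2(\mu_t))^d)$ by Theorem \ref{mainthm1}(e) and $\|\nabla\psi(\cdot,t)\|\le \mu_t$ by (b4):
\[
\int_0^T\!\!\int_\Omega |\vec{v}\cdot\vec{\nu}|\,d\|\nabla\psi(\cdot,t)\|\,dt
\le \Bigl(\int_0^T\mu_t(\Omega)\,dt\Bigr)^{\!1/2}\Bigl(\int_0^T\!\!\int_\Omega|\vec{v}|^2\,d\mu_t\,dt\Bigr)^{\!1/2}<\infty.
\]
For the identity \eqref{eq:5.3}, the plan is to write it first at the $\varepsilon_i$-level and then pass to the limit. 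Since $\psi^{\varepsilon_i}=\tfrac12(\varphi^{\varepsilon_i}+1)$ is smooth, integration by parts in time gives, for any $\phi\in C^1_c(\Omega\times(0,T))$,
\[
\int_0^T\!\!\int_\Omega \phi_t\psi^{\varepsilon_i}\,dx\,dt=-\int_0^T\!\!\int_\Omega \phi\,\psi_t^{\varepsilon_i}\,dx\,dt,
\]
and the definition of $\vec{v}^{\,\varepsilon_i}$ yields the pointwise identity $\vec{v}^{\,\varepsilon_i}\cdot\nabla\psi^{\varepsilon_i}=-\psi_t^{\varepsilon_i}$ on the open set $\{|\nabla\varphi^{\varepsilon_i}|\ne 0\}$. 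Setting $R_i:=-\int_0^T\int_{\{|\nabla\varphi^{\varepsilon_i}|=0\}}\phi\,\psi_t^{\varepsilon_i}\,dx\,dt$, these combine into the approximate identity
\[
\int_0^T\!\!\int_\Omega \phi_t\psi^{\varepsilon_i}\,dx\,dt
=\int_0^T\!\!\int_\Omega \phi\,\vec{v}^{\,\varepsilon_i}\cdot \nabla\psi^{\varepsilon_i}\,dx\,dt+R_i.
\]

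\textbf{Passage to the limit in the principal terms.}
The left-hand side converges to $\int\int\phi_t\psi\,dx\,dt$ by the $L^1_{loc}$-convergence of $\psi^{\varepsilon_i}\to\psi$ given by Theorem \ref{mainthm1}(b1). For the first right-hand side term I would mimic the proof of Theorem \ref{mainthm1}(d): using $\sqrt{2W(\varphi^{\varepsilon_i})}\nabla\varphi^{\varepsilon_i}=\nabla k(\varphi^{\varepsilon_i})$, the vanishing of the discrepancy (Theorem \ref{thm4.5}) combined with \eqref{eq:5.1}, and the $L^1$-convergence $k(\varphi^{\varepsilon_i})\to \sigma(\psi-\tfrac12)$ (since $k(\pm 1)=\pm\sigma/2$), one identifies the limit of the vector-valued measure $\sqrt{2W(\varphi^{\varepsilon_i})}\nabla\varphi^{\varepsilon_i}\,dx\,dt$ as $\sigma\,\vec{\nu}\,d\|\nabla\psi(\cdot,t)\|\,dt$. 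Writing
$\vec{v}^{\,\varepsilon_i}\cdot\nabla\psi^{\varepsilon_i}\,dx\,dt=\tfrac12\vec{v}^{\,\varepsilon_i}\cdot(\nabla\varphi^{\varepsilon_i}/|\nabla\varphi^{\varepsilon_i}|)\,|\nabla\varphi^{\varepsilon_i}|\,dx\,dt$
and using the uniform $L^2(\mu_t^{\varepsilon_i})$-bound on $\vec{v}^{\,\varepsilon_i}$ (coming from $|\vec{v}^{\,\varepsilon_i}|^2|\nabla\varphi^{\varepsilon_i}|^2=(\varphi_t^{\varepsilon_i})^2$ and \eqref{eq:3.1}) together with Hutchinson's compactness result for varifolds with $L^2$-first variation (Theorem 4.4.2 of \cite{hutchinson}, applied exactly as in the proof of (d)), one obtains
\[
\int_0^T\!\!\int_\Omega\phi\,\vec{v}^{\,\varepsilon_i}\cdot \nabla\psi^{\varepsilon_i}\,dx\,dt
\longrightarrow \int_0^T\!\!\int_\Omega\phi\,\vec{v}\cdot\vec{\nu}\,d\|\nabla\psi(\cdot,t)\|\,dt.
\]

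\textbf{Main obstacle: vanishing of the remainder $R_i$.}
The remaining (and hardest) step is to show $R_i\to 0$. The a priori obstacle is that the set $A_i:=\spt\phi\cap\{|\nabla\varphi^{\varepsilon_i}|=0\}$ may have positive Lebesgue measure, while the dissipation bound $\int_0^T\!\int_\Omega\varepsilon_i(\varphi_t^{\varepsilon_i})^2\,dx\,dt\le \sigma D_1$ loses a factor $\varepsilon_i^{-1/2}$ relative to what a naive Cauchy--Schwarz needs. I would decompose $A_i$ using Proposition \ref{prop4.7}: the part $A_i\cap\{|\varphi^{\varepsilon_i}|<1-b\}$ lies in the interface region and has Lebesgue measure $O(\varepsilon_i)$ by the energy estimate; on $A_i\cap\{|\varphi^{\varepsilon_i}|\ge 1-b\}$, the Allen--Cahn equation evaluated at the critical points $\nabla\varphi^{\varepsilon_i}=0$, combined with the bound $\varepsilon_i|\Delta\varphi^{\varepsilon_i}|\le C$ from Lemma \ref{lem:holder} and the smallness of $W(\varphi^{\varepsilon_i})$ and $W'(\varphi^{\varepsilon_i})$ near $\pm 1$, yields the required pointwise control of $\psi_t^{\varepsilon_i}$. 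A cleaner alternative, which avoids the critical-set analysis altogether, is to regularize $\vec{v}^{\,\varepsilon_i}$ by $\vec{v}^{\,\varepsilon_i}_\delta:=-\varphi_t^{\varepsilon_i}\nabla\varphi^{\varepsilon_i}/(|\nabla\varphi^{\varepsilon_i}|^2+\delta^2)$, obtain the exact pointwise identity
\[
\psi_t^{\varepsilon_i}=-\vec{v}^{\,\varepsilon_i}_\delta\cdot\nabla\psi^{\varepsilon_i}+\frac{\delta^2}{|\nabla\varphi^{\varepsilon_i}|^2+\delta^2}\,\psi_t^{\varepsilon_i}
\]
everywhere, run the integration by parts with $\vec{v}^{\,\varepsilon_i}_\delta$ in place of $\vec{v}^{\,\varepsilon_i}$, and send $\delta\to 0$ first (via dominated convergence at fixed $i$) and then $i\to\infty$; this is the route I would ultimately expect to pursue.
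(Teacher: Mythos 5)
The paper's proof of this proposition does not go back to the $\varepsilon$-level at all. It introduces the space--time derivative $\nabla_{x,t}\psi=(\nabla\psi,\psi_t)$ in the $BV$ sense, checks (citing Propositions 8.1--8.3 of \cite{MR2383536} and Proposition 2.85 of \cite{MR1857292}) that $\|\nabla_{x,t}\psi\|\ll\mu$ and that the approximate tangent planes of $\|\nabla_{x,t}\psi\|$ and $\mu$ agree a.e., and then invokes Proposition \ref{prop2.3}: since $\{\mu_t\}$ is already known to be an $L^2$-flow, $(\vec{v},1)\in T_{(x,t)}\mu$ at $\mu$-a.e.\ point, hence $(\vec{v},1)\cdot\vec{\nu}_{x,t}=0$ $\|\nabla_{x,t}\psi\|$-a.e., and splitting $\vec{\nu}_{x,t}$ into its space and time components gives \eqref{eq:5.3} at once. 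Your integrability argument via Cauchy--Schwarz and (b4) is exactly what the paper does for that part, and is fine.

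Your $\varepsilon$-level plan, however, has a genuine gap at the step you single out as the principal convergence,
\[
\int_0^T\!\!\int_\Omega\phi\,\vec{v}^{\,\varepsilon_i}\cdot\nabla\psi^{\varepsilon_i}\,dx\,dt\longrightarrow\int_0^T\!\!\int_\Omega\phi\,\vec{v}\cdot\vec{\nu}\,d\|\nabla\psi(\cdot,t)\|\,dt,
\]
and the tools you invoke cannot supply it. Two independent obstructions. First, to use Hutchinson's compactness as in the proof of Theorem \ref{mainthm1}(d)/(e), you need a uniform $L^2$ bound on $\vec{v}^{\,\varepsilon_i}$ against the reference measure you are using, namely $|\nabla\psi^{\varepsilon_i}|\,dx\,dt=\tfrac12|\nabla\varphi^{\varepsilon_i}|\,dx\,dt$. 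But
\[
\int|\vec{v}^{\,\varepsilon_i}|^2\,|\nabla\psi^{\varepsilon_i}|\,dx=\frac12\int\frac{(\varphi_t^{\varepsilon_i})^2}{|\nabla\varphi^{\varepsilon_i}|}\,dx,
\]
and the only pointwise gradient information available, the equipartition estimate \eqref{eq:3.15}, gives $\varepsilon_i|\nabla\varphi^{\varepsilon_i}|\le\sqrt{2W(\varphi^{\varepsilon_i})}$, an \emph{upper} bound on $|\nabla\varphi^{\varepsilon_i}|$; there is no lower bound, so this integral cannot be compared with the dissipation $\int\varepsilon_i(\varphi_t^{\varepsilon_i})^2\,dx$ from \eqref{eq:3.1}. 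The identity $|\vec{v}^{\,\varepsilon_i}|^2|\nabla\varphi^{\varepsilon_i}|^2=(\varphi_t^{\varepsilon_i})^2$ together with \eqref{eq:3.1} only yields an $L^2$ bound against the gradient-only measure $\tilde\mu^{\varepsilon_i}=\sigma^{-1}\varepsilon_i|\nabla\varphi^{\varepsilon_i}|^2\,dx\,dt$, which is exactly why the paper switches to $\tilde\mu^{\varepsilon_i}$ in the proof of claim (e). Second, even granted such a bound, Hutchinson's theorem would identify the limiting vector field against the weak-$\ast$ limit of the reference measures, and $|\nabla\psi^{\varepsilon_i}|\,dx\,dt$ does \emph{not} in general converge to $\|\nabla\psi(\cdot,t)\|\,dt$: lower semicontinuity of total variation only gives $\|\nabla\psi(\cdot,t)\|\le\liminf_i\|\nabla\psi^{\varepsilon_i}(\cdot,t)\|$, and the limit can carry extra mass from interface cancellations or higher-integer density. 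The limit would then a priori live on $\mu$ rather than on $\|\nabla\psi\|$, and you would have to reprove exactly the tangency information that the paper obtains from Proposition \ref{prop2.3}. The $\delta$-regularization at the end of your proposal is aimed only at the remainder $R_i$; sending $\delta\to0$ at fixed $i$ by dominated convergence reproduces the same $R_i$-decomposition you started from and does not touch either of the two points above.
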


\begin{proof}
Set $\nabla_{x,t} = (\nabla, \partial _t)$ in the sense of BV.
One can check that $\| \nabla _{x,t} \psi \|\ll \mu$, 
$ \mu \lfloor_{\partial ^\ast \{ \psi=1 \}}$ is rectifiable, 
$\int  \vert \vec{v} \cdot \vec{\nu} \vert \, d \| \nabla \psi (\cdot, t)\| dt <\infty$,
and 
the approximate tangent space coincides with that of $\| \nabla_{x,t} \psi\|$ for $\mu$-a.e.
and $\|\nabla_{x,t} \psi\|$-a.e. (see \cite[Proposition 8.1--8.3]{MR2383536} and 
\cite[Proposition 2.85]{MR1857292}). 
By this and Proposition \ref{prop2.3},
we have
\begin{equation*}
\begin{split}
0 = 
%\int _0 ^T \int _{\Omega}  \phi (\vec{v},1) \cdot \vec \nu_{x,t} \, d\mu =
\int _0 ^T \int _{\Omega}  \phi (\vec{v},1) \cdot \vec \nu_{x,t} \, d\|\nabla _{x,t} \psi \| 
=
\int _0 ^T \int _{\Omega} \phi \vec{v} \cdot \vec{\nu} \, d \| \nabla \psi (\cdot, t)\| dt
+
\int _0 ^T \int _{\Omega} \phi \psi _t \, dxdt
\end{split}
\end{equation*}
for any $\phi \in C_c ^1 (\Omega\times (0,T))$,
where $\vec \nu_{x,t}$ is the inner unit normal vector of $\{ (x,t) \mid \psi (x,t)=1 \}$.
Therefore we obtain \eqref{eq:5.3}.
\end{proof}

\begin{lemma}\label{lem5.2}
Let $\gamma$ and $\delta$ be positive constants with $\delta < \gamma$.
Under the same assumptions of Theorem \ref{mainthm2}, there exist $T_2 \in( 0,1)$
 and $\epsilon_5 \in (0,1)$
depending only on $\gamma$, $\delta$, and $\Cr{const:3.1}(\omega,d ,D' _1)$
with the following property.
Let $g: \R \to [0,\infty)$ be a smooth even function such that $g(0)=0$, 
$0 \leq g'' (s) \leq 2$ for any $s \in \R$, and $g(s)=\vert s \vert -\frac12$ if $\vert s \vert \geq 1$.
Set $g^\delta (s) := \delta g(s/\delta)$ and 
define $\tilde r^{\varepsilon,\delta} \in C^\infty (\R^d \times [0,\infty))$ by
\[
\tilde r^{\varepsilon,\delta} (x,t) := g^\delta (x_1) + \int _0 ^t \lambda ^\varepsilon (\tau) \, d\tau 
+ 2\delta ^{-1} t - \gamma, 
\]
where $\lambda ^\varepsilon$ is given by 
\eqref{lambdadef}. Set $\tilde \phi^{\varepsilon,\delta} := q^\varepsilon (\tilde r^{\varepsilon,\delta} )$
and assume that $\tilde \phi ^\varepsilon (x,0) \geq \varphi ^\varepsilon _0 (x) $
for any $x \in \R^d$. Then 
\begin{equation}\label{eq:5.6}
\tilde \phi ^{\varepsilon,\delta}  \geq \varphi ^\varepsilon
\qquad \text{in} \ \R^d \times [0,T_2) 
\end{equation}
for any $\varepsilon \in (0,\epsilon_5)$.
\end{lemma}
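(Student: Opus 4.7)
The plan is to verify directly that $\tilde\phi^{\varepsilon,\delta}$ is a supersolution of the Allen--Cahn equation in \eqref{ac}, treating $\lambda^\varepsilon(t)$ as a prescribed function of time (the one determined by the $\varphi^\varepsilon$ under comparison), and then invoke the standard parabolic comparison principle for bounded solutions on $\R^d$.

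For the supersolution computation I would substitute $\tilde\phi^{\varepsilon,\delta} = q^\varepsilon(\tilde r^{\varepsilon,\delta})$ into the operator $P[u] := \varepsilon u_t - \varepsilon \Delta u + \varepsilon^{-1} W'(u) - \lambda^\varepsilon \sqrt{2W(u)}$ and exploit \eqref{eq:3.16}--\eqref{eq:3.17}, together with $\tilde r_t = \lambda^\varepsilon + 2\delta^{-1}$, $|\nabla \tilde r|^2 = ((g^\delta)'(x_1))^2$, and $\Delta \tilde r = (g^\delta)''(x_1)$. After the cancellations I expect the identity
\[
P[\tilde\phi^{\varepsilon,\delta}] = \varepsilon q_r^\varepsilon \bigl(2\delta^{-1} - (g^\delta)''(x_1)\bigr) + \varepsilon q_{rr}^\varepsilon \bigl(1 - ((g^\delta)'(x_1))^2\bigr).
\]
The first summand is nonnegative since $q_r^\varepsilon > 0$ and $(g^\delta)''(x_1) = g''(x_1/\delta)/\delta \leq 2/\delta$. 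For the second summand, $(g^\delta)'(x_1) = \pm 1$ whenever $|x_1| \geq \delta$, so it vanishes outside the strip $\{|x_1| < \delta\}$; inside this strip, the sign of $q_{rr}^\varepsilon = -2 q^\varepsilon(1-(q^\varepsilon)^2)/\varepsilon^2$ coincides with the sign of $-\tilde r^{\varepsilon,\delta}$.

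The heart of the argument is therefore to choose $T_2 \in (0,1)$ and $\epsilon_5 := \epsilon_1$ so that $\tilde r^{\varepsilon,\delta} \leq 0$ throughout $\{|x_1| \leq \delta\} \times [0, T_2]$. Since $g \geq 0$ with $g(1) = 1/2$, one has $g^\delta(x_1) \leq \delta/2$ on this strip. Cauchy--Schwarz combined with \eqref{eq:3.5} gives
\[
\Bigl| \int_0^t \lambda^\varepsilon(\tau)\,d\tau\Bigr| \leq \sqrt{\Cr{const:3.1}\,t(1+t)},
\]
so on the strip
\[
\tilde r^{\varepsilon,\delta}(x,t) \leq \tfrac{\delta}{2} + \sqrt{\Cr{const:3.1}\,t(1+t)} + \tfrac{2t}{\delta} - \gamma.
\]
Since $\delta < \gamma$, one can pick $T_2 = T_2(\gamma,\delta,\Cr{const:3.1}) \in (0,1)$ small enough to make the right-hand side strictly negative for all $t \in [0, T_2]$, which then yields $P[\tilde\phi^{\varepsilon,\delta}] \geq 0$ on $\R^d \times [0, T_2)$.

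For the comparison step, after extending $\varphi^\varepsilon$ periodically to $\R^d$, the difference $w := \tilde\phi^{\varepsilon,\delta} - \varphi^\varepsilon$ is bounded and satisfies $P[\tilde\phi^{\varepsilon,\delta}] - P[\varphi^\varepsilon] \geq 0$ with the same $\lambda^\varepsilon(t)$. By the mean-value theorem applied to $W'$ and $\sqrt{2W}$ on $[-1,1]$ (both arguments lie there by Proposition \ref{prop3.1}), this reduces to a linear parabolic inequality $\varepsilon w_t - \varepsilon \Delta w + Q(x,t) w \geq 0$ with $Q \in L^\infty(\R^d \times [0, T_2))$ for each fixed $\varepsilon$; together with $w(\cdot,0) \geq 0$, the parabolic comparison principle for bounded solutions yields $w \geq 0$, i.e., \eqref{eq:5.6}. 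The main obstacle is the sign of the discrepancy term $\varepsilon q_{rr}^\varepsilon(1 - ((g^\delta)'(x_1))^2)$, which can be negative where $\tilde r^{\varepsilon,\delta} > 0$; ruling this out in the transition strip is exactly what forces the short-time restriction, via the uniform $L^2$-bound on $\lambda^\varepsilon$ from Lemma \ref{lem3.2}.
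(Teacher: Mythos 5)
Your proposal is correct and follows essentially the same route as the paper: the paper rewrites \eqref{ac} as the equation \eqref{ac-r} for $r^\varepsilon = (q^\varepsilon)^{-1}(\varphi^\varepsilon)$ and checks that $\tilde r^{\varepsilon,\delta}$ is a supersolution of that equation, which, after multiplying by $\varepsilon q^\varepsilon_r>0$, is literally the identity you derived for $P[\tilde\phi^{\varepsilon,\delta}]$. The splitting into $\{|x_1|\geq\delta\}$ (where the transition term vanishes) and $\{|x_1|<\delta\}$ (where $\tilde r^{\varepsilon,\delta}\leq 0$ is enforced via the Cauchy--Schwarz bound on $\int_0^t\lambda^\varepsilon$ from Lemma \ref{lem3.2}), the choice of $T_2$, and the concluding comparison principle are all the same ingredients as in the paper's proof.
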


\begin{proof}
We denote $\tilde r =\tilde r^{\varepsilon,\delta}$ for simplicity.
By \eqref{ac-r} and the comparison principle,
we need only prove 
\begin{equation}
\tilde  r  _t
\geq 
\Delta \tilde r
- \dfrac{2 q ^\varepsilon (\tilde r) }{\varepsilon } ( \vert \nabla \tilde r \vert ^2 -1) 
+ \lambda ^\varepsilon  
\qquad \text{in} \ \R^d \times (0,T_2)
\label{ac-r2}
\end{equation}
for sufficiently small $T_2>0$ and $\varepsilon>0$, since
$\tilde \phi ^{\varepsilon,\delta} \geq \varphi ^\varepsilon$ if and only if
$\tilde r  \geq r ^\varepsilon$.
In the case of $\vert x_1 \vert \geq \delta$, \eqref{ac-r2} holds by 
$\tilde r_t = 2\delta^{-1} + \lambda ^\varepsilon $,
$\vert \nabla \tilde r \vert =1$, and $\Delta \tilde r =0$.
Next we consider the case of $\vert x_1 \vert \leq \delta$.
Set $O_\delta :=\{ x \in \R^d \mid \vert x_1 \vert \leq \delta \}$.
By $\tilde r (x ,0) \leq -\gamma +\frac{\delta}{2} $ on $O_\delta$ and
\[
\vert \tilde r (x,t) -\tilde r (x,0) \vert
\leq \int _0 ^t \vert \lambda ^\varepsilon (\tau) \vert \, d\tau + 2\delta^{-1} t 
\leq \sqrt{\Cr{const:3.1} (1+t)} \sqrt{t} + 2\delta^{-1} t,
\]
there exists $T_2 >0$ such that 
\begin{equation}\label{eq:5.7}
\tilde r (x,t) \leq -\frac{\gamma}{4}<0
\qquad \text{for any} \ (x,t) \in O_\delta \times [0,T_2).
\end{equation}
By \eqref{eq:5.7} and $\vert \nabla \tilde r \vert \leq 1$, 
$\dfrac{2 q ^\varepsilon (\tilde r) }{\varepsilon } ( \vert \nabla \tilde r \vert ^2 -1) \geq 0$. 
By using this, for any $(x,t) \in O_\delta \times [0,T_2)$,
\begin{equation}\label{ac-r3}
\tilde  r  _t
-
\Delta \tilde r
+ \dfrac{2 q ^\varepsilon (\tilde r) }{\varepsilon } ( \vert \nabla \tilde r \vert ^2 -1) 
- \lambda ^\varepsilon  
\geq 2\delta^{-1} - \Delta \tilde r \geq 0,
\end{equation}
where we used
$\Delta \tilde r \leq \delta^{-1} g'' (x_1 /\delta) \leq 2 \delta^{-1}$.
Therefore we obtain \eqref{ac-r2}.
\end{proof}

\begin{lemma}\label{lem5.3}
Let $r \in (0,\frac{1}{4})$. Then there exists $T_3 >0$ depending only on $d$ and $r$
with the following property.
Let $U_0 \subset \subset (\frac14,\frac34)^d$ satisfies 
$\mathscr{L}^d (U_0) =\mathscr{L}^d (B_{r} (0))$ and has a 
$C^1$ boundary $M_0$ with \eqref{iso} for $\delta_1 >0$.
In addition, we assume $\mathscr{H}^{d-1} (M_0) \leq 2 \mathscr{H}^{d-1} (\partial B_{r} (0))$.
Then we have 
\begin{equation}\label{eq:5.10}
0\leq \mu _0 (\Omega) - \mu _t ( \Omega ) \leq \delta_1 \qquad \text{for any} \ t \in [0,T_3),
\end{equation}
where $\mu_t$ is a weak solution to \eqref{vpmcf} with initial data $M_0$.
\end{lemma}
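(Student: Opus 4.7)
The plan is to combine Lemma~\ref{lem5.2} with the Euclidean isoperimetric inequality. I will first confine $U_t=\{\psi(\cdot,t)=1\}$ away from $\partial(0,1)^d$ (interpreting $\Omega=\mathbb{T}^d$ via the fundamental domain $(0,1)^d$) by using $2d$ planar upper barriers, and then use Theorem~\ref{mainthm1}(b3)--(b4) together with hypothesis~\eqref{iso} to control $\mu_0(\Omega)-\mu_t(\Omega)$.

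For the confinement, I apply (translated versions of) Lemma~\ref{lem5.2} with troughs centered on the hyperplanes $\{x_j=1/8\}$ and $\{x_j=7/8\}$ for $j=1,\ldots,d$; the proof of Lemma~\ref{lem5.2} carries over verbatim under such translations since only derivatives of $\tilde r^{\varepsilon,\delta}$ enter. Choosing $\gamma=1/16$ and $\delta\leq 1/32$, each trough is contained in a strip of half-width less than $1/8$ around its plane and is therefore disjoint from $U_0\subset\subset(1/4,3/4)^d$ with a buffer independent of $U_0$. Using Proposition~\ref{prop2.2} to control the sharpness of $\varphi_0^{\varepsilon_i}$ on these strips (where $\varphi_0^{\varepsilon_i}\to-1$ and the barrier equals $q^{\varepsilon_i}$ of a negative number close to $-\gamma$), one gets the initial ordering $\tilde\phi^{\varepsilon_i,\delta}(\cdot,0)\geq\varphi_0^{\varepsilon_i}$ on $\Omega$ for large $i$, so Lemma~\ref{lem5.2} yields $\varphi^{\varepsilon_i}\leq\tilde\phi^{\varepsilon_i,\delta}$ on $\R^d\times[0,T_2)$.

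Next I would choose the time $T_3$. By Cauchy--Schwarz and Lemma~\ref{lem3.2},
\[
\Big|\int_0^t\lambda^{\varepsilon_i}(\tau)\,d\tau\Big|\leq\sqrt{t\,\Cr{const:3.1}(1+t)}
\]
uniformly in $i$, where $\Cr{const:3.1}$ depends only on $d$ and $r$: namely through $D_1'\leq C(d,r)$ from $\mathscr{H}^{d-1}(M_0)\leq 2d\omega_d r^{d-1}$, and through the constant $\omega$ in~\eqref{omega} since $\mathscr{L}^d(U_0)=\omega_d r^d<1/2$. Hence a choice $T_3=T_3(d,r)\in(0,T_2]$ exists for which each barrier still satisfies $\tilde\phi^{\varepsilon_i,\delta}(\cdot,t)<0$ on a thin strip strictly inside its trough, uniformly for $t\in[0,T_3)$ and large $i$. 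Consequently $\varphi^{\varepsilon_i}(\cdot,t)<0$ on these $2d$ strips, and Theorem~\ref{mainthm1}(b1) forces $\psi(\cdot,t)=0$ a.e.\ on them, so $U_t$ is disjoint from these strips up to null sets.

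Since $U_t$ therefore sits (up to null sets) in the union of the connected components of $\Omega$ minus these $2d$ strips, and each such component is isometric to a cube compactly embedded in $\R^d$, the Euclidean isoperimetric inequality applied componentwise combined with the subadditivity $(a+b)^{(d-1)/d}\leq a^{(d-1)/d}+b^{(d-1)/d}$ yields
\[
\mu_t(\Omega)\geq\|\nabla\psi(\cdot,t)\|(\Omega)=\mathscr{H}^{d-1}(\partial^\ast U_t)\geq d\omega_d^{1/d}\bigl(\mathscr{L}^d(U_0)\bigr)^{(d-1)/d},
\]
where I also used Theorem~\ref{mainthm1}(b3), (b4). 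Combined with $\mu_0(\Omega)=\mathscr{H}^{d-1}(M_0)$ and~\eqref{iso}, this gives the upper bound $\mu_0(\Omega)-\mu_t(\Omega)\leq\delta_1$. The lower bound $\mu_t(\Omega)\leq\mu_0(\Omega)$ is immediate from~\eqref{eq:3.1} at the approximate level by passing to the limit (compactness of $\Omega$ makes the constant function $1$ an admissible test, and $\mu_0^{\varepsilon_i}(\Omega)\to\mathscr{H}^{d-1}(M_0)$ by Proposition~\ref{prop2.2}(3)). The delicate point is the barrier construction: ensuring the initial ordering holds uniformly in $i$ and that the confinement time $T_3$ depends only on $d,r$, which both rest on the fixed $1/8$ buffer from $U_0$ to the barrier planes together with the $L^2$-bound of Lemma~\ref{lem3.2}.
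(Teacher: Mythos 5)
Your proof follows the paper's own proof almost step for step: planar barriers from Lemma~\ref{lem5.2} to confine $U_t$, the isoperimetric inequality together with Theorem~\ref{mainthm1}(b3)--(b4) and \eqref{iso} for the upper bound $\mu_0(\Omega)-\mu_t(\Omega)\le\delta_1$, and the energy decay \eqref{eq:3.1} together with Proposition~\ref{prop2.2}(3) for the lower bound $\mu_t(\Omega)\le\mu_0(\Omega)$, with the observation that $\omega$ and $D_1'$ (hence the constant of Lemma~\ref{lem3.2}) depend only on $d$ and $r$. The one difference is the barrier placement: the paper puts a single trough per coordinate at the fundamental-domain face (the set $|x_j|\le\delta$ in $\mathbb{T}^d$), which confines $U_t$ to one box compactly contained in $(0,1)^d$, so the isoperimetric inequality applies at once; your troughs at $x_j=1/8$ and $x_j=7/8$ split $\mathbb{T}^d$ into $2^d$ boxes and necessitate the extra subadditivity step you include. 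That step is correct, but the paper's simpler placement makes it unnecessary. One caution about the ``delicate point'' you flag: the initial ordering $\tilde\phi^{\varepsilon_i,\delta}(\cdot,0)\ge\varphi_0^{\varepsilon_i}$ is \emph{not} a consequence of Proposition~\ref{prop2.2}. Since $q^{\varepsilon_i}$ is increasing, the ordering is equivalent to $g^\delta-\gamma\ge\overline{r_i}$ pointwise; on the trough the left side is about $-\gamma$, while the construction preceding Proposition~\ref{prop2.2} caps $\overline{r_i}$ at $-\tfrac23 b_i$, so the inequality needs $b_i\ge\tfrac32\gamma$, and Proposition~\ref{prop2.2} controls neither $b_i$ nor the pointwise values of $\varphi_0^{\varepsilon_i}$ away from $M_0^i$. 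The paper's proof of Lemma~\ref{lem5.3} also invokes Lemma~\ref{lem5.2} without verifying this hypothesis, so the issue is shared; but it has to be arranged in the construction of $\varphi_0^{\varepsilon_i}$ (e.g.\ by letting $\overline{r_i}$ drop well below $-\gamma$ away from $M_0^i$, which the buffer $U_0\subset\subset(\tfrac14,\tfrac34)^d$ permits), not deduced from Proposition~\ref{prop2.2}.
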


\begin{proof}
First we claim that there exists $T_3 >0$ depending only on $d$ and $r$ such that
\begin{equation}\label{eq:5.11}
U_t = \{ x \in (0,1)^d \mid \psi (x,t)=1 \} \subset \left( \frac{1}{10}, \frac{9}{10}\right)^d
\end{equation}
for any $ t \in[0,T_3)$, where 
$\psi = \lim _{i\to \infty} \psi ^{\varepsilon _i} = \lim _{i\to \infty}  \frac12 ( \varphi ^{\varepsilon _i} +1)$.
Let $\tilde \phi ^{\varepsilon ,\delta}$ be a function given by Lemma \ref{lem5.2}
with $\gamma= \frac{1}{10}$ and $\delta = \frac{1}{20}$. 
By \eqref{eq:5.6} and \eqref{eq:5.7}, one can check that
there exists $T_3=T_3 (\Cr{const:3.1}(\omega,d ,D' _1)) >0$
such that
$\lim _{ i\to \infty }\varphi^{\varepsilon _i} (x,t) = -1 $
on $\{ x \in \R^d \mid \vert x_1 \vert \leq \frac{1}{10} \}$
for any $t \in [0,T_3)$.
Note that $\omega$ and $D' _1$ depend only on $r$ by  
$\mathscr{L}^d (U_0) =\mathscr{L}^d (B_{r} (0))$ and
$\mathscr{H}^{d-1} (M_0) \leq 2 \mathscr{H}^{d-1} (\partial B_{r} (0))$.
Hence $T_3$ depends only on $d$ and $r$.
Therefore $U_t \cap \{ x \in \R^d \mid \vert x_1 \vert \leq \frac{1}{10} \} =\emptyset$ for any $t \in [0,T_3)$.
Similarly we have \eqref{eq:5.11}.
Thus $\partial ^\ast (U_t \cap (0,1)^d) = \partial ^\ast U_t$ for any $t \in [0,T_3)$.
Hence, by using the isoperimetric inequality for Caccioppoli sets (see \cite{MR2147710,MR1242977}), 
and (b3) and (b4) of Theorem \ref{mainthm1}, we have
\begin{equation}\label{iso2}
d \omega _d ^{\frac{1}{d}} (\mathscr{L}^d (U_0))^{\frac{d-1}{d}}=
d \omega _d ^{\frac{1}{d}} (\mathscr{L}^d (U_t))^{\frac{d-1}{d}} \leq \mathscr{H}^{d-1} (\partial ^\ast U_t) 
\leq \mu _t (\Omega) \qquad \text{for any} \ t \in [0,T_3).
\end{equation}
By \eqref{iso} and \eqref{iso2}, we obtain \eqref{eq:5.10}.
\end{proof}

\bigskip

Finally we prove Theorem \ref{mainthm2}.
\begin{proof}[Proof of Theorem \ref{mainthm2}]
%By the monotonicity formula \eqref{eq:3.20} and $D_1 <2$, 
%we have (a) (see \cite[Proposition 8.5]{takasao-tonegawa} or \cite[p.\,6366]{MR4335627}).
First we show (a). 
From \eqref{eq:3.5.additional}, 
\begin{equation*}
\int_ {0} ^{T} \vert \lambda ^\varepsilon (t) \vert^2 \, dt 
\leq \Cr{const:3.1-2} (\mu _0 ^\varepsilon (\Omega) -\mu _{T} ^\varepsilon (\Omega) + T )
\end{equation*}
for any $T>0$ and for any $\varepsilon \in (0,\epsilon_1)$. By this and \eqref{eq:5.10},
we can choose $\delta_1 = \delta _1 (\Cr{const:3.1-2}(\omega,d ,D' _1)) >0$ so that 
\begin{equation}\label{eq:5.12}
\limsup _{i\to \infty} e^{\frac12 \int_0 ^{T_4} \vert \lambda ^{\varepsilon_i} \vert ^2 \, dt} 
\leq \limsup _{i \to \infty} e^{ \frac12 \Cr{const:3.1-2} \delta_1 } e^{\frac12 \Cr{const:3.1-2} T_4}
\leq \frac{10}{9},
\end{equation}
where $T_4 = T_4 (d,r) = \min \{ T_3,  \frac{2}{ \Cr{const:3.1-2} } \log \frac{100}{99} \} >0$
and $\delta_1$ also depends only on $d$ and $r$ since
$\mathscr{L}^d (U_0) =\mathscr{L}^d (B_{r} (0))$ and
$\mathscr{H}^{d-1} (M_0) \leq 2 \mathscr{H}^{d-1} (\partial B_{r} (0))$.
Then \eqref{eq:3.20} and \eqref{eq:5.12} imply
\begin{equation}\label{eq:5.13}
\int_{\mathbb{R}^d} \rho _{(y,s)} (x,t) \, d\mu _t  (x) 
\leq  
\frac{10}{9} \int_{\mathbb{R}^d} \rho _{(y,s)} (x,0) \, d\mu _0  (x) 
\end{equation}
for any $y \in \R^d$, $t\in [0,T_4)$, and $s>0$ with $0\leq t <s$.
Recall that $\rho _{(y,s)} (x,0)$ converges to $(d-1)$-dimensional 
delta function at $y$ as $s\downarrow 0$.
Therefore, since $M_0$ is $C^1$, we may assume that there exists $s_0>0$ depending only on $M_0$
such that
\begin{equation}\label{eq:5.4}
\int _{\R^d} \rho _{(y,s)} (x,0) \, d \mu _0
\leq \frac32
\end{equation}
for any $y\in \R^d$ and $s \in (0,s_0)$. Set $T_1= T_1 (d,r,M_0) := \min \{ T_4 , s_0 \}$.
Let $t_0 \in (0,T_1)$ be a number such that $\mu _{t_0}$ is integral.
Then there exist a countably $(d-1)$-rectifiable set $M_{t_0}$ and 
$\theta_{t_0} \in L ^1 _{loc} (\mathscr{H}^{d-1} \lfloor_{M_{t_0}})$ such that 
$\mu _{t_0} = \theta _{t_0} \mathscr{H}^{d-1} \lfloor_{M_{t_0}}$.
Assume that there exist $x_0 \in M_{t_0}$ and $N\geq 2$ such that
$M_{t_0}$ has an approximate tangent space at $x_0$ and
$\displaystyle \lim _{r\to 0} \frac{ \mu _{t_0} (B_r (x_0)) }{\omega _{d-1} r^{d-1}} =\theta _{t_0} (x_0) =N$.
Set $r=\sqrt{2(s-t_0)}$ for $t_0<s <T_1$.
Using the same calculation as \eqref{eq:7.15}, for any $\delta \in (0,1)$, we obtain
\begin{equation*}
\begin{split}
& \int _{\R^d} \rho _{(x_0,s)} (x,t) \, d\mu_{t_0}
\geq 
\frac{1}{(\sqrt{2\pi} r)^{d-1}} \int _\delta ^1 \mu_{t_0} (B_{\sqrt{2r^2 \log \frac{1}{k} }} (x_0)) \, dk \\
\to & \,
\frac{N \omega_{d-1}}{\pi ^{\frac{d-1}{2}}} 
\int _\delta ^1 \left(  \log \frac{1}{k} \right)^{\frac{d-1}{2}} \, dk
\qquad \text{as} \ r\to 0 \ (s\downarrow t_0).
\end{split}
\end{equation*}
By this and 
$
\int _0 ^1 \left( \log \frac{1}{k} \right)^{\frac{d-1}{2}} \, dk
=\Gamma (\frac{d-1}{2} +1)
=\pi ^{\frac{d-1}{2}} /\omega_{d-1}
$, we have
\begin{equation*}\label{eq:5.8}
\begin{split}
\lim_{s\downarrow t_0} \int _{\R^d} \rho _{(x_0,s)} (x,t) \, d\mu_{t_0} =N.
\end{split}
\end{equation*}
Then we have a contradiction by \eqref{eq:5.13} and \eqref{eq:5.4}.
Hence we obtain (a).

The claim (b1) and (b2) are clear and (b3) is also obvious by Remark \ref{rem2.6} and 
$\mu_t= \|\nabla \psi (\cdot,t)\|$ for a.e. $t \in (0,T_1)$. By \eqref{eq:5.1}, we have (b4).

Next  we prove (b5).
By \eqref{eq:5.3}, for any $\phi \in C_c ^1 ((0,T))$, we compute
\begin{equation*}
\begin{split}
\int _0 ^{T_1} \phi
\int _{\Omega} \vec v \cdot \vec \nu \, d \| \nabla \psi (\cdot ,t) \| dt
=\int _0 ^{T_1} \phi _t \int _{\Omega} \psi \, dx dt
=0,
\end{split}
\end{equation*}
where we used (b3) of Theorem \ref{mainthm1}.
Thus $\int _{\Omega} \vec v \cdot \vec \nu \, d \| \nabla \psi (\cdot ,t) \|=0$ for a.e. $t \in (0,T_1)$.

Now we prove (b6). Set $d\nu:=d\mathscr{H}^{d-1} \lfloor_{\partial ^\ast U_t} dt$.
Since the space $C_c (\Omega)$ is dense in $L^2 (\nu)$, for any $\eta \in C_c ((0,T_1))$ we have
\begin{equation*}
\begin{split}
0 = & \, \int _0 ^{T_1} \int _{\partial ^\ast U_t} \{ \vec v -\vec h +\lambda \vec \nu \}
\cdot \vec \nu \eta \, d \mathscr{H}^{d-1} dt \\
= & \, \int _0 ^{T_1}
\left( - \int _{\partial ^\ast U_t} \vec h\cdot \vec \nu \, d \mathscr{H}^{d-1} 
+ \lambda  \mathscr{H}^{d-1} (\partial ^\ast U_t)  \right) \eta \, dt,
\end{split}
\end{equation*}
where we used (b3) and (b5). Hence we obtain (b6).
\end{proof}

\section{Appendix}

\begin{proposition}[See Lemma 3.24 in \cite{MR1857292}]\label{prop7.2}
Let $U \subset \R^d$ be an open set. 
Assume that $u \in BV(U)$ and $K \subset U$ is a compact set. Then
\[
\int _K \vert u\ast \eta_\delta -u \vert \, dx \leq \delta \| \nabla u \| (U)
\]
for all $\delta \in (0, \dist (K,\partial U))$, where $\eta _\delta$ is the standard mollifier defined in Section 3.
\end{proposition}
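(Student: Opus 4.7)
The plan is to prove the estimate first for smooth $u$ via the fundamental theorem of calculus, and then extend to general BV functions by a standard approximation argument.

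First, suppose $u \in C^1(U) \cap BV(U)$. Fix $\delta \in (0,\dist(K,\partial U))$. Using $\int_{B_\delta(0)} \eta_\delta(y)\, dy = 1$, I would write
\[
u \ast \eta_\delta(x) - u(x) = \int_{B_\delta(0)} \eta_\delta(y)\bigl(u(x-y)-u(x)\bigr)\, dy,
\]
and apply the fundamental theorem of calculus in the form $u(x-y)-u(x)= -\int_0^1 \nabla u(x-sy)\cdot y\, ds$. Taking absolute values, integrating over $K$, and using Fubini's theorem gives
\[
\int_K |u\ast \eta_\delta - u|\, dx \leq \int_0^1\int_{B_\delta(0)} \eta_\delta(y)|y| \left(\int_K |\nabla u(x-sy)|\, dx\right)dy\, ds.
\]
Because $\spt \eta_\delta \subset B_\delta(0)$ and $\delta<\dist(K,\partial U)$, the translate $K-sy$ is contained in $U$ for every $s\in[0,1]$ and $y\in B_\delta(0)$, so the innermost integral is bounded by $\int_U|\nabla u|\, dx = \|\nabla u\|(U)$. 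Since $|y|\leq \delta$ on the support of $\eta_\delta$ and $\eta_\delta$ integrates to one, this yields the desired bound $\delta \|\nabla u\|(U)$.

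For the general case $u\in BV(U)$, I would invoke the Meyers--Serrin-type approximation for BV (see, e.g., \cite{MR1857292}): there exists $\{u_n\}\subset C^\infty(U)\cap BV(U)$ with $u_n\to u$ in $L^1(U)$ and $\int_U|\nabla u_n|\, dx \to \|\nabla u\|(U)$. Applying the smooth case to each $u_n$,
\[
\int_K |u_n\ast \eta_\delta - u_n|\, dx \leq \delta \int_U |\nabla u_n|\, dx.
\]
The $L^1(U)$-convergence of $u_n$ to $u$ implies $u_n\ast \eta_\delta \to u\ast \eta_\delta$ in $L^1(K)$ (by Young's inequality applied on a neighborhood of $K$), and also $u_n\to u$ in $L^1(K)$. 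Passing to the limit on both sides gives the claim.

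There is no real obstacle here: the content is essentially the fundamental theorem of calculus plus Fubini, combined with a standard approximation. The only minor point to check carefully is that $\delta<\dist(K,\partial U)$ guarantees $K-sy\subset U$ for all relevant $s,y$, which is what allows the $|\nabla u|$-integrals to be dominated by $\|\nabla u\|(U)$ uniformly.
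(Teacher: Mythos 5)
Your proof is correct. The paper itself gives no proof of this proposition, citing Lemma~3.24 of Ambrosio--Fusco--Pallara \cite{MR1857292}; your argument (fundamental theorem of calculus and Fubini for smooth $u$, then the Anzellotti--Giaquinta approximation in $L^1$ with convergence of total variation for general $u\in BV(U)$, with the observation that $\delta<\dist(K,\partial U)$ keeps the relevant translates of $K$ inside $U$) is exactly the standard argument underlying that reference.
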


%\begin{proof}
%Set $\tilde \varphi ^\varepsilon (\tilde x,\tilde t) = \varphi ^\varepsilon (\varepsilon \tilde x, \varepsilon ^2 \tilde t)$. Then
%we obtain \eqref{eq:1.6} and $\sup _{x \in \Omega} |\nabla_{\tilde x} ^i \tilde \varphi ^\varepsilon (\tilde x, 0)  | \leq \Cr{const:initial}$ for any $i=1,2,3$.
%By LSU, for the solution to $v_t =\Delta v +f$, we have
%\[
%\| v_t \|_{L^p (B_1 \times [0,1])} + \| \nabla ^2 v \|_{L^p (\Omega \times [0,1])}
%\leq C(d,p) (\|f, \nabla v, v\| _{L^p(B_2 \times [0,1])} + \| v(\cdot,0) \| _{W^2 (B_2)})
%\]
%for any $p \in(1,\infty)$. By this inequality we have
%
%
%\end{proof}

\bigskip

As in Lemma \ref{lem3.2}, we can obtain the following estimate for the classical solution to the volume preserving mean curvature flow.
\begin{proposition}\label{prop7.3}
Let $\Omega = \T^d$ and $U_t \subset \Omega$ be a bounded open set with smooth boundary $M_t$
for any $t\ \in [0,T)$ and $0 < \mathscr{L}^d (U_0) < \mathscr{L}^d(\Omega)$. Assume that $\{M_t \} _{t\in [0,T)}$ is the volume preserving mean curvature
flow. Then there exists $C_\lambda >0$ depending only on $d$, 
$\mathscr{L}^d(U_0)$, and $\mathscr{H}^{d-1}(M_0)$ such that
\begin{equation}
\int _0 ^s \vert \lambda (t) \vert ^2 \, dt \leq C_\lambda (1+s),\qquad s\in (0,T) ,
\label{eq:7.5}
\end{equation}
where $\lambda (t) =\frac{1}{\mathscr{H}^{d-1} (M_t)} \int _{M_t} \vec{h} \cdot \vec{\nu} \, d \mathscr{H}^{d-1}$.
\end{proposition}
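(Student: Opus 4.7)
The plan is to mirror Lemma \ref{lem3.2}: via an integration-by-parts identity against a Poisson-type test vector field, I will derive a pointwise-in-time estimate of the form $|\lambda(t)|^2 \le C(1 + \int_{M_t}|\vec v|^2\,d\mathscr{H}^{d-1})$, and then integrate using the perimeter dissipation already built into \eqref{vpproperty}.

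First, for any smooth periodic vector field $\vec\zeta$ on $\Omega$, I combine the velocity relation $\vec v = \vec h - \lambda\vec\nu$ on $M_t$ with the first variation formula $\int_{M_t}\div_{M_t}\vec\zeta\,d\mathscr{H}^{d-1} = -\int_{M_t}\vec h\cdot\vec\zeta\,d\mathscr{H}^{d-1}$ and the divergence theorem $\int_{U_t}\div\vec\zeta\,dx = -\int_{M_t}\vec\nu\cdot\vec\zeta\,d\mathscr{H}^{d-1}$ (the sign reflecting that $\vec\nu$ is the inner normal) to obtain the key identity
\begin{equation*}
\lambda(t)\int_{U_t}\div\vec\zeta\,dx \;=\; \int_{M_t}\vec v\cdot\vec\zeta\,d\mathscr{H}^{d-1} \;+\; \int_{M_t}\div_{M_t}\vec\zeta\,d\mathscr{H}^{d-1}.
\end{equation*}

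I then choose $\vec\zeta_t$ so that the coefficient $\int_{U_t}\div\vec\zeta_t\,dx$ is uniformly bounded away from zero. Fix $\delta>0$ small, let $u_t\in C^\infty(\Omega)$ be the zero-mean periodic solution of $-\Delta u_t = \chi_{U_t}\ast\eta_\delta - \mathscr{L}^d(U_0)$ (the zero-mean solvability condition holds by volume preservation), and set $\vec\zeta_t := \nabla u_t$. Schauder estimates, together with $\|\chi_{U_t}\ast\eta_\delta\|_{L^\infty}\le 1$, give $\|u_t\|_{C^2(\Omega)}\le C(\delta,d)$ uniformly in $t$. A short calculation yields $\int_{U_t}\div\vec\zeta_t\,dx = -\int_{U_t}\chi_{U_t}\ast\eta_\delta\,dx + (\mathscr{L}^d(U_0))^2$, and Proposition \ref{prop7.2} applied to $\chi_{U_t}$ (whose total variation equals $\mathscr{H}^{d-1}(M_t)\le \mathscr{H}^{d-1}(M_0)$ by \eqref{vpproperty}) bounds the mollification error by $\delta\,\mathscr{H}^{d-1}(M_0)$. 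Choosing $\delta$ depending only on $\mathscr{L}^d(U_0)$ and $\mathscr{H}^{d-1}(M_0)$ forces $|\int_{U_t}\div\vec\zeta_t\,dx|\ge c_1 := \tfrac12\mathscr{L}^d(U_0)(1-\mathscr{L}^d(U_0))>0$.

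Inserting $\vec\zeta_t$ into the identity and applying Cauchy--Schwarz together with $\mathscr{H}^{d-1}(M_t)\le \mathscr{H}^{d-1}(M_0)$ gives a constant $C$ depending only on $d$, $\mathscr{L}^d(U_0)$, and $\mathscr{H}^{d-1}(M_0)$ such that $|\lambda(t)|^2 \le C\bigl(1 + \int_{M_t}|\vec v|^2\,d\mathscr{H}^{d-1}\bigr)$. Integrating on $[0,s]$ and invoking the dissipation identity $\int_0^s\int_{M_t}|\vec v|^2\,d\mathscr{H}^{d-1}\,dt = \mathscr{H}^{d-1}(M_0)-\mathscr{H}^{d-1}(M_s)\le \mathscr{H}^{d-1}(M_0)$ then yields \eqref{eq:7.5}. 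The main obstacle is the uniform lower bound $|\int_{U_t}\div\vec\zeta_t\,dx|\ge c_1 > 0$: this is precisely where the hypothesis $0<\mathscr{L}^d(U_0)<\mathscr{L}^d(\Omega)$ enters (so that $c_1>0$), and where the $BV$ bound on $\chi_{U_t}$ supplied by the monotonically decreasing perimeter is needed to control the mollification error uniformly in $t$, in direct analogy with the role of \eqref{omega} and the mollification in Lemma \ref{lem3.2}.
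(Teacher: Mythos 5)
Your proposal is correct and follows essentially the same route as the paper's proof: the same first-variation identity, the same choice of test field $\vec\zeta_t=\nabla u_t$ with $-\Delta u_t$ equal to the mollified characteristic function minus its mean, the same use of Proposition \ref{prop7.2} and a $\delta$ depending only on $\mathscr{L}^d(U_0)$ and $\mathscr{H}^{d-1}(M_0)$ to bound $\left|\int_{U_t}\operatorname{div}\vec\zeta_t\right|$ away from zero, and the same perimeter-dissipation bound to conclude. (One cosmetic imprecision: the Schauder step really uses the $C^{0,\beta}$ control of $\chi_{U_t}\ast\eta_\delta$ furnished by $\|\nabla(\chi_{U_t}\ast\eta_\delta)\|_{L^\infty}\lesssim\delta^{-1}$, not merely the $L^\infty$ bound; the paper makes the same point by citing a $C^1$ bound of order $1+\delta^{-1}$.)
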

\begin{proof}
Let $\vec{\zeta} :\Omega \times [0,\infty) \to \R ^d$ be a smooth periodic function.
By \eqref{vpmcf}, \eqref{vpproperty}, the divergence theorem, and the property of the mean curvature, we have
\begin{equation}
\frac{d}{dt} \mathscr{H}^{d-1} (M_t) 
=
- \int _{M_t} \vec{h} \cdot \vec{v} \, d \mathscr{H}^{d-1} 
= 
- \int _{M_t} \vert \vec{v} \vert ^2 \, d \mathscr{H}^{d-1} \leq 0
\label{eq:7.6}
\end{equation}
and
\begin{equation}
\begin{split}
\int _{M_t} \vec{v} \cdot \vec{\zeta} \, d \mathscr{H}^{d-1} 
= & \, 
\int _{M_t}\vec{h} \cdot \vec{\zeta} \, d \mathscr{H}^{d-1} 
-
\lambda \int _{M_t} \vec{\nu} \cdot \vec{\zeta} \, d \mathscr{H}^{d-1} \\
= & \,
- \int _{M_t} \div \!_{M_t} \vec{\zeta} \, d \mathscr{H}^{d-1} 
+
\lambda \int _{U_t} \div \vec{\zeta} \, dx.
\end{split}
\label{eq:7.7}
\end{equation}
By \eqref{eq:7.6} and \eqref{eq:7.7}, we obtain
\begin{equation}
\begin{split}
\vert \lambda  \vert \left\vert \int _{U_t} \div \vec{\zeta} \, dx \right\vert
\leq & \, 
\| \vec{\zeta} (\cdot,t) \|_{C^1} 
\left( \mathscr{H}^{d-1} (M_t) + \int _{M_t} \vert \vec{v} \vert \, d\mathscr{H}^{d-1} \right) \\
\leq & \,
\| \vec{\zeta} (\cdot,t) \|_{C^1} 
\left( \mathscr{H}^{d-1} (M_0) + \int _{M_t} \vert \vec{v} \vert \, d\mathscr{H}^{d-1} \right).
\end{split}
\label{eq:7.8}
\end{equation}
Let $\alpha, \delta \in (0,1)$ and $u=u(x,t)$ be a periodic solution to
\begin{equation*}
\left\{ 
\begin{array}{ll}
-\Delta u &=\chi _{U_t} \ast \eta_\delta - \dashint _\Omega (\chi _{U_t} \ast \eta_\delta) \qquad \text{in} \ \Omega, \\
\int _\Omega u \, dx &=0.
\end{array} \right.
\end{equation*}
Then the standard PDE arguments imply the existence and uniqueness of the solution $u$ and
\[
\| u(\cdot, t) \|_{C^{2,\alpha} (\Omega)} \leq C_\delta , \qquad t \in [ 0, T),
\] 
where $C_\delta >0$ depends only on $d$ and $\delta$. % (see Proposition \ref{prop7.1}).
Set $\vec{\zeta} (x,t) = \nabla u(x,t)$. We compute that
\begin{equation*}
\begin{split}
&-\int _{U_t} \div \vec{\zeta} \, dx = \int _{U_t} (-\Delta u) \, dx \\
=& \,
\int _{U _t}  \left( \chi _{U_t} \ast \eta_\delta - \dashint _\Omega (\chi _{U_t} \ast \eta_\delta) \right) \, dx \\
=& \,
\int _{\Omega} \chi _{U_t} (\chi _{U_t} \ast \eta_\delta) \,dx - \frac{\mathscr{L}^d (U_t)}{\mathscr{L}^d (\Omega)} \int _\Omega (\chi _{U_t} \ast \eta_\delta)  \, dx \\
=& \,
\int _{\Omega} \chi _{U_t} (\chi _{U_t} \ast \eta_\delta -\chi _{U_t}) \,dx + \int _{\Omega} \chi _{U_t} ^2 \, dx\\
& \, - \frac{\mathscr{L}^d (U_t)}{\mathscr{L}^d (\Omega)} \int _\Omega (\chi _{U_t} \ast \eta_\delta -\chi _{U_t})  \, dx 
-\frac{\mathscr{L}^d (U_t)}{\mathscr{L}^d (\Omega)} \int _\Omega \chi _{U_t} \, dx \\
\geq & \, -C\delta \mathscr{H}^{d-1} (M_t) + \mathscr{L}^d (U_t) \left( 1- \frac{\mathscr{L}^d (U_t)}{\mathscr{L}^d (\Omega)} \right) \\
\geq & \, -C\delta \mathscr{H}^{d-1} (M_0) + \mathscr{L}^d (U_0) \left( 1- \frac{\mathscr{L}^d (U_0)}{\mathscr{L}^d (\Omega)} \right),
\end{split}
\end{equation*}
where we used Proposition \ref{prop7.2}, $\|\nabla \chi _{U_t}\| (\Omega)= \mathscr{H}^{d-1} (M_t)$, 
\eqref{eq:7.6}, and the volume preserving property. 
We choose $\delta >0$ such that
\[
-\int _{U_t} \div \vec{\zeta} \, dx 
\geq
\frac12 \mathscr{L}^d (U_0) \left( 1- \frac{\mathscr{L}^d (U_0)}{\mathscr{L}^d (\Omega)} \right) >0.
\]
By this and \eqref{eq:7.8}, 
\begin{equation}
\vert \lambda  \vert
\leq
\frac{C_\delta }{\omega'}
\left( \mathscr{H}^{d-1} (M_0) + \int _{M_t} \vert \vec{v}\vert \, d\mathscr{H}^{d-1} \right),
\label{eq:7.9}
\end{equation}
where $\omega' = \frac12 \mathscr{L}^d (U_0) \left( 1- \frac{\mathscr{L}^d (U_0)}{\mathscr{L}^d (\Omega)} \right)$.
The equality \eqref{eq:7.6} implies 
\begin{equation}
\int _0 ^s \int _{M_t} \vert \vec{v} \vert ^2 \, d \mathscr{H}^{d-1} \, dt \leq \mathscr{H}^{d-1} (M_0), \qquad s \in [0,T).
\label{eq:7.10}
\end{equation}
Therefore we obtain \eqref{eq:7.5} by \eqref{eq:7.9} and \eqref{eq:7.10}.

\end{proof}

Next we show some properties of the backward heat kernel.
\begin{lemma}[See\cite{ilmanen1993}]\label{estilmanen}
Let $D>0$ and $\nu$ be a Radon measure on $\R^d$ satisfying 
\begin{equation}\label{eq:7.12}
\sup _{R>0 ,x \in \mathbb{R}^d} \frac{ \nu (B_R(x)) }{ \omega _{d-1} R^{d-1} } \leq D.
\end{equation}
Then the following hold:
\begin{enumerate}
\item For any $a>0$ there exists $\gamma_1 =\gamma_1 (a)>0$ such that for any $r>0$ and 
for any $x,x_1 \in \mathbb{R}^d$ with $\vert x-x_1 \vert \leq \gamma_1 r$, we have the estimate
\begin{equation}
\int_{\mathbb{R}^d} \rho_{x_1 } ^r (y) \,d\nu (y) \leq \int _{\mathbb{R}^d} \rho_{x} ^r \,d\nu (y) +aD,
\label{rhoest1}
\end{equation} 
where $\rho _x ^r$ is given by \eqref{defrhor}. 
\item For any $r,R>0$ and for any $x\in \mathbb{R}^d$, we have
\begin{equation}
\int_{\mathbb{R}^d \setminus B_R(x)} \rho^r _x (y) \,d\nu (y) \leq 2^{d-1} e^{-3R^2 /8r^2}D.
\label{rhoest2}
\end{equation} 
%\item For any $a>0$ there exists $\gamma_2 =\gamma_2 (a)>0$ such that for any $r,R>0$ with $1\leq \frac{R}{r} \leq 1+\gamma _2$ and any $x\in \mathbb{R}^d$, 
%\begin{equation}
%\int_{\mathbb{R}^d} \rho_{x}^R (y) \,d\nu (y) \leq (1+a) \int _{\mathbb{R}^d} \rho_x ^r (y) \,d\nu (y) +aD.
%\label{rhoest3}
%\end{equation} 
%holds.
\end{enumerate}
\begin{proof}
We only show \eqref{rhoest1} here (the estimate \eqref{rhoest2} can be shown more easily).
For $\beta \in (0,1)$, we have
\begin{equation}\label{eq:7.15}
\begin{split}
& \int_{\mathbb{R}^d} \rho_{x_1 } ^r (y) \,d\nu (y)
= 
\frac{1}{(\sqrt{2\pi} r)^{d-1}} 
\int_{\mathbb{R}^d} e^{- \frac{\vert x_1 -y \vert^2 }{2r^2}} \,d\nu (y) \\
= & \,
\frac{1}{(\sqrt{2\pi} r)^{d-1}} 
\int_0 ^1\nu (\{ y \mid e^{- \frac{\vert x_1 -y \vert^2 }{2r^2}} >k \} ) \, dk
= 
\frac{1}{(\sqrt{2\pi} r)^{d-1}} 
\int_0 ^1 \nu ( B_{\sqrt{2r^2 \log \frac{1}{k}}} (x_1) ) \, dk \\
\leq & \,
\omega ^{d-1} D 
\int_0 ^\beta \left( \log \frac{1}{k} \right)^{\frac{d-1}{2}}  \, dk
+
\frac{1}{(\sqrt{2\pi} r)^{d-1}} 
\int_\beta ^1 \nu ( B_{ r \sqrt{2 \log \frac{1}{k}}} (x_1) ) \, dk,
\end{split}
\end{equation}
where we used \eqref{eq:7.12}.
By
$
\int _0 ^1 \left( \log \frac{1}{k} \right)^{\frac{d-1}{2}} \, dk
=\Gamma (\frac{d-1}{2} +1)
=\pi ^{\frac{d-1}{2}} /\omega_{d-1}
$, we have
\begin{equation}\label{eq:7.16}
\int_0 ^\beta \left( \log \frac{1}{k} \right)^{\frac{d-1}{2}}  \, dk
\to 0 \qquad 
\text{as}
\qquad 
\beta \to 0.
\end{equation}
We choose $\gamma _1 >0$ depending only on $\beta$ such that
\[
\sqrt{2 \log \frac{1}{k}} + \gamma_1 
\leq 
\sqrt{2 \log \frac{1}{k-\beta}}
\qquad 
\text{for any} \quad k \in (\beta ,1].
\]
For any $x \in \R ^d$ with $\vert x-x_1 \vert \leq \gamma_1 r$, we have
$
B_{r \sqrt{2 \log \frac{1}{k}}} (x_1) 
\subset 
B_{(\sqrt{2 \log \frac{1}{k}} + \gamma_1) r} (x)
\subset
B_{r \sqrt{2 \log \frac{1}{k-\beta}}} (x)
$ for $k \in (\beta,1]$.
Therefore 
\begin{equation}\label{eq:7.17}
\begin{split}
& \frac{1}{(\sqrt{2\pi} r)^{d-1}} \int_\beta ^1 \nu ( B_{ r \sqrt{2 \log \frac{1}{k}}} (x_1) ) \, dk
\leq
\frac{1}{(\sqrt{2\pi} r)^{d-1}} \int_\beta ^1 \nu ( B_{ r \sqrt{2 \log \frac{1}{k-\beta}}} (x) ) \, dk\\
\leq & \,
\frac{1}{(\sqrt{2\pi} r)^{d-1}} \int_0 ^1 \nu ( B_{ r \sqrt{2 \log \frac{1}{k'}}} (x) ) \, dk'
=
\int_{\mathbb{R}^d} \rho_{x } ^r (y) \,d\nu (y).
\end{split}
\end{equation}
Hence \eqref{eq:7.15}, \eqref{eq:7.16}, and \eqref{eq:7.17} imply \eqref{rhoest1}.
\end{proof}
\end{lemma}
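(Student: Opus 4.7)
The plan is to rewrite both Gaussian integrals using the layer cake formula and then exploit the density bound \eqref{eq:7.12} through elementary one-dimensional estimates. Since $\rho_x^r(y) = (\sqrt{2\pi}r)^{-(d-1)} e^{-|x-y|^2/(2r^2)}$ and $\{y : \rho_{x_1}^r(y) > k/(\sqrt{2\pi}r)^{d-1}\} = B_{r\sqrt{2\log(1/k)}}(x_1)$ for $k \in (0,1)$, the substitution $k = t(\sqrt{2\pi}r)^{d-1}$ in $\int \rho_{x_1}^r\,d\nu = \int_0^\infty \nu(\{\rho_{x_1}^r > t\})\,dt$ gives the fundamental identity
\begin{equation*}
\int_{\R^d} \rho_{x_1}^r(y)\,d\nu(y) = \frac{1}{(\sqrt{2\pi}r)^{d-1}} \int_0^1 \nu\bigl(B_{r\sqrt{2\log(1/k)}}(x_1)\bigr)\,dk.
\end{equation*}

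For part (1), I would split this integral at some small parameter $\beta = \beta(a) > 0$. On the interval $(0,\beta)$, the density bound \eqref{eq:7.12} gives
\begin{equation*}
\frac{1}{(\sqrt{2\pi}r)^{d-1}} \int_0^\beta \nu(B_{r\sqrt{2\log(1/k)}}(x_1))\,dk
\leq \frac{\omega_{d-1} D}{\pi^{(d-1)/2}} \int_0^\beta (\log(1/k))^{(d-1)/2}\,dk,
\end{equation*}
which can be made $\leq aD$ by choosing $\beta$ small, using the convergence of the full integral $\int_0^1 (\log(1/k))^{(d-1)/2}\,dk = \Gamma(\tfrac{d+1}{2})$. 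On the complementary interval $(\beta,1)$ I would choose $\gamma_1 = \gamma_1(\beta)$ such that $\sqrt{2\log(1/k)} + \gamma_1 \leq \sqrt{2\log(1/(k-\beta))}$ for every $k \in (\beta, 1]$; this is possible because the right-hand side blows up as $k \downarrow \beta$. For $|x-x_1| \leq \gamma_1 r$ this yields the ball inclusion $B_{r\sqrt{2\log(1/k)}}(x_1) \subset B_{r\sqrt{2\log(1/(k-\beta))}}(x)$, and a change of variables $k' = k - \beta$ bounds the second piece by $\int \rho_x^r\,d\nu$.

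For part (2), the same layer cake representation restricted to $\R^d \setminus B_R(x)$ gives
\begin{equation*}
\int_{\R^d \setminus B_R(x)} \rho_x^r(y)\,d\nu(y)
= \frac{1}{(\sqrt{2\pi}r)^{d-1}} \int_0^{e^{-R^2/(2r^2)}} \nu\bigl(B_{r\sqrt{2\log(1/k)}}(x) \setminus B_R(x)\bigr)\,dk,
\end{equation*}
since the sublevel set $\{\rho_x^r > k/(\sqrt{2\pi}r)^{d-1}\}$ first reaches outside $B_R(x)$ exactly when $r\sqrt{2\log(1/k)} > R$. Dropping the $B_R(x)$ and applying \eqref{eq:7.12}, the right-hand side is bounded by $(D\omega_{d-1}/\pi^{(d-1)/2}) \int_0^{e^{-R^2/(2r^2)}} (\log(1/k))^{(d-1)/2}\,dk$; the substitution $k = e^{-u}$ converts this to the incomplete gamma integral $\int_{R^2/(2r^2)}^\infty u^{(d-1)/2} e^{-u}\,du$. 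Using the elementary bound that $u^{(d-1)/2} e^{-u/4}$ is bounded on $[0,\infty)$, I can peel off a factor of $e^{-3u/4}$ and obtain a tail estimate of the form $C_d\, e^{-3R^2/(8r^2)}$.

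The main obstacle is recovering the sharp prefactor $2^{d-1}$ in \eqref{rhoest2}. The layer cake approach naturally produces some dimension-dependent constant, but this constant involves ratios of gamma functions and is not obviously $\leq 2^{d-1}$. To obtain precisely this bound, I would instead argue via a dyadic decomposition $\R^d \setminus B_R(x) = \bigsqcup_{j\geq 0} (B_{2^{j+1}R}(x) \setminus B_{2^j R}(x))$: on each annulus the Gaussian is bounded by $e^{-4^j R^2/(2r^2)}$ and the measure by $D\omega_{d-1}(2^{j+1}R)^{d-1}$, after which summing the resulting geometric-like series and factoring out $e^{-3R^2/(8r^2)}$ should produce the stated constant. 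Part (1) should present no real difficulty beyond bookkeeping.
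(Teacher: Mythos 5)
Your proof of part (1) is essentially identical to the paper's: the layer-cake identity, the split at $\beta$, the choice of $\gamma_1$ giving the ball inclusion $B_{r\sqrt{2\log(1/k)}}(x_1)\subset B_{r\sqrt{2\log(1/(k-\beta))}}(x)$, and the change of variables $k'=k-\beta$ are exactly the steps in the paper. No issues there.

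For part (2), which the paper does not prove, your two proposed routes both run into the same difficulty and neither actually recovers the stated prefactor. The layer-cake route with the bound $u^{(d-1)/2}e^{-u/4}\leq C$ yields a constant of the form $\tfrac{4}{3}\,\sup_u(u^{(d-1)/2}e^{-u/4})/\Gamma(\tfrac{d+1}{2})$, which you correctly observe is not obviously $\leq 2^{d-1}$. The dyadic decomposition does not repair this: writing $t=R/r$, your sum becomes
\[
\frac{D\,\omega_{d-1}}{(2\pi)^{(d-1)/2}}\,t^{d-1}\sum_{j\geq 0}2^{(j+1)(d-1)}e^{-4^j t^2/2}
=\frac{2^{d-1}D\,\omega_{d-1}}{(2\pi)^{(d-1)/2}}\sum_{j\geq 0}u_j^{d-1}e^{-u_j^2/2},\quad u_j=2^j t,
\]
a dyadic Riemann-type sum of $\tau^{d-1}e^{-\tau^2/2}$ that, after extracting $e^{-3t^2/8}$, still leaves a dimension-dependent constant involving $\omega_{d-1}/(2\pi)^{(d-1)/2}$ and a supremum over $t$; there is no reason this should collapse to $2^{d-1}$, and for large $d$ it does not. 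The intended ``easier'' argument avoids the layer-cake altogether: for $|x-y|\geq R$ write $e^{-|x-y|^2/2r^2}=e^{-3|x-y|^2/8r^2}\,e^{-|x-y|^2/8r^2}\leq e^{-3R^2/8r^2}\,e^{-|x-y|^2/2(2r)^2}$, so that
\[
\int_{\R^d\setminus B_R(x)}\rho_x^r\,d\nu
\leq e^{-3R^2/8r^2}\cdot\frac{1}{(\sqrt{2\pi}r)^{d-1}}\int_{\R^d}e^{-|x-y|^2/2(2r)^2}\,d\nu
= 2^{d-1}e^{-3R^2/8r^2}\int_{\R^d}\rho_x^{2r}\,d\nu,
\]
and then the computation you already carried out in part (1) (i.e.\ the full layer-cake identity together with $\int_0^1(\log\tfrac1k)^{(d-1)/2}\,dk=\pi^{(d-1)/2}/\omega_{d-1}$) gives $\int\rho_x^{2r}\,d\nu\leq D$, yielding \eqref{rhoest2} with the exact constant $2^{d-1}$.
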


Let $u ^\varepsilon=u ^\varepsilon (x)$ be a smooth function and define
$$
e _\varepsilon (x) = \frac{\varepsilon \vert \nabla u^\varepsilon (x) \vert ^2 }{2} 
+ \frac{W(u ^\varepsilon (x))}{\varepsilon}, \qquad
\xi _\varepsilon (x) = \frac{\varepsilon \vert \nabla u^\varepsilon (x) \vert ^2 }{2} 
- \frac{W(u^\varepsilon (x))}{\varepsilon}.
$$
The following propositions are used in the proof of the integrality of $\mu _t$.
\begin{proposition}[See \cite{MR1803974, takasao-tonegawa, MR2040901}]\label{prop7.5}
For any $R \in (0,\infty)$, $E_0 \in (0,\infty)$, $s\in (0,1)$, and $N \in \N$,
there exists $\varrho \in (0,1)$ with the following property:
Assume that a set $Y \in \R^d$ has no more than $N+1$ elements and
$Y \subset \{ (0,\dots ,0, x_d) \in \R^d \mid x_d \in \R \}$, $\diam Y \leq \varrho R$,
and there exists $a \in (0,R) $ such that $\vert y-z \vert >3a$ holds for any $y,z \in Y$ with $y\not=z$.
Moreover, we assume the following.
\begin{enumerate}
\item $u ^\varepsilon \in C^2 (\{ y \in \R ^d \mid \dist (y, Y) <R \})$.
\item For any $x \in Y$ and $r \in [a,R]$,
\[
\int _{B_r (x)} \vert \xi _\varepsilon \vert
+ (1-(\nu_d) ^2) \varepsilon \vert \nabla u ^\varepsilon \vert^2
+ \varepsilon \vert \nabla u ^\varepsilon \vert 
\left\vert \Delta u ^\varepsilon - \frac{W'(u ^\varepsilon)}{\varepsilon ^2} \right\vert 
\, dy \leq \varrho r^{d-1}.
\]
Here $\nu = (\nu _1,\dots , \nu _d) = \frac{\nabla u ^\varepsilon}{\vert \nabla u ^\varepsilon \vert}$.
\item For any $x \in Y$, 
\[
\int _ a ^R \frac{d \tau}{\tau ^d} \int _{B_\tau (x)} (\xi _\varepsilon)_{+} \, dy \leq \varrho.
\]
\item For any $x \in Y$ and $ r \in [a,R]$,
\[
\int _{B_r (x)} \varepsilon \vert \nabla u ^\varepsilon \vert^2 \, dy 
\leq E_0 r^{d-1}.
\]
\end{enumerate}
Then, we have
\begin{equation}\label{eq:7.18}
\sum_{x \in Y} \frac{1}{a^{d-1}}
\int_{B_a (x)} e_\varepsilon \leq
s+ \frac{1+s}{R^{d-1}} 
\int_{\{ x \mid \dist (x,Y) <R\}} e_\varepsilon .
\end{equation}
\end{proposition}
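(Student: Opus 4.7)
The plan is to derive an almost-monotonicity estimate at each point of $Y$ from the Modica--Mortola energy identity, and then combine these single-point estimates by exploiting the collinear (one-dimensional) structure of $Y$ and the smallness of the tilt term $(1-\nu_d^2)\varepsilon|\nabla u^\varepsilon|^2$. The starting point is the identity (used already in the paper's rectifiability argument)
\begin{equation*}
\frac{d}{d\tau}\!\left\{\frac{1}{\tau^{d-1}}\!\int_{B_\tau(x)}\! e_\varepsilon\, dy\right\}
+ \frac{1}{\tau^d}\!\int_{B_\tau(x)}\! \bigl(\xi_\varepsilon + \varepsilon H^\varepsilon ((y-x)\!\cdot\!\nabla u^\varepsilon)\bigr)\, dy
- \frac{\varepsilon}{\tau^{d+1}}\!\int_{\partial B_\tau(x)}\!((y-x)\!\cdot\!\nabla u^\varepsilon)^2\, d\mathscr{H}^{d-1}
= 0,
\end{equation*}
valid for each $x\in Y$. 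Integrating from $a$ to $\tau\in[a,R]$ and using the bound $|(y-x)\cdot\nabla u^\varepsilon|\leq \tau|\nabla u^\varepsilon|$ inside $B_\tau(x)$ together with assumptions (2) and (3), the discrepancy integral and the $H^\varepsilon$-integral are both estimated by $C\varrho$ (with $C$ depending only on $d$ and $R$), yielding the pointwise quasi-monotonicity
\[
\frac{1}{a^{d-1}}\int_{B_a(x)} e_\varepsilon \;\le\; \frac{1}{\tau^{d-1}}\int_{B_\tau(x)} e_\varepsilon + C\varrho,\qquad \tau\in[a,R], \ x\in Y.
\]

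Next, I would average the above inequality over $\tau$ in a window $[R/2,R]$ to convert it into an $L^1$-in-scale bound, sum over the $N+1$ points of $Y$, and interchange the order of integration via Fubini. This produces an estimate of the form
\[
\sum_{x\in Y}\frac{1}{a^{d-1}}\int_{B_a(x)} e_\varepsilon
\;\le\; \frac{C}{R^{d-1}}\int_{\{\dist(\cdot,Y)<R\}} N_Y(y)\, e_\varepsilon(y)\, dy + (N+1)C\varrho,
\]
where $N_Y(y):=\#\{x\in Y:|y-x|<R\}$ is the multiplicity of the overlap. The inequality we actually need has no factor $N_Y$ in the right-hand side (only $(1+s)$), so the remaining task is to show that the weighted integral of $N_Y\, e_\varepsilon$ exceeds the ordinary integral by at most a factor $1+s$, modulo a small error $\leq s/2$.

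This is the core step and also the main obstacle. The point is that $N_Y(y)>1$ occurs exactly when $y$ lies in the "overlap tube" that forms a neighborhood of the $x_d$-axis of radial thickness determined by $\diam Y$. Because $Y$ is collinear along the $x_d$-axis with $\diam Y\le\varrho R$, this tube has transverse radius $\sim\sqrt{R\,\diam Y}\le\sqrt{\varrho}\,R$. The bound $(1-\nu_d^2)\varepsilon|\nabla u^\varepsilon|^2\leq\varrho\,r^{d-1}$ on every ball (assumption (2)) forces $\nabla u^\varepsilon$ to be almost parallel to $e_d$, hence the level sets of $u^\varepsilon$ to be close to horizontal hyperplanes. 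A tilt-excess / $L^2$-height argument (in the spirit of Allard and the adaptations to Allen--Cahn in \cite{MR1803974,takasao-tonegawa,MR2040901}) then shows that the $e_\varepsilon$-mass concentrated in this thin tube is a fraction $O(\sqrt{\varrho})$ of the total mass in $\{\dist(\cdot,Y)<R\}$, uniformly in $\varepsilon$.

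Putting these together, the contribution of the overlap region is absorbed into the additive $s$ and the multiplicative $1+s$ on the right-hand side, provided $\varrho$ is chosen small enough in terms of $s$, $N$, $E_0$, and $R$. The additive constant $(N+1)C\varrho$ from the first step is likewise absorbed into $s$ by the same choice. Tracking the dependence of all constants confirms that the final $\varrho$ depends only on $R,E_0,s,N$, which is precisely the statement of the proposition. The hard part throughout will be the quantitative geometric step: converting the integral tilt bound into a pointwise-structural statement about the concentration of level sets, and then leveraging this to control the overlap multiplicity of the collinear cluster.
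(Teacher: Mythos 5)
The paper does not prove this proposition; it cites it from Hutchinson--Tonegawa, Takasao--Tonegawa, and Tonegawa, so there is no internal proof to compare against. Your first two steps (the single-point quasi-monotonicity from the Modica--Mortola identity) are sound. The problem is in the multiplicity reduction of steps~3--4, which contains a genuine gap.

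After summing the quasi-monotonicity over $x\in Y$ and applying Fubini, you obtain the weighted integral $\int_{\{\dist(\cdot,Y)<R\}} N_Y(y)\,e_\varepsilon(y)\,dy$. You then assert that $N_Y(y)>1$ only on a thin tube of transverse radius $\sim\sqrt{\varrho}\,R$, and that the tilt bound confines most of the energy outside this tube. This is geometrically incorrect. Since $\diam Y\le\varrho R$ with $\varrho$ small, the balls $B_R(x)$, $x\in Y$, are almost concentric: their common intersection contains $B_{(1-\varrho)R}$ around any point of $Y$, which is essentially all of $\{\dist(\cdot,Y)<R\}$. Thus $N_Y\equiv\#Y$ on the bulk of the region, so $\int N_Y\,e_\varepsilon \approx (\#Y)\int e_\varepsilon$, and your scheme produces a multiplicative constant of order $N+1$ rather than the required $1+s$. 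Quantitatively, if the interface consists of $k=\#Y$ nearly-horizontal disjoint sheets (the generic picture enforced by the small tilt excess), the left side is $\approx k\omega_{d-1}$ while your bound yields $\approx k^2\omega_{d-1}$, which is off by a factor $k$. The cited proofs do not sum the same large region $\#Y$ times; instead they exploit collinearity and small tilt to show that the energy concentrated near each $y_i$ at scale $a$ corresponds to a \emph{distinct} nearly-planar sheet, and these sheets each contribute separately and essentially disjointly to the total mass in $\{\dist(\cdot,Y)<R\}$ --- a graph/slicing argument or an inductive splitting of $Y$, not Fubini over overlapping balls. That counting mechanism is the missing idea and cannot be recovered by shrinking $\varrho$.
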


\begin{proposition}[See \cite{MR1803974, takasao-tonegawa, MR2040901}]\label{prop7.6}
For any $s,b,\beta \in (0,1)$, and $c \in (1,\infty)$, there exist
$\varrho , \epsilon \in (0,1)$ and $L \in (1,\infty) $ with the following property:
Assume that $\varepsilon \in (0,\epsilon)$, $u ^\varepsilon \in C^2 (B_{4\varepsilon L} (0))$
and
\[
\sup _{B_{4\varepsilon L} (0)} \varepsilon \vert \nabla u ^\varepsilon \vert
\leq c,
\quad
\sup_{x,y \in B_{4\varepsilon L} (0), x\not=y} \varepsilon ^{\frac32}
\frac{ \vert \nabla u ^\varepsilon (x) -\nabla u ^\varepsilon (y) \vert }{\vert x-y \vert^{\frac12}}
\leq c, \quad
\vert u ^\varepsilon (0) \vert<1-b,
\]
\[
\int _{B_{4\varepsilon L} (0)} (\vert \xi _\varepsilon \vert +
(1-(\nu_d)^2) \varepsilon \vert \nabla u ^\varepsilon \vert^2) \, dx
\leq \varrho (4\varepsilon L)^{d-1},
\]
and
\[
\sup _{B_{4\varepsilon L} (0)} (\xi _\varepsilon)_+ \leq \varepsilon ^{-\beta}.
\]
Then we have
\begin{equation}\label{eq:7.19}
[-1+b, 1-b] \subset u ^\varepsilon (J)
\qquad \text{and} \qquad
\inf _{x \in J} \partial _{x_d} u ^\varepsilon (x) >0 
\ \ \text{or} \ \
\sup _{x \in J} \partial _{x_d} u ^\varepsilon (x) <0,
\end{equation}
where $J= B_{3\varepsilon L} (0) \cap \{ (0,\dots , 0, x_d) \in \R^d \mid x_d \in \R \}$.
In addition, we have
\[
\left \vert 
\sigma - \frac{1}{\omega _{d-1} (L\varepsilon )^{d-1}} \int _{B_{\varepsilon L}(0)} e_{\varepsilon}
\right \vert \leq s.
\]
\end{proposition}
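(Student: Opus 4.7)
The approach is a blow-up/compactness argument at scale $\varepsilon$, by contradiction. Suppose the proposition fails for some fixed $(s,b,\beta,c)$. Then along sequences $\varrho_n\downarrow 0$, $L_n\uparrow\infty$ with $\varrho_n L_n^{d-1}\to 0$, and $\varepsilon_n\downarrow 0$, there exist $u^{\varepsilon_n}\in C^2(B_{4\varepsilon_n L_n}(0))$ satisfying all the hypotheses but violating at least one conclusion. Rescale by $\tilde u_n(y):=u^{\varepsilon_n}(\varepsilon_n y)$ on $B_{4L_n}(0)$. A direct computation gives
\begin{equation*}
\tilde\xi_n(y):=\tfrac{1}{2}|\nabla\tilde u_n(y)|^2-W(\tilde u_n(y))=\varepsilon_n\,\xi_{\varepsilon_n}(\varepsilon_n y),
\end{equation*}
so the rescaled hypotheses read: $\|\nabla\tilde u_n\|_{L^\infty}\leq c$; $\nabla\tilde u_n$ has $C^{0,1/2}$-seminorm bounded by $c$; $|\tilde u_n(0)|<1-b$; $\int_{B_{4L_n}(0)}\bigl(|\tilde\xi_n|+(1-\tilde\nu_{n,d}^2)|\nabla\tilde u_n|^2\bigr)\,dy\leq 4^{d-1}\varrho_n L_n^{d-1}\to 0$; and $\sup(\tilde\xi_n)_+\leq \varepsilon_n^{1-\beta}\to 0$.

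By Arzelà--Ascoli and a diagonal extraction, a subsequence converges $\tilde u_n\to\tilde u$ in $C^1_{\mathrm{loc}}(\R^d)$ for some $\tilde u\in C^{1,1/2}_{\mathrm{loc}}(\R^d)$. The $C^1$-convergence turns the two integrands above into continuous, uniformly convergent quantities, and their $L^1$-smallness forces $|\nabla\tilde u|^2=2W(\tilde u)$ and $\sum_{i=1}^{d-1}(\partial_{y_i}\tilde u)^2\equiv 0$ everywhere on $\R^d$. Hence $\tilde u(y)=U(y_d)$ for some $U:\R\to[-1,1]$ with $(U')^2=2W(U)$ and, by differentiation on the open set $\{U'\neq 0\}$, $U''=W'(U)$. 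Since $|U(0)|\leq 1-b<1$, $U$ is a non-constant bounded solution, so it must be a translate of $\pm\tanh$: $U(y_d)=\pm\tanh(y_d-y_d^\ast)$ with $|y_d^\ast|\leq C(b)$.

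All three conclusions are now strict properties of $U$ that persist under $C^1$-convergence. Since $U$ is strictly monotone and surjective onto $(-1,1)$, choose $R_b$ so that $U([y_d^\ast-R_b,\,y_d^\ast+R_b])\supset[-1+b,\,1-b]$ and $|U'|\geq c_b>0$ on this interval. Then for $L_n>2R_b$ and $n$ large, $C^1$-convergence implies that on $\tilde J_n:=B_{3L_n}(0)\cap\{(0,\dots,0,y_d)\}$ the function $\tilde u_n$ attains every value in $[-1+b,1-b]$ and $\partial_{y_d}\tilde u_n$ has constant sign; unscaling back gives \eqref{eq:7.19}. For the energy identity, the change of variables $x=\varepsilon_n y$ gives
\begin{equation*}
\frac{1}{\omega_{d-1}(L_n\varepsilon_n)^{d-1}}\int_{B_{\varepsilon_n L_n}(0)} e_{\varepsilon_n}\,dx=\frac{1}{\omega_{d-1}L_n^{d-1}}\int_{B_{L_n}(0)}\tilde e_n\,dy,
\end{equation*}
where $\tilde e_n:=|\nabla\tilde u_n|^2/2+W(\tilde u_n)$. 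Passing to the limit and using Fubini together with the coarea identity $\int_\R|U'|^2\,dy_d=\int_\R|U'|\sqrt{2W(U)}\,dy_d=\int_{-1}^{1}\sqrt{2W(s)}\,ds=\sigma$ and the exponential decay of $|U'|$ about $y_d^\ast$, the right-hand side becomes $\sigma+o(1)$ as $L_n\to\infty$, contradicting the failure of the third conclusion.

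The main obstacle is the simultaneous management of the three small/large parameters: $\varrho_n$ must be small enough relative to $L_n$ so that $\varrho_n L_n^{d-1}\to 0$ (forcing $L^1$-vanishing of the discrepancy and tilt in the rescaled picture); $L_n\to\infty$ is needed to capture the full $\tanh$ transition and recover $\sigma$ in the limit; and $\varepsilon_n\to 0$ must be fast enough that $\varepsilon_n^{1-\beta}\to 0$ (this is the sole place where $\beta<1$ is used, ensuring that $(\tilde\xi_n)_+\to 0$ pointwise and not merely in $L^1$). Once the diagonal is chosen correctly, the limit profile is uniquely identified as the $1$D heteroclinic connection, and the three conclusions follow from strict inequalities in the limit via uniform $C^1$-convergence; the quantifier bookkeeping is the only delicate point, as the $1$D rigidity argument itself is standard.
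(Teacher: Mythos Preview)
The paper does not prove this proposition; it is simply recorded in the Appendix with a reference to \cite{MR1803974, takasao-tonegawa, MR2040901}. Your blow-up/compactness strategy is indeed the standard route in those references, but there is a genuine gap in the way you handle the parameter $L$.

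You send $L_n\to\infty$ together with $\varrho_n\to 0$ and $\varepsilon_n\to 0$. After rescaling, Arzel\`a--Ascoli yields only $C^1_{\mathrm{loc}}$-convergence of $\tilde u_n$ to the tanh profile $U(y_d)$, i.e., uniform convergence on every \emph{fixed} compact set. Both conclusions, however, live on domains that grow with $n$: the sign of $\partial_{y_d}\tilde u_n$ must be checked on all of $\tilde J_n=\{|y_d|<3L_n\}$, and the energy is averaged over $B_{L_n}(0)$. Local convergence gives you no pointwise information on $\tilde u_n$ in $B_{3L_n}(0)\setminus B_R(0)$ for fixed $R$; you cannot exclude an additional transition layer of $\tilde u_n$ near $y_d\approx 2L_n$ (where $\partial_{y_d}\tilde u_n$ would change sign), nor can you evaluate $\frac{1}{\omega_{d-1}L_n^{d-1}}\int_{B_{L_n}(0)}\tilde e_n$ without knowing that $\tilde u_n$ is close to $\pm 1$ away from a single interface. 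The integral smallness of $|\tilde\xi_n|$ and of the tilt does let you replace $\tilde e_n$ by $(\partial_{y_d}\tilde u_n)^2$ up to $O(\varrho_n)$, but that alone does not pin down the value of the remaining integral over a growing ball.

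The correct ordering is: first fix $L=L(s,b,c)$ large enough so that for every translated profile $U(y_d)=\pm\tanh(y_d-y_d^\ast)$ with $|y_d^\ast|\leq C(b)$ one has
\[
\left|\sigma-\frac{1}{\omega_{d-1}L^{d-1}}\int_{B_L(0)}(U'(y_d))^2\,dy\right|\leq \frac{s}{2},
\qquad
[-1+b,1-b]\subset U((-3L,3L)),\quad \inf|U'|\geq c_b>0
\]
on the relevant subinterval. Then run the contradiction argument with this $L$ fixed and only $\varrho_n,\epsilon_n\to 0$. The rescaled domain $\overline{B_{4L}(0)}$ is now a fixed compact set, so $\tilde u_n\to U$ in $C^1(\overline{B_{4L}(0)})$, and passing to the limit both in the sign condition on $(-3L,3L)$ and in the energy integral over $B_L(0)$ is immediate. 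This is how the cited references proceed; your remark that ``the quantifier bookkeeping is the only delicate point'' is correct, but the diagonal you chose does not resolve it.
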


\begin{remark}
Note that the assumptions for $(\xi _\varepsilon )_+$ in the propositions above
hold for the solution to \eqref{ac} with
suitable initial data (see \eqref{eq:3.15}).
\end{remark}

\bigskip

\section*{Acknowledgement}

The author would like to thank Professor Takashi Kagaya for his helpful comments.
%The author expresses his gratitude to the referees for the helpful comments
%and suggestions that helped him to improve the original manuscript.
This work was supported by JSPS KAKENHI
Grant Numbers JP20K14343, JP18H03670, 
and JSPS Leading Initiative for Excellent Young Researchers (LEADER) operated by Funds for the Development of Human Resources in Science and Technology.
%%%%%%%%%%%%%%%%%%%%%%%%%%%%%%%%%%%%%%%%%%%%%%%%%%%%%%%%%%%%%%%%%%%%%%%%%%%%%%%%%%%%%%%%%%%%%%%%%%%%%%%%%%%%%%%%%%%%%%%%%%%%%%%%%%%

\end{document}